\definecolor{newcolor}{rgb}{.8,.349,.1}
\newcommand{\Rmnum}[1]{\uppercase\expandafter{\romannumeral #1}}
\newtheorem{theorem}{Theorem}[section]
\newtheorem{lemma}[theorem]{Lemma}
\newtheorem{corollary}{Corollary}[section]
\theoremstyle{definition}
\newtheorem{expl}{Example}[section]
\theoremstyle{remark}
\newtheorem{remark}{Remark}[]
\newcommand{\dd}{\mathrm{d}}
\newcommand{\dx}{\Delta x}
\newcommand{\dy}{\Delta y}
\newcommand{\dt}{\Delta t}
\newcommand{\halfone}{\frac{1}{2}}
\newcommand{\fourone}{\frac{1}{4}}
\newcommand{\mfourone}{\frac{m}{4}}
\newcommand{\sfourone}{\frac{\sigma}{4}}
\def\vec#1{{\bm #1}}
\def\jump#1{\llbracket #1 \rrbracket }
\def\average#1{\lbrace\!\lbrace #1  \rbrace\!\rbrace }
\journal{Journal of Computational Physics}
\begin{document}

\verso{Shengrong Ding, Kailiang Wu}

\begin{frontmatter}

\title{{\bf GQL-Based Bound-Preserving and Locally Divergence-Free Central Discontinuous Galerkin Schemes for Relativistic Magnetohydrodynamics}\tnoteref{tnote1}}
	
	%Robust Central Discontinuous Galerkin Schemes for Relativistic Magnetohydrodynamics: Provable Bound Preservation via Geometric Quasilinearization and Local Divergence-Free Implementation

\tnotetext[tnote1]{This work is partially supported by Shenzhen Science and Technology Program (Grant No.~RCJC20221008092757098) and 
	National Natural Science Foundation of China (Grant No.~12171227 and No.~92370108).}

%\author[1]{Shengrong \snm{Ding}}
\author[1]{Shengrong {Ding}}
\ead{dingsr@sustech.edu.cn}
% \fntext[fn1]{This is author footnote for second author.}  \fnref{fn1}
\author[2,1,3]{Kailiang {Wu}\corref{cor1}}
\cortext[cor1]{Corresponding author.}
%% Third author's email
\ead{wukl@sustech.edu.cn}

\address[1]{SUSTech International Center for Mathematics, Southern University of Science and Technology, Shenzhen 518055, China}
\address[2]{Department of Mathematics, Southern University of Science and Technology, Shenzhen 518055, China}
\address[3]{National Center for Applied Mathematics Shenzhen (NCAMS), Shenzhen 518055, China}

%\received{xx xx 202x}
%\finalform{xx xx 202x}
%\accepted{xx xx 202x}
%\availableonline{xx xx 202x}
%\communicated{xx xx}

\begin{abstract}
%%%

This paper develops novel and robust central discontinuous Galerkin (CDG) schemes of arbitrarily high-order accuracy for special relativistic magnetohydrodynamics (RMHD) with a general equation of state (EOS). These schemes are provably bound-preserving (BP), meaning they consistently preserve the upper bound for subluminal fluid velocity and the positivity of density and pressure, while also (locally) maintaining the divergence-free (DF) constraint for the magnetic field. For 1D RMHD, the standard CDG method is exactly DF, and its BP property is proven under a condition achievable by the BP limiter. For 2D RMHD, we design provably BP and locally DF CDG schemes based on the suitable discretization of a modified RMHD system, which is the relativistic analogue of Godunov's symmetrizable form of the non-relativistic MHD system [S.~K.~Godunov, {\em Numerical Methods for Mechanics of Continuum Medium}, 1 (1972) 26--34]. A key novelty in our schemes is the meticulous discretization of additional source terms in the modified RMHD equations, so as to precisely counteract the influence of divergence errors on the BP property across overlapping meshes. Notably, we provide rigorous proofs of the BP property for our CDG schemes and first establish the theoretical connection between BP and discrete DF properties on overlapping meshes for RMHD. Owing to the absence of explicit expressions for primitive variables in terms of conserved variables, the constraints of physical bounds are strongly nonlinear, making the BP proofs highly nontrivial. We overcome these challenges through technical estimates within the geometric quasilinearization (GQL) framework [K. Wu \& C.-W. Shu, {\em SIAM Review}, 65 (2023) 1031--1073], which equivalently converts the nonlinear constraints into linear ones. Furthermore, we introduce a new 2D cell average decomposition on overlapping meshes, which relaxes the theoretical BP CFL constraint and reduces the number of internal nodes, thereby enhancing the efficiency of the 2D BP CDG method. Finally, we implement the proposed CDG schemes for extensive RMHD problems with various EOSs, demonstrating their robustness and effectiveness in challenging scenarios like ultra-relativistic blasts and jets in strongly magnetized environments.

%%%%
\end{abstract}

%\begin{keyword}
%% MSC codes here, in the form: \MSC code \sep code
%% or \MSC[2008] code \sep code (2000 is the default)
%\MSC 41A05\sep 41A10\sep 65D05\sep 65D17
%% Keywords
%\KWD Keyword1\sep Keyword2\sep Keyword3
%\end{keyword}

\begin{keyword}	
	{\bf Keywords:} 
	Relativistic magnetohydrodynamics; 
	Bound-preserving;
	Divergence-free;
	Central discontinuous Galerkin;
	Cell average decomposition; 
	High-order accuracy;
	Hyperbolic conservation laws
\end{keyword}

\end{frontmatter}

%\linenumbers

%% main text

\section{Introduction}

Relativistic magnetohydrodynamics (RMHD) integrates the principles of magnetohydrodynamics (MHD) with Einstein's theory of relativity and is a pivotal discipline in both astrophysics and plasma physics. This field primarily focuses on the dynamics of electrically conducting fluids, such as plasma, in the presence of magnetic fields, especially under conditions of exceedingly high velocities or strong gravitational forces. 
RMHD has become instrumental in studying a wide array of astrophysical phenomena across stellar and galactic scales. %Notable applications include the analysis of gamma-ray bursts, the formation mechanisms of black holes, the dynamics within active galactic nuclei, the supernova jets, %the behavior of X-ray binaries, 
%and the merging processes of neutron stars. 
%In these contexts, plasma velocities often approach the speed of light, making relativistic effects essential for accurate modeling and understanding.

In the laboratory frame, the $d$-dimensional special RMHD equations can be formulated as
\begin{equation}\label{eq:RMHD}
	\bm{U}_t + \nabla \cdot \bm{F}(\bm{U})={\bf 0},
\end{equation} 
where $t$ represents the time, $d \in \{1, 2, 3\}$ specifies the dimensionality, the conservative vector $\bm{U} = \left(D, \bm{m}, \bm{B}, E \right)^{\top} \in \mathbb{R}^8$ comprises the mass density $D$, the momentum density vector $\bm{m}$, the magnetic field $\bm{B}$, and the energy density $E$. 
The divergence of the fluxes, $\nabla \cdot \bm{F}(\bm{U})=\sum_{i=1}^{d} \frac{\partial \bm{F}_i(\bm{U})}{\partial x_i}$, where each $\bm{F}_i(\bm{U})$ represents the flux in the $x_i$-direction, defined as
\begin{equation*}
	\bm{F}_i(\bm{U})=\Big( D v_i, v_i \bm{m} - B_i (W^{-2}\bm{B}+(\bm{v}\cdot \bm{B})\bm{v}) + p_{tot} \bm{e}_i, v_i \bm{B} - B_i \bm{v}, m_i \Big)^{\top}. 
\end{equation*}
Here,  $\bm{v}=(v_1,v_2,v_3)$ denotes the fluid velocity, $W = 1/\sqrt{1-|\bm{v}|^2}$ is the Lorentz factor, 
the total pressure $p_{tot} = p + p_m$ includes both thermal pressure $p$ and magnetic pressure $p_m = \frac{1}{2} ((1-|\bm{v}|^2) |\bm{B}|^2 + (\bm{v} \cdot \bm{B})^2)$, and the vector $\bm{e}_i$ represents the $i$th row of the identity matrix of size $3 \times 3$. 
The conservative vector $\bm{U} = \left(D, \bm{m}, \bm{B}, E \right)^{\top}$ can be explicitly expressed by the primitive variables $\bm W := (\rho,\bm v, \bm B, p)^\top$ via 
\begin{equation}\label{eq:W2U}
	\begin{cases}
		D = \rho W,\\
		m_i = \rho h W^2 v_i + |\bm{B}|^2 v_i - (\bm{v} \cdot \bm{B})B_i, \quad i \in \{1, 2, 3\},\\
		E = \rho h W^2 - p_{tot} + |\bm{B}|^2,
	\end{cases}
\end{equation}
where $\rho$ is the rest-mass density. The specific enthalpy $h = 1 + e + p/\rho$, where units are chosen such that the speed of light $c$ is equal to one (i.e., $c=1$), and $e$ is the specific internal energy.  
The system \eqref{eq:RMHD} is closed with an equation of state (EOS), which relates thermodynamic variables. 
The general form of the EOS is given by
\begin{equation}\label{eq:EOS}
	h=h(p,\rho) = 1+e(p,\rho)+p/{\rho}, 
\end{equation}
which must satisfy the inequality 
\begin{equation}\label{EOS:heq1}
	h(p,\rho) \geq \sqrt{1+p^2/{\rho}^2} + p/\rho,
\end{equation}
as implied by the relativistic kinetic theory \cite{WuTang2017ApJS}.
For the causal EOS, namely that the local speed of sound is slower than the speed of light $c$, the following relation holds: 
\begin{equation}\label{EOS:heq2}
	h\left(\frac{1}{\rho}-\frac{\partial h(p,\rho)}{\partial p}\right) < \frac{\partial h(p,\rho)}{\partial \rho} <0,
\end{equation}
if the fluid's thermal expansion coefficient is positive; see \cite{WuTang2017ApJS}.  
Throughout this paper, we consider a general EOS satisfying \eqref{EOS:heq1} and \eqref{EOS:heq2}, which includes a wide range of commonly used EOSs such as the ideal EOS in \eqref{EOS:IDEOS}.

Owing to relativistic effects, the flux functions $\bm{F}_i(\bm{U})$ and the primitive quantities $\bm W := (\rho,\bm v, \bm B, p)^\top$ cannot be explicitly expressed in terms of the conserved variables $\bm{U} = \left(D, \bm{m}, \bm{B}, E \right)^{\top}$. This unique characteristic, absent in non-relativistic MHD, renders the RMHD system highly nonlinear and complex, making its analytical analysis challenging. 
Consequently, numerical simulations have emerged as a vital approach for investigating the complex dynamics within RMHD. Nonetheless, the nonlinear hyperbolic nature of the RMHD equations implies that their solutions might develop discontinuities, like shocks, even under smooth initial conditions. Furthermore, the solutions of the RMHD system can manifest as large-scale structures or exhibit very strong discontinuities, particularly in the strongly magnetized or ultra-relativistic scenarios. 
These factors lead to the challenges of devising robust high-order accurate numerical schemes for RMHD.

The magnetic field $\bm{B}=(B_1,B_2,B_3)$ is subject to a divergence-free (DF) constraint:
\begin{equation}\label{eq:DF}
	\nabla \cdot \bm{B} := \sum_{i=1}^{d} \frac{\partial B_i}{\partial x_i} = 0,
\end{equation}
indicating the absence of magnetic monopoles in the system. If the initial magnetic field complies with this DF condition, the exact solutions of the RMHD equations always maintain zero divergence.
In the design of numerical RMHD schemes, careful consideration of the DF condition \eqref{eq:DF} is imperative, posing additional challenges beyond those typically encountered in solving nonlinear hyperbolic systems.
For both non-relativistic ideal MHD and RMHD, adherence to the DF condition \eqref{eq:DF} is critical for robust computations; significant divergence errors in the magnetic field can lead to nonphysical outcomes or numerical instabilities \cite{Brackbill1980, Evans1988, BalsaraSpicer1999, Toth2000, Li2005, Wu2017a, WuTangM3AS}. 
In the one-dimensional (1D) case, the DF condition \eqref{eq:DF} simplifies to ensuring that $B_1$ remains constant, which is straightforward. However, in multidimensional contexts, preserving this condition numerically becomes significantly more complex. 
To address this, various techniques have been developed either to reduce divergence errors or to enforce discrete analogs of the DF condition \eqref{eq:DF}. These techniques were primarily developed for the non-relativistic ideal MHD system; see, for example, \cite{Brackbill1980,Evans1988, powell1997approximate, BalsaraSpicer1999, Toth2000, Dedner2002, Gardiner2005, Li2005, Li2011, Li2012, Yakovlev2013, Fu2018} and the references therein.  % helzel2011unstaggered,
Among them, the eight-wave method by Powell and his collaborators \cite{Powell1995,powell1997approximate} aims to manage divergence errors through suitable discretization of a modified MHD system. In the non-relativistic case, this modified MHD system was first proposed by Godunov \cite{Godunov1972} in 1972 for entropy symmetrization.
To extend the eight-wave method and entropy-stable schemes to relativistic scenarios, the symmetrizable modification of the RMHD system was presented in \cite{WuShu2019SISC}, expressed as
 \begin{equation}\label{eq:ModRMHD}
 	\bm{U}_t + \nabla \cdot \bm{F}(\bm{U})=-(\nabla \cdot \bm{B}) \bm{S}(\bm{U}) ,
 \end{equation}
 where $\bm{S}(\bm{U})=\left(0, (1-|\bm{v}|^2)\bm{B}+ (\bm{v} \cdot \bm{B}) \bm{v}, \bm{v}, \bm{v}\cdot \bm{B}\right)^\top$. 
The symmetrization source terms $-(\nabla \cdot \bm{B}) \bm{S}(\bm{U})$ on the right-hand side of \eqref{eq:ModRMHD} are proportional to the magnetic divergence. Therefore, under the DF condition \eqref{eq:DF}, the original form \eqref{eq:RMHD} and the modified one \eqref{eq:ModRMHD} coincide at the continuous level.
However, these source terms not only render the modified system \eqref{eq:ModRMHD} symmetrizable, but also play a pivotal role in preserving the physical bound constraints \eqref{eq:G}, which will be elaborated upon subsequently.

% \cite{Brackbill1980,Evans1988,BalsaraSpicer1999,Toth2000,Li2005}. 
% (cf.~\cite{Brackbill1980, 
	% 	Evans1988, 
	% 	devore1991flux,Dai1998,Londrillo2004divergence,Gardiner2005,xu2016divergence, Rossmanith2006,helzel2011unstaggered,mishra_tadmor_2012,helzel2013high, 
	% 	Li2005, Balsara2009,Li2011,Li2012,Fu2018, 
	% 	Powell1995,powell1997approximate}). 

Besides the difficulties in addressing the DF condition \eqref{eq:DF}, significant numerical challenges also arise from the need to preserve physical bounds. These include maintaining the positivity of density and thermal pressure (or internal energy), as well as capping the fluid velocity at the speed of light (note $c=1$). These constraints define the admissible state set:
\begin{equation}\label{eq:G}
	\mathcal{G} = \left\{
		\bm{U}=(D, \bm{m}, \bm{B}, E)^\top:~ D>0, ~ p(\bm{U}) >0, ~ |\bm{v}(\bm{U})|<1 \right\},
\end{equation}
where the functions $p(\bm{U})$ and $\bm{v}(\bm{U})$ are strongly nonlinear and have no explicit expressions (see \Cref{sec:2.1}). 
The development of bound-preserving (BP) numerical schemes is crucial not only for their physical relevance but also for numerical stability. Violating these bounds can disrupt the hyperbolic nature of the system, leading to severe numerical instability and simulation breakdowns. Indeed, schemes that lack the BP property often fail when simulating RMHD problems involving large Lorentz factors, strong shocks, high Mach numbers, low density, low plasma beta, or low pressure. Over recent decades, there has been notable progress in designing high-order BP numerical methods for hyperbolic conservation laws; see, for example,  \cite{zhang2010,zhang2010b,Xu2014,Xiong2016,christlieb2015positivity,ZHANG2017301,Wu2017a,WuShu2018,WuShu2019} and the surveys in \cite{zhang2011b,Shu2018}. These methods are primarily built on two types of limiters: the simple scaling limiter and the flux-correction limiter. Motivated by these, various BP numerical methods have been developed for special relativistic hydrodynamics (RHD) without magnetic fields, including BP finite difference \cite{WuTang2015}, central and non-central discontinuous Galerkin (DG) \cite{QinShu2016,WuTang2017ApJS}, and finite volume schemes \cite{LingDuanTang2019,ChenWu2022}. Frameworks for designing high-order accurate BP finite volume, DG, and finite difference methods for general RHD equations have been established in \cite{Wu2017}. However, extending these BP methods to RMHD is challenging due to the impact of magnetic divergence errors on the BP property.

Researchers have discovered intrinsic connections between the BP property and the DF condition from both discrete and continuous viewpoints \cite{WuTangM3AS,Wu2017a,WuShu2020NumMath}. Specifically, for the conservative RMHD system \eqref{eq:RMHD} with either an ideal or a general EOS satisfying \eqref{EOS:heq1} and \eqref{EOS:heq2}, it was first shown in \cite{WuTangM3AS,WuTangZAMP} that the BP properties of finite volume and (non-central) DG schemes are closely linked to a discrete DF condition. Furthermore, even minor violations of this discrete DF condition can cause the loss of the BP property \cite{WuTangM3AS}. Unfortunately, the globally coupled nature of the discrete DF condition makes it incompatible with standard local scaling BP limiters in multidimensional cases. Similarly, at the continuous level, researchers in \cite{WuShu2020NumMath} have found that the BP property of the exact solutions to \eqref{eq:RMHD} is also strongly tied to the (continuous) DF constraint \eqref{eq:DF}. Slight deviations from \eqref{eq:DF} can result in failure to preserve the constraints \eqref{eq:G}, implying that the conservative RMHD system \eqref{eq:RMHD} may be ill-posed outside the DF regime. These findings align with those in the non-relativistic case \cite{Wu2017a,WuShu2018,WuShu2019}, highlighting the complexity of understanding and constructing BP schemes for multidimensional RMHD. Fortunately, the symmetrizable modified RMHD system \eqref{eq:ModRMHD} successfully overcomes these challenges, as its exact solutions \eqref{eq:ModRMHD} remain BP even without satisfying the DF constraint \eqref{eq:DF}. This discovery has opened avenues for developing multidimensional BP schemes for RMHD through appropriate discretization of \eqref{eq:ModRMHD}. Inspired by this finding, high-order BP (non-central) DG schemes have been developed in \cite{WuShu2020NumMath} for the modified RMHD equations \eqref{eq:ModRMHD}. For further related developments in non-relativistic ideal MHD, readers can refer to \cite{DingWu2023,WuJiangShu2022}.

This paper focuses on developing  provably BP, locally DF, high-order accurate, central DG (CDG) schemes for RMHD. This work is a sequel to previous BP research \cite{WuTangM3AS,WuShu2020NumMath,WuTangZAMP} on finite volume and standard non-central DG methods. The CDG methods \cite{LiuYJCDG2007}, designed by integrating central schemes \cite{KURGANOV2000,LIUYJ2005} with DG methods \cite{COCKBURN1998}, retain the key advantages of the latter, such as compact stencil and efficient parallel implementation. 
Unlike regular (non-central) DG, the CDG methods evolve two sets of numerical solutions on overlapping cells (primal and dual meshes), thereby avoiding the need to solve complex (approximate) Riemann problems at cell interfaces. 
Despite the increased computational demands in each time step, CDG methods allow for larger time step-sizes \cite{LiuShuTadmorZhang2008,Reyna2015} and have shown superior performance in certain numerical simulations compared to non-central DG methods \cite{LiuShuTadmorZhang2008}. Owing to these advantages, CDG methods have been successfully applied to diverse systems, including Euler %\cite{}, %Hamilton--Jacobi  \cite{LiYakovlevHJ2010}, 
and special RHD systems  \cite{LiuYJCDG2007,JIANG2022111297,ZhaoTangCDG2017,WuTang2017ApJS}. Exactly (globally) DF CDG schemes were systematically developed in \cite{Li2011,Li2012} for non-relativistic ideal MHD and extended to the conservative RMHD system \eqref{eq:RMHD} in \cite{ZhaoTang2017}. Recognizing the importance of the BP property for robust simulations, high-order BP CDG methods have been developed for various equations, including the compressible Euler system \cite{LiPPCDG2016}, shallow water equations \cite{LiPPCDGSW2017,JIANG2022111297}, non-relativistic MHD systems \cite{WuJiangShu2022}, and the special RHD equations \cite{WuTang2017ApJS}. However, developing provably BP CDG schemes for RMHD faces different challenges, primarily due to the high nonlinearity of constraints \eqref{eq:G} and the impact of magnetic divergence errors on the BP property over overlapping meshes. 

The main efforts and novelties of this paper are summarized as follows:
\begin{itemize}
	\item For 1D conservative RMHD equations \eqref{eq:RMHD}, the standard CDG method maintains the 1D DF constraint, and its BP property is proven under a condition achievable by the BP limiter.
	\item In multidimensional cases, we propose provably BP, locally DF CDG schemes based on suitable discretization of the modified RMHD equations \eqref{eq:ModRMHD}.
	Owing to the locally DF feature, magnetic divergence errors exist only at the cell interfaces.
	The critical and innovative aspect of our CDG schemes lies in carefully discretizing the additional source terms in \eqref{eq:ModRMHD} to precisely counteract the influence of divergence errors on the BP property over overlapping meshes.
	This technical task is informed by our comprehensive theoretical analysis.
	Our source term discretization in the CDG framework, drawing information from the corresponding dual mesh, differs notably from non-central BP DG schemes \cite{WuShu2020NumMath}. 
	\item We provide rigorous proofs of the BP property for our 2D CDG schemes. The proofs are nontrivial and technical, due to the strong nonlinearity of the constraints \eqref{eq:G}. 
		To overcome these challenges, we adopt the recently developed geometric quasilinearization (GQL) approach \cite{WuTangM3AS,WuShu2021GQL}, which equivalently converts the nonlinear constraints into linear ones via a higher-dimensional perspective.  
Utilizing GQL, we establish theoretical links between BP and DF properties.
We prove that the BP property of CDG schemes for 2D conservative RMHD system \eqref{eq:RMHD}
without source terms is closely tied to a discrete DF condition on overlapping meshes, distinct from the one in \cite{WuTangM3AS,WuTangZAMP} for non-central DG methods. 
This discrete DF condition, globally coupled across all the cells, is not maintained by the local scaling BP limiter. 
However, thanks to properly discretized symmetrization source terms, the BP property of our CDG schemes for the modified RMHD equations \eqref{eq:ModRMHD} is influenced only by a local discrete DF condition, which is ensured by our locally DF CDG discretization.  
This further underscores the importance of symmetrization source terms for bound preservation.
The discrete DF constraint intrinsically couples the states in CDG schemes, adding significant complexity to the BP analysis. Thus, some standard BP frameworks, often based on transforming multidimensional schemes into a convex combination of formally 1D BP schemes \cite{zhang2010,zhang2010b,LiPPCDG2016,WuTang2017ApJS}, are not applicable in our multidimensional RMHD cases. 
\item We extend a novel 2D cell average decomposition (CAD), recently proposed for non-central DG schemes \cite{CuiDingWu2023JCP,CuiDingWu2022SINUM}, to overlapping meshes. 
As a cornerstone of BP analysis \cite{zhang2010,CuiDingWu2023JCP,CuiDingWu2022SINUM}, 
CAD decomposes the cell average of the numerical solution into a convex combination of values at some nodal points. 
It determines the theoretical CFL condition of the resulting BP schemes and also affects the computational cost of the associated BP limiter. 
Compared to the classic CAD, our extended CAD offers a notably milder BP CFL constraint and requires fewer internal nodes, enhancing the efficiency of 2D CDG methods.
\item The proposed CDG schemes are Riemann-solver-free, uniformly high-order accurate, and of high resolution. 
We implement these schemes to simulate a range of RMHD problems with various EOSs beyond the ideal EOS, tackling challenging scenarios like ultra-relativistic blast and jet problems in strongly magnetized environments.
\end{itemize}

This paper is organized as follows. 
\Cref{sec:2} introduces the challenges arising from the nonlinear implicit relations from conserved to primitive variables and the GQL approach for addressing these challenges.  
\Cref{1DBPCDG} presents the high-order CDG method for 1D conservative RMHD equations \eqref{eq:RMHD}, along with a rigorous proof of the BP property and the implementation of the BP limiter. \Cref{2DBPCDG} details the provably BP and locally DF high-order CDG method for the 2D RMHD system. In \cref{Numericalexperiments}, we provide several numerical experiments for RMHD with various EOSs. 
The concluding remarks are presented in \cref{Conclusions}.

\section{Preliminaries}\label{sec:2}

%Conversion
\subsection{Nonlinear Implicit Relations from Conserved to Primitive Variables}\label{sec:2.1}

The conserved variables $\bm{U} = (D, \bm{m}, \bm{B}, E)^\top$ can be explicitly derived from the  primitive variables $\bm{W} = (\rho, \bm{v}, \bm{B}, p)^\top$ via \eqref{eq:W2U}. However, the inverse determination of $\bm{W}$ from $\bm{U}$ is difficult, as there are no explicit expressions for $\bm{W}$ in terms of $\bm{U}$, even for the ideal EOS. As a result, the two functions $p(\bm{U})$ and $\bm{v}(\bm{U})$ in \eqref{eq:G} are implicit and highly nonlinear, posing significant challenges in developing BP schemes.

On the other hand, the flux functions ${\bm{F}}_i({\bm{U}})$ also cannot be explicitly expressed by $\bm{U}$. In practice, it is necessary to first recover $\bm{W}$ from $\bm{U}$ and then use both $\bm{U}$ and $\bm{W}$ to calculate ${\bm{F}}_i({\bm{U}})$. 
Recovering $\bm{W}$ from a given $\bm{U}$ is a highly complex task, yet it is involved in all conservative RMHD schemes. This process typically requires solving strongly nonlinear algebraic equations  \cite{noble2006primitive,mignone2006hllc}. Rewriting the EOS \eqref{eq:EOS} as $p = p(\rho, h)$ and defining $\xi := \rho h W^2$, we can follow the approach of \cite{mignone2006hllc,noble2006primitive} to derive a nonlinear equation for the unknown $\xi \in \mathbb{R}^+$:
\begin{equation}\label{eq:RMHD:fU(xi)}
	{\mathcal F}_{ \bm U}(\xi ) := \xi - p\left( \frac{D}{W(\xi)}, \frac{\xi}{D W(\xi)} \right) - \frac{1}{2}\left( {\frac{{{{\left| \vec B \right|}^2}}}{{{{W}^2(\xi)}}} + \frac{{{{(\vec m \cdot \vec B)}^2}}}{{{\xi ^2}}}} \right) + {\left| \vec B \right|^2}  - E = 0, 
\end{equation}
where %$W(\xi)$ is expressed as a function of $\xi$ 
\begin{align}\label{eq:Wxi}
	W(\xi) = \left( {\xi^{-2}} {{(\xi + {|\vec B|^2})}^{-2}}
	f_{\Omega}(\xi)\right)^{ - {1}/{2} } \quad \mbox{with} \quad 
%\label{eq:Wxi-zzzzz}
	f_{\Omega}(\xi):={\xi^2}{{(\xi + {|\vec B|^2})}^2} - \left[ {\xi^2}{|\vec m|^2} + (2\xi+{|\vec B|^2}) {{(\vec m \cdot \vec B)}^2} \right].
\end{align}
By using a root-finding algorithm, such as the bisection or Newton-Raphson method, to solve equation \eqref{eq:RMHD:fU(xi)} within the interval $\Omega_f :=  \{\xi \in \mathbb{R}^+: f_{\Omega}(\xi) > 0\}$, one can obtain $\xi$. Subsequently, the primitive variables $\rho(\bm{U})$, $p(\bm{U})$, and $\bm{v}(\bm{U})$ can be computed by 
\begin{equation}\label{eq:RMHD:getv}
	\bm v(\bm U) =\frac{ {\bm m + \xi ^{ - 1}(\vec m \cdot \bm B) \bm B} }{\xi+ {|\bm B|^2}},\quad 
	%\label{eq:RMHD:getrho}
	\rho (\bm U) 	= \frac{D}{{W(\xi)}},\quad h(\bm U) = \frac{\xi}{D W(\xi)} , \quad 
	p(\bm U) = p\left( \rho (\bm U), h(\bm U) \right).
\end{equation}
For either an ideal or a general EOS satisfying \eqref{EOS:heq1} and \eqref{EOS:heq2}, it has been proven in \cite{WuTangM3AS,WuTangZAMP} that the function ${\mathcal{F}}_{\bm{U}}(\xi)$ is strictly increasing and thus has a unique root in $\Omega_f$, provided that $\bm{U} \in {\mathcal{G}}$. Indeed, ensuring the BP property for $\bm{U}$ is essential for robustly recovering the primitive variables $\bm{W}$.

\subsection{Geometric Quasilinearization (GQL)}\label{GQL} % for Nonlinear Constraints

Since the two implicit functions $p(\bm{U})$ and $\bm{v}(\bm{U})$ in \eqref{eq:G} are highly nonlinear, determining whether a given $\bm{U}$ belongs to $\mathcal{G}$ is already a difficult task. Finding a numerical RMHD scheme that provably preserves its solution within $\mathcal{G}$ is even more challenging.

To address the challenges arising from this high nonlinearity, we employ the GQL approach \cite{WuTangM3AS,WuShu2021GQL}, which establishes an equivalent linear representation of any convex set by introducing extra auxiliary variables. This representation, known as the GQL representation, will be the fundamental cornerstone for our BP analysis and the proof of related theorems.

The convexity of $\mathcal{G}$ was proven in \cite{WuTangM3AS} using differential geometry techniques.

\begin{lemma}[Convexity]\label{Lemma:convex}
	The admissible state set $\mathcal{G}$ is convex.
\end{lemma}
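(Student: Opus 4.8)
The plan is to prove the convexity of $\mathcal{G}$ by exhibiting an equivalent \emph{linear} (in $\bm U$) description of the set through the GQL philosophy, which will make convexity immediate since an intersection of half-spaces (even an infinite family) is convex. First I would recall the structure: the nonlinearity in $\mathcal{G}$ hides entirely in the implicit functions $p(\bm U)$ and $\bm v(\bm U)$, so the task is to replace the two constraints $p(\bm U)>0$ and $|\bm v(\bm U)|<1$ by an infinite family of constraints each of which is affine in the conserved vector $\bm U=(D,\bm m,\bm B,E)^\top$. The natural auxiliary variables to introduce are a velocity-like vector $\bm \omega\in\mathbb R^3$ with $|\bm\omega|<1$ and possibly an auxiliary scalar; the claim one expects from \cite{WuTangM3AS} is that $\bm U\in\mathcal G$ if and only if $D>0$ together with
\begin{equation*}
  E - \bm m\cdot\bm\omega - \frac12\Big( |\bm B|^2(1-|\bm\omega|^2) + (\bm\omega\cdot\bm B)^2 \Big) > 0 \qquad \text{for all } \bm\omega \text{ with } |\bm\omega|<1,
\end{equation*}
and a similar affine family certifying $D>0$. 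The key computational step is to verify this equivalence: for a fixed admissible $\bm U$, optimize the left-hand side over $\bm\omega$ in the open unit ball and check that the infimum is a monotone function of $p(\bm U)$ and the velocity bound, using the recovery relations \eqref{eq:RMHD:getv} and the EOS inequalities \eqref{EOS:heq1}--\eqref{EOS:heq2}; conversely, evaluating the family at the particular choice $\bm\omega=\bm v(\bm U)$ recovers the thermal pressure positivity, while letting $|\bm\omega|\to 1$ forces $|\bm v(\bm U)|\le 1$.

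Once the GQL representation is established, convexity follows in one line: each constraint in the family, for fixed $\bm\omega$ (and fixed auxiliary scalar), is a strict affine inequality in $\bm U$ — note that the $\bm\omega$-dependent quadratic term in $|\bm B|$ is a fixed quadratic form in $\bm B$ but is still affine \emph{jointly} only if handled carefully, so I would double-check that the $|\bm B|^2$ terms indeed combine into something affine in $\bm U$, which they do because $\bm B$ appears linearly in $\bm U$ and the quadratic-in-$\bm B$ expression is a convex function; hence the sublevel description is actually an intersection of sets each of which is either a half-space or the sublevel set of a convex function, all of which are convex. Therefore $\mathcal G$, being the intersection of these convex sets over all admissible $\bm\omega$, is convex. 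Alternatively — and this is the cleaner route I would actually write — one shows directly that $\mathcal G$ coincides with $\{\bm U: D>0\}\cap\bigcap_{\bm\omega}\{\bm U: \Pi(\bm U,\bm\omega)>0\}$ where each $\Pi(\cdot,\bm\omega)$ is \emph{concave} in $\bm U$ (a linear term minus a convex quadratic form in $\bm B$), so each set is convex and the conclusion is immediate.

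I expect the main obstacle to be the first step — proving the GQL equivalence rigorously, i.e.\ that the infinite affine family exactly carves out $\mathcal G$ rather than a strictly larger or smaller set. This requires performing the constrained optimization $\inf_{|\bm\omega|<1}\Pi(\bm U,\bm\omega)$ and connecting its sign and the boundary behavior as $|\bm\omega|\to1$ to the implicit quantities $p(\bm U)$ and $|\bm v(\bm U)|$, which in turn demands the monotonicity of ${\mathcal F}_{\bm U}(\xi)$ recalled in \Cref{sec:2.1} and the EOS constraints \eqref{EOS:heq1}--\eqref{EOS:heq2}. Since the excerpt explicitly attributes this convexity result and its differential-geometry proof to \cite{WuTangM3AS}, I would keep the presentation brief: either cite that reference for the detailed verification of the GQL representation and then give the one-line convexity argument, or reproduce just the statement of the GQL representation as a displayed lemma and note that convexity is an immediate corollary. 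The routine but slightly delicate bookkeeping — making sure the magnetic-pressure term is written so that concavity in $\bm U$ is manifest, and handling the strictness of the inequalities so that $\mathcal G$ is open and convex — is where I would be most careful, but it is not conceptually hard.
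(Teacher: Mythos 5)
The paper itself offers no proof of this lemma: it simply cites \cite{WuTangM3AS}, where convexity is established by differential-geometry techniques, and only afterwards derives the GQL representation \emph{from} convexity. Your route inverts this order: establish an equivalent description of $\mathcal G$ as an intersection of sets cut out by constraints concave in $\bm U$, and read off convexity. This is a legitimate and genuinely different strategy (it is essentially how the equivalent forms are verified in \cite{WuTangM3AS,WuTangZAMP} by direct computation), but you must be careful not to invoke the general GQL machinery of \cite{WuShu2021GQL} to produce the representation, since that framework takes convexity as a hypothesis; the equivalence has to be checked by hand, and that check is where all the work of the cited proof actually lives.

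There is also a concrete error in the candidate family you display. Comparing with $\bm n^*$ in \Cref{Lemma:Gstar}, the correct concave-in-$\bm U$ form (obtained by minimizing the linear constraints over $\bm B^*$) contains a term $-D\sqrt{1-|\bm\omega|^2}$, which your family omits. This omission is not cosmetic: evaluating your expression at $\bm\omega=\bm v(\bm U)$ and using \eqref{eq:W2U} gives
\begin{equation*}
E-\bm m\cdot\bm v-\tfrac12\big((1-|\bm v|^2)|\bm B|^2+(\bm v\cdot\bm B)^2\big)=\rho h-p=\rho(1+e),
\end{equation*}
which is positive whenever $\rho>0$ and $e>-1$ and therefore does \emph{not} enforce $p(\bm U)>0$; with the missing $-D/W=-\rho$ term one instead gets $\rho e$, whose positivity (together with $D>0$ and the EOS conditions \eqref{EOS:heq1}--\eqref{EOS:heq2}) is what actually encodes pressure positivity. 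As written, your family carves out a strictly larger set than $\mathcal G$, so the claimed equivalence would fail. The convexity deduction at the end (each constraint is linear minus a positive semidefinite quadratic form in $\bm B$, hence concave, hence its strict superlevel set is convex, and an arbitrary intersection of convex sets is convex) is correct once the representation is repaired and rigorously verified.
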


Thanks to this convexity, the GQL representation can be established \cite{WuTangM3AS,WuTangZAMP,WuShu2021GQL}.

\begin{lemma}[GQL Representation]\label{Lemma:Gstar}
	The admissible state set $\mathcal{G}$ is exactly equivalent to the set 
	\begin{equation}\label{eq:Gs}
		\mathcal{G}_* = \left\{
		\bm{U}=(D, \bm{m}, \bm{B}, E)^\top:~ \bm{U}\cdot \bm{n}_1>0,~ \bm{U}\cdot \bm{n}^{*} + p^*_m >0 ~~  \forall \bm{v}^*\in \mathbb{B}_1(\bm 0),  ~~ \forall \bm{B}^*\in \mathbb{R}^3 \right\},
	\end{equation}
	where $\bm{n}_1 =(1,0,0,0,0,0,0,0)^\top$,  
	$\bm{n}^{*}=\left(-\sqrt{1-|\bm{v}^*|^2}, -\bm{v}^*, -(1-|\bm{v}^*|^2)\bm{B}^*, 1\right)^{\top}$, 
	$p^*_m=\frac{1}{2}\left((1-|\bm{v}^*|^2)\bm{B}^*+(\bm{v}^* \cdot \bm{B}^*)^2\right)$, 
	and $\{\bm{v}^*, \bm{B}^*\}$ are the extra free auxiliary variables independent of $\bm U$.  
\end{lemma}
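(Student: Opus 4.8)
The plan is to prove the two set inclusions $\mathcal{G} \subseteq \mathcal{G}_*$ and $\mathcal{G}_* \subseteq \mathcal{G}$ separately, exploiting the convexity of $\mathcal{G}$ (\Cref{Lemma:convex}) and the geometric meaning of the auxiliary variables $\{\bm v^*, \bm B^*\}$. The key observation is that the linear functional $\bm U \mapsto \bm U \cdot \bm n^* + p_m^*$ appearing in \eqref{eq:Gs} is, for each fixed admissible pair $\{\bm v^*, \bm B^*\}$, a supporting-type affine function whose nonnegativity over $\mathcal{G}$ must be checked; conversely, these affine functions must be shown to be enough to cut out $\mathcal{G}$ exactly. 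Throughout I would use the recovery relations \eqref{eq:W2U} to express $\bm U \cdot \bm n^*$ in primitive variables.

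First, for $\mathcal{G} \subseteq \mathcal{G}_*$: take $\bm U \in \mathcal{G}$, so $D = \rho W > 0$, $p > 0$, $|\bm v| < 1$. The condition $\bm U \cdot \bm n_1 = D > 0$ is immediate. For the second family of constraints, fix arbitrary $\bm v^* \in \mathbb{B}_1(\bm 0)$ and $\bm B^* \in \mathbb{R}^3$, and substitute \eqref{eq:W2U} into $\bm U \cdot \bm n^* + p_m^*$. After collecting terms, the expression should reduce to something manifestly positive — I expect it to take the form of (a positive multiple of) $\rho h W^2 (1 - \bm v \cdot \bm v^*)$ plus a nonnegative quadratic remainder in $(\bm B - \bm B^*)$ and the velocities, plus a term controlled by $p > 0$ via \eqref{EOS:heq1}. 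The factor $1 - \bm v \cdot \bm v^* > 0$ follows from Cauchy--Schwarz since $|\bm v| < 1$ and $|\bm v^*| < 1$. This is the computational heart of the "easy" direction: it amounts to verifying an algebraic identity/inequality, and the source of $h \geq \sqrt{1 + p^2/\rho^2} + p/\rho$ is precisely what lets one absorb the worst cross terms.

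Second, for $\mathcal{G}_* \subseteq \mathcal{G}$: take $\bm U \in \mathcal{G}_*$ and argue by contraposition — if $\bm U \notin \mathcal{G}$, exhibit a choice of $\{\bm v^*, \bm B^*\}$ violating one of the linear constraints in \eqref{eq:Gs}. If $D \leq 0$, the $\bm n_1$ constraint already fails, so assume $D > 0$. The delicate case is when $p(\bm U) \leq 0$ or $|\bm v(\bm U)| \geq 1$ (which may force the recovery procedure of \Cref{sec:2.1} to fail); here I would work at the level of the boundary $\partial \mathcal{G}$, using convexity of $\mathcal{G}$ to produce a supporting hyperplane and then identifying its normal with some $\bm n^*$ of the prescribed form. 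Concretely, for a state on the boundary one expects either $|\bm v| \to 1$ (take $\bm v^*$ aligned with $\bm v$, $\bm v^* \to \bm v$) or $p \to 0^+$ (the remainder estimate above becomes an equality in the limit), and for states strictly outside one perturbs slightly to get strict violation. The main obstacle is exactly this direction: because $p(\bm U)$ and $\bm v(\bm U)$ have no closed form, one cannot simply plug in and optimize over $\{\bm v^*, \bm B^*\}$; instead the argument must be organized around the recovery equation \eqref{eq:RMHD:fU(xi)}–\eqref{eq:RMHD:getv} and the monotonicity of $\mathcal{F}_{\bm U}$, or alternatively around an abstract separation argument justified by \Cref{Lemma:convex}. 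I would handle it by showing that the supremum over admissible $\{\bm v^*, \bm B^*\}$ of the "defect" $-(\bm U \cdot \bm n^* + p_m^*)$ is attained (or approached) at the physically meaningful choice $\bm v^* = \bm v(\bm U)$, $\bm B^* = \bm B$, reducing $\bm U \cdot \bm n^* + p_m^*$ to a simple expression in $\rho$, $p$, $W$ whose positivity is equivalent to $p(\bm U) > 0$ together with $|\bm v(\bm U)| < 1$ — closing the equivalence.
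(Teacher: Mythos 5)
Your overall architecture (two inclusions; substitute \eqref{eq:W2U} and exploit \eqref{EOS:heq1} for $\mathcal G\subseteq\mathcal G_*$; make judicious choices of $(\bm v^*,\bm B^*)$ for the converse) does match the strategy of the proof the paper points to --- note the paper itself does not prove \Cref{Lemma:Gstar} but cites \cite{WuTangM3AS,WuTangZAMP}. However, as written your argument has genuine gaps in both directions. For $\mathcal G\subseteq\mathcal G_*$, the ``computational heart'' is only conjectured: after substituting \eqref{eq:W2U}, the expression $\bm U\cdot\bm n^*+p_m^*$ contains the negative terms $-p$ and $-\rho W\sqrt{1-|\bm v^*|^2}$ together with indefinite magnetic cross terms such as $(\bm v\cdot\bm B)(\bm v^*\cdot\bm B)-(1-|\bm v^*|^2)\,\bm B\cdot\bm B^*$, and it is not ``manifestly positive'': the naive bounds $1-\bm v\cdot\bm v^*\ge\sqrt{(1-|\bm v|^2)(1-|\bm v^*|^2)}$ and $h\ge 1+p/\rho$ only yield $\rho hW^2(1-\bm v\cdot\bm v^*)-\rho W\sqrt{1-|\bm v^*|^2}-p\ \ge\ p\,\bigl(W\sqrt{1-|\bm v^*|^2}-1\bigr)$, which can be negative. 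The actual argument requires a specific completion-of-squares identity in which the magnetic remainder and the sharpened enthalpy bound \eqref{EOS:heq1} jointly absorb these terms; you have not produced it, and your claimed decomposition is not obviously attainable.

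For $\mathcal G_*\subseteq\mathcal G$ the difficulties are worse. Evaluating the constraint at $\bm v^*=\bm v(\bm U)$, $\bm B^*=\bm B$ (which indeed reduces to $\rho e+(\bm v\cdot\bm B)^2$) presupposes that the primitive variables exist, i.e., that the recovery succeeds, which is essentially what membership in $\mathcal G$ provides; and your fallback of ``producing a supporting hyperplane and identifying its normal with some $\bm n^*$ of the prescribed form'' is circular, since the assertion that every supporting hyperplane of $\mathcal G$ has exactly this form is the content of the lemma. The non-circular route is to pass through the explicit characterization of $\mathcal G$ in \Cref{Lemma:G0} (namely $D>0$, $q(\bm U)>0$, $\Phi(\bm U)>0$) and show that suitable limits of $(\bm v^*,\bm B^*)$ recover these inequalities --- e.g., $\bm B^*=\bm 0$ and optimizing over $\bm v^*\in\mathbb B_1(\bm 0)$ yields $E\ge\sqrt{D^2+|\bm m|^2}$ --- but you never invoke this bridge. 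Finally, because $\mathbb B_1(\bm 0)$ is open the infimum of $\bm U\cdot\bm n^*+p_m^*$ over the auxiliary variables is not attained, so the family of strict inequalities in \eqref{eq:Gs} only delivers non-strict limits such as $q(\bm U)\ge 0$; an additional argument is needed to upgrade these to the strict inequalities defining $\mathcal G$, and your sketch does not address this boundary issue.
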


Remarkably, all the constraints in the GQL representation \eqref{eq:Gs} become {\em linear} with respect to $\bm{U}$. This highly desirable feature gives it significant advantages in analyzing and designing BP schemes. The first proof of \Cref{Lemma:Gstar} can be found in \cite{WuTangM3AS} for the ideal EOS and in \cite{WuTangZAMP} for a general equation satisfying \eqref{EOS:heq1} and \eqref{EOS:heq2}. The geometric interpretation of \Cref{eq:Gs} and the more general GQL framework were established in \cite{WuShu2021GQL} for any convex invariant region of partial differential equations.

Thanks to the GQL approach, we have the following inequalities, which will be useful in our BP analysis. 

\begin{lemma}[See \cite{WuTangM3AS,WuTangZAMP}] \label{Lemma:LF}
	Given any admissible state $\bm{U} \in \mathcal{G}$, we have
	\begin{equation}\label{eq:keyieq}
		\left(\bm{U} + \theta \bm{F}_\ell(\bm{U}) \right) \cdot \bm{n}^* + p^*_m + \theta \left(v_\ell^* p^*_m - B_\ell (\bm{v}^* \cdot \bm{B}^*)\right) \geq0,
	\end{equation} 
	for any $\theta \in[-1,1]$, any $\bm{v}^* \in \mathbb{B}_1(\bm 0)$, and any $\bm{B}^*\in \mathbb{R}^3$, where $\ell \in \{1,2,3\}$.  Here, $\{\bm{v}^*, \bm{B}^*\}$ are the extra free auxiliary variables in the GQL representation. 
\end{lemma}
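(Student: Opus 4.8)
The plan is to establish \eqref{eq:keyieq} by reducing it, via the GQL representation in \Cref{Lemma:Gstar}, to a statement about the admissibility of a suitable one-parameter family of states. Concretely, I would first expand the left-hand side of \eqref{eq:keyieq} and recognize it as $\bm{U}\cdot\bm{n}^* + p^*_m + \theta\big(\bm{F}_\ell(\bm{U})\cdot\bm{n}^* + v_\ell^* p^*_m - B_\ell(\bm{v}^*\cdot\bm{B}^*)\big)$. The key observation I expect to need is that the quantity $\bm{F}_\ell(\bm{U})\cdot\bm{n}^* + v_\ell^* p^*_m - B_\ell(\bm{v}^*\cdot\bm{B}^*)$ is, up to the explicit multiplicative structure, exactly the ``flux-version'' of the linear functional $\bm{U}\cdot\bm{n}^* + p^*_m$; that is, if we define for fixed $\bm{v}^*,\bm{B}^*$ the linear functional $L(\bm{U}) := \bm{U}\cdot\bm{n}^* + p^*_m$ applied componentwise, then one should be able to verify the algebraic identity
\begin{equation*}
\bm{F}_\ell(\bm{U})\cdot\bm{n}^* + v_\ell^* p^*_m - B_\ell(\bm{v}^*\cdot\bm{B}^*) \;=\; \pm\, L\big(\bm{F}_\ell(\bm{U})\big)\quad\text{or}\quad v_\ell\, L(\bm{U}) + (\text{sign-definite remainder}),
\end{equation*}
which will let me rewrite \eqref{eq:keyieq} as $(1+\theta v_\ell) L(\bm{U}) + \theta\,(\text{something nonnegative})$ or similar.

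Second, I would make precise the connection to admissibility: by \Cref{Lemma:Gstar}, $\bm{U}\in\mathcal{G}$ means $L(\bm{U})>0$ for every choice of auxiliary $\bm{v}^*\in\mathbb{B}_1(\bm 0)$ and $\bm{B}^*\in\mathbb{R}^3$ (plus the density constraint). The standard trick here — the one used for relativistic hydrodynamics flux estimates — is that $\bm{U}\pm\bm{F}_\ell(\bm{U})$ (or more precisely a scaled version reflecting the subluminal eigenvalue bound $|\lambda|\le 1$) maps $\mathcal{G}$ into (the closure of) $\mathcal{G}$. So I would aim to show $\bm{U}+\theta\bm{F}_\ell(\bm{U})$, after the appropriate identification, has $L(\cdot)\ge 0$ for all $\theta\in[-1,1]$; since the extremal cases $\theta=\pm 1$ and linearity in $\theta$ give the whole interval, it suffices to check $\theta=1$ and $\theta=-1$. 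For these, I would compute $L(\bm{U}\pm\bm{F}_\ell(\bm{U}))$ explicitly in primitive variables using \eqref{eq:W2U} and the flux formula, and exhibit it as a manifestly nonnegative expression (a sum of squares weighted by $1\pm v_\ell$, $\rho h W^2$, and $(1-|\bm{v}|^2)$-type factors), invoking \eqref{EOS:heq1} to control the enthalpy-pressure terms.

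The main obstacle I anticipate is the explicit primitive-variable computation of $L(\bm{U}\pm\bm{F}_\ell(\bm{U}))$: the flux $\bm{F}_\ell(\bm{U})$ contains the magnetic stress terms $-B_\ell(W^{-2}\bm{B}+(\bm{v}\cdot\bm{B})\bm{v})+p_{tot}\bm{e}_\ell$, and pairing these against $\bm{n}^*=(-\sqrt{1-|\bm{v}^*|^2},-\bm{v}^*,-(1-|\bm{v}^*|^2)\bm{B}^*,1)^\top$ together with the $p^*_m$ terms produces a fairly intricate combination of $\bm{v},\bm{B},\bm{v}^*,\bm{B}^*$. The delicate part is reorganizing this into a sum of perfect squares: I expect terms like $\big(W v_\ell^* - W v_\ell\big)^2$-type pieces, cross terms between $\bm{B}$ and $\bm{B}^*$ that complete to $\big((1-|\bm{v}^*|^2)^{1/2}\bm{B}^* \mp \cdots\big)^2$, and the thermal part $\rho h W^2(1\pm v_\ell)(1\mp v_\ell v_\ell^* - \cdots)$ which needs $h\ge\sqrt{1+p^2/\rho^2}+p/\rho$ from \eqref{EOS:heq1} to absorb the $-p\,(\text{sign})$ contributions. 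Since \Cref{Lemma:LF} is quoted from \cite{WuTangM3AS,WuTangZAMP}, I would follow that derivation: the clean route is probably to avoid brute force by instead noting that $\bm{U}+\theta\bm{F}_\ell(\bm{U})$ for $|\theta|\le 1$ equals a convex combination of $\bm{U}+\bm{F}_\ell$ and $\bm{U}-\bm{F}_\ell$ and invoking a known lemma (e.g.\ from \cite{WuShu2019SISC} or \cite{WuTangM3AS}) that each of $\bm{U}\pm\bm{F}_\ell(\bm{U})$ lies in $\overline{\mathcal{G}}$ whenever $\bm{U}\in\mathcal{G}$ — at which point the GQL representation \eqref{eq:Gs} applied to that state immediately yields the desired inequality, with the $v_\ell^* p^*_m - B_\ell(\bm{v}^*\cdot\bm{B}^*)$ correction accounting precisely for the difference between $p^*_m$ evaluated with the flux's ``magnetic-pressure'' contribution and the raw linear pairing $\bm{F}_\ell(\bm{U})\cdot\bm{n}^*$.
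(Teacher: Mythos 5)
The paper does not actually prove \Cref{Lemma:LF}; it is imported verbatim from \cite{WuTangM3AS,WuTangZAMP}, so there is no in-paper argument to compare against. Judged on its own terms, your outline gets the correct skeleton: the left-hand side of \eqref{eq:keyieq} is affine in $\theta$, so it suffices to verify the two endpoint cases $\theta=\pm1$, and each endpoint case is a direct primitive-variable computation in which the expression is reorganized into a sum of manifestly nonnegative terms, with \eqref{EOS:heq1} needed to control the enthalpy--pressure contributions. That is indeed how the cited references proceed. But the proposal stops at describing this computation rather than performing it, and the entire content of the lemma lives in that computation; "I would follow that derivation" is not a proof.

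More importantly, the ``clean route'' you offer as an alternative is based on a false premise. For RMHD it is \emph{not} true that $\bm{U}\pm\bm{F}_\ell(\bm{U})\in\overline{\mathcal{G}}$ whenever $\bm{U}\in\mathcal{G}$ --- the failure of this naive Lax--Friedrichs splitting property, and its replacement by an inequality carrying the extra term $\pm\left(v_\ell^* p^*_m - B_\ell(\bm{v}^*\cdot\bm{B}^*)\right)$, is precisely the phenomenon that ties the BP property to discrete divergence-free conditions throughout this paper (see the discussion around \Cref{Coro:Flux2} and \Cref{thm:IRMHD}). Even granting that membership, the GQL representation applied to $\bm{U}\pm\bm{F}_\ell(\bm{U})$ would yield $(\bm{U}\pm\bm{F}_\ell(\bm{U}))\cdot\bm{n}^*+p^*_m\geq 0$, which neither implies nor is implied by \eqref{eq:keyieq}: the correction term $v_\ell^* p^*_m - B_\ell(\bm{v}^*\cdot\bm{B}^*)$ has no definite sign (take $\bm{B}^*$ large and aligned so that $B_\ell(\bm{v}^*\cdot\bm{B}^*)$ dominates), so it cannot be waved away as ``accounting precisely for the difference.'' The only viable route is the one you flag as the obstacle: the explicit expansion of $(\bm{U}\pm\bm{F}_\ell(\bm{U}))\cdot\bm{n}^*+p^*_m\pm(v_\ell^* p^*_m - B_\ell(\bm{v}^*\cdot\bm{B}^*))$ in the variables $\rho,\bm{v},\bm{B},p,\bm{v}^*,\bm{B}^*$ and its rearrangement into nonnegative pieces. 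Without that, the argument has a genuine gap.
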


We have the following two corollaries as direct consequences of \Cref{Lemma:LF}.
\begin{corollary}\label{Coro:Flux}
	Given any admissible state $\bm{U} \in \mathcal{G}$, we have 
	\begin{equation*}\label{eq:GQLCor1_1}
		\pm \bm{F}_\ell(\bm{U}) \cdot \bm{n}_1 \geq - \bm{U} \cdot \bm{n}_1,
	\end{equation*}
	\begin{equation*}\label{eq:GQLCor1_2}
		\pm \bm{F}_\ell(\bm{U}) \cdot \bm{n}^* \geq -\bm{U} \cdot \bm{n}^* - p^*_m \mp \left(v_\ell^* p^*_m - B_\ell (\bm{v}^* \cdot \bm{B}^*)\right)
	\end{equation*}
	for any $\bm{v}^* \in \mathbb{B}_1(\bm 0)$, any $\bm{B}^*\in \mathbb{R}^3$, and $\ell \in \{1,2,3\}$.
\end{corollary}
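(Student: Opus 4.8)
The plan is to derive \Cref{Coro:Flux} directly from the key inequality \eqref{eq:keyieq} in \Cref{Lemma:LF} by making judicious choices of the free parameter $\theta$ and of the auxiliary variables $\{\bm{v}^*, \bm{B}^*\}$. The first inequality is the easiest: I would take $\bm{v}^* = \bm 0$ and $\bm{B}^* = \bm 0$ in \eqref{eq:keyieq}, which makes $\bm{n}^* = \bm{n}_1 = (1,0,0,0,0,0,0,0)^\top$ (since $\sqrt{1-|\bm 0|^2}=1$... wait, $\bm{n}^*$ then equals $(-1,\bm 0,\bm 0,1)^\top$, not $\bm{n}_1$). So instead I would recall that $\bm{U}\cdot\bm{n}_1 = D > 0$ for $\bm{U}\in\mathcal{G}$, and observe that $\bm{F}_\ell(\bm{U})\cdot\bm{n}_1 = D v_\ell$; hence $\pm \bm{F}_\ell(\bm{U})\cdot\bm{n}_1 = \pm D v_\ell \ge -D|v_\ell| \ge -D = -\bm{U}\cdot\bm{n}_1$, using $|v_\ell|\le |\bm v| < 1$. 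This uses only the $D>0$ and $|\bm v|<1$ parts of admissibility.

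For the second inequality, I would apply \eqref{eq:keyieq} twice, once with $\theta = 1$ and once with $\theta = -1$, keeping $\{\bm{v}^*,\bm{B}^*\}$ arbitrary. Taking $\theta = 1$ gives
\[
\bm{U}\cdot\bm{n}^* + p^*_m + \bm{F}_\ell(\bm{U})\cdot\bm{n}^* + \bigl(v_\ell^* p^*_m - B_\ell(\bm{v}^*\cdot\bm{B}^*)\bigr) \ge 0,
\]
which rearranges to $\bm{F}_\ell(\bm{U})\cdot\bm{n}^* \ge -\bm{U}\cdot\bm{n}^* - p^*_m - \bigl(v_\ell^* p^*_m - B_\ell(\bm{v}^*\cdot\bm{B}^*)\bigr)$, i.e. the ``$+$'' case. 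Taking $\theta = -1$ gives $-\bm{F}_\ell(\bm{U})\cdot\bm{n}^* \ge -\bm{U}\cdot\bm{n}^* - p^*_m + \bigl(v_\ell^* p^*_m - B_\ell(\bm{v}^*\cdot\bm{B}^*)\bigr)$, i.e. the ``$-$'' case. Together these are exactly the claimed $\pm$ inequality, valid for all $\bm{v}^*\in\mathbb{B}_1(\bm 0)$, all $\bm{B}^*\in\mathbb{R}^3$, and $\ell\in\{1,2,3\}$. Since $\pm 1 \in [-1,1]$, the hypothesis of \Cref{Lemma:LF} is satisfied in both applications.

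Honestly, this corollary is essentially a two-line consequence of \Cref{Lemma:LF}, so there is no serious obstacle; the only thing to be careful about is bookkeeping the signs correctly when splitting into $\theta = \pm 1$ and matching them to the $\pm$ on the flux and on the $\bigl(v_\ell^* p^*_m - B_\ell(\bm{v}^*\cdot\bm{B}^*)\bigr)$ term, and — for the first inequality — recognizing that one should not try to force $\bm{n}^*$ to equal $\bm{n}_1$ but rather argue directly from the explicit form $\bm{F}_\ell(\bm{U})\cdot\bm{n}_1 = Dv_\ell$ together with $|v_\ell|<1$. If one preferred a uniform treatment, one could alternatively note that the first inequality also follows from a limiting/scaling argument on \eqref{eq:keyieq}, but the direct computation is cleaner and is the route I would take.
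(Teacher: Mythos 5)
Your proof is correct and follows essentially the same route as the paper: the first inequality is obtained directly from $\bm{F}_\ell(\bm{U})\cdot\bm{n}_1 = Dv_\ell$ together with $D>0$ and $|v_\ell|<1$, and the second by specializing \eqref{eq:keyieq} to $\theta=\pm 1$. The sign bookkeeping in your $\theta=\pm1$ cases matches the $\mp$ in the stated inequality, so nothing further is needed.
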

\begin{proof}
	Using the constraint $|\bm{v}|<c=1$, we can prove the first inequality by 
	$$
	\pm \bm{F}_\ell(\bm{U}) \cdot \bm{n}_1 = \pm v_l D \geq -D =  - \bm{U} \cdot \bm{n}_1.
	$$
	The second inequality can be obtained by reformulating \eqref{eq:keyieq} with the choice of $\theta=\pm 1$.
\end{proof}

\begin{corollary}\label{Coro:Flux2}
	Given any two admissible states $\bm{U} \in {\mathcal G}$ and $\tilde{\bm{U} }\in \mathcal{G}$, we have 
	\begin{equation}\label{eq:GQLCor2_1}
		-\left(\bm{F}_\ell(\bm{U}) -\bm{F}_\ell(\tilde{\bm{U}})\right)\cdot \bm{n}_1 \geq -(\bm{U} +  \tilde{\bm{U}}) \cdot \bm{n}_1,
	\end{equation}
	\begin{equation}\label{eq:GQLCor2_2}
		-\left(\bm{F}_\ell(\bm{U}) -\bm{F}_\ell(\tilde{\bm{U}})\right)\cdot \bm{n}^* \geq -\left((\bm{U} +  \tilde{\bm{U}}) \cdot \bm{n}^*  + 2 p^*_m  \right) - (  B_\ell -\tilde{B}_\ell ) (\bm{v}^* \cdot \bm{B}^*)
	\end{equation}
	for any $\bm{v}^* \in \mathbb{B}_1(\bm 0)$, any $\bm{B}^*\in \mathbb{R}^3$, and $\ell \in \{1,2,3\}$. 
\end{corollary}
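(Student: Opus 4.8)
The plan is to obtain both inequalities by invoking \Cref{Coro:Flux} twice---once for $\bm U$ and once for $\tilde{\bm U}$---with opposite sign choices, and then adding the resulting bounds. The underlying idea is that \Cref{Coro:Flux} already supplies one-sided bounds for $\pm\bm F_\ell\cdot\bm n_1$ and $\pm\bm F_\ell\cdot\bm n^*$ at a single admissible state; writing the flux difference as $\bm F_\ell(\bm U)-\bm F_\ell(\tilde{\bm U})$ and estimating each summand with the sign that makes the right-hand side collapse usefully immediately produces a bound expressed only through the states, the GQL normals, and the auxiliary quantities.

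First I would treat \eqref{eq:GQLCor2_1}. Writing
\[
-\bigl(\bm F_\ell(\bm U)-\bm F_\ell(\tilde{\bm U})\bigr)\cdot\bm n_1=\bigl(-\bm F_\ell(\bm U)\cdot\bm n_1\bigr)+\bigl(\bm F_\ell(\tilde{\bm U})\cdot\bm n_1\bigr),
\]
I apply the first inequality of \Cref{Coro:Flux} to $\bm U$ with the lower sign, giving $-\bm F_\ell(\bm U)\cdot\bm n_1\ge-\bm U\cdot\bm n_1$, and to $\tilde{\bm U}$ with the upper sign, giving $\bm F_\ell(\tilde{\bm U})\cdot\bm n_1\ge-\tilde{\bm U}\cdot\bm n_1$. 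Summing the two bounds yields \eqref{eq:GQLCor2_1}.

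Next I would handle \eqref{eq:GQLCor2_2} in the same fashion, decomposing
\[
-\bigl(\bm F_\ell(\bm U)-\bm F_\ell(\tilde{\bm U})\bigr)\cdot\bm n^*=\bigl(-\bm F_\ell(\bm U)\cdot\bm n^*\bigr)+\bigl(\bm F_\ell(\tilde{\bm U})\cdot\bm n^*\bigr).
\]
The second inequality of \Cref{Coro:Flux}, applied to $\bm U$ with the lower sign, gives $-\bm F_\ell(\bm U)\cdot\bm n^*\ge-\bm U\cdot\bm n^*-p^*_m+\bigl(v_\ell^* p^*_m-B_\ell(\bm v^*\cdot\bm B^*)\bigr)$, and applied to $\tilde{\bm U}$ with the upper sign gives $\bm F_\ell(\tilde{\bm U})\cdot\bm n^*\ge-\tilde{\bm U}\cdot\bm n^*-p^*_m-\bigl(v_\ell^* p^*_m-\tilde B_\ell(\bm v^*\cdot\bm B^*)\bigr)$. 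Adding these, the $\pm v_\ell^* p^*_m$ terms cancel and the magnetic contributions combine into $-(B_\ell-\tilde B_\ell)(\bm v^*\cdot\bm B^*)$, which is exactly \eqref{eq:GQLCor2_2}. Since every step is valid for all $\bm v^*\in\mathbb B_1(\bm 0)$, all $\bm B^*\in\mathbb R^3$, and all $\ell\in\{1,2,3\}$, the corollary follows.

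I do not anticipate a genuine analytic obstacle: granted \Cref{Coro:Flux} (hence, ultimately, \Cref{Lemma:LF} and the GQL representation in \Cref{Lemma:Gstar}), the argument is a short sign-bookkeeping exercise. The one place that needs a little care is matching each sign in \Cref{Coro:Flux} with the correct state so that the auxiliary $v_\ell^* p^*_m$ terms annihilate while the $B_\ell$-dependent terms survive with the coefficients appearing on the right-hand side of \eqref{eq:GQLCor2_2}.
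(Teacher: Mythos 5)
Your proof is correct and is exactly the argument the paper intends: its proof of \Cref{Coro:Flux2} simply says the results follow directly from \Cref{Coro:Flux}, and your sign choices (lower sign for $\bm U$, upper sign for $\tilde{\bm U}$) make the $v_\ell^* p^*_m$ terms cancel and leave the $(B_\ell-\tilde B_\ell)(\bm v^*\cdot\bm B^*)$ term as required.
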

\begin{proof}
	The results directly follow from \Cref{Coro:Flux}.
\end{proof}

\Cref{Coro:Flux2} will play a key role in analyzing the impact of fluxes on the BP property. In particular, the last term in the inequality \eqref{eq:GQLCor2_2} is crucial, as it relates the BP property to a discrete DF condition.

The following lemma will be valuable in analyzing the impact of the discrete symmetrization source terms on the BP property. Its proof can be found in \cite{WuShu2020NumMath}.

\begin{lemma}[]\label{Lemma:SourT}
	For any $\bm{U} \in \mathcal{G}$, any $\bm{v}^* \in \mathbb{B}_1(\bm 0)$, any $\bm{B}^* \in \mathbb{R}^3$, and $\zeta \in \mathbb{R}$, we have 
	\begin{equation}
		\zeta \left( \bm{S}(\bm{U})\cdot \bm{n}^* \right) \geq -\zeta (\bm{v}^* \cdot \bm{B}^*) - \frac{|\zeta|}{\sqrt{\rho h}} \left( \bm{U} \cdot \bm{n}^* + p^*_m \right).
	\end{equation}
Here, $\{\bm{v}^*, \bm{B}^*\}$ are the extra free auxiliary variables in the GQL representation. 
\end{lemma}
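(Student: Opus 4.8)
The plan is to prove \Cref{Lemma:SourT} by exploiting the linearity of the GQL constraints in \Cref{Lemma:Gstar}, just as was done for the flux estimates in \Cref{Lemma:LF} and its corollaries. First I would unpack the definitions: with $\bm{U}\in\mathcal G$ we have $\bm v := \bm v(\bm U)$, $\bm B$, $\rho h := \rho(\bm U) h(\bm U)$ well defined and, crucially, $\rho h > 0$ and $|\bm v| < 1$. Recall $\bm{S}(\bm{U})=\bigl(0,\,(1-|\bm v|^2)\bm B + (\bm v\cdot\bm B)\bm v,\ \bm v,\ \bm v\cdot\bm B\bigr)^\top$ and $\bm n^* = \bigl(-\sqrt{1-|\bm v^*|^2},\,-\bm v^*,\,-(1-|\bm v^*|^2)\bm B^*,\,1\bigr)^\top$. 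A direct dot product gives
\begin{equation*}
	\bm{S}(\bm U)\cdot\bm n^* = -\bm v^*\cdot\bigl((1-|\bm v|^2)\bm B+(\bm v\cdot\bm B)\bm v\bigr) - (1-|\bm v^*|^2)\,\bm B^*\cdot\bm v + (\bm v\cdot\bm B).
\end{equation*}
The goal is to bound $\zeta\,(\bm S(\bm U)\cdot\bm n^*) + \zeta(\bm v^*\cdot\bm B^*)$ from below by $-\frac{|\zeta|}{\sqrt{\rho h}}\bigl(\bm U\cdot\bm n^* + p_m^*\bigr)$, i.e.\ to show that $\bm U\cdot\bm n^* + p_m^*$ controls $\sqrt{\rho h}\,\bigl|\bm S(\bm U)\cdot\bm n^* + \bm v^*\cdot\bm B^*\bigr|$ up to a factor $1$.

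The key step is to recognize that $\bm U\cdot\bm n^* + p_m^*$ is itself, by \Cref{Lemma:Gstar}, a nonnegative quantity that can be written explicitly in terms of $\bm W$ and the auxiliary $(\bm v^*,\bm B^*)$; substituting the expressions \eqref{eq:W2U} for $D, \bm m, E$ one gets, after regrouping, something of the schematic form $\rho h W^2(1 - \bm v\cdot\bm v^*)(\text{something}) + (\text{magnetic cross terms})$, the leading piece being $\rho h W^2\bigl(\sqrt{1-|\bm v^*|^2} - \bm v\cdot\bm v^*\bigr) \ge \rho h W^2(1-|\bm v|)(\cdots) > 0$. I would then estimate $|\bm S(\bm U)\cdot\bm n^* + \bm v^*\cdot\bm B^*|$ term by term. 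The convective part $\bigl|{-}\bm v^*\cdot\bigl((1-|\bm v|^2)\bm B + (\bm v\cdot\bm B)\bm v\bigr) + \bm v\cdot\bm B - \bm v^*\cdot\bm B\bigr|$ (after using $\bm v^*\cdot\bm B^* - (1-|\bm v^*|^2)\bm v\cdot\bm B^* = \bm v^*\cdot\bm B^* - \bm v\cdot\bm B^* + |\bm v^*|^2 \bm v \cdot \bm B^*$, etc.) should be bounded by a multiple of $\sqrt{p_m^*}$ times $\sqrt{p_m^{\rm local}}$-type quantities and then absorbed, via Cauchy--Schwarz / Young's inequality $2ab \le a^2/\sqrt{\rho h} \cdot \sqrt{\rho h}\, b^2 $, into the sum of $p_m^*$ and a $(\rho h)^{-1}$-weighted piece of $\bm U\cdot\bm n^*$. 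The factor $1/\sqrt{\rho h}$ on the right-hand side is precisely the Young-inequality balancing constant that makes the magnetic cross terms in $\bm U\cdot\bm n^*$ (which scale like $|\bm B||\bm B^*|$) dominate those in $\bm S(\bm U)\cdot\bm n^*$.

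The main obstacle I anticipate is the bookkeeping of the magnetic cross terms: both $\bm U\cdot\bm n^* + p_m^*$ and $\bm S(\bm U)\cdot\bm n^* + \bm v^*\cdot\bm B^*$ contain products of $\bm B$ with $\bm B^*$ and with $\bm v^*$, mediated by $\bm v$, and one must show that completing the square in the former exactly dominates the latter after scaling by $\sqrt{\rho h}$ — there is no slack to spare, since the constant is sharp. A clean way to organize this is to introduce the GQL-type decomposition $\bm U\cdot\bm n^* + p_m^* = \rho h W^2 (1 - \bm v\cdot\bm v^* + \text{correction}) + \tfrac12\bigl| (1-|\bm v^*|^2)^{1/2}\bm B - (1-|\bm v^*|^2)^{1/2}\bm B^* + \cdots\bigr|^2 + \cdots$, i.e.\ to find the sum-of-squares form that \Cref{Lemma:Gstar} guarantees exists, and then match each square against the corresponding cross term in $\bm S(\bm U)\cdot\bm n^*$. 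Since \Cref{Lemma:SourT} is cited from \cite{WuShu2020NumMath}, I would follow that reference's algebraic route; the essential ideas are (i) the explicit dot products above, (ii) a sum-of-squares rewriting of the nonnegative quantity $\bm U\cdot\bm n^* + p_m^*$, and (iii) a single application of Young's inequality with weight $\sqrt{\rho h}$ to close the estimate.
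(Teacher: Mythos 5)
Your reduction is right and is the correct first step: by homogeneity in $\zeta$ the lemma is equivalent to $\sqrt{\rho h}\,\bigl|\bm{S}(\bm{U})\cdot\bm{n}^*+\bm{v}^*\cdot\bm{B}^*\bigr|\le \bm{U}\cdot\bm{n}^*+p^*_m$, and your expansion of $\bm{S}(\bm{U})\cdot\bm{n}^*$ is correct. Note also that the paper itself does not prove \Cref{Lemma:SourT} — it only cites \cite{WuShu2020NumMath} — and your outline follows the same route as that reference: substitute \eqref{eq:W2U}, rewrite $\bm{U}\cdot\bm{n}^*+p^*_m$ as a nonnegative combination of a hydrodynamic block and squared magnetic cross terms, then close with a weighted Cauchy--Schwarz/Young step.

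The gap is that the decisive part of this route is described but never executed, and it is exactly where all of the difficulty lies. You never exhibit the sum-of-squares form of $\bm{U}\cdot\bm{n}^*+p^*_m$, never match its squares against the cross terms of $\bm{S}(\bm{U})\cdot\bm{n}^*+\bm{v}^*\cdot\bm{B}^*$, and never verify that the specific constant $1/\sqrt{\rho h}$ closes the estimate — by your own observation there is ``no slack to spare,'' which is precisely why an unexecuted Young-inequality plan is not evidence that the bound holds. The one intermediate claim you do make is wrong as stated: the ``leading piece'' $\rho h W^2\bigl(\sqrt{1-|\bm{v}^*|^2}-\bm{v}\cdot\bm{v}^*\bigr)$ is not sign-definite (take $\bm{v}$ and $\bm{v}^*$ parallel with moduli near $1$). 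After the correct regrouping the hydrodynamic block is $\rho h W^2(1-\bm{v}\cdot\bm{v}^*)-\rho W\sqrt{1-|\bm{v}^*|^2}-p$, and even its nonnegativity is not free: it rests on the EOS condition \eqref{EOS:heq1} together with the inequality $1-\bm{v}\cdot\bm{v}^*\ge\sqrt{(1-|\bm{v}|^2)(1-|\bm{v}^*|^2)}$, neither of which your sketch invokes. As it stands, the proposal is a correct statement of strategy plus an explicit deferral to the reference, not a proof.
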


\section{1D Provably BP CDG Schemes}\label{1DBPCDG}
This section presents the BP CDG schemes for the RMHD equations \eqref{eq:RMHD} in one dimension, where the 
DF constraint \eqref{eq:DF} reduces to  
$$
B_1(x,t) \equiv B_{const} \quad \forall x, t\ge 0, 
$$
where $x$ represents the 1D spatial coordinate variable, and $B_{const}$ is a constant. Assume that the computational domain is uniformly partitioned into a (primal) mesh $\{I_j:=(x_{j-\halfone}, x_{j+\halfone})\}$ with the constant spatial step-size  $\dx=x_{j+\halfone}-x_{j-\halfone}$. To define the CDG scheme, we also introduce a dual mesh  $\{J_{j+\halfone}=(x_{j}, x_{j+1})\}$, where $x_j = \frac{1}{2}(x_{j-\halfone}+x_{j+\halfone})$ is the center of the cell $I_{j}$. 

\subsection{1D CDG Method}
Define the DG finite element spaces ${\mathbb V}_h^{I}$ and ${\mathbb V}_h^{J}$ as follows:
\begin{gather*}
	{\mathbb V}_h^{I} = \left \{\bm{w}=(w_1,\cdots,w_8)^\top \in [L^2(\Omega)]^8: w_\ell|_{I_j} \in \mathbb{P}^k(I_j) ~~ \forall 1\leq \ell \leq 8, ~\forall j \right \}, \\
	{\mathbb V}_h^{J} = \left\{\bm{u}=(u_1,\cdots,u_8)^\top\in [L^2(\Omega)]^8: u_\ell|_{J_{j+\halfone}} \in \mathbb{P}^k(J_{j+\halfone}) ~~ \forall 1\leq \ell \leq 8, ~\forall j \right\}, 
\end{gather*}
where $\mathbb{P}^k(I_j)$ and $\mathbb{P}^k(I_{j+\halfone})$ are the polynomial of the degree up to $k$ over $I_j$ and $I_{j+\halfone}$, respectively.
Then the semi-discrete CDG scheme is given as follows: seek the numerical solution $\bm{U}_h^I(x,t) \in {\mathbb V}_h^{I}$ on the primal mesh and $\bm{U}_h^J(x,t) \in {\mathbb V}_h^{J}$ on the dual mesh such that 
\begin{equation}\label{CDG1D:IntI}
	\begin{aligned}
		\int_{I_{j}} \partial_t(\bm{U}_h^I) \cdot \bm{w} \dd x =&  \frac{1}{\tau_{\rm max}} \int_{I_{j}} (\bm{U}_h^J - \bm{U}_h^I)\cdot \bm{w} \dd x +\int_{I_{j}} \bm{F}_1(\bm{U}_h^J) \cdot \partial_x \bm{w} \dd x    \\
		&+  \bm{F}_1(\bm{U}_h^J(x_{j-\halfone},t)) \cdot \bm{w}(x_{j-\halfone}^+) - \bm{F}_1(\bm{U}_h^J(x_{j+\halfone},t)) \cdot \bm{w}(x_{j+\halfone}^-) \quad \forall \bm{w} \in {\mathbb V}_h^{I},
	\end{aligned}
\end{equation}
\begin{equation}\label{CDG1D:IntJ}
	\begin{aligned}
		\int_{J_{j+\halfone}} \partial_t(\bm{U}_h^J) \cdot \bm{u} \dd x =&  \frac{1}{\tau_{\rm max}} \int_{J_{j+\halfone}} (\bm{U}_h^I - \bm{U}_h^J)\cdot \bm{u} \dd x +\int_{J_{j+\halfone}} \bm{F}_1(\bm{U}_h^I) \cdot \partial_x \bm{u} \dd x   \\
		& +  \bm{F}_1(\bm{U}_h^I(x_{j},t)) \cdot \bm{u}(x_{j}^+) - \bm{F}_1(\bm{U}_h^I(x_{j+1},t)) \cdot \bm{u}(x_{j+1}^-) \quad \forall \bm{u} \in {\mathbb V}_h^{J}.
	\end{aligned}
\end{equation}
Here, $\bm{w}(x_{j+\halfone}^{-})$ and $\bm{w}(x_{j+\halfone}^{+})$ denote the left- and right-hand limits of $\bm{w}$ at the interface $x_{j+\halfone}$, respectively; similarly, $\bm{u}(x_{j}^{\pm})$ are the left- and right-hand limits of $\bm{u}$ at the interfaces $x_{j}$.

In the equations \eqref{CDG1D:IntI} and \eqref{CDG1D:IntJ}, the first term at the right-hand side represents additional numerical dissipation, which is crucial for maintaining the stability of CDG schemes. This term is also instrumental in ensuring the BP property, as shown later. The parameter $\tau_{\rm max}$ in \eqref{CDG1D:IntI} and \eqref{CDG1D:IntJ} is the maximum allowable time step-size for sustaining stability. This parameter is determined in accordance with specific CFL conditions; see \eqref{CDG1D:CFLCond}. 
For further insights into the significance of $\tau_{\rm max}$ within the context of CDG methods, one may refer to 
 \cite{LiuYJCDG2007,LiuShuTadmorZhang2008,Reyna2015}.

 \begin{remark}
 	 A distinctive aspect of the CDG method is that it does not need any numerical flux which typically relies on exact or approximate Riemann solvers. This is because the solutions or fluxes are computed at the cell interfaces of the primal (or respectively, dual) mesh, corresponding to the centers of the dual (or primal) mesh, where the numerical solution $\bm{U}_h^J(x,t)$ (or $\bm{U}_h^I(x,t)$) exhibits continuity.
 \end{remark}

Consider a local orthogonal basis $\{\Psi_j^{(\ell)}(x)\}_{\ell=0}^{k}$ of the polynomial space $\mathbb{P}^k(I_j)$ and $\{\Psi_{j+\halfone}^{(\ell)}(x)\}_{\ell=0}^{k}$ of the polynomial space $\mathbb{P}^k(I_{j+\halfone})$. For instance, one can employ the scaled Legendre polynomials 
\begin{equation*}
	\Psi_{j_0}^{(0)}(x) = 1, \quad \Psi_{j_0}^{(1)}(x) = \xi_{j_0}, \quad \Psi_{j_0}^{(2)}(x) =  \xi_{j_0}^2 - \frac{1}{3}, \quad 
	\Psi_{j_0}^{(3)}(x) =  \xi_{j_0}^3  - \frac{3}{5} \xi_{j_0}, ~~
	\dots
\end{equation*}
where $\xi_{j_0} := 2(x-x_{j_0})/\dx$ and the index $j_0 = j$ or $j+\halfone$.
Then the numerical solutions $\bm{U}_h^I(x,t)$ and $\bm{U}_h^J(x,t)$ in the spaces ${\mathbb V}_h^{I}$ and ${\mathbb V}_h^{J}$, respectively, can be written as
\begin{equation}\label{CDG1D:SolI}
	\bm{U}_h^I(x,t) = \sum_{\ell=0}^{k} \bm{U}_j^{I,(\ell)}(t) \Psi_j^{(\ell)}(x) ~~~ \forall x\in I_j,
\end{equation} 
\begin{equation}\label{CDG1D:SolJ}
	\bm{U}_h^J(x,t) = \sum_{\ell=0}^{k} \bm{U}_{j+\halfone}^{J,(\ell)}(t) \Psi_{j+\halfone}^{(\ell)}(x) ~~~ \forall x\in I_{j+\halfone},
\end{equation} 
where $\bm{U}_j^{I,(\ell)}(t)$ and $\bm{U}_{j+\halfone}^{J,(\ell)}(t)$ are the degrees of freedom. Define 
$${a}_j^{(\ell)} = \frac{1}{\dx}\int_{I_j} \left(\Psi_j^{(\ell)}(x)\right)^2 \dd x, \quad
{a}_{j+\halfone}^{(\ell)} =\frac{1}{\dx}\int_{J_{j+\halfone}} \left(\Psi_{j+\halfone}^{(\ell)}(x)\right)^2 \dd x. $$
Since $\bm{U}_h^J(x,t)$ may exhibit discontinuity at $x=x_{j}$, we should split the integration $\int_{I_{j}} \bm{F}_1(\bm{U}_h^J) \cdot \partial_x \bm{w} \dd x$ in \eqref{CDG1D:IntI} into two parts $\int_{x_{j-\frac12}}^{x_j} \bm{F}_1(\bm{U}_h^J) \cdot \partial_x \bm{w} \dd x$ and $\int_{x_{j}}^{x_{j+\frac12}} \bm{F}_1(\bm{U}_h^J) \cdot \partial_x \bm{w} \dd x$, which are then approximated by using a quadrature rule of sufficiently high accuracy. 
Let $\{{x}_{j\pm\fourone}^\mu\}_{\mu=1}^Q$ be the $Q$-point Gauss quadrature nodes in  $[x_{j\pm\fourone}-\frac{\dx}{4}, x_{j\pm\fourone}+\frac{\dx}{4}]$, where $x_{j\pm \frac14}:=x_{j}\pm \frac14 \dx$. Let $\{ {\omega}_{\mu} \}_{\mu=1}^{Q}$ be the  quadrature weights on the interval $[-\halfone,\halfone]$ such that $\sum_{\mu=1}^{Q} {\omega}_{\mu} =1$. 
For accuracy requirement, we take $Q=k+1$. 
Then the semi-discrete CDG scheme \eqref{CDG1D:IntI} on the primal mesh can be reformulated as 
\begin{equation}\label{CDG1D:DisI}
	\begin{aligned}
		{a}_j^{(\ell)}\frac{\dd \bm{U}_j^{I,(\ell)}(t)}{\dd t} 
		&= 
		\frac{1}{\tau_{\rm max} \dx } \int_{I_{j}} (\bm{U}_h^J - \bm{U}_h^I) \Psi_j^{(\ell)}(x) \dd x + \sum_{m=\pm1} \sum_{\mu=1}^{Q} \frac{{\omega}_{\mu}}{2}\bm{F}_1(\bm{U}_h^J({x}_{j+\mfourone}^{\mu},t)) \frac{\dd \Psi_j^{(\ell)}({x}_{j+\mfourone}^{\mu}) }{\dd x}    \\
		&+  \frac{1}{
			\dx}\left(\bm{F}_1(\bm{U}_h^J(x_{j-\halfone},t)) \Psi_j^{(\ell)}(x_{j-\halfone}^+) - \bm{F}_1(\bm{U}_h^J(x_{j+\halfone},t)) \Psi_j^{(\ell)}(x_{j+\halfone}^-) \right), \quad   \ell=0,\cdots,k. 
	\end{aligned}
\end{equation}
Similarly,  the semi-discrete CDG scheme \eqref{CDG1D:IntJ} on the dual mesh can be rewritten as
\begin{equation}\label{CDG1D:DisJ}
	\begin{aligned}
		{a}_{j+\halfone}^{(\ell)} \frac{\dd \bm{U}_{j+\halfone}^{J,(\ell)}(t)}{\dd t} =&  \frac{1}{ \tau_{\rm max} \dx} \int_{J_{j+\halfone}} (\bm{U}_h^I - \bm{U}_h^J) \Psi_{j+\halfone}^{(\ell)}(x)  \dd x +\sum_{m=\pm1} \sum_{\mu=1}^{Q} \frac{{\omega}_{\mu}}{2} \bm{F}_1(\bm{U}_h^I({x}_{j+\halfone+\mfourone}^\mu,t)) \frac{\dd \Psi_{j+\halfone}^{(\ell)}({x}_{j+\halfone+\mfourone}^{\mu})}{\dd x}  \\
		& +  \frac{1}{\dx} \left(\bm{F}_1(\bm{U}_h^I(x_{j},t)) \Psi_{j+\halfone}^{(\ell)}(x_{j}^+) - \bm{F}_1(\bm{U}_h^I(x_{j+1},t)) \Psi_{j+\halfone}^{(\ell)}(x_{j+1}^-) \right),  \quad  \ell=0,\cdots,k.
	\end{aligned}
\end{equation}
%For ease of notation, we will omit the time variable $t$ of $\bm{U}_h^I(x,t)$ and $\bm{U}_h^J(x,t)$ in the following discussion.

The equations \eqref{CDG1D:DisI} and \eqref{CDG1D:DisJ} constitute a nonlinear system of ordinary differential equations for the degrees of freedom $\bm{U}_{j}^{J,(\ell)}(t)$ and $\bm{U}_{j+\halfone}^{J,(\ell)}(t)$. This system can be formally represented as $\frac{\dd \bm{U}}{\dd t} = \mathcal{L}(\bm{U})$. To achieve a fully discrete, high-order accurate CDG scheme, one can employ high-order strong-stability-preserving (SSP) time discretization \cite{GottliebShuTadmor2001}. For example, the SSP third-order Runge--Kutta method is described by
\begin{equation}\label{RKTime}
	\begin{aligned}
		{\bm{U}}^{[1]} &= {\bm{U}}^{n} + \dt \mathcal{L}({\bm{U}}^{n}), \\
		{\bm{U}}^{[2]} &= \frac{3}{4}{\bm{U}}^{n} + \frac{1}{4} \left({\bm{U}}^{[1]} + \dt \mathcal{L}({\bm{U}}^{[1]}) \right), \\
		{\bm{U}}^{n+1} &= \frac{1}{3}{\bm{U}}^{n} + \frac{2}{3} \left({\bm{U}}^{[2]} + \dt \mathcal{L}({\bm{U}}^{[2]}) \right),
	\end{aligned}
\end{equation}
and the SSP third-order multi-step method is given by 
\begin{equation*}%\label{MSTime}
	{\bm{U}}^{n+1} = \frac{16}{27}\left({\bm{U}}^{n} + 3\dt \mathcal{L}({\bm{U}}^{n}) \right) 
	+ \frac{11}{27} \left({\bm{U}}^{n-3} + \frac{12}{11} \dt \mathcal{L}({\bm{U}}^{n-3}) \right),
\end{equation*}
where $\dt$ denotes the time step-size.

\begin{lemma}[Divergence-Free Property]\label{lem:1DDF}
	Let $B_{1,h}^{I}(x,t)$ and $B_{1,h}^{J}(x,t)$ denote the fifth component of the vector functions $\bm{U}_h^I(x,t)$ and $\bm{U}_h^J(x,t)$, respectively.  %Then the initial CDG solution given by the standard $L^2$ projection satisfies
	%$$B_{1,h}^{I}(x,0) = B_{const} =B_{1,h}^{J}(x,0) \qquad \forall x$$ 
	%and 
	%which imply that the fully discrete
	The 1D fully discrete CDG schemes with an SSP time discretization method exactly preserve the DF constraint
\begin{equation}\label{eq:1DgDF}
		B_{1,h}^{I}(x,t_n) = B_{const} =B_{1,h}^{J}(x,t_n) \qquad \forall x,~ n \ge 0,
\end{equation}
	with $t_n$ denoting the $n$th time level. 
\end{lemma}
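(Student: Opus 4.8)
\emph{Proof strategy.} The plan is to exploit the single structural fact that the fifth component of the flux $\bm{F}_1(\bm{U})$ — that is, the first entry of the $3$-vector $v_1\bm{B}-B_1\bm{v}$, which equals $v_1 B_1 - B_1 v_1 = 0$ — is identically zero. Consequently the evolution of $B_1$ decouples from every genuinely nonlinear part of the scheme and is driven only by the inter-mesh dissipation term.

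First I would record the initial step: the $L^2$ projection of a constant onto $\mathbb{P}^k(I_j)$ (resp.\ $\mathbb{P}^k(J_{j+\halfone})$) is that same constant, since $\mathbb{P}^k$ contains the constants. Hence, if $B_1(x,0)\equiv B_{const}$, then $B_{1,h}^I(x,0)=B_{const}=B_{1,h}^J(x,0)$, and in fact the only nonzero expansion coefficient of the $B_1$-component in \eqref{CDG1D:SolI}–\eqref{CDG1D:SolJ} is the cell-average one, equal to $B_{const}$.

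Next I would isolate the fifth scalar component of the semi-discrete systems \eqref{CDG1D:DisI}–\eqref{CDG1D:DisJ}. Because $\bm{F}_1$ contributes nothing here, both the volume (quadrature) terms and the interface boundary terms vanish, leaving
\[
	a_j^{(\ell)}\frac{\dd}{\dd t}B_{1,j}^{I,(\ell)}(t)=\frac{1}{\tau_{\rm max}\,\dx}\int_{I_j}\bigl(B_{1,h}^J-B_{1,h}^I\bigr)\,\Psi_j^{(\ell)}(x)\,\dd x,
\]
and symmetrically on the dual mesh. The key observation is that on the set where $B_{1,h}^I\equiv B_{const}\equiv B_{1,h}^J$ everywhere, the integrand $B_{1,h}^J-B_{1,h}^I$ vanishes pointwise — note that $B_{1,h}^J$ restricted to $I_j$ is simply the constant $B_{const}$ on each of its two sub-pieces — so $\tfrac{\dd}{\dd t}B_{1,j}^{I,(\ell)}=0$ for every $\ell$, and likewise on the dual mesh. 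In operator form, $\mathcal{L}(\bm{U})$ has vanishing fifth component whenever $B_{1,h}^I=B_{1,h}^J=B_{const}$.

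Finally I would close the argument by induction over time levels and over Runge–Kutta (or multistep) stages. A single forward-Euler substep $\bm{U}\mapsto\bm{U}+\dt\,\mathcal{L}(\bm{U})$ leaves $B_{1,h}^I=B_{1,h}^J=B_{const}$ invariant by the previous paragraph; since each stage of an SSP method is a convex combination of such substeps and a convex combination of the common value $B_{const}$ is again $B_{const}$, each stage — and hence the full update from $t_n$ to $t_{n+1}$ — preserves \eqref{eq:1DgDF}. I do not expect a genuine obstacle: the only points needing care are (i) checking that the fifth flux entry is truly zero in the $x_1$-direction and (ii) accounting for $B_{1,h}^J$ being only piecewise polynomial on $I_j$; both are immediate once written out, so the difficulty of this lemma is essentially bookkeeping rather than analysis.
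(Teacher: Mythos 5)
Your proposal is correct and follows essentially the same route as the paper: the paper's proof simply notes that the fifth component of $\bm{F}_1$ vanishes identically, so all degrees of freedom of $B_1$ on both meshes have zero time derivative, and the SSP update preserves the constant. Your version is in fact slightly more careful than the paper's terse argument, since you explicitly observe that the inter-mesh dissipation term $\tau_{\rm max}^{-1}\int_{I_j}(B_{1,h}^J-B_{1,h}^I)\Psi_j^{(\ell)}\,\dd x$ only vanishes because both solutions already equal $B_{const}$, which is precisely the invariance/induction step needed to make the claim rigorous.
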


\begin{proof}
	Since the fifth component of $\bm{F}_1(\bm{U})$ is identically zero, 
	the semi-discrete CDG schemes \eqref{CDG1D:DisI}--\eqref{CDG1D:DisJ} satisfy 
	$$
	\frac{\dd {B}_{1,j}^{I,(\ell)}(t)}{\dd t} = 0, \quad \frac{\dd {B}_{1,j+\frac12}^{J,(\ell)}(t)}{\dd t} = 0 \qquad \forall j,~ \ell=0,\cdots, k,
	$$
	where ${B}_{1,j}^{I,(\ell)}$ (resp.~${B}_{1,j+\frac12}^{J,(\ell)}$) is the fifth component of the vector $\bm{U}_j^{I,(\ell)}$ (resp.~$\bm{U}_{j+\halfone}^{J,(\ell)}$). 
	This implies \eqref{eq:1DgDF} and completes the proof. 
\end{proof}

\subsection{Rigorous BP Analysis of 1D CDG Method for RMHD}

Define the cell averages $\overline{\bm{U}}_j^I = \frac{1}{\dx}\int_{I_j} \bm{U}_h^I \dd x$ and $\overline{\bm{U}}_{j+\halfone}^J = \frac{1}{\dx}\int_{J_{j+\halfone}} \bm{U}_h^J \dd x$ over the cell $I_j$ and $J_{j+\halfone}$, respectively. 
Then we have $\overline{\bm{U}}_j^I=\bm{U}_j^{I,(0)}$ and $\overline{\bm{U}}_{j+\halfone}^J=\bm{U}_{j+\halfone}^{J,(0)}$ due to the orthogonality of the local basis. 
By taking $\ell=0$ in \eqref{CDG1D:DisI} and \eqref{CDG1D:DisJ}, we obtain the evolution equations of cell averages in the CDG method as follows: 
\begin{equation}\label{CDG1D:aveI}
	\begin{aligned}
		&\frac{\dd \overline{\bm{U}}_j^I}{\dd t} = \frac{\overline{\bm{U}}_j^J-\overline{\bm{U}}_j^I}{\tau_{\rm max}} - \frac{\bm{F}_1(\bm{U}^J_{j+\halfone}) - \bm{F}_1(\bm{U}^J_{j-\halfone})}{\dx} =: \mathcal{L}_j^I(\bm{U}_h^I, \bm{U}_h^J), 
	\end{aligned}
\end{equation}
\begin{equation}\label{CDG1D:aveJ}
	\begin{aligned}
		\frac{\dd \overline{\bm{U}}_{j+\halfone}^J}{\dd t} = \frac{\overline{\bm{U}}_{j+\halfone}^I-\overline{\bm{U}}_{j+\halfone}^J}{\tau_{\rm max}} - \frac{\bm{F}_1(\bm{U}^I_{j+1}) - \bm{F}_1(\bm{U}^I_{j})}{\dx}=: \mathcal{L}_{j+\halfone}^J(\bm{U}_h^J,\bm{U}_h^I),
	\end{aligned}
\end{equation}
where $\bm{U}^J_{j\pm\halfone} = \bm{U}_h^J(x_{j\pm\halfone},t)$ and $\bm{U}^I_{j+\halfone \pm \halfone} = \bm{U}_h^I(x_{j+\halfone \pm \halfone},t)$. For convenience, we will omit the temporal dependence $(t)$ from all quantities in our following discussions, provided that this does not lead to any confusion. 

Let $\{\widehat{x}_{j\pm\fourone}^\nu\}_{\nu=1}^L$ be the $L$-point Gauss--Lobatto quadrature nodes in  $[x_{j\pm\fourone}-\frac{\dx}{4}, x_{j\pm\fourone}+\frac{\dx}{4}]$ with $L=\lceil\frac{k+3}{2} \rceil$. Let $\{ \widehat{\omega}_{\nu} \}_{\nu=1}^{L}$ be the quadrature weights on the interval $[-\halfone,\halfone]$ such that $\sum_{\nu=1}^{L} \widehat{\omega}_{\nu} =1$. Then we have the following theorem. 
\begin{theorem}[Bound-Preserving Property]\label{thm:main1D}
	Assume that $\overline{\bm{U}}_{j}^I \in \mathcal{G}$ and $\overline{\bm{U}}_{j+\halfone}^J\in \mathcal{G}$ for all $j$. 
	If the numerical solutions $\bm{U}_h^I(x)$ and $\bm{U}_h^J(x)$ satisfy
	\begin{equation}\label{CDG1D:GL}
		\bm{U}_h^I(\widehat{x}_{j\pm\fourone}^\nu) \in \mathcal{G}, \quad	\bm{U}_h^J(\widehat{x}_{j\pm\fourone}^\nu)\in \mathcal{G} \qquad \forall j, ~ 1\leq \nu \leq L, 
	\end{equation}
	and the discrete DF constraint 
	\begin{equation}\label{CDG1D:B}
		(B_1)^J_{j\pm\halfone} = B_{const} = (B_1)^I_{j+\halfone\pm\halfone} \qquad \forall j, 
	\end{equation}
	then the updated cell averages satisfy
	\begin{equation}\label{CDG1D:Update}
		\overline{\bm{U}}_{j}^{I,\dt} := 
		\overline{\bm{U}}_{j}^I + \dt \mathcal{L}_{j}^I(\bm{U}_h^I,\bm{U}_h^J) \in \mathcal{G},
		\quad
		\overline{\bm{U}}_{j+\halfone}^{J,\dt} := 
		\overline{\bm{U}}_{j+\halfone}^J + \dt \mathcal{L}_{j+\halfone}^J(\bm{U}_h^J,\bm{U}_h^I) \in \mathcal{G}
		\quad \forall j 
	\end{equation}
	under the CFL condition
	\begin{equation}\label{CDG1D:CFLCond}
		0<\dt \le \frac{\theta \widehat{\omega}_1}{2} \dx
	\end{equation}
	with $\theta = \frac{\dt}{\tau_{\rm max}} \in (0,1]$.
	
\end{theorem}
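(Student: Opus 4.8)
\emph{Strategy.} The plan is to write each updated cell average in \eqref{CDG1D:Update} as a convex combination of states known to belong to $\mathcal{G}$ together with one auxiliary state satisfying the non-strict versions of the GQL constraints in \eqref{eq:Gs}, and then to conclude by the convexity of $\mathcal{G}$ (Lemma~\ref{Lemma:convex}) and its GQL characterization $\mathcal{G}=\mathcal{G}_*$ (Lemma~\ref{Lemma:Gstar}). The point of passing to $\mathcal{G}_*$ is that all its defining constraints are \emph{linear} in $\bm U$, so membership of a convex combination can be checked termwise and the flux contributions can be bounded by the GQL inequalities of Corollary~\ref{Coro:Flux} and Lemma~\ref{Lemma:LF}. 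It suffices to treat one forward-Euler update of the form \eqref{CDG1D:Update}, since every stage of an SSP time discretization is a convex combination of such updates.

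\emph{Cell-average decomposition and the CFL.} Writing $\theta=\dt/\tau_{\rm max}$, the primal update becomes $\overline{\bm U}_j^{I,\dt}=(1-\theta)\overline{\bm U}_j^I+\theta\,\overline{\bm U}_j^J-\tfrac{\dt}{\dx}\big(\bm F_1(\bm U^J_{j+\frac12})-\bm F_1(\bm U^J_{j-\frac12})\big)$. The cell $I_j$ is split by its center $x_j$ into the half-cells $[x_{j-\frac12},x_j]$ and $[x_j,x_{j+\frac12}]$, over each of which $\bm U_h^I$ and the relevant single-valued branch of $\bm U_h^J$ are polynomials of degree at most $k$; since the $L$-point Gauss--Lobatto rule with $L=\lceil\tfrac{k+3}{2}\rceil$ is exact for degree $2L-3\ge k$, I will rewrite $\overline{\bm U}_j^I$ and $\overline{\bm U}_j^J$ as convex combinations of their values at the nodes $\{\widehat x_{j\pm\frac14}^\nu\}_{\nu=1}^L$. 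The outer Gauss--Lobatto nodes of these half-cells are exactly the interfaces $x_{j\pm\frac12}$ (with the shared node $x_j$ in between), so the flux-evaluation points enter the decomposition of $\overline{\bm U}_j^J$ with weight $\widehat\omega_1/2$ each (in particular $\bm U^J_{j\pm\frac12}\in\mathcal{G}$ by \eqref{CDG1D:GL}). Splitting those two nodes off and grouping them with the flux difference gives the convex combination $\overline{\bm U}_j^{I,\dt}=(1-\theta)\overline{\bm U}_j^I+\theta(1-\widehat\omega_1)\bm R'+\theta\widehat\omega_1\bm G$, where $\bm R'\in\mathcal{G}$ is a convex combination of the remaining Gauss--Lobatto values of $\bm U_h^J$ (all in $\mathcal{G}$ by \eqref{CDG1D:GL}) and $\bm G=\tfrac12\big[(\bm U^J_{j-\frac12}+\lambda\bm F_1(\bm U^J_{j-\frac12}))+(\bm U^J_{j+\frac12}-\lambda\bm F_1(\bm U^J_{j+\frac12}))\big]$ with $\lambda=\tfrac{2\dt}{\theta\widehat\omega_1\dx}$; the CFL condition \eqref{CDG1D:CFLCond} is precisely $\lambda\le1$.

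\emph{The ``formally 1D'' term and the role of divergence-freeness.} It remains to verify $\bm G\cdot\bm n_1>0$ and $\bm G\cdot\bm n^*+p^*_m\ge0$ for all admissible auxiliary variables. The first follows from Corollary~\ref{Coro:Flux}, using $|\bm F_1(\bm U)\cdot\bm n_1|=|v_1|D<D=\bm U\cdot\bm n_1$ for $\bm U\in\mathcal{G}$ and $\lambda\le1$. For the second, I add the two instances of inequality \eqref{eq:keyieq} from Lemma~\ref{Lemma:LF} with $\ell=1$---one for $\bm U^J_{j-\frac12}$ with parameter $\theta=\lambda$, one for $\bm U^J_{j+\frac12}$ with parameter $\theta=-\lambda$ (both admissible since $\lambda\in[0,1]$): the $v_1^*p^*_m$ terms cancel identically, the remaining cross terms amount to $\lambda\big((B_1)^J_{j+\frac12}-(B_1)^J_{j-\frac12}\big)(\bm v^*\cdot\bm B^*)$, which vanishes by the discrete DF condition \eqref{CDG1D:B}, and what survives is exactly $2\,\bm G\cdot\bm n^*+2p^*_m\ge0$. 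Consequently $\overline{\bm U}_j^{I,\dt}$ is a convex combination of $\overline{\bm U}_j^I\in\mathcal{G}$, $\bm R'\in\mathcal{G}$ and $\bm G$, with a strictly positive weight on a term in $\mathcal{G}$ (namely $1-\theta$ if $\theta<1$, otherwise $\theta(1-\widehat\omega_1)>0$ since $\widehat\omega_1\le\tfrac12$); by linearity of the $\mathcal{G}_*$-constraints in $\bm U$ this yields $\overline{\bm U}_j^{I,\dt}\in\mathcal{G}_*=\mathcal{G}$. The dual-mesh update $\overline{\bm U}_{j+\frac12}^{J,\dt}$ is handled identically, now decomposing $\overline{\bm U}_{j+\frac12}^I$ over $J_{j+\frac12}$ split at its midpoint $x_{j+\frac12}$, whose outer Gauss--Lobatto nodes are the flux points $\bm U^I_j,\bm U^I_{j+1}$, and using \eqref{CDG1D:GL} together with $(B_1)^I_j=(B_1)^I_{j+1}=B_{const}$.

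\emph{Main obstacle.} The delicate points are (i) carefully tracking which single-valued polynomial branch is evaluated at each Gauss--Lobatto node---recall $\bm U_h^J$ jumps at $x_j$ and $\bm U_h^I$ at $x_{j+\frac12}$---so that every node appearing in the decomposition is covered by \eqref{CDG1D:GL}; and, more fundamentally, (ii) the cancellation in the third step: the term $B_\ell(\bm v^*\cdot\bm B^*)$ in Lemma~\ref{Lemma:LF} is precisely what ties the BP property to a discrete DF condition, and only the constancy of $B_1$ in one dimension---i.e.\ \eqref{CDG1D:B}---makes this obstruction disappear.
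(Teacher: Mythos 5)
Your proposal is correct and follows essentially the same route as the paper's proof: the same Gauss--Lobatto cell average decomposition over the two half-cells, the same use of the GQL representation to reduce to linear constraints, and the same cancellation of the $B_\ell(\bm v^*\cdot\bm B^*)$ term via the discrete DF condition. The only cosmetic difference is that you package the interface states and the flux difference into a single Lax--Friedrichs-type state $\bm G$ and invoke Lemma~\ref{Lemma:LF} with $\theta=\pm\lambda$, whereas the paper bounds the flux difference separately via \Cref{Coro:Flux2} (i.e., Lemma~\ref{Lemma:LF} at $\theta=\pm1$) and absorbs the CFL condition into the coefficient $\frac{\theta\widehat\omega_1}{2}-\frac{\dt}{\dx}\ge 0$.
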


\begin{proof}
	We only present the proof of $\overline{\bm{U}}_{j}^{I,\dt} \in \mathcal{G}$, as the proof of $\overline{\bm{U}}_{j+\halfone}^{J,\dt} \in \mathcal{G}$ is analogous and thus omitted. 
	Note that the $L$-point Gauss--Lobatto quadrature rule with $L=\lceil\frac{k+3}{2} \rceil$ is exact for all polynomials of degree not exceeding $k$. 
	This implies the following 1D CAD: 
\begin{align} \label{eq:1D}
			\overline{\bm{U}}_{j}^{J} &= \sum_{\nu=1}^{L} \frac{\widehat{\omega}_\nu}{2} \left(\bm{U}_h^J(\widehat{x}_{j-\fourone}^\nu) + \bm{U}_h^J(\widehat{x}_{j+\fourone}^\nu)\right) 
		= \frac{\widehat{\omega}_1}{2}\left(\bm{U}_{j-\halfone}^J + \bm{U}_{j+\halfone}^J\right) + {\bm \Pi}_{j}^J , %\in \mathcal{G},
		\\ \nonumber
		{\bm \Pi}_j^J &:= \sum_{\nu=2}^{L} \frac{\widehat{\omega}_\nu}{2}\bm{U}_h^J(\widehat{x}_{j-\fourone}^\nu) + \sum_{\nu=1}^{L-1} \frac{\widehat{\omega}_\nu}{2}\bm{U}_h^J(\widehat{x}_{j+\fourone}^\nu),
\end{align}
	where we have used $\bm{U}_{j-\halfone}^J = \bm{U}_h^J(\widehat{x}_{j-\fourone}^1)$, $\bm{U}_{j+\halfone}^J = \bm{U}_h^J(\widehat{x}_{j+\fourone}^L)$, and $\widehat{\omega}_1 = \widehat{\omega}_L$. 
	Then $\overline{\bm{U}}_{j}^{I,\dt}$ can be rewritten as
	\begin{equation}\label{WKL1}
		\begin{aligned}
			\overline{\bm{U}}_{j}^{I,\dt} &= (1-\theta)\overline{\bm{U}}_{j}^{I} + \theta \overline{\bm{U}}_{j}^{J} - \frac{\dt}{\dx}\left(\bm{F}_1(\bm{U}^J_{j+\halfone}) - \bm{F}_1(\bm{U}^J_{j-\halfone})\right) \\
			&= (1-\theta)\overline{\bm{U}}_{j}^{I} + \theta {\bm \Pi}_{j}^J
			+ \frac{\theta \widehat{\omega}_1}{2}\left(\bm{U}_{j-\halfone}^J + \bm{U}_{j+\halfone}^J\right) 
			+\frac{\dt}{\dx} {\bm \Pi}_{\bm{F}} 
		\end{aligned}
	\end{equation}
	with
	$$
	{\bm \Pi}_{\bm{F}} :=-\left(
	\bm{F}_1(\bm{U}^J_{j+\halfone}) - \bm{F}_1(\bm{U}^J_{j-\halfone}) \right).
	$$		
	Under the condition \eqref{CDG1D:GL}, we have 
	\begin{equation}\label{pf:CDG1Dcon1}
		\overline{\bm{U}}_{j}^{I} \in \mathcal{G}, \quad
		\bm{U}_{j\pm\halfone}^J \in \mathcal{G}, \quad 
		\frac{1}{1-\widehat{\omega}_1}{\bm \Pi}_{j}^J \in \mathcal{G}.
	\end{equation}
	We first prove that $\overline{\bm{U}}_{j}^{I,\dt} \cdot \bm{n}_1 >0$. Using \eqref{eq:GQLCor2_1} in \Cref{Coro:Flux2} gives 
	$$
	{\bm \Pi}_{\bm{F}} \cdot \bm{n}_1  \geq  -\left( \bm{U}_{j-\halfone}^J + \bm{U}_{j+\halfone}^J\right) \cdot \bm{n}_1,
	$$
	which, along with \eqref{WKL1}, implies 
	\begin{equation*}
		\begin{aligned}
			\overline{\bm{U}}_{j}^{I,\dt} \cdot \bm{n}_1 
			& \geq (1-\theta)\overline{\bm{U}}_{j}^{I} \cdot \bm{n}_1  + \theta {\bm \Pi}_{j}^J \cdot \bm{n}_1 + \frac{\theta \widehat{\omega}_1}{2}\left(\bm{U}_{j-\halfone}^J + \bm{U}_{j+\halfone}^J\right) \cdot \bm{n}_1 - \frac{\dt}{\dx} \left( \bm{U}_{j-\halfone}^J + \bm{U}_{j+\halfone}^J\right) \cdot \bm{n}_1 \\
			& > \left(\frac{\theta \widehat{\omega}_1}{2} - \frac{\dt}{\dx} \right) \left( \bm{U}_{j-\halfone}^J + \bm{U}_{j+\halfone}^J\right) \cdot \bm{n}_1 
			\ge 0,
		\end{aligned}
	\end{equation*}
	where the last step follows from the CFL constraint \eqref{CDG1D:CFLCond} and $\bm{U}_{j\pm \halfone}^J \cdot \bm{n}_1 >0$ according to $\bm{U}_{j\pm\halfone}^J \in \mathcal{G}$ and the GQL representation \eqref{eq:Gs}. 
	We then prove that $\overline{\bm{U}}_{j}^{I,\dt} \cdot \bm{n}^* + p^*_m>0$ for any free auxiliary variables $\bm{v}^* \in \mathbb{B}_1(\bm 0)$ and $\bm{B}^*\in \mathbb{R}^3$. 
	Using \eqref{eq:GQLCor2_2} in \Cref{Coro:Flux2} gives 
	\begin{equation*}
		\begin{aligned}
		 {\bm \Pi}_{\bm{F}} \cdot \bm{n}^* 
			%			&= -\left(\bm{F}_1(\bm{U}^J_{j+\halfone}) - \bm{F}_1(\bm{U}^J_{j-\halfone}) \right) \cdot \bm{n}^*  \\
			&\geq - \left(  (\bm{U}_{j-\halfone}^J + \bm{U}_{j+\halfone}^J) \cdot \bm{n}^* + 2 p^*_m \right)-\left((B_1)_{j+\halfone}^J - (B_1)_{j-\halfone}^J\right) (\bm{v}^*\cdot \bm{B}^*) \\ 
			&=- \left( (\bm{U}_{j-\halfone}^J + \bm{U}_{j+\halfone}^J) \cdot \bm{n}^* + 2 p^*_m \right),
		\end{aligned}
	\end{equation*}
	where the equality follows from the discrete DF constraint \eqref{CDG1D:B}. 
	Therefore, 
	\begin{equation*}
		\begin{aligned}
			\overline{\bm{U}}_{j}^{I,\dt} \cdot \bm{n}^* + p^*_m
			&=	(1-\theta)(\overline{\bm{U}}_{j}^{I} \cdot \bm{n}^* + p^*_m)  + \theta ({\bm \Pi}_{j}^J \cdot \bm{n}^* + (1-\widehat{\omega}_1)p^*_m ) \\
			& ~~~~~~ + \frac{\theta \widehat{\omega}_1}{2}\left((\bm{U}_{j-\halfone}^J + \bm{U}_{j+\halfone}^J) \cdot \bm{n}^* + 2 p^*_m\right) + \frac{\dt}{\dx}  {\bm \Pi}_{\bm{F}} \cdot \bm{n}^*  \\
			&> \left(\frac{\theta \widehat{\omega}_1}{2} - \frac{\dt}{\dx}\right)\left((\bm{U}_{j-\halfone}^J + \bm{U}_{j+\halfone}^J) \cdot \bm{n}^* + 2p^*_m\right)
			\ge 0,
		\end{aligned}
	\end{equation*}
	where we have invoked the CFL condition \eqref{CDG1D:CFLCond} and the fact that
	$\bm{U}_{j\pm \halfone}^J \cdot \bm{n}^* + p^*_m >0$ due to $\bm{U}_{j\pm\halfone}^J \in \mathcal{G}$ and the GQL representation \eqref{eq:Gs}. 
	Thanks to \Cref{Lemma:Gstar}, we obtain $\overline{\bm{U}}_{j}^{I,\dt}\in \mathcal{G}_{*}=\mathcal{G}$. 
	Similar arguments yield  $\overline{\bm{U}}_{j+\frac12}^{J,\dt}\in \mathcal{G}$. 
	The proof is completed.	
\end{proof}

\begin{remark}
	\Cref{thm:main1D} provides the sufficient conditions \eqref{CDG1D:GL} and \eqref{CDG1D:B} under which the 1D CDG schemes maintain the 
	BP property of the updated cell averages, when the forward Euler method is employed for time discretization. 
	Furthermore, the BP property remains true in the 1D CDG schemes utilizing high-order SSP time discretization, as an SSP method can be formulated as a convex combination of the forward Euler method. 
\end{remark}

\begin{remark}
	\Cref{thm:main1D} indicates that the BP property of the 1D CDG schemes is linked to the discrete DF condition \eqref{CDG1D:B}. According to \Cref{lem:1DDF}, the 1D CDG schemes inherently satisfy the exact 1D DF property
	 \eqref{eq:1DgDF}. This implies that the discrete DF condition \eqref{CDG1D:B} is naturally met in the 1D case, rendering it a trivial aspect. However, as we will see, the corresponding discrete DF condition in the two-dimensional (2D) case presents notable differences and is fairly nontrivial. 
\end{remark}

%Contrary to the  
%1D discrete DF condition \eqref{CDG1D:B}, %A local scaling BP limiter is typically needed to enforce this condition \eqref{CDG1D:GL}.

\subsection{1D BP Limiter}%\label{1DBPLmt}
The BP condition \eqref{CDG1D:GL} may not be inherently fulfilled by the CDG solutions. To ensure compliance with this BP condition \eqref{CDG1D:GL}, the implementation of a local scaling BP limiter is often required. Such types of BP limiters were originally proposed in \cite{zhang2010} for scalar conservation laws and have been generalized to the compressible Euler equations \cite{zhang2010b}, relativistic hydrodynamics \cite{QinShu2016,WuTang2017ApJS,Wu2017}, and RMHD \cite{WuTangM3AS,WuShu2020NumMath}.

As discussed in Section \ref{sec:2.1}, the two implicit functions $p(\bm{U})$ and $\bm{v}(\bm{U})$, which are involved in $\mathcal{G}$, are highly nonlinear and cannot be explicitly formulated. Consequently, determining whether a given state $\bm U$ is in $\mathcal G$ and enforcing the BP condition \eqref{CDG1D:GL} are both difficult. This complexity significantly complicates the design of the BP limiter for RMHD.

To construct the BP limiter, we invoke the following equivalent form of the set $\mathcal{G}$, which was proven in \cite{WuTangM3AS} for the ideal EOS and extended to a general EOS in \cite{WuTangZAMP}.

\begin{lemma}\label{Lemma:G0}
	The admissible state set $\mathcal{G}$ has the following explicit equivalent representation:  
	\begin{equation}\label{eq:G0}
		{\mathcal{G}} = \left\{
		\bm{U}=(D, \bm{m}, \bm{B}, E)^\top:~ D>0, ~  q(\bm{U}) >0, ~ \Phi(\bm{U}) > 0 \right\},
	\end{equation}
	where 
	\begin{align*}
		q(\bm{U}) &:=E-\sqrt{D^2+|\bm{m}|^2},
		\\
		\Phi(\bm{U}) &:=\left(\phi(\bm{U})+2E-2|\bm{B}|^2\right)\sqrt{\phi(\bm{U})+|\bm{B}|^2-E}-\sqrt{\frac{27}{2}\left(D^2|\bm{B}|^2+(\bm{m}\cdot \bm{B})^2\right)}
	\end{align*}
	with $\phi(\bm{U}):=\sqrt{(|\bm{B}|^2-E)^2+3(E^2-D^2-|\bm{m}|^2)}$.
\end{lemma}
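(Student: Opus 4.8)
The plan is to prove the two set inclusions between $\mathcal G$ and the explicit set on the right-hand side of \eqref{eq:G0}. Throughout, for a state with primitive variables $(\rho,\bm v,\bm B,p)$ set $\xi:=\rho hW^2$, and record the identities that follow directly from \eqref{eq:W2U}: $\bm m=(\xi+|\bm B|^2)\bm v-(\bm v\cdot\bm B)\bm B$, $\bm m\cdot\bm B=\xi\,(\bm v\cdot\bm B)$, $E=\xi-p-p_m+|\bm B|^2$, together with the resulting factorization $f_\Omega(\xi)=\xi^2(\xi+|\bm B|^2)^2(1-|\bm v|^2)$, which shows that $W(\xi)$ in \eqref{eq:Wxi} coincides with the Lorentz factor $W$ and that $\xi\in\Omega_f$ whenever $|\bm v|<1$.

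For the inclusion $\mathcal G\subseteq\{D>0,\;q(\bm U)>0,\;\Phi(\bm U)>0\}$, assume $\rho>0$, $p>0$, $|\bm v|<1$. Then $D=\rho W\ge\rho>0$ at once. For $q(\bm U)>0$ I would expand $E^2-D^2-|\bm m|^2$ with the identities above; a careful but elementary computation isolates the non-magnetic contribution $W^2\rho^2\bigl(h^2-1-2hp/\rho\bigr)+p^2$, which is $\ge p^2>0$ by the relativistic kinetic-theory inequality \eqref{EOS:heq1} rewritten as $\rho^2h^2\ge\rho^2+2\rho hp$, plus remaining magnetic terms that one checks are nonnegative; combined with $E=\rho hW^2-p+(|\bm B|^2-p_m)\ge\rho h-p=\rho(1+e)>0$ (using $W\ge1$ and $p_m\le\tfrac12|\bm B|^2$) this yields $E>\sqrt{D^2+|\bm m|^2}$. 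Proving $\Phi(\bm U)>0$ is heavier: substituting the same expressions turns the energy relation into a polynomial identity which, after eliminating $\bm v$ in favour of $W^2$ (equivalently $\xi$), reduces to a cubic whose physically relevant root is forced into a prescribed window exactly when $p>0$; here $\phi(\bm U)$ is the square root of an intermediate discriminant and $\Phi(\bm U)$ the Cardano-type discriminant of that cubic, the constant $\sqrt{27/2}$ being its signature, so $p>0$ forces $\Phi(\bm U)>0$.

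For the reverse inclusion, suppose $D>0$, $q(\bm U)>0$, $\Phi(\bm U)>0$; one must produce a physical preimage. Parametrize candidate preimages by $\xi>0$: the identities force $|\bm v|^2=\dfrac{\xi^2|\bm m|^2+(2\xi+|\bm B|^2)(\bm m\cdot\bm B)^2}{\xi^2(\xi+|\bm B|^2)^2}$, hence $W=W(\xi)$, $\rho=D/W(\xi)$, $h=\xi/(D\,W(\xi))$ and, from the energy relation, $p=G(\xi):=\xi-\tfrac12\bigl(|\bm B|^2W(\xi)^{-2}+(\bm m\cdot\bm B)^2\xi^{-2}\bigr)+|\bm B|^2-E$, while EOS consistency is $\mathcal F_{\bm U}(\xi)=0$. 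Thus $\bm U\in\mathcal G$ iff some $\xi$ in the component of $\Omega_f$ containing $+\infty$ satisfies $\mathcal F_{\bm U}(\xi)=0$ and $G(\xi)>0$. On that component $\mathcal F_{\bm U}$ is strictly increasing (an EOS estimate of the type behind the monotonicity recalled in \Cref{sec:2.1}, now valid without assuming $\bm U\in\mathcal G$) with $\mathcal F_{\bm U}(\xi)\to+\infty$ as $\xi\to+\infty$; hence it has a unique root there exactly when $\mathcal F_{\bm U}$ is negative at the left endpoint of that component, which is what $q(\bm U)>0$ guarantees, and $\Phi(\bm U)>0$ is precisely the condition that this root lies in the $p>0$ window identified above (equivalently $G>0$ there). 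An alternative, essentially equivalent route starts from the GQL representation \eqref{eq:Gs}: minimizing $\bm U\cdot\bm n^*+p^*_m$ over $\bm B^*\in\mathbb R^3$ is a quadratic solved by Sherman--Morrison (the correction simplifying because $(1-|\bm v^*|^2)+|\bm v^*|^2=1$), and the subsequent minimization over $\bm v^*\in\mathbb B_1(\bm 0)$ reproduces the scalar constraints $q(\bm U)>0$ and $\Phi(\bm U)>0$.

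The main obstacle is the analysis behind $\Phi(\bm U)>0$: unlike $q$, it resists a one-line estimate and demands a careful reduction of the coupled algebraic relations to a single cubic together with a precise account of when its admissible root sits inside the admissible window --- and all of this for a \emph{general} EOS obeying \eqref{EOS:heq1}--\eqref{EOS:heq2}, not just the ideal law. The delicate point is using \eqref{EOS:heq2} to guarantee the monotonicity of $h$ in $p$, which is what converts ``$p>0$'' into a clean threshold on $\xi$ and thence into $\Phi(\bm U)>0$; this is exactly where \cite{WuTangZAMP} extends \cite{WuTangM3AS}.
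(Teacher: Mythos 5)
First, a point of comparison: the paper does not actually prove \Cref{Lemma:G0} — it is imported verbatim, with the proof attributed to \cite{WuTangM3AS} for the ideal EOS and \cite{WuTangZAMP} for a general EOS satisfying \eqref{EOS:heq1}--\eqref{EOS:heq2}. So there is no in-paper argument to match; your proposal has to stand on its own, and it is best read as a reconstruction of the strategy of those references.

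Judged that way, the skeleton is right but the proof is not there. Your preparatory identities are correct: $\bm m\cdot\bm B=\xi(\bm v\cdot\bm B)$, the factorization $f_{\Omega}(\xi)=\xi^2(\xi+|\bm B|^2)^2(1-|\bm v|^2)$ (so $W(\xi)$ in \eqref{eq:Wxi} is the Lorentz factor), and the non-magnetic part $W^2\rho^2(h^2-1-2hp/\rho)+p^2$ of $E^2-D^2-|\bm m|^2$, whose positivity does follow from \eqref{EOS:heq1}. But every step that carries the actual weight of the lemma is asserted rather than derived. (i) The ``remaining magnetic terms'' in $E^2-D^2-|\bm m|^2$ are not obviously termwise nonnegative; establishing this requires a nontrivial completion of squares, not a parenthetical ``one checks.'' (ii) The entire $\Phi$ analysis — which cubic you obtain, why its admissible root lies in the $p>0$ window precisely when $\left(\phi+2E-2|\bm B|^2\right)\sqrt{\phi+|\bm B|^2-E}>\sqrt{\tfrac{27}{2}\left(D^2|\bm B|^2+(\bm m\cdot\bm B)^2\right)}$, and why this works for a general EOS rather than only the ideal one — is the mathematical content of the lemma, and you explicitly defer it. (iii) In the reverse inclusion you invoke strict monotonicity of ${\mathcal F}_{\bm U}$ on the unbounded component of $\Omega_f$ ``without assuming $\bm U\in\mathcal G$''; the paper records this monotonicity only for $\bm U\in\mathcal G$, and proving it under the hypotheses $D>0$, $q(\bm U)>0$, $\Phi(\bm U)>0$ alone is exactly where \eqref{EOS:heq2} must be deployed — this is a genuine argument, not a remark. (iv) The claim that $q(\bm U)>0$ is precisely the condition for ${\mathcal F}_{\bm U}$ to be negative at the left endpoint of that component is likewise unproven. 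You correctly identify (ii) as the main obstacle, but naming the obstacle is not surmounting it: as written, the proposal is a plausible roadmap for the proof in \cite{WuTangM3AS,WuTangZAMP}, not a proof.
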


	To prevent the influence of round-off errors on the BP property, we define
	\begin{equation}\label{eq:Geps}
		{\mathcal{G}}_\varepsilon = \left\{
		\bm{U}=(D, \bm{m}, \bm{B}, E)^\top:~ D\geq \varepsilon_D, ~  q(\bm{U}) \geq\varepsilon_q, ~ \Phi_\varepsilon(\bm{U}) \geq 0 \right\},
	\end{equation}
	where $\Phi_\varepsilon(\bm{U}):=\Phi(\bm{U}_\varepsilon)$ with  $\bm{U}_\varepsilon:=(D,\bm{m},\bm{B},E-\varepsilon)^\top$; $\varepsilon_D$, $\varepsilon_q$, and $\varepsilon$ are small positive numbers and will be specified later. 
	It was shown in \cite{WuTangM3AS} that ${\mathcal{G}}_\varepsilon$ is also a convex set and ${\mathcal{G}}_\varepsilon \subset {\mathcal{G}}=\mathcal{G}_*$.

	We now design a BP limiter to enforce the following conditions: 
	\begin{equation}\label{eq:1DBPcond}
		\bm{U}^I_{h}(x)\in {\mathcal{G}}_\varepsilon ,~~ \bm{U}^J_{h}(x) \in {\mathcal{G}}_\varepsilon \qquad \forall x\in 	\mathcal{S}_j=\widehat{\mathcal{Q}}_j \cup \mathcal{Q}_j, \quad \forall j, 
	\end{equation}	
	where
	$$
	\widehat{\mathcal{Q}}_j = \left\{\widehat{x}_{j-\fourone}^\nu\right\}_{\nu=0}^L \cup
	\left\{\widehat{x}_{j+\fourone}^\nu\right\}_{\nu=0}^L, \quad
	\mathcal{Q}_j = \left\{{x}_{j-\fourone}^\mu\right\}_{\mu=0}^Q \cup \left\{{x}_{j+\fourone}^\mu\right\}_{\mu=1}^{Q}.
	$$
	Our BP limiter is independently performed in each primal and dual mesh cell. 
	For convenience, we detail only the implementation for $\bm{U}^I_h(x)$,  as the implementation for  $\bm{U}^J_{h}(x)$ is identical. 
	Define $\bm{U}^I_j(x):=(D_j(x),\bm{m}_j(x),\bm{B}_j(x),E_j(x))^\top$ and $\overline{\bm{U}}_j^I:=(\overline{D}_j, \overline{\bm{m}}_j, \overline{\bm{B}}_j, \overline{E}_j)^\top$. 
	For each cell $I_{j}$, we modify the polynomial solution $\bm{U}^I_j(x)$ to $\widetilde{\bm{U}}^I_j(x)$ such that $\widetilde{\bm{U}}^I_j(x)\in \mathcal{G}_\varepsilon$ for all $x\in\mathcal{S}_j$, via the following three steps:
	\begin{description}
		\item[Step 1] 	First, enforce the mass density $D\geq\varepsilon_D$ with $\varepsilon_D = \min\{ 10^{-13}, \overline{D}_j \}$. 
		We modify $D_j(x)$ as 
		\begin{equation*}\label{BP:D}
			\widehat{D}_j(x) = \theta_1(D_j(x)-\overline{D}_j) + \overline{D}_j, 
		\end{equation*}
		where 
		$\theta_1 = \min\left\{\frac{\overline{D}_j-\varepsilon_D}{\overline{D}_j-D_{\rm min}},1 \right\}$ and 
		$D_{\rm min} = \underset{x\in\mathcal{S}_j}{\min} D_j(x).$
		Note that ${D}_j(x)$ is limited only when $D_{\rm min} < \varepsilon_D$.
		\item[Step 2] 	Next, enforce $q(\bm{U})\geq\varepsilon_q$ with $\varepsilon_q = \min\{ 10^{-13}, q (\overline{\bm{U}}_j^I) \}$. Denote $\widehat{\bm{U}}_j(x):=(\widehat{D}_j(x),\bm{m}_j(x),\bm{B}_j(x),E_j(x))^\top$. We modify 
		$\widehat{\bm{U}}_j(x)$ as 
		\begin{equation*}\label{BP:qU}
			\check{\bm{U}}_j(x) = (\theta_2(\widehat{D}_j(x)-\overline{D}_j) + \overline{D}_j, \theta_2(\widehat{\bm{m}}_j(x)-\overline{\bm{m}}_j) + \overline{\bm{m}}_j,
			\widehat{\bm{B}}_j(x) ,
			\theta_2({E}_j(x)-\overline{E}_j) + \overline{E}_j)
		\end{equation*}
		with	
		$
		\theta_2 = \min\left\{\frac{q(\overline{\bm{U}}_j)-\varepsilon_q}{q(\overline{\bm{U}}_j)-q_{\rm min}},1 \right\}$ and $  
		q_{\rm min} = \underset{x\in\mathcal{S}_j}{\min} q(\widehat{\bm{U}}_j(x)).
		$
		\item[Step 3] 	Finally, enforce $\Phi_\varepsilon(\bm{U}) \ge 0$ with $\varepsilon = 10^{-13} \overline E_j$. We modify 
		$\check{\bm{U}}_j(x)$ as 
		\begin{equation}\label{BP:phiU}
			\widetilde{\bm{U}}^I_j(x) = \theta_3(\check{\bm{U}}_j(x)-\overline{\bm{U}}_j^I) + \overline{\bm{U}}_j^I,
		\end{equation}
		where  
		$
		\theta_3 = \underset{x\in\mathcal{S}_j}{\min} \tilde{\theta}(x), 
		$ 
		and the function $\tilde{\theta}(x)$ is defined for all $x\in \mathcal{S}_{j}$ as follows:
		\begin{equation*} 
			\tilde{\theta}(x) :=	\begin{cases}
				1, \quad & {\rm if}\quad \Phi_\varepsilon(\check{\bm{U}}_j(x)) \ge 0,
				\\
				\tilde {\theta} \in [0,1) \mbox{ is the unique root of } \Phi_\varepsilon\left({	\tilde{\theta}}(\check{\bm{U}}_j(x)-\overline{\bm{U}}_j^I)+\overline{\bm{U}}_j^I\right), \quad & \mbox{otherwise}. 
			\end{cases}
		\end{equation*}
		The uniqueness of this root in $[0,1)$ is guaranteed by the convexity of ${\mathcal{G}}_\varepsilon$.
	\end{description}

\begin{remark}
	The above BP limiter maintains the discrete DF condition \eqref{CDG1D:B}. 
	Furthermore, such type of limiters  
	may not destroy the high-order accuracy, as demonstrated by \Cref{Ex:Smooth}; some theoretical justification for this can also be found in the references  \cite{zhang2010,zhang2011b,ZHANG2017301}. 
\end{remark}

\begin{remark}
	After applying the BP limiter to the CDG solutions ${\bm{U}}_h^I(x)$ and ${\bm{U}}^J_{h}(x)$, the values of the limited solutions $\widetilde{\bm{U}}_h^I(x)$ and $\widetilde{\bm{U}}^J_{h}(x)$ at all points in $\cup_j \mathcal{S}_j$ belong to $\mathcal{G}$. 
	This ensures the BP condition \eqref{CDG1D:GL} in \Cref{thm:main1D}. 
	Consequently, by 
	 utilizing these limited polynomials $\widetilde{\bm{U}}_j^I(x)$ and $\widetilde{\bm{U}}^J_{j+\halfone}(x)$ to respectively replace ${\bm{U}}_j^I(x)$ and ${\bm{U}}^J_{j+\halfone}(x)$ at each Runge--Kutta stage \eqref{RKTime}, we obtain the fully discrete CDG schemes that are both high-order accurate and satisfy the BP property under the CFL condition \eqref{CDG1D:CFLCond}.
\end{remark}

\begin{remark}\label{rem:1Dmorepoints}
	It is crucial to emphasize that our BP limiter includes the Gauss points $\mathcal{Q}_j$ within $\mathcal{S}_j$ to ensure adherence to the BP condition at these points. Although these Gauss points are not explicitly required in the BP condition \eqref{CDG1D:GL} of \Cref{thm:main1D}, their inclusion is critical for the robust recovery of primitive variables at these locations.
	This necessity arises because the flux must be evaluated at these points, as indicated in \eqref{CDG1D:DisI} and \eqref{CDG1D:DisJ}.
	As we have discussed, for the RMHD system, the flux cannot be explicitly derived from the conserved variables. Consequently, it becomes necessary to first recover the primitive variables from the conserved ones before computing the flux. 
	The success of this recovery process hinges on ensuring that the conserved variables at the Gauss points are within the admissible state set $\mathcal G$.
	This specific requirement is not necessary in the non-relativistic MHD case.
\end{remark}

\section{2D Provably BP and Locally DF CDG Schemes}\label{2DBPCDG}

In this section, we propose 2D provably BP schemes with a suitable locally DF CDG finite element discretization for the modified symmetrizable form \eqref{eq:ModRMHD} of the RMHD equations. Our theoretical analysis will demonstrate that our locally DF approximations and proper discretization of the symmetrization source terms are crucial in achieving the BP property. While we focus exclusively on the 2D case, the proposed schemes and numerical analysis can be directly extended to the three-dimensional scenarios.

Let $(x, y)$ represent the 2D spatial coordinate variables. Assume that the 2D computational domain is uniformly partitioned into the primal mesh cells ${ I_{ij} = (x_{i-\frac{1}{2}}, x_{i+\frac{1}{2}}) \times (y_{j-\frac{1}{2}}, y_{j+\frac{1}{2}}) }$, where $\dx$ and $\dy$ are the constant spatial step-sizes in the $x$ and $y$ directions, respectively. Let $x_i = \frac{1}{2}(x_{i-\frac{1}{2}} + x_{i+\frac{1}{2}})$ and $y_j = \frac{1}{2}(y_{j-\frac{1}{2}} + y_{j+\frac{1}{2}})$. Then the dual mesh cells are defined as ${ J_{i+\frac{1}{2},j+\frac{1}{2}} = (x_{i}, x_{i+1}) \times (y_{j}, y_{j+1}) }$.

\subsection{2D Locally DF CDG Schemes for Modified RMHD Equations}

First, we introduce the locally DF finite element spaces \cite{Li2005,Yakovlev2013}, which are defined as 
\begin{gather*}
	\mathbb{V}_h^{I,k} = \left \{\bm{w}=(w_1,\cdots,w_8)^\top \in [L^2(\Omega)]^8: w_\ell|_{I_{ij}} \in \mathbb{P}^k(I_{ij}), ~~ (w_5,w_6)^\top \in \mathbb{B}^k(I_{ij}) ~~\forall \ell, i , j\right \}, \\
	\mathbb{V}_h^{J,k} = \left\{\bm{u}=(u_1,\cdots,u_8)^\top\in [L^2(\Omega)]^8: u_\ell|_{J_{i+\halfone,j+\halfone}} \in \mathbb{P}^k(J_{i+\halfone,j+\halfone}), ~~ (u_5,u_6)^\top \in \mathbb{B}^k(I_{i+\halfone,j+\halfone}) ~~\forall \ell, i, j \right\},
\end{gather*}
where $\mathbb{P}^k(I_{ij})$ and $\mathbb{P}^k(I_{i+\halfone,j+\halfone})$ are the polynomial of the degree up to $k$ over $I_{ij}$ and $I_{i+\halfone,j+\halfone}$, respectively; the DF polynomial spaces  $\mathbb{B}^k(I_{ij})$ and $\mathbb{B}^k(I_{i+\halfone,j+\halfone})$ are given by 
\begin{gather*}
	\mathbb{B}^k(I_{ij}) = \left\{({w}_5,{w}_6)^\top \in \left[\mathbb{P}^k(I_{ij}) \right]^2: \frac{\partial {w}_5}{\partial x} + \frac{\partial {w}_6}{\partial y}=0 ~~ \forall (x,y)\in I_{ij}\right\},\\
	\mathbb{B}^k(I_{i+\halfone,j+\halfone})=\left\{({u}_5,{u}_6)^\top\in \left[\mathbb{P}^k(I_{i+\halfone,j+\halfone})\right]^2: \frac{\partial {u}_5}{\partial x} + \frac{\partial {u}_6}{\partial y}=0 ~~ \forall (x,y)\in I_{i+\halfone,j+\halfone}\right\}.
\end{gather*}
Different from the locally DF schemes in \cite{Li2005,Yakovlev2013} for the conservative non-relativistic MHD system, 
our locally DF CDG schemes for the 2D modified symmetrizable RMHD equations \eqref{eq:ModRMHD} are defined as follows: find the numerical solutions $\bm{U}_h^I(x,y,t) \in \mathbb{V}_h^{I,k}$ and $\bm{U}_h^J(x,y,t) \in \mathbb{V}_h^{J,k}$ such that for any test functions $\bm{w}(x,y) \in \mathbb{V}_h^{I,k}$ and $\bm{u}(x,y) \in \mathbb{V}_h^{J,k}$, 
\begin{align}
	\int_{I_{ij}} \partial_t(\bm{U}_h^I) \cdot \bm{w} \dd x \dd y 
	=&  \frac{1}{\tau_{\rm max}} \int_{I_{ij}} (\bm{U}_h^J - \bm{U}_h^I)\cdot \bm{w} \dd x \dd y +\int_{I_{ij}} \bm{F}(\bm{U}_h^J) \cdot \nabla \bm{w} \dd x \dd y   \nonumber\\
	&- \int_{y_{j-\halfone}}^{y_{j+\halfone}} \left(\bm{F}_1(\bm{U}_h^J(x_{i+\halfone},y)) \cdot \bm{w}(x_{i+\halfone}^-,y) - \bm{F}_1(\bm{U}_h^J(x_{i-\halfone},y)) \cdot \bm{w}(x_{i-\halfone}^+,y)\right) \dd y    \nonumber\\
	& - \int_{x_{i-\halfone}}^{x_{i+\halfone}} \left(\bm{F}_2(\bm{U}_h^J(x,y_{j+\halfone})) \cdot \bm{w}(x,y_{j+\halfone}^-) - \bm{F}_2(\bm{U}_h^J(x,y_{j-\halfone})) \cdot \bm{w}(x,y_{j-\halfone}^+)\right) \dd x \nonumber\\
	& + \int_{y_{j-\halfone}}^{y_{j+\halfone}} (-\jump{B_{1,h}^J(x_i,y)}) \bm{S}\left(\average{\bm{U}_h^J(x_i,y)}\right) \cdot \bm{w}(x_i,y) \dd y \nonumber\\
	& + \int_{x_{i-\halfone}}^{x_{i+\halfone}}  (-\jump{B_{2,h}^J(x,y_j)}) \bm{S}\left(\average{\bm{U}_h^J(x,y_j)}\right) \cdot \bm{w}(x,y_j) \dd x, \label{CDG2D:IntgI}
\end{align}
\begin{align}
	\int_{I_{i+\halfone,j+\halfone}} \partial_t(\bm{U}_h^J) \cdot \bm{u} \dd x \dd y 
	=&  \frac{1}{\tau_{\rm max}} \int_{I_{i+\halfone,j+\halfone}} (\bm{U}_h^I - \bm{U}_h^J)\cdot \bm{u} \dd x \dd y+\int_{I_{i+\halfone,j+\halfone}} \bm{F}(\bm{U}_h^I) \cdot \nabla \bm{u} \dd x \dd y   \nonumber\\
	&- \int_{y_{j}}^{y_{j+1}} \left(\bm{F}_1(\bm{U}_h^I(x_{i+1},y)) \cdot \bm{w}(x_{i+1}^-,y) - \bm{F}_1(\bm{U}_h^I(x_{i},y)) \cdot \bm{w}(x_{i}^+,y)\right) \dd y   \nonumber\\
	& - \int_{x_{i}}^{x_{i+1}} \left(\bm{F}_2(\bm{U}_h^I(x,y_{j+1})) \cdot \bm{w}(x,y_{j+1}^-) - \bm{F}_2(\bm{U}_h^I(x,y_{j})) \cdot \bm{w}(x,y_{j}^+)\right) \dd x  \nonumber\\
	& + \int_{y_{j}}^{y_{j+1}} (-\jump{B_{1,h}^I(x_{i+\halfone},y)}) \bm{S}\left(\average{\bm{U}_h^I(x_{i+\halfone},y)}\right) \cdot \bm{u}(x_{i+\halfone},y) \dd y \nonumber\\
	& + \int_{x_{i}}^{x_{i+1}} (-\jump{B_{2,h}^I(x,y_{j+\halfone})}) \bm{S}\left(\average{\bm{U}_h^I(x,y_{j+\halfone})}\right) \cdot \bm{u}(x,y_{j+\halfone}) \dd x, \label{CDG2D:IntgJ}
\end{align}
where the discretization of the symmetrization source terms in the modified RMHD system \eqref{eq:ModRMHD} is crucial for the theoretical BP property, and 
the time variable $t$ has been omitted here for convenience. The notations $\jump{\cdot}$ and $\average{\cdot}$ respectively denote the jump and average of the limiting values at a cell interface: 
\begin{gather*}
	\jump{B_{1,h}^J(x_i,y)} := B_{1,h}^J(x_i^+,y) - B_{1,h}^J(x_i^-,y), 
	\quad
	\jump{B_{2,h}^J(x,y_j)}:= B_{2,h}^J(x,y_j^+) - B_{2,h}^J(x,y_j^-), \\
	\average{\bm{U}_h^J(x_i,y)} := \frac{1}{2} \left( \bm{U}_h^J(x_i^+,y) + \bm{U}_h^J(x_i^-,y) \right), 
	\quad
	\average{\bm{U}_h^J(x,y_j)} := \frac{1}{2} \left( \bm{U}_h^J(x,y_j^+) + \bm{U}_h^J(x,y_j^-) \right), \\
	\jump{B_{1,h}^I(x_{i+\halfone},y)} :=B_{1,h}^I(x_{i+\halfone}^+,y) - B_{1,h}^I(x_{i+\halfone}^-,y), 
	\quad
	\jump{B_{2,h}^I(x,y_{j+\halfone})}:= B_{2,h}^I(x,y_{j+\halfone}^+) - B_{2,h}^I(x,y_{j+\halfone}^-), \\
	\average{\bm{U}_h^I(x_{i+\halfone},y)} := \frac{1}{2} \left( \bm{U}_h^I(x_{i+\halfone}^+,y) + \bm{U}_h^I(x_{i+\halfone}^-,y) \right), 
	\quad
	\average{\bm{U}_h^I(x,y_{j+\halfone})} := \frac{1}{2} \left( \bm{U}_h^I(x,y_{j+\halfone}^+) + \bm{U}_h^I(x,y_{j+\halfone}^-) \right).
\end{gather*}

To implement the locally DF CDG schemes, it is necessary to split the conservative vector $\bm{U}$ into two parts:
\begin{equation*}
	\bm{U}^{R} = (D, \bm{m}, B_3, E)^\top, \quad \bm{U}^{B} = (B_1, B_2)^\top.
\end{equation*}
Accordingly, we partition the fluxes $\bm{F}_\ell(\bm{U})$ into $\bm{F}_\ell^{R}(\bm{U})$ and $\bm{F}_\ell^{B}(\bm{U})$ for $\ell = 1, 2$. Similarly, $\bm{S}(\bm{U})$ is divided into $\bm{S}^{R}(\bm{U})$ and $\bm{S}^{B}(\bm{U})$. The decomposition of the numerical solution $\bm{U}_h^{I}$ is represented by $\bm{U}_h^{R,I}$ and $\bm{U}_h^{B,I}$. Likewise, the decomposition of the numerical solution $\bm{U}_h^{J}$ is denoted by $\bm{U}_h^{R,J}$ and $\bm{U}_h^{B,J}$.

Let $\{\Psi_{ij}^{(\ell)}\}_{\ell=0}^{K_R}$ and $\{\Psi_{i+\halfone,j+\halfone}^{(\ell)}\}_{\ell=0}^{K_R}$ be the local orthogonal bases of the polynomial spaces $\mathbb{P}^{k}(I_{ij})$ and $\mathbb{P}^{k}(I_{i+\halfone,j+\halfone})$, respectively. Here, $K_R=\frac{k(k+3)}{2}$. 
For example, the orthogonal basis of the polynomial space $\mathbb{P}^{k}(I_{i_0,j_0})$ can be taken as 
\begin{equation*}
	\begin{aligned}
		&\Psi^{(0)}_{i_0,j_0}(x,y)=1, \quad 
		\Psi^{(1)}_{i_0,j_0}(x,y) = \xi_{i_0}, \quad 
		\Psi^{(2)}_{i_0,j_0}(x,y) = \eta_{j_0}, \quad
		\Psi^{(3)}_{i_0,j_0}(x,y)=\xi_{i_0}^2-\frac{1}{3}, \\
		&\Psi^{(4)}_{i_0,j_0}(x,y)=\xi_{i_0} \eta_{j_0}, \quad 
		\Psi^{(5)}_{i_0,j_0}(x,y)=\eta_{j_0}^2-\frac{1}{3}, \quad
		\Psi^{(6)}_{i_0,j_0}(x,y)=\xi_{i_0}^3-\frac{3}{5}\xi_{i_0}, \\
		&\Psi^{(7)}_{i_0,j_0}(x,y)=(\xi_{i_0}^2-\frac{1}{3})\eta_{j_0} \quad
		\Psi^{(8)}_{i_0,j_0}(x,y)=\xi_{i_0}(\eta_{j_0}^2-\frac{1}{3}) \quad
		\Psi^{(9)}_{i_0,j_0}(x,y)=\eta_{j_0}^3-\frac{3}{5}\eta_{j_0}, ~~
		\dots
	\end{aligned}
\end{equation*}
where $\xi_{i_0} = 2(x-x_{i_0})/{\dx}$, $\eta_{j_0} = 2(y-y_{j_0})/{\dy}$, and $(i_0,j_0)=(i,j)$ or $(i+\halfone,j+\halfone)$.
Then, $\bm{U}_h^{R,I}(x,y,t)$ and $\bm{U}_h^{R,J}(x,y,t)$ can be expressed as 
\begin{align}\label{CDG2D:SolRI}
	\bm{U}_h^{R,I}(x,y,t) &= \sum_{\ell=0}^{K_R} \bm{U}_{ij}^{R,I,(\ell)}(t) \Psi_{ij}^{(\ell)}(x,y) \quad \forall (x,y)\in I_{ij},
\\
\label{CDG2D:SolRJ}
	\bm{U}_h^{R,J}(x,y,t) &= \sum_{\ell=0}^{K_R} \bm{U}_{i+\halfone,j+\halfone}^{R,J,(\ell)}(t) \Psi_{i+\halfone,j+\halfone}^{(\ell)}(x,y) \quad \forall (x,y)\in I_{i+\halfone,j+\halfone},
\end{align} 
where the degrees of freedom are 
\begin{align*}
	\bm{U}_{ij}^{R,I,(\ell)}(t)&=({D}_{ij}^{R,I,(\ell)}, \bm{m}_{ij}^{R,I,(\ell)}, {(B_3)}_{ij}^{R,I,(\ell)}, {E}_{ij}^{R,I,(\ell)})^\top, \\
	\bm{U}_{i+\halfone,j+\halfone}^{R,J,(\ell)}(t)&=({D}_{i+\halfone,j+\halfone}^{R,J,(\ell)},\bm{m}_{i+\halfone,j+\halfone}^{R,J,(\ell)},{(B_3)}_{i+\halfone,j+\halfone}^{R,J,(\ell)},{E}_{i+\halfone,j+\halfone}^{R,J,(\ell)})^\top.
\end{align*}
Let  $\{\bm{\Psi}_{ij}^{(\ell)} \}_{\ell=0}^{K_B}$ and  $\{\bm{\Psi}_{i+\halfone,j+\halfone}^{(\ell)} \}_{\ell=0}^{K_B}$ be the local orthogonal bases of the DF polynomial spaces $\mathbb{B}^k(I_{ij})$ and $\mathbb{B}^k(I_{i+\halfone,j+\halfone})$, respectively. Here, $K_B=\frac{(k+1)(k+4)}{2}-1$ is the space dimensionality. Specifically, these bases (cf.~\cite{Li2005}) can be taken as 
\begin{equation*}
	\begin{aligned}
		&\bm{\Psi}^{(0)}_{i_0,j_0} = 
		\begin{pmatrix}
			0 \\
			1
		\end{pmatrix},
		\quad
		\bm{\Psi}^{(1)}_{i_0,j_0} = 
		\begin{pmatrix}
			1 \\
			0
	\end{pmatrix}, \quad 
		\bm{\Psi}^{(2)}_{i_0,j_0} = 
		\begin{pmatrix}
			0 \\
			\xi_{i_0}
	\end{pmatrix}, \quad 
		\bm{\Psi}^{(3)}_{i_0,j_0} = 
		\begin{pmatrix}
			\eta_{j_0} \\
			0
	\end{pmatrix}, \quad 
		\bm{\Psi}^{(4)}_{i_0,j_0} = 
		\begin{pmatrix}
			\dx \xi_{i_0} \\
			-\dy \eta_{j_0}
	\end{pmatrix}, \\
		&\bm{\Psi}^{(5)}_{i_0,j_0} = 
		\begin{pmatrix}
			0 \\
			\xi_{i_0}^2-\frac{1}{3}
	\end{pmatrix}, \quad
		\bm{\Psi}^{(6)}_{i_0,j_0} = 
		\begin{pmatrix}
			\eta_{j_0}^2-\frac{1}{3} \\
			0
	\end{pmatrix}, \quad  
		\bm{\Psi}^{(7)}_{i_0,j_0} = 
		\begin{pmatrix}
			\dx(\xi_{i_0}^2-\frac{1}{3}) \\
			-2\dy \xi_{i_0} \eta_{j_0}
	\end{pmatrix}, \quad 
		\bm{\Psi}^{(8)}_{i_0,j_0} = 
		\begin{pmatrix}
			-2\dx \xi_{i_0} \eta_{j_0} \\
			\dy (\eta_{j_0}^2-\frac{1}{3})
		\end{pmatrix}, \\
		&\bm{\Psi}^{(9)}_{i_0,j_0} = 
		\begin{pmatrix}
			0 \\
			\xi_{i_0}^3-\frac{3}{5}\xi_{i_0}
	\end{pmatrix}, \quad
		\bm{\Psi}^{(10)}_{i_0,j_0} = 
		\begin{pmatrix}
			\eta_{j_0}^3-\frac{3}{5}\eta_{j_0}\\
			0
	\end{pmatrix}, \quad
		\bm{\Psi}^{(11)}_{i_0,j_0} = 
		\begin{pmatrix}
			\frac{\dx}{3}(\xi_{i_0}^3-\xi_{i_0})\\
			-\dy(\xi_{i_0}^2-\frac{1}{3})\eta_{j_0}
	\end{pmatrix}, \\
		&\bm{\Psi}^{(12)}_{i_0,j_0} = 
		\begin{pmatrix}
			\dx\xi_{i_0}(\eta_{j_0}^2-\frac{1}{3})\\
			-\frac{\dy}{3}(\eta_{j_0}^3-\eta_{j_0})
	\end{pmatrix}, \quad
		\bm{\Psi}^{(13)}_{i_0,j_0} = 
	\begin{pmatrix}
			\dx(\xi_{i_0}^2-\frac{1}{3})\eta_{j_0}\\
			-\dy\xi_{i_0}(\eta_{j_0}^2-\frac{1}{3})
	\end{pmatrix}, \quad
		\dots
	\end{aligned}
\end{equation*}
Then, $\bm{U}_h^{B,I}(x,y,t)$ and $\bm{U}_h^{B,J}(x,y,t)$ can be expressed as 
\begin{equation}\label{CDG2D:SolBI}
	\bm{U}_h^{B,I}(x,y,t) = \sum_{\ell =0}^{K_B} {U}_{ij}^{B,I,(\ell)}(t) \bm{\Psi}_{ij}^{(\ell)}(x,y) \quad \forall (x,y)\in I_{ij},
\end{equation} 
\begin{equation}\label{CDG2D:SolBJ}
	\bm{U}_h^{B,J}(x,y,t) = \sum_{\ell =0}^{K_B} {U}_{i+\halfone,j+\halfone}^{B,J,(\ell)}(t) \bm{\Psi}_{i+\halfone,j+\halfone}^{(\ell)}(x,y) \quad \forall (x,y)\in I_{i+\halfone,j+\halfone},
\end{equation} 
where the scalars ${U}_{ij}^{B,I,(\ell)}(t)$ and ${U}_{i+\halfone,j+\halfone}^{B,J,(\ell)}(t)$ denote the degrees of freedom.

For the $\mathbb{P}^k$-based CDG scheme, we approximate the flux integrals in \eqref{CDG2D:IntgI} and \eqref{CDG2D:IntgJ} using the Gauss quadrature rule of $Q=k+1$ points to meet the algebraic precision requirement. 
Let $\{{y}_{j\pm\fourone}^\mu\}_{\mu=1}^Q$ denote the Gauss quadrature nodes in the interval $[y_{j\pm\fourone}-\frac{\dx}{4}, y_{j\pm\fourone}+\frac{\dx}{4}]$, where $y_{j\pm\fourone} = y_j + \frac14 \dy$.  
Substituting \eqref{CDG2D:SolRI} and \eqref{CDG2D:SolRJ} into \eqref{CDG2D:IntgI} and \eqref{CDG2D:IntgJ},  we obtain, for $\ell=0,\cdots, K_R$, that 
\begin{align}\label{CDG2D:DisRI}
	&{a}_{ij}^{R,(\ell)} \frac{\dd \bm{U}_{ij}^{R,I,(\ell)}(t)}{\dd t} 
	= \frac{1}{\tau_{\rm max} \dx \dy} \int_{I_{ij}} (\bm{U}_h^{R,J} - \bm{U}_h^{R,I}) \Psi_{ij}^{(\ell)}(x,y) \dd x \dd y \nonumber\\
	& ~~~ + \sum_{m=\pm1, \sigma=\pm1} \sum_{\mu=1}^{Q}  \sum_{\tilde{\mu}=1}^{Q} \frac{\omega_{\mu}}{2} \frac{\omega_{\tilde{\mu}}}{2} \bm{F}^R\left(\bm{U}_h^J(x_{i+\mfourone}^\mu,y_{j+\sfourone}^{\tilde{\mu}})\right) \cdot \nabla \Psi_{ij}^{(\ell)}(x_{i+\mfourone}^\mu,y_{j+\sfourone}^{\tilde{\mu}}) \nonumber\\
	& ~~~ - \frac{1}{\dx} \sum_{m=\pm1} \sum_{\mu=1}^{Q} \frac{\omega_{\mu}}{2} \left( \bm{F}_1^R(\bm{U}_h^J(x_{i+\halfone},y_{j+\mfourone}^\mu))  \Psi_{ij}^{(\ell)}(x_{i+\halfone}^-,y_{j+\mfourone}^\mu) \right.  %\nonumber\\
	\left.- \bm{F}_1^R(\bm{U}_h^J(x_{i-\halfone},y_{j+\mfourone}^\mu))  \Psi_{ij}^{(\ell)}(x_{i-\halfone}^+,y_{j+\mfourone}^\mu) \right)    \nonumber\\
	& ~~~ - \frac{1}{\dy} \sum_{m=\pm1} \sum_{\mu=1}^{Q} \frac{\omega_{\mu}}{2} \left( \bm{F}_2^R(\bm{U}_h^J(x_{i+\mfourone}^\mu,y_{j+\halfone}))  \Psi_{ij}^{(\ell)}(x_{i+\mfourone}^\mu,y_{j+\halfone}^-) \right. %\nonumber\\
	\left. - \bm{F}_2^R(\bm{U}_h^J(x_{i+\mfourone}^\mu,y_{j-\halfone}))  \Psi_{ij}^{(\ell)}(x_{i+\mfourone}^\mu,y_{j-\halfone}^+)\right) \nonumber\\
	& ~~~ + \frac{1}{\dx} \sum_{m=\pm1} \sum_{\mu=1}^{Q} \frac{\omega_{\mu}}{2}  (-\jump{B_{1,h}^J(x_i,y_{j+\mfourone}^\mu)}) \bm{S}^R\left(\average{\bm{U}_h^J(x_i,y_{j+\mfourone}^\mu)}\right) \Psi_{ij}^{(\ell)}(x_i,y_{j+\mfourone}^\mu) \nonumber\\
	& ~~~ + \frac{1}{\dy} \sum_{m=\pm1} \sum_{\mu=1}^{Q} \frac{\omega_{\mu}}{2}  (-\jump{B_{2,h}^J(x_{i+\mfourone}^\mu,y_j)}) \bm{S}^R\left(\average{\bm{U}_h^J(x_{i+\mfourone}^\mu,y_j)}\right) \Psi_{ij}^{(\ell)}(x_{i+\mfourone}^\mu,y_j) , 
\end{align}
\begin{align}\label{CDG2D:DisRJ}
	&{a}_{i+\halfone,j+\halfone}^{R,(\ell)} \frac{\dd \bm{U}_{i+\halfone,j+\halfone}^{R,J,(\ell)}(t)}{\dd t}
	=  \frac{1}{\tau_{\rm max} \dx \dy} \int_{I_{i+\halfone,j+\halfone}} (\bm{U}_h^{R,I} - \bm{U}_h^{R,J}) \Psi_{i+\halfone,j+\halfone}^{(\ell)}(x,y) \dd x \dd y \nonumber\\
	& ~~~  + \sum_{m=\pm1, \sigma=\pm1} \sum_{\mu=1}^{Q} \sum_{\tilde{\mu}=1}^{Q} \frac{\omega_{\mu}}{2} \frac{\omega_{\tilde{\mu}}}{2} \bm{F}^R\left(\bm{U}_h^I(x_{i+\halfone+\mfourone}^\mu,y_{j+\halfone+\sfourone}^{\tilde{\mu}})\right) \cdot \nabla \Psi_{i+\halfone,j+\halfone}^{(\ell)}(x_{i+\halfone+\mfourone}^\mu,y_{j+\halfone+\sfourone}^{\tilde{\mu}}) \nonumber\\
	&~~~ - \frac{1}{\dx} \sum_{m=\pm1} \sum_{\mu=1}^{Q} \frac{\omega_{\mu}}{2} \left( \bm{F}_1^R(\bm{U}_h^{I}(x_{i+1},y_{j+\halfone+\mfourone}^\mu)) \Psi_{i+\halfone,j+\halfone}^{(\ell)}(x_{i+1}^-,y_{j+\halfone+\mfourone}^\mu) \right. %\nonumber\\
	\left. - \bm{F}_1^R(\bm{U}_h^{I}(x_{i},y_{j+\halfone+\mfourone}^\mu)) \Psi_{i+\halfone,j+\halfone}^{(\ell)}(x_{i}^+,y_{j+\halfone+\mfourone}^\mu)\right) \nonumber\\
	& ~~~  - \frac{1}{\dy} \sum_{m=\pm1} \sum_{\mu=1}^{Q} \frac{\omega_{\mu}}{2} \left( \bm{F}_2^R(\bm{U}_h^{I}(x_{i+\halfone+\mfourone}^\mu,y_{j+1})) \Psi_{i+\halfone,j+\halfone}^{(\ell)}(x_{i+\halfone+\mfourone}^\mu,y_{j+1}^-) \right. %\nonumber\\
	\left. - \bm{F}_2^R(\bm{U}_h^{I}(x_{i+\halfone+\mfourone}^\mu,y_{j})) \Psi_{i+\halfone,j+\halfone}^{(\ell)}(x_{i+\halfone+\mfourone}^\mu,y_{j}^+) \right),  \nonumber\\
	& ~~~  + \frac{1}{\dx} \sum_{m=\pm1} \sum_{\mu=1}^{Q} \frac{\omega_{\mu}}{2} (-\jump{B_{1,h}^I(x_{i+\halfone},y_{j+\halfone+\mfourone}^\mu)}) \bm{S}^R\left(\average{\bm{U}_h^I(x_{i+\halfone},y_{j+\halfone+\mfourone}^\mu)}\right) \Psi_{i+\halfone,j+\halfone}^{(\ell)}(x_{i+\halfone},y_{j+\halfone+\mfourone}^\mu)  \nonumber\\
	& ~~~ + \frac{1}{\dy} \sum_{m=\pm1} \sum_{\mu=1}^{Q} \frac{\omega_{\mu}}{2}  (-\jump{B_{2,h}^I(x_{i+\halfone+\mfourone}^\mu,y_{j+\halfone})}) \bm{S}^R\left(\average{\bm{U}_h^I(x_{i+\halfone+\mfourone}^\mu,y_{j+\halfone})}\right) \Psi_{i+\halfone,j+\halfone}^{(\ell)}(x_{i+\halfone+\mfourone}^\mu,y_{j+\halfone}) ,
\end{align}
where 
$${a}_{ij}^{R,(\ell)} = \frac{1}{\dx \dy}\int_{I_{ij}} \left(\Psi_{ij}^{(\ell)}(x,y)\right)^2 \dd x \dd y, \quad 
{a}_{i+\halfone,j+\halfone}^{R,(\ell)} =\frac{1}{\dx \dy}\int_{J_{i+\halfone,j+\halfone}} \left(\Psi_{i+\halfone,j+\halfone}^{(\ell)}(x,y)\right)^2 \dd x \dd y.$$
Similarly, for $\ell=0,\cdots,K_B$, we derive
\begin{align}\label{CDG2D:DisBI}
	{a}_{ij}^{B,(\ell)} \frac{\dd {U}_{ij}^{B,I,(\ell)}(t)}{\dd t} 
	&= \frac{1}{\tau_{\rm max} \dx \dy} \int_{I_{ij}} (\bm{U}_h^{B,J} - \bm{U}_h^{B,I}) \cdot \bm{\Psi}_{ij}^{(\ell)}(x,y) \dd x \dd y \nonumber \\
	& + \sum_{m=\pm1, \sigma=\pm1} \sum_{\mu=1}^{Q}  \sum_{\tilde{\mu}=1}^{Q} \frac{\omega_{\mu}}{2} \frac{\omega_{\tilde{\mu}}}{2} \bm{F}^B\left(\bm{U}_h^J(x_{i+\mfourone}^\mu,y_{j+\sfourone}^{\tilde{\mu}})\right) : \nabla \bm{\Psi}_{ij}^{(\ell)}(x_{i+\mfourone}^\mu,y_{j+\sfourone}^{\tilde{\mu}}) \nonumber \\
	&- \frac{1}{\dx} \sum_{m=\pm1} \sum_{\mu=1}^{Q} \frac{\omega_{\mu}}{2} \left( \bm{F}_1^B(\bm{U}_h^J(x_{i+\halfone},y_{j+\mfourone}^\mu))  \cdot \bm{\Psi}_{ij}^{(\ell)}(x_{i+\halfone}^-,y_{j+\mfourone}^\mu) \right. 
	\left.- \bm{F}_1^B(\bm{U}_h^J(x_{i-\halfone},y_{j+\mfourone}^\mu))  \cdot \bm{\Psi}_{ij}^{(\ell)}(x_{i-\halfone}^+,y_{j+\mfourone}^\mu) \right)     \nonumber \\
	& - \frac{1}{\dy} \sum_{m=\pm1} \sum_{\mu=1}^{Q} \frac{\omega_{\mu}}{2} \left( \bm{F}_2^B(\bm{U}_h^J(x_{i+\mfourone}^\mu,y_{j+\halfone}))  \cdot \bm{\Psi}_{ij}^{(\ell)}(x_{i+\mfourone}^\mu,y_{j+\halfone}^-) \right.  
	\left. - \bm{F}_2^B(\bm{U}_h^J(x_{i+\mfourone}^\mu,y_{j-\halfone}))  \cdot \bm{\Psi}_{ij}^{(\ell)}(x_{i+\mfourone}^\mu,y_{j-\halfone}^+)\right) \nonumber \\
	& + \frac{1}{\dx} \sum_{m=\pm1} \sum_{\mu=1}^{Q} \frac{\omega_{\mu}}{2}  (-\jump{B_{1,h}^J(x_i,y_{j+\mfourone}^\mu)}) \bm{S}^B\left(\average{\bm{U}_h^J(x_i,y_{j+\mfourone}^\mu)}\right) \cdot \bm{\Psi}_{ij}^{(\ell)}(x_i,y_{j+\mfourone}^\mu) \nonumber \\
	& + \frac{1}{\dy} \sum_{m=\pm1} \sum_{\mu=1}^{Q} \frac{\omega_{\mu}}{2}  (-\jump{B_{2,h}^J(x_{i+\mfourone}^\mu,y_j)}) \bm{S}^B\left(\average{\bm{U}_h^J(x_{i+\mfourone}^\mu,y_j)}\right) \cdot \bm{\Psi}_{ij}^{(\ell)}(x_{i+\mfourone}^\mu,y_j) , 
\end{align}
\begin{align}\label{CDG2D:DisBJ}
	&a_{i+\halfone,j+\halfone}^{B,(\ell)} \frac{\dd {U}_{i+\halfone,j+\halfone}^{B,J,(\ell)}(t)}{\dd t} 
	= \frac{1}{\tau_{\rm max} \dx \dy} \int_{I_{i+\halfone,j+\halfone}} (\bm{U}_h^{B,I} - \bm{U}_h^{B,J}) \cdot \bm{\Psi}_{i+\halfone,j+\halfone}^{(\ell)}(x,y) \dd x \dd y \nonumber\\
	& ~~~  + \sum_{m=\pm1, \sigma=\pm1} \sum_{\mu=1}^{Q}  \sum_{\tilde{\mu}=1}^{Q} \frac{\omega_{\mu}}{2} \frac{\omega_{\tilde{\mu}}}{2}  \bm{F}^B\left(\bm{U}_h^I(x_{i+\halfone+\mfourone}^\mu,y_{j+\halfone+\sfourone}^{\tilde{\mu}})\right) : \nabla \bm{\Psi}_{i+\halfone,j+\halfone}^{(\ell)}(x_{i+\halfone+\mfourone}^\mu,y_{j+\halfone+\sfourone}^{\tilde{\mu}}) \nonumber\\
	&~~~ - \frac{1}{\dx} \sum_{m=\pm1} \sum_{\mu=1}^{Q} \frac{\omega_{\mu}}{2} \left( \bm{F}_1^B(\bm{U}_h^{I}(x_{i+1},y_{j+\halfone+\mfourone}^\mu)) \cdot \bm{\Psi}_{i+\halfone,j+\halfone}^{(\ell)}(x_{i+1}^-,y_{j+\halfone+\mfourone}^\mu) \right. 
	\left. - \bm{F}_1^B(\bm{U}_h^{I}(x_{i},y_{j+\halfone+\mfourone}^\mu)) \cdot \bm{\Psi}_{i+\halfone,j+\halfone}^{(\ell)}(x_{i}^+,y_{j+\halfone+\mfourone}^\mu) \right)  \nonumber \\
	& ~~~  - \frac{1}{\dy} \sum_{m=\pm1} \sum_{\mu=1}^{Q} \frac{\omega_{\mu}}{2} \left( \bm{F}_2^B(\bm{U}_h^{I}(x_{i+\halfone+\mfourone}^\mu,y_{j+1})) \cdot \bm{\Psi}_{i+\halfone,j+\halfone}^{(\ell)}(x_{i+\halfone+\mfourone}^\mu,y_{j+1}^-) \right. \left. - \bm{F}_2^B(\bm{U}_h^{I}(x_{i+\halfone+\mfourone}^\mu,y_{j})) \cdot \bm{\Psi}_{i+\halfone,j+\halfone}^{(\ell)}(x_{i+\halfone+\mfourone}^\mu,y_{j}^+) \right)  \nonumber\\
	& ~~~  + \frac{1}{\dx} \sum_{m=\pm1} \sum_{\mu=1}^{Q} \frac{\omega_{\mu}}{2} (-\jump{B_{1,h}^I(x_{i+\halfone},y_{j+\halfone+\mfourone}^\mu)}) \bm{S}^B\left(\average{\bm{U}_h^I(x_{i+\halfone},y_{j+\halfone+\mfourone}^\mu)}\right) \cdot \bm{\Psi}_{i+\halfone,j+\halfone}^{(\ell)}(x_{i+\halfone},y_{j+\halfone+\mfourone}^\mu)  \nonumber\\
	& ~~~ + \frac{1}{\dy} \sum_{m=\pm1} \sum_{\mu=1}^{Q} \frac{\omega_{\mu}}{2}  (-\jump{B_{2,h}^I(x_{i+\halfone+\mfourone}^\mu,y_{j+\halfone})}) \bm{S}^B\left(\average{\bm{U}_h^I(x_{i+\halfone+\mfourone}^\mu,y_{j+\halfone})}\right) \cdot \bm{\Psi}_{i+\halfone,j+\halfone}^{(\ell)}(x_{i+\halfone+\mfourone}^\mu,y_{j+\halfone}),
\end{align}
where ``$:$'' denotes the Frobenius inner product of two matrices, and 
$${a}_{ij}^{B,(\ell)} = \frac{1}{\dx \dy} \int_{I_{ij}} \left|\bm{\Psi}_{ij}^{(\ell)}(x,y)\right|^2 \dd x \dd y, \quad 
{a}_{i+\halfone,j+\halfone}^{B,(\ell)} = \frac{1}{\dx \dy} \int_{I_{i+\halfone,j+\halfone}} \left|\bm{\Psi}_{i+\halfone,j+\halfone}^{(\ell)}(x,y)\right|^2 \dd x \dd y.$$

Coupling the semi-discrete CDG schemes \eqref{CDG2D:DisRI}-\eqref{CDG2D:DisBJ} with 
an SSP time discretization, for example, the third-order SSP Runge--Kutta method \eqref{RKTime}, 
 we obtain the fully-discrete, high-order accurate, locally DF CDG schemes for the 2D modified symmetrizable RMHD system \eqref{eq:ModRMHD}.
%The following presents the BP analysis for our proposed CDG method.

\subsection{Evolution Equations for Cell Averages in Our Locally DF CDG Schemes}

Define the 2D cell averages $\overline{\bm{U}}_{ij}^I = \frac{1}{\dx \dy}\int_{I_{ij}} \bm{U}_h^I \dd x \dd y$ and $\overline{\bm{U}}_{i+\halfone,j+\halfone}^J = \frac{1}{\dx \dy}\int_{J_{i+\halfone, j+\halfone}} \bm{U}_h^J \dd x \dd y$ over the cells $I_{ij}$ and $J_{i+\halfone,j+\halfone}$, respectively. 
Due to the local orthogonality of our adopted basis, we have 
$$\overline{\bm{U}}_{ij}^I=({D}_{ij}^{R,I,(0)},\bm{m}_{ij}^{R,I,(0)},{U}_{ij}^{B,I,(0)},{U}_{ij}^{B,I,(1)},{(B_3)}_{ij}^{R,I,(0)},{E}_{ij}^{R,I,(0)})^\top,$$ $$\overline{\bm{U}}_{i+\halfone,j+\halfone}^J=({D}_{i+\halfone,j+\halfone}^{R,J,(0)},\bm{m}_{i+\halfone,j+\halfone}^{R,J,(0)},{U}_{i+\halfone,j+\halfone}^{B,J,(0)},{U}_{i+\halfone,j+\halfone}^{B,J,(1)},{(B_3)}_{i+\halfone,j+\halfone}^{R,J,(0)},{E}_{i+\halfone,j+\halfone}^{R,J,(0)})^\top.$$
Taking the equations \eqref{CDG2D:DisRI}--\eqref{CDG2D:DisRJ} with $\ell=0$ and selecting the equations \eqref{CDG2D:DisBI}--\eqref{CDG2D:DisBJ} for $\ell=0,1$, 
we then obtain the evolution equations for the cell averages in our locally DF CDG schemes: 
\begin{equation}\label{CDG2D:ave}
	\frac{\dd \overline{\bm{U}}_{ij}^I}{\dd t} =\mathcal{L}_{ij}^I(\bm{U}_h^I, \bm{U}_h^J), 
	\qquad
	\frac{\dd \overline{\bm{U}}_{i+\halfone,j+\halfone}^J}{\dd t} =\mathcal{L}_{i+\halfone,j+\halfone}^J(\bm{U}_h^J,\bm{U}_h^I)
\end{equation}
with 
\begin{equation}\label{WKL2}
	\mathcal{L}_{ij}^I(\bm{U}_h^I, \bm{U}_h^J) := \mathcal{H}_{ij}^I(\bm{U}_h^I, \bm{U}_h^J) + \mathcal{S}^J_{ij}, \quad
	\mathcal{L}_{i+\halfone,j+\halfone}^J(\bm{U}_h^J,\bm{U}_h^I) := \mathcal{H}_{i+\halfone,j+\halfone}^J(\bm{U}_h^J, \bm{U}_h^I) +  \mathcal{S}^I_{i+\halfone,j+\halfone}.
\end{equation}
Here, $\mathcal{H}_{ij}^I(\bm{U}_h^I,\bm{U}_h^J)$ and $\mathcal{H}_{i+\halfone,j+\halfone}^J(\bm{U}_h^J, \bm{U}_h^I)$ are defined by 
\begin{equation}\label{CDG2D:aveHI}  
	\begin{aligned}
		\mathcal{H}_{ij}^I(\bm{U}_h^I, \bm{U}_h^J) :=& \frac{\overline{\bm{U}}_{ij}^J-\overline{\bm{U}}_{ij}^I}{\tau_{\rm max}}  
		- \frac{1}{\dx} \sum_{m=\pm1}\sum_{\mu=1}^{Q} \frac{\omega_{\mu}}{2} \left(\bm{F}_1(\bm{U}^{J,\mu}_{i+\halfone,j+\mfourone}) - \bm{F}_1(\bm{U}^{J,\mu}_{i-\halfone,j+\mfourone})\right)  \\
		&- \frac{1}{\dy} \sum_{m=\pm1} \sum_{\mu=1}^{Q} \frac{\omega_{\mu}}{2} \left(\bm{F}_2(\bm{U}^{J,\mu}_{i+\mfourone,j+\halfone}) - \bm{F}_2(\bm{U}^{J,\mu}_{i+\mfourone,j-\halfone})\right), 
	\end{aligned}
\end{equation}
\begin{equation}\label{CDG2D:aveHJ}
	\begin{aligned}
		\mathcal{H}_{i+\halfone,j+\halfone}^J(\bm{U}_h^J,\bm{U}_h^I) :=& \frac{\overline{\bm{U}}_{i+\halfone,j+\halfone}^I-\overline{\bm{U}}_{i+\halfone,j+\halfone}^J}{\tau_{\rm max}} 
		- \frac{1}{\dx} \sum_{m=\pm1} \sum_{\mu=1}^{Q} \frac{\omega_{\mu}}{2} \left(\bm{F}_1(\bm{U}^{I,\mu}_{i+1,j+\halfone+\mfourone}) - \bm{F}_1(\bm{U}^{I,\mu}_{i,j+\halfone+\mfourone})\right) \\
		&- \frac{1}{\dy} \sum_{m=\pm1} \sum_{\mu=1}^{Q} \frac{\omega_{\mu}}{2} \left(\bm{F}_2(\bm{U}^{I,\mu}_{i+\halfone+\mfourone,j+1}) - \bm{F}_2(\bm{U}^{I,\mu}_{i+\halfone+\mfourone,j})\right)
	\end{aligned}
\end{equation}
with 
$$
\bm{U}^{J,\mu}_{i\pm \halfone,j+\mfourone} = \bm{U}_h^{J}(x_{i\pm\halfone},y_{j+\mfourone}^{\mu}), \quad
\bm{U}^{J,\mu}_{i+\mfourone,j\pm \halfone} = \bm{U}_h^{J}(x_{i+\mfourone}^\mu,y_{j\pm\halfone}),
$$
$$
\bm{U}^{I,\mu}_{i+\halfone \pm \halfone,j+\halfone+\mfourone} = \bm{U}_h^{I}(x_{i+\halfone \pm \halfone},y_{j+\halfone+\mfourone}^{\mu}), \quad
\bm{U}^{I,\mu}_{i+\halfone+\mfourone,j+\halfone \pm \halfone} = \bm{U}_h^{I}(x_{i+\halfone+\mfourone}^{\mu},y_{j+\halfone \pm \halfone}).
$$ 
The discrete source terms in \eqref{WKL2} are given by 
\begin{equation}\label{CDG2D:SourI}
	\begin{aligned}
		\mathcal{S}_{ij}^J =& - \frac{1}{\dx}\sum_{m=\pm1} \sum_{\mu=1}^{Q} \frac{\omega_{\mu}}{2}  \jump{B_{1,h}^J(x_i,y_{j+\mfourone}^{\mu})} \bm{S}\left(\average{\bm{U}_h^J(x_i,y_{j+\mfourone}^{\mu})}  \right)  \\
		&- \frac{1}{\dy} \sum_{m=\pm1} \sum_{\mu=1}^{Q} \frac{\omega_{\mu}}{2} \jump{B_{2,h}^J(x_{i+\mfourone}^{\mu},y_j)} \bm{S}\left(\average{\bm{U}_h^J(x_{i+\mfourone}^{\mu},y_j)}  \right), 
	\end{aligned}
\end{equation}
\begin{equation}\label{CDG2D:SourJ}
	\begin{aligned}
		\mathcal{S}_{i+\halfone,j+\halfone}^I =& 
		-  \frac{1}{\dx}\sum_{m=\pm1} \sum_{\mu=1}^{Q} \frac{\omega_{\mu}}{2}  \jump{B_{1,h}^I(x_{i+\halfone},y_{j+\halfone+\mfourone}^{\mu})} \bm{S}\left(\average{\bm{U}_h^I(x_{i+\halfone},y_{j+\halfone+\mfourone}^{\mu})}  \right) \\
		&-  \frac{1}{\dy} \sum_{m=\pm1} \sum_{\mu=1}^{Q} \frac{\omega_{\mu}}{2}  \jump{B_{2,h}^I(x_{i+\halfone+\mfourone}^{\mu},y_{j+\halfone})} \bm{S}\left(\average{\bm{U}_h^I(x_{i+\halfone+\mfourone}^{\mu},y_{j+\halfone})}  \right).
	\end{aligned}
\end{equation}

\subsection{Cell Average Decomposition (CAD) on Overlapping Meshes}
In this section, we introduce CAD and its extension on 2D overlapping meshes, which will play a critical role in analyzing and designing provably BP high-order CDG schemes. 
CAD splits the cell average of the numerical solution into a convex combination of certain nodal point values within the cell.
%This technique  plays a  .  
It forms the cornerstone of the theoretical framework for rigorous BP analysis \cite{zhang2010,CuiDingWu2023JCP,CuiDingWu2022SINUM} and also determines the theoretical CFL condition of the resulting BP schemes and the computational cost of the associated BP limiter.
In the 1D case, we have used the CAD \eqref{eq:1D} based on the Gauss-Lobatto quadrature, which was originally proposed by Zhang and Shu in \cite{zhang2010}. This 1D CAD was proven to be optimal for non-central DG schemes \cite{CuiDingWu2022SINUM}, in the sense of attaining the mildest BP CFL condition. 

The 2D CADs are much more complex than the 1D ones. As shown in \cite{CuiDingWu2022SINUM}, 
a generally feasible 2D symmetric CAD on the reference cell $\Omega:=[-1,1]^2$ can be defined as 
\begin{equation}\label{eq:2DCAD}
	\begin{split} 
		\frac14 \int_{\Omega} f(x,y) \dd x \dd y 
			= & \bar \omega  
		\sum_{\mu=1}^Q   \omega_\mu \left(  \frac{  \lambda_1  }{  \lambda } 
		\Big(  f(-1,  y^{\mu}  ) +   f( 1,   y^{\mu}  )   \Big)  +  \frac{  \lambda_2  }{  \lambda } 
		\Big(	 f(  x^{\mu} , -1 ) +  f( x^{\mu} , 1  )   \Big) \right)
		\\
		& + \sum_{s=1}^S \tilde \omega_s f( \tilde x^s, \tilde y^s ) \qquad \forall f \in \mathbb P^k ( \Omega ),
	\end{split}
\end{equation}
where $\lambda_i \ge 0$ will be specified in the BP analysis with $\lambda = \lambda_1 + \lambda_2>0$; 
the weights $\bar \omega > 0$ and $\widetilde \omega_s \ge 0$ satisfy $2 \bar \omega +  \sum_{s=1}^S \tilde \omega_s =1$; both $\{x^{\mu} \}_{\mu=1}^Q$ and $\{y^{\mu} \}_{\mu=1}^Q$ denote the $Q$-point Gauss quadrature nodes in $[-1,1]$; 
the set of internal nodes $\Omega^{\rm int}:=\{(\tilde x^s, \tilde y^s)\}_{s=1}^S \subset \Omega$ for all $s$.  
In the following, we give two examples of such 2D symmetric CADs on the reference cell $\Omega=[-1,1]^2$. 

\begin{expl}[2D Zhang--Shu CAD \cite{zhang2010}]\label{ex:zs}
	This 2D CAD is constructed by a tensor product of the 1D Gauss quadrature and 1D Gauss--Lobatto quadrature. 
	The corresponding weights and internal nodes are given by  
		\begin{equation}\label{CAD:1b}
			\begin{cases} \displaystyle
		\bar{\omega} = \frac{1}{L(L-1)} \quad \mbox{with}\quad  L=\left\lceil\frac{k+3}{2} \right\rceil,
		\\ \displaystyle
		\sum_{s=1}^S \tilde \omega_s f( \tilde x^s, \tilde y^s ) = \sum_{\nu=2}^{L-1} \sum_{\mu=1}^Q 
		\widehat \omega_\nu   \omega_\mu \left(
		\frac{\lambda_1 }{\lambda} f(\widehat x^\nu , y^\mu) + 
		\frac{\lambda_2}{\lambda} f(x^\mu, \widehat y^\nu ) 
		\right),
		\end{cases}
	\end{equation}
	where  $S = 2Q(L-2)$, and both $\{\widehat x^{\nu} \}_{\nu=1}^L$ and $\{\widehat y^{\nu} \}_{\nu=1}^L$ denote the $L$-point Gauss--Lobatto quadrature nodes in $[-1,1]$. The nodes are illustrated in Figure \ref{fig:CAD1a}. 
\end{expl}

\begin{expl}[2D Cui--Ding--Wu CAD \cite{CuiDingWu2023JCP,CuiDingWu2022SINUM}]\label{ex:cdw}
	We take the cases of $k=2$ and $k=3$ as example, while the CAD for more general polynomial spaces can be found in \cite{CuiDingWu2022SINUM}. In these two cases, the corresponding weights and internal nodes are given by  
\begin{align} \label{CAD:2b}
				\bar{\omega} &= \frac{1}{4+2|\delta|}, \qquad \tilde \omega_{1} = \tilde \omega_{2} = \frac{1+|\delta|}{2(2+|\delta|)}, \qquad S=2, \qquad \delta := \frac{\lambda_1 -\lambda_2}{\lambda}, 
			\\  
	\left(\tilde x^1,\tilde y^1\right) &= 
\begin{cases} \displaystyle
	\left(\sqrt{\frac{2|\delta|}{3+3|\delta|}}, 0\right), {~\rm if~} \delta\in[-1,0], \\ \displaystyle
	\left(0, \sqrt{\frac{2|\delta|}{3+3|\delta|}} \right), {~\rm if~} \delta\in[0,1], 
\end{cases}
\quad 
\left(\tilde x^2, \tilde y^2\right) = 
\begin{cases} \displaystyle
	\left(-\sqrt{\frac{2|\delta|}{3+3|\delta|}}, 0\right), {~\rm if~} \delta\in[-1,0], \\ \displaystyle
	\left(0, -\sqrt{\frac{2|\delta|}{3+3|\delta|}} \right), {~\rm if~} \delta \in[0,1].
\end{cases}
\end{align}
If $\lambda_1=\lambda_2$, then $\delta=0$, and the two internal nodes $(\tilde x^1,\tilde y^1)$ and $(\tilde x^2,\tilde y^2)$ merges to a singe node $(0,0)$; in this case, $\bar{\omega} = \frac{1}{4}$, and 
$\sum_{s=1}^S \tilde \omega_s f( \tilde x^s, \tilde y^s )=\frac12 f(0,0)$.  
The nodes of this CAD are illustrated in Figure \ref{fig:CAD2a}. 
\end{expl}

The 2D Cui--Ding--Wu CAD was proven to be optimal for BP non-central DG schemes, in the sense of achieving the mildest BP CFL condition in theory \cite{CuiDingWu2023JCP,CuiDingWu2022SINUM}. Moreover, it requires much fewer internal nodes, thereby needing less computational cost in the associated BP limiting procedure, compared to the 2D Zhang--Shu CAD. 

%and $\alpha_1$ and $\alpha_2$ denote the estimated maximum wave speeds in the $x$- and $y$-directions, respectively.

%$\sum_{\ell=2}^{L-1} \sum_{q=1}^Q 
%\omega_\ell^{{\tt GL}}   \omega_q^{{\tt G}}  \left(
%\frac{a_1/\Delta x}{a_1/\Delta x + a_2/\Delta y} p(x_{i,\ell}^{{\tt GL}} , y_{j,q}^{{\tt G}}) + 
%\frac{a_2/\Delta y}{a_1/\Delta x + a_2/\Delta y} p(x_{i,q}^{{\tt G}},  y_{j,\ell}^{{\tt GL}}) 
%\right),$

	\begin{figure}[!htb]
	\centering	
	\begin{subfigure}{0.48\textwidth}
		\centering
		\includegraphics[width=0.8\textwidth]{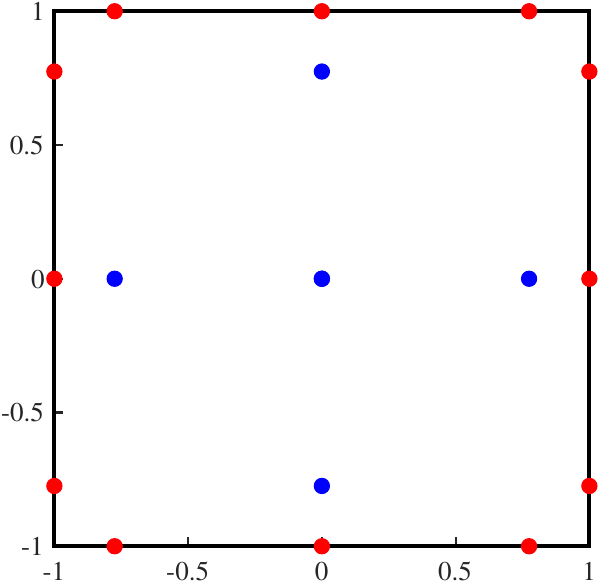}
		\caption{Zhang--Shu CAD}
		\label{fig:CAD1a}
	\end{subfigure}
	\begin{subfigure}{0.48\textwidth}
		\includegraphics[width=0.8\textwidth]{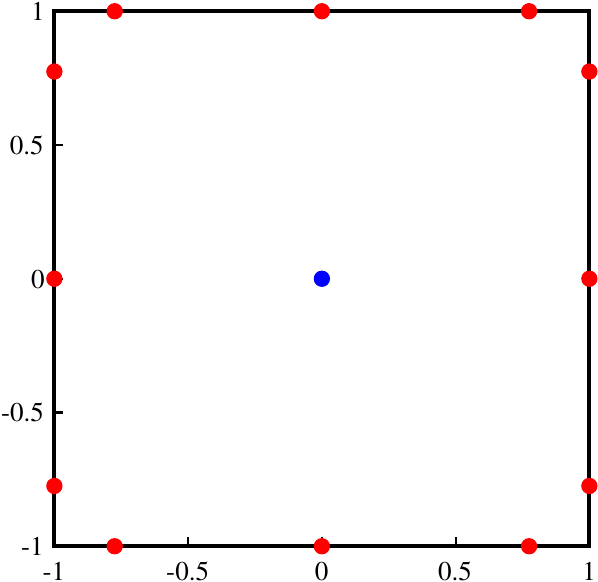}
		\caption{Cui-Ding-Wu CAD}
		\label{fig:CAD2a}
	\end{subfigure}
	\caption{Nodes of the Zhang--Shu CAD in \Cref{ex:zs} and the Cui--Ding--Wu CAD in \Cref{ex:cdw} on  the reference cell $\Omega=[-1,1]^2$ for $k=2$. Red: boundary nodes; blue: internal nodes. 
}
\label{fig:CADref}
\end{figure}

Next, we would like to extend the 2D CAD techniques to overlapping meshes, which will be useful in our BP analysis for CDG schemes. The BP analysis requires us to seek a 
2D CAD that decomposes the cell averages 
$$\overline{\bm{U}}_{ij}^J:=\frac{1}{\dx \dy} \int_{I_{ij}} {\bm{U}}_h^J(x,y) \dd x \dd y, \qquad \overline{\bm{U}}_{i+\halfone,j+\halfone}^I:= \frac{1}{\dx \dy} \int_{J_{i+\frac12,j+\frac12}} {\bm{U}}_h^I(x,y) \dd x \dd y $$ 
into some convex combinations of specific point values. 
However, the CDG solution ${\bm{U}}_h^J$ (resp. ${\bm{U}}_h^I$) is a discontinuous piecewise polynomial function on the target cell $I_{ij}$ (resp. $J_{i+\frac12,j+\frac12}$).    
This is different from and more difficult than the non-central DG case. 
An natural way to address this difficulty is by splitting the integration on the target cell into four integrals of polynomials on four quarter cells, for which the regular 2D CAD can be applied. 
For example, the cell average $\overline{\bm{U}}_{ij}^J$ can be split as 
$$
\overline{\bm{U}}_{ij}^J = \frac{1}4  \sum_{m=\pm 1, \sigma\pm1} \frac{4}{\dx \dy} \int_{\Omega_{i+\mfourone,j+\sfourone}} {\bm{U}}_h^J(x,y) \dd x \dd y, 
$$
where $\Omega_{i+\mfourone,j+\sfourone}:=[x_{i+\mfourone}-\frac{\dx}{4}, x_{i+\mfourone}+\frac{\dx}{4}] \times [y_{j+\sfourone}-\frac{\dy}{4}, y_{j+\sfourone}+\frac{\dy}{4}]$. Transforming the CAD \eqref{eq:2DCAD} on the reference cell to the quarter cell $\Omega_{i+\mfourone,j+\sfourone}$ gives 
\begin{equation*}%\label{eq:2DCADb}
	\begin{split} 
		\frac{4}{\dx \dy} \int_{\Omega_{i+\mfourone,j+\sfourone}} {\bm{U}}_h^J(x,y) \dd x \dd y  
		= & \bar \omega  
		\sum_{\mu=1}^Q   \omega_\mu \bigg[ \frac{  \lambda_1  }{  \lambda } 
		\Big(  {\bm{U}}_h^J (x_{i+\mfourone}-\frac{\dx}{4},  y_{j+\sfourone}^{\mu}  ) +   {\bm{U}}_h^J( x_{i+\mfourone}+\frac{\dx}{4},   y_{j+\sfourone}^{\mu}  )   \Big) 
		\\
		& \qquad  +  \frac{  \lambda_2  }{  \lambda } 
		\Big(	 {\bm{U}}_h^J(  x_{i+\mfourone}^{\mu} , y_{j+\sfourone}-\frac{\dy}{4} ) +  {\bm{U}}_h^J( x_{i+\mfourone}^{\mu} , y_{j+\sfourone}+\frac{\dy}{4}  )   \Big) \bigg] 
		\\
		& + \sum_{s=1}^S \tilde \omega_s {\bm{U}_{h}^J( \tilde x_{i+\mfourone,j+\sfourone}^{s}, \tilde y_{i+\mfourone,j+\sfourone}^s) },
	\end{split}
\end{equation*}
where 
$
\tilde x_{i+\mfourone,j+\sfourone}^{s} = x_{i+\mfourone}  +  \frac{1}4 \tilde x^s \dx  $ and $ \tilde y_{i+\mfourone,j+\sfourone}^{s} = y_{j+\sfourone} +  \frac{1}4 \tilde y^s \dy. 
$ 
Then all the nodes for the 2D CAD on the target cell $I_{ij} = \cup_{m=\pm 1} \cup_{\sigma\pm1} \Omega_{i+\mfourone,j+\sfourone}$ form the set 
\begin{equation}\label{thm:2DQ}
	\mathcal{Q}_{ij}= \bigcup_{m=\pm 1} \bigcup_{\sigma\pm1}  \mathcal{Q}_{i+\mfourone,j+\sfourone}
\end{equation}
with 
$$	\mathcal{Q}_{i+\mfourone,j+\sfourone} :=\left\{ (x_{i+\mfourone}\pm \frac{\dx}{4},  y_{j+\sfourone}^{\mu}  ) \right\}_{\mu=1}^Q \bigcup  
\left\{ (  x_{i+\mfourone}^{\mu} , y_{j+\sfourone}\pm \frac{\dy}{4} ) \right\}_{\mu=1}^Q \bigcup \left\{   ( \tilde x_{i+\mfourone,j+\sfourone}^{s},\tilde y_{i+\mfourone,j+\sfourone}^s) \right \}_{s=1}^S.$$ 
Therefore, we obtain the following extended 2D CAD on overlapping meshes: 
\begin{equation}\label{eq:CADI}
	\overline{\bm{U}}_{ij}^J 
	= {\bm \Pi}_{ij}^{J,1} + \frac{\bar{\omega}}{2} {\bm \Pi}_{ij}^{J,2} + \frac{\bar{\omega}}{2} {\bm \Pi}_{ij}^{J,3},
\end{equation}
where $\bm{U}^{J,\pm,\mu}_{i,j+\mfourone} := \bm{U}^J_h(x_i^{\pm}, y_{j+\mfourone}^{\mu})$, $\bm{U}^{J,\mu,\pm}_{i+\mfourone,j} := \bm{U}^J_h(x_{i+\mfourone}^{\mu}, y_{j}^{\pm})$, and 
\begin{align*}
	&{\bm \Pi}_{ij}^{J,1} :=  \sum_{m=\pm 1, \sigma\pm1} \sum_{s=1}^{S} \frac{\omega_s}{4} {\bm{U}_{h}^J(\tilde x_{i+\mfourone,j+\sfourone}^{s},\tilde y_{i+\mfourone,j+\sfourone}^s) }, \\%%J2
	&{\bm \Pi}_{ij}^{J,2} :=  \sum_{m=\pm1}\sum_{\mu=1}^{Q} \frac{\omega_{\mu}}{2} 
	\left( \frac{\lambda_1}{\lambda} \left(\bm{U}^{J,-,\mu}_{i,j+\mfourone} + \bm{U}^{J,+,\mu}_{i,j+\mfourone}\right)  
	+ \frac{\lambda_2}{\lambda} \left(\bm{U}^{J,\mu,-}_{i+\mfourone,j} + \bm{U}^{J,\mu,+}_{i+\mfourone,j}\right) \right) , \\%%J2
	&{\bm \Pi}_{ij}^{J,3} := \sum_{m=\pm1}\sum_{\mu=1}^{Q} \frac{\omega_{\mu}}{2} 
	\left( \frac{\lambda_1}{\lambda} \left(\bm{U}^{J,\mu}_{i-\halfone,j+\mfourone} + \bm{U}^{J,\mu}_{i+\halfone,j+\mfourone}\right)  
	+ \frac{\lambda_2}{\lambda} \left(\bm{U}^{J,\mu}_{i+\mfourone,j-\halfone} + \bm{U}^{J,\mu}_{i+\mfourone,j+\halfone}\right) \right). %%J3
\end{align*}
Similarly, we have 
\begin{equation}\label{eq:CADJ}
	\overline{\bm{U}}_{i+\halfone,j+\halfone}^I =  \frac{1}{\dx \dy} \int_{I_{i+\halfone,j+\halfone}} \bm{U}_{h}^I(x,y) \dd x \dd y 
	= {\bm \Pi}_{i+\halfone,j+\halfone}^{I,1} + \frac{\bar{\omega}}{2} {\bm \Pi}_{i+\halfone,j+\halfone}^{I,2} + \frac{\bar{\omega}}{2} {\bm \Pi}_{i+\halfone,j+\halfone}^{I,3},
\end{equation}
where $
	\bm{U}^{I,\pm,\mu}_{i+\halfone,j+\halfone+\mfourone} := \bm{U}^I_h(x_{i+\halfone}^{\pm}, y_{j+\halfone+\mfourone}^{\mu})$, $
	\bm{U}^{I,\mu,\pm}_{i+\halfone+\mfourone,j+\halfone} := \bm{U}^I_h(x_{i+\halfone+\mfourone}^{\mu}, y_{j+\halfone}^{\pm})$, and 
\begin{align*}
	&{\bm \Pi}_{i+\halfone,j+\halfone}^{I,1} :=  \sum_{m=\pm 1, \sigma\pm1} \sum_{s=1}^{S} \frac{\omega_s}{4} {\bm{U}_{h}^I(\tilde x_{i+\halfone+\mfourone,j+\halfone+\sfourone}^{s},\tilde y_{i+\halfone+\mfourone,j+\halfone+\sfourone}^s)},  \\ %%I1
	&{\bm \Pi}_{i+\halfone,j+\halfone}^{I,2} := \sum_{m=\pm1}\sum_{\mu=1}^{Q} \frac{\omega_{\mu}}{2} \left( \frac{\lambda_1}{\lambda}  \left(\bm{U}^{I,-,\mu}_{i+\halfone,j+\halfone+\mfourone} + \bm{U}^{I,+,\mu}_{i+\halfone,j+\halfone+\mfourone}\right)  
	+ \frac{\lambda_2}{\lambda} \left(\bm{U}^{I,\mu,-}_{i+\halfone+\mfourone,j+\halfone} + \bm{U}^{I,\mu,+}_{i+\halfone+\mfourone,j+\halfone}\right) \right), \\%%I2
	&{\bm \Pi}_{i+\halfone,j+\halfone}^{I,3} := \sum_{m=\pm1}\sum_{\mu=1}^{Q} \frac{\omega_{\mu}}{2} \left( \frac{\lambda_1}{\lambda} \left(\bm{U}^{I,\mu}_{i,j+\halfone+\mfourone} + \bm{U}^{I,\mu}_{i+1,j+\halfone+\mfourone}\right)  
	+ \frac{\lambda_2}{\lambda} \left(\bm{U}^{I,\mu}_{i+\halfone+\mfourone,j} + \bm{U}^{I,\mu}_{i+\halfone+\mfourone,j+1}\right) \right). %%I3
\end{align*}
The extensions of the Zhang--Shu CAD and the Cui--Ding--Wu CAD on overlapping meshes are illustrated in \Cref{fig:CADInt}.

	\begin{figure}[htbp]
	\centering	
	\begin{subfigure}{0.48\textwidth}
		\centering
		\includegraphics[width=0.8\textwidth]{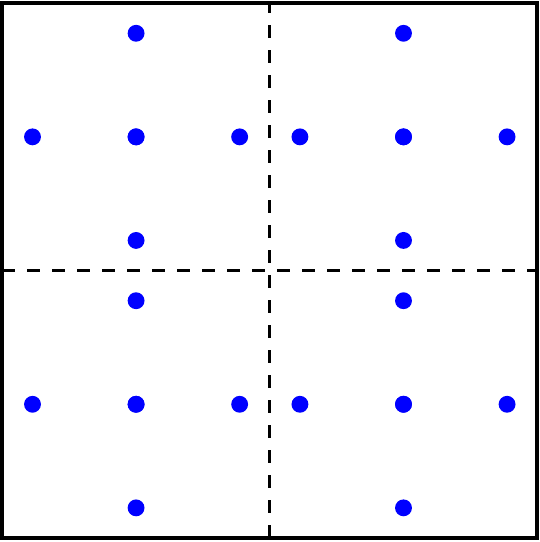}
		\caption{Extended Zhang--Shu CAD for $k=2$}
		\label{fig:CAD1}
	\end{subfigure}
	\begin{subfigure}{0.48\textwidth}
		\includegraphics[width=0.8\textwidth]{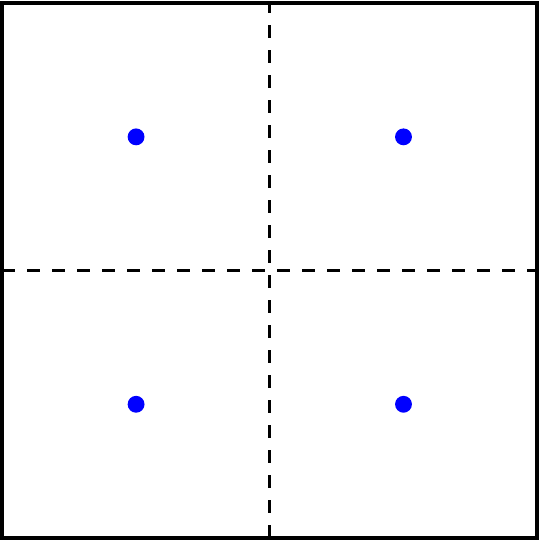}
		\caption{Extended Cui--Ding--Wu CAD for both $k=2$ and $k=3$}
		\label{fig:CAD2}
	\end{subfigure}
	
	\caption{Internal nodes of the Zhang--Shu CAD and the Cui--Ding--Wu CAD extended to overlapping meshes. 
	}
	\label{fig:CADInt}
\end{figure}

\subsection{Rigorous BP Analysis of Our Locally DF CDG Schemes}

%cell average decomposition

%For the semi-discrete equation \eqref{CDG2D:ave} coupled with the Euler forward time discretization, we have the following theorem:

We are now ready to rigorously analyze the BP property of the proposed locally DF CDG schemes. 

\begin{theorem}\label{thm:main2D}
	Assume $\overline{\bm{U}}_{ij}^I \in \mathcal{G}$ and $\overline{\bm{U}}_{i+\halfone,j+\halfone}^J \in \mathcal{G}$.
	If the numerical solutions $\bm{U}_h^I(x,y)$ and $\bm{U}_h^J(x,y)$ satisfy %for all $i$ and $j$
	% that 
	\begin{equation}\label{CDG2D:QuaCond}
		\bm{U}_h^I(x,y) \in \mathcal{G}, \quad \bm{U}_h^J(x,y) \in \mathcal{G} \quad \forall(x,y) \in \mathcal{Q}_{ij}, \quad \forall i,j
	\end{equation}
	with the set $\mathcal{Q}_{ij}$ defined in \eqref{thm:2DQ}, 
	then 
	the updated cell averages satisfy
	\begin{equation*}\label{CDG2D:Update}
		\overline{\bm{U}}_{ij}^{I,\dt} := 
		\overline{\bm{U}}_{ij}^I + \dt \mathcal{L}_{ij}^I(\bm{U}_h^I,\bm{U}_h^J) \in \mathcal{G},
		\quad
		\overline{\bm{U}}_{i+\halfone,j+\halfone}^{J,\dt} := 
		\overline{\bm{U}}_{i+\halfone,j+\halfone}^J + \dt \mathcal{L}_{i+\halfone,j+\halfone}^J(\bm{U}_h^J,\bm{U}_h^I) \in \mathcal{G}
		\qquad \forall i,j 
	\end{equation*}
	under the CFL condition
	\begin{equation}\label{CDG2D:CFLCond}
		0<\dt\left(\frac{\alpha_1}{\dx} + \frac{\alpha_2}{\dy}\right) \le \frac{\theta \bar{\omega}}{2}, \quad 
		\theta = \frac{\dt}{\tau_{\rm max}} \in (0,1],
	\end{equation}
	where $\alpha_1 = \max\{1,\beta_1\}$ and $\alpha_2 = \max\{1,\beta_2\}$ with 
	\begin{equation}\label{CDG2D:beta}
		\begin{aligned}
			\beta_1 = \underset{i,j,\mu}{\max} \underset{m=\pm1}{\max} \left\{ \frac{\big|\jump{B_{1,h}^J(x_i,y^\mu_{j+\frac{m}{4}})}\big|}{2\sqrt{\average{(\rho h) (x_i,y^\mu_{j+\frac{m}{4}})}}}, \frac{\big|\jump{B_{1,h}^I(x_{i+\halfone},y^\mu_{j+\frac12+\frac{m}{4}})}\big|}{2\sqrt{\average{(\rho h) (x_{i+\halfone},y^\mu_{j+\frac12+\frac{m}{4}})}}} \right\}, \\
			\beta_2 = \underset{i,j,\mu}{\max} \underset{m=\pm1}{\max} \left\{ \frac{\big|\jump{B_{2,h}^J(x^\mu_{i+\frac{m}{4}},y_j)}\big|}{2\sqrt{\average{(\rho h) (x^\mu_{i+\frac{m}{4}},y_j)}}}, \frac{\big|\jump{B_{2,h}^I(x^\mu_{i+\frac12+\frac{m}{4}},y_{j+\halfone})}\big|}{2\sqrt{\average{(\rho h) (x^\mu_{i+\frac12+\frac{m}{4}},y_{j+\halfone})}}} \right\}.
		\end{aligned}
	\end{equation}
	
\end{theorem}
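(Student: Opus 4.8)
The plan is to follow the architecture of the proof of \Cref{thm:main1D}, now using the GQL representation $\mathcal{G}=\mathcal{G}_*$ of \Cref{Lemma:Gstar}, the extended CAD \eqref{eq:CADI}--\eqref{eq:CADJ} on overlapping meshes, and the estimates in \Cref{Coro:Flux2} and \Cref{Lemma:SourT}. I would prove only $\overline{\bm{U}}_{ij}^{I,\dt}\in\mathcal{G}$, the dual update being entirely analogous. First I fix the still-free CAD parameters by choosing $\lambda_1=\alpha_1/\dx$ and $\lambda_2=\alpha_2/\dy$, so that $\lambda=\lambda_1+\lambda_2$ and the CFL condition \eqref{CDG2D:CFLCond} is exactly equivalent to
\[
\tfrac{\theta\bar\omega}{2}\,\tfrac{\lambda_1}{\lambda}\ \ge\ \dt\,\tfrac{\alpha_1}{\dx},\qquad
\tfrac{\theta\bar\omega}{2}\,\tfrac{\lambda_2}{\lambda}\ \ge\ \dt\,\tfrac{\alpha_2}{\dy};
\]
since $\alpha_i=\max\{1,\beta_i\}$, the left-hand sides simultaneously dominate $\dt/\dx$ and $\dt\beta_1/\dx$ (respectively $\dt/\dy$ and $\dt\beta_2/\dy$). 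Inserting \eqref{eq:CADI} into the cell-average update and writing $\theta=\dt/\tau_{\rm max}$ gives
\[
\overline{\bm{U}}_{ij}^{I,\dt}=(1-\theta)\overline{\bm{U}}_{ij}^{I}+\theta\,{\bm \Pi}_{ij}^{J,1}+\tfrac{\theta\bar\omega}{2}{\bm \Pi}_{ij}^{J,2}+\tfrac{\theta\bar\omega}{2}{\bm \Pi}_{ij}^{J,3}+\dt\,{\bm \Pi}_{\bm F}+\dt\,\mathcal{S}_{ij}^J,
\]
where ${\bm \Pi}_{\bm F}$ collects the flux differences in \eqref{CDG2D:aveHI} and $\mathcal{S}_{ij}^J$ is \eqref{CDG2D:SourI}. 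All nodes appearing in ${\bm \Pi}_{ij}^{J,1},{\bm \Pi}_{ij}^{J,2},{\bm \Pi}_{ij}^{J,3}$ lie in $\mathcal{Q}_{ij}$, so by \eqref{CDG2D:QuaCond} and the convexity of $\mathcal{G}$ (\Cref{Lemma:convex}) these point values---and the interface averages $\average{\bm{U}_h^J}$ entering $\mathcal{S}_{ij}^J$---all belong to $\mathcal{G}$.

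For the functional $\bm{U}\mapsto\bm{U}\cdot\bm{n}_1=D$: the source contributes nothing (the first component of $\bm{S}$ is $0$), and pairing $\tfrac{\theta\bar\omega}{2}{\bm \Pi}_{ij}^{J,3}$ with ${\bm \Pi}_{\bm F}$ and applying \eqref{eq:GQLCor2_1} with $\ell=1,2$ leaves, thanks to the CFL bounds, a non-negative multiple of positive point values of $D$; hence $\overline{\bm{U}}_{ij}^{I,\dt}\cdot\bm{n}_1>0$. The crux is $\overline{\bm{U}}_{ij}^{I,\dt}\cdot\bm{n}^{*}+p^{*}_m>0$ for all $\bm{v}^{*}\in\mathbb{B}_1(\bm 0)$, $\bm{B}^{*}\in\mathbb{R}^3$. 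I would split the right-hand side into three groups. Group (i), $(1-\theta)(\overline{\bm{U}}_{ij}^{I}\cdot\bm n^*+p^*_m)+\theta({\bm \Pi}_{ij}^{J,1}\cdot\bm n^*+\cdots)$, is non-negative by \eqref{eq:Gs}. Group (ii) is $\tfrac{\theta\bar\omega}{2}{\bm \Pi}_{ij}^{J,3}+\dt\,{\bm \Pi}_{\bm F}$: applying \eqref{eq:GQLCor2_2} with $\ell=1,2$, the $\bigl((\bm U+\tilde{\bm U})\cdot\bm n^*+2p^*_m\bigr)$ parts are absorbed by ${\bm \Pi}_{ij}^{J,3}$ via $\tfrac{\theta\bar\omega}{2}\tfrac{\lambda_1}{\lambda}\ge\dt/\dx$ and $\tfrac{\theta\bar\omega}{2}\tfrac{\lambda_2}{\lambda}\ge\dt/\dy$, leaving only the divergence-error terms $-\tfrac{\dt}{\dx}\sum_{m,\mu}\tfrac{\omega_\mu}{2}\bigl(\text{$B_1$-difference across }x_{i\pm\frac12}\bigr)(\bm v^*\!\cdot\!\bm B^*)$ and its $y$-analogue. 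Group (iii) is $\tfrac{\theta\bar\omega}{2}{\bm \Pi}_{ij}^{J,2}+\dt\,\mathcal{S}_{ij}^J$: applying \Cref{Lemma:SourT} at the admissible averaged states $\average{\bm{U}_h^J}$ with $\zeta\propto-\tfrac{\dt}{\dx}\jump{B_{1,h}^J(x_i,\cdot)}$ (respectively $-\tfrac{\dt}{\dy}\jump{B_{2,h}^J(\cdot,y_j)}$), the term $-\tfrac{|\zeta|}{\sqrt{\rho h}}(\average{\bm U}\cdot\bm n^*+p^*_m)$ is absorbed by ${\bm \Pi}_{ij}^{J,2}$ using $\tfrac{\theta\bar\omega}{2}\tfrac{\lambda_1}{\lambda}\ge\dt\beta_1/\dx$, $\tfrac{\theta\bar\omega}{2}\tfrac{\lambda_2}{\lambda}\ge\dt\beta_2/\dy$ and the definition \eqref{CDG2D:beta} of $\beta_1,\beta_2$, leaving only the terms $+\tfrac{\dt}{\dx}\sum_{m,\mu}\tfrac{\omega_\mu}{2}\jump{B_{1,h}^J(x_i,\cdot)}(\bm v^*\!\cdot\!\bm B^*)$ and its $y$-analogue. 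It then remains to show that the leftover divergence-error terms from (ii) and (iii) cancel exactly for every $\bm v^*,\bm B^*$.

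This cancellation is the heart of the argument, and I would isolate it as a \emph{local discrete divergence-free} identity. Its proof uses only that on each \emph{dual} cell $J_{i+\frac12,j+\frac12}$ the pair $(B_{1,h}^J,B_{2,h}^J)\in\mathbb{B}^k$ is an exactly divergence-free polynomial: applying the divergence theorem on each quarter cell $\Omega_{i+\frac m4,j+\frac\sigma4}$ (which sits inside a single dual cell) and evaluating the boundary line integrals with the $Q=k+1$-point Gauss rule---exact for degree $\le 2k+1\ge k$---rewrites the $x$-direction $B_1$-differences across $x_{i\pm\frac12}$ together with the $B_1$-jump at $x_i$ as $y$-direction $B_2$-differences, and symmetrically for the $y$-direction; summing over $m,\sigma=\pm1$ the cross contributions telescope and cancel. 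Crucially, no continuity of $B_{1,h}^J,B_{2,h}^J$ across dual-cell interfaces is invoked---only the element-wise constraint---which is exactly why drawing the jump and average in the source discretization from the dual mesh makes bound preservation hinge on this purely local condition rather than on the globally coupled discrete DF condition that would otherwise appear. Once this identity is established, all three groups are non-negative with at least one strictly positive, so $\overline{\bm{U}}_{ij}^{I,\dt}\in\mathcal{G}_*=\mathcal{G}$ by \Cref{Lemma:Gstar}; the dual-mesh update is handled identically using \eqref{eq:CADJ}, \eqref{CDG2D:aveHJ} and \eqref{CDG2D:SourJ}. The step I expect to be the main obstacle is precisely this local discrete DF cancellation---making the quarter-cell divergence-theorem bookkeeping, the Gauss-rule exactness, and the $\lambda_1/\lambda$-weighted pairings line up so that the $(\bm v^*\!\cdot\!\bm B^*)$ contributions vanish identically---with a secondary technical point being the control of $(\rho h)^{-1/2}$ at the averaged interface states through \eqref{CDG2D:beta}.
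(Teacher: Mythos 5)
Your proposal is correct and follows essentially the same route as the paper's proof: the same choice $\lambda_1=\alpha_1/\dx$, $\lambda_2=\alpha_2/\dy$ in the extended CAD \eqref{eq:CADI}, the same use of \Cref{Coro:Flux2} to absorb the flux terms into ${\bm \Pi}_{ij}^{J,3}$ and of \Cref{Lemma:SourT} (with the $\beta_\ell$ from \eqref{CDG2D:beta}) to absorb the source terms into ${\bm \Pi}_{ij}^{J,2}$, and the same key cancellation ${\rm div}_{ij}\jump{\bm{B}_h^J}-{\rm div}_{ij}\bm{B}_h^J=0$ established by the divergence theorem on the four quarter cells together with Gauss-rule exactness and the element-wise DF constraint on the dual mesh. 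Your three-group bookkeeping is only a presentational variant of the paper's estimates \eqref{pf:2DPIFdotnstar} and \eqref{pf:2DSour}, and your observation that no inter-element continuity of $\bm{B}_h^J$ is needed matches the paper's \Cref{rmk:BP2D}.
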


\begin{proof}
	We only present the proof of $\overline{\bm{U}}_{ij}^{I,\dt} \in \mathcal{G}$, as the proof of $\overline{\bm{U}}_{i+\halfone,j+\halfone}^{J,\dt} \in \mathcal{G}$ is analogous and thus omitted. 
	Define 
	\begin{equation}\label{pf:2DPIF}
		\begin{aligned}
			{\bm \Pi}_{F}^{J} :=& - \frac{1}{\dx} \sum_{m=\pm1}\sum_{\mu=1}^{Q} \frac{\omega_{\mu}}{2} \left(\bm{F}_1(\bm{U}^{J,\mu}_{i+\halfone,j+\mfourone}) - \bm{F}_1(\bm{U}^{J,\mu}_{i-\halfone,j+\mfourone})\right) 
			- \frac{1}{\dy} \sum_{m=\pm1} \sum_{\mu=1}^{Q} \frac{\omega_{\mu}}{2} \left(\bm{F}_2(\bm{U}^{J,\mu}_{i+\mfourone,j+\halfone}) - \bm{F}_2(\bm{U}^{J,\mu}_{i+\mfourone,j-\halfone})\right).
		\end{aligned}
	\end{equation}
	Using \eqref{WKL2}, \eqref{CDG2D:aveHI}, and \eqref{eq:CADI} gives 
	\begin{align}\nonumber
			\overline{\bm{U}}_{ij}^{I,\dt} 
			& = \overline{\bm{U}}_{ij}^I + \dt \mathcal{L}_{ij}^I(\bm{U}_h^I,\bm{U}_h^J) 
			\\ \nonumber 
			&= (1-\theta)\overline{\bm{U}}_{ij}^{I} + \theta \overline{\bm{U}}_{ij}^{J} + \dt {\bm \Pi}_{F}^{J} + \dt \mathcal{S}_{ij}^{J}, \\ \label{pf:CDG2DUbar}
			&=(1-\theta)\overline{\bm{U}}_{ij}^{I} + \theta{\bm \Pi}_{ij}^{J,1} + \frac{\theta \bar{\omega}}{2} ({\bm \Pi}_{ij}^{J,2} + {\bm \Pi}_{ij}^{J,3}) + \dt {\bm \Pi}_{F}^{J} + \dt \mathcal{S}_{ij}^{J},  
	\end{align}
	where we have employed in the last step the extended 2D CAD \eqref{eq:CADI} with $\lambda_1 = \frac{ \alpha_1}{\dx}$ and $\lambda_2 =  \frac{\alpha_2}{\dy}$. 
	Under the condition \eqref{CDG2D:QuaCond} and using the convexity of $\mathcal G$, we have  
	\begin{equation}\label{pf:CDG2Dcon1}
		\overline{\bm{U}}_{ij}^{I} \in \mathcal{G}, \quad \frac{1}{1-2\bar{\omega}}{\bm \Pi}_{ij}^{J,1} \in \mathcal{G}, \quad \frac{1}{2}{\bm \Pi}_{ij}^{J,2} \in \mathcal{G}, \quad \frac{1}{2}{\bm \Pi}_{ij}^{J,3} \in \mathcal{G}.
	\end{equation}

	We first prove that $\overline{\bm{U}}_{ij}^{I,\dt} \cdot \bm{n}_1 >0$. Using \eqref{eq:GQLCor2_1} in \Cref{Coro:Flux2} gives 
	\begin{equation}\label{pf:2DPIFdotn1}
		\begin{aligned}
			{\bm \Pi}_{F}^{J} \cdot \bm{n}_1 
			& \geq - \frac{1}{\dx} \sum_{m=\pm1}\sum_{\mu=1}^{Q} \frac{\omega_{\mu}}{2} \left(\bm{U}^{J,\mu}_{i+\halfone,j+\mfourone} + \bm{U}^{J,\mu}_{i-\halfone,j+\mfourone}\right) \cdot \bm{n}_1 
			- \frac{1}{\dy} \sum_{m=\pm1} \sum_{\mu=1}^{Q} \frac{\omega_{\mu}}{2} \left(\bm{U}^{J,\mu}_{i+\mfourone,j+\halfone} + \bm{U}^{J,\mu}_{i+\mfourone,j-\halfone}\right) \cdot \bm{n}_1 \\
			& \geq - \frac{\alpha_1}{\dx} \sum_{m=\pm1}\sum_{\mu=1}^{Q} \frac{\omega_{\mu}}{2} \left(\bm{U}^{J,\mu}_{i+\halfone,j+\mfourone} + \bm{U}^{J,\mu}_{i-\halfone,j+\mfourone}\right) \cdot \bm{n}_1 
			- \frac{\alpha_2}{\dy} \sum_{m=\pm1} \sum_{\mu=1}^{Q} \frac{\omega_{\mu}}{2} \left(\bm{U}^{J,\mu}_{i+\mfourone,j+\halfone} + \bm{U}^{J,\mu}_{i+\mfourone,j-\halfone}\right) \cdot \bm{n}_1 \\
			& =  (\lambda_1 + \lambda_2)  {\bm \Pi}_{ij}^{J,3}  \cdot \bm{n}_1 = \lambda  {\bm \Pi}_{ij}^{J,3}  \cdot \bm{n}_1.
		\end{aligned}
	\end{equation}
	This, along with \eqref{pf:CDG2DUbar} and $\bm{S}_{ij}^J\cdot \bm{n}_1=0$, implies  
	\begin{equation}\label{pf:Udotn1}
		\begin{aligned}
			\overline{\bm{U}}_{ij}^{I,\dt} \cdot \bm{n}_1 \geq (1-\theta)\overline{\bm{U}}_{ij}^{I} \cdot \bm{n}_1 + \theta{\bm \Pi}_{ij}^{J,1} \cdot \bm{n}_1 + \frac{\theta \bar{\omega}}{2} {\bm \Pi}_{ij}^{J,2} \cdot \bm{n}_1 + \left(\frac{\theta \bar{\omega}}{2}- {\lambda}\Delta t\right) {\bm \Pi}_{ij}^{J,3} \cdot \bm{n}_1 >0,
		\end{aligned}
	\end{equation}
where we have used \eqref{pf:CDG2Dcon1} and the CFL constraint \eqref{CDG2D:CFLCond}. 

	Next, we will prove that $\overline{\bm{U}}_{ij}^{I,\dt} \cdot \bm{n}^* + p^*_m >0$ for any free auxiliary variables $\bm{v}^* \in \mathbb{B}_1(\bm 0)$ and $\bm{B}^*\in \mathbb{R}^3$. 
	It follows from \eqref{pf:CDG2DUbar} that 
	\begin{align}\nonumber
	 \overline{\bm{U}}_{ij}^{I,\dt} \cdot \bm{n}^* + p^*_m  = &	(1-\theta) (\overline{\bm{U}}_{ij}^{I} \cdot \bm{n}^* + p^*_m ) + (1-2 \bar{\omega}) \theta \left( \frac{1}{1-2 \bar{\omega}} {\bm \Pi}_{ij}^{J,1} \cdot \bm{n}^* + p^*_m\right) 
	 \\ \nonumber
	 & + \frac{\theta \bar{\omega}}{2} ({\bm \Pi}_{ij}^{J,2} \cdot \bm{n}^* + 2 p^*_m  + {\bm \Pi}_{ij}^{J,3} \cdot \bm{n}^* + 2 p^*_m) + \dt {\bm \Pi}_{F}^{J} \cdot \bm{n}^* + \dt \mathcal{S}_{ij}^{J} \cdot \bm{n}^* 
	 \\ \label{WKL33} 
	  > & \frac{\theta \bar{\omega}}{2} ({\bm \Pi}_{ij}^{J,2} \cdot \bm{n}^* + 2 p^*_m  + {\bm \Pi}_{ij}^{J,3} \cdot \bm{n}^* + 2 p^*_m) + \dt {\bm \Pi}_{F}^{J} \cdot \bm{n}^* + \dt \mathcal{S}_{ij}^{J} \cdot \bm{n}^*. 
	\end{align}
	We first estimate the lower bounds for ${\bm \Pi}_{F}^{J} \cdot \bm{n}^*$ and $\bm{S}_{ij}^J \cdot \bm{n}^*$, respectively. 
	Using \eqref{eq:GQLCor2_2} in \Cref{Coro:Flux2} gives  
	\begin{align}\label{pf:2DPIFdotnstar}
		{\bm \Pi}_{F}^{J} \cdot \bm{n}^* 
		& \geq -\frac{1}{\dx} \sum_{m=\pm1}\sum_{\mu=1}^{Q} \frac{\omega_{\mu}}{2} \left( (\bm{U}^{J,\mu}_{i+\halfone,j+\mfourone} + \bm{U}^{J,\mu}_{i-\halfone,j+\mfourone}) \cdot \bm{n}^* + 2 p^*_m 
		+ \left((B_1)_{i+\halfone,j+\mfourone}^{J,\mu}-(B_1)_{i-\halfone,j+\mfourone}^{J,\mu}\right) (\bm{v}^*\cdot \bm{B}^*)\right)  \nonumber\\  
		& ~~~~- \frac{1}{\dy} \sum_{m=\pm1} \sum_{\mu=1}^{Q} \frac{\omega_{\mu}}{2} \left( (\bm{U}^{J,\mu}_{i+\mfourone,j+\halfone} + \bm{U}^{J,\mu}_{i+\mfourone,j-\halfone} ) \cdot \bm{n}^* + 2 p^*_m 
		+  \left((B_2)_{i+\mfourone,j+\halfone}^{J,\mu}-(B_2)_{i+\mfourone,j-\halfone}^{J,\mu}\right) (\bm{v}^*\cdot \bm{B}^*)\right)  \nonumber\\
		& \geq 
		- \frac{\alpha_1}{\dx} \sum_{m=\pm1}\sum_{\mu=1}^{Q} \frac{\omega_{\mu}}{2} \left( (\bm{U}^{J,\mu}_{i+\halfone,j+\mfourone} + \bm{U}^{J,\mu}_{i-\halfone,j+\mfourone}) \cdot \bm{n}^* + 2 p^*_m\right)  \nonumber\\  
		& ~~~~ - \frac{\alpha_2}{\dy} \sum_{m=\pm1} \sum_{\mu=1}^{Q} \frac{\omega_{\mu}}{2} \left( (\bm{U}^{J,\mu}_{i+\mfourone,j+\halfone} + \bm{U}^{J,\mu}_{i+\mfourone,j-\halfone} ) \cdot \bm{n}^* + 2 p^*_m \right) - (\bm{v}^*\cdot \bm{B}^*) {\rm div}_{ij}\bm{B}_h^J   \nonumber\\ 
		& = - \lambda \left({\bm \Pi}_{ij}^{J,3} \cdot \bm{n}^* + 2p^*_m\right) - (\bm{v}^*\cdot \bm{B}^*) {\rm div}_{ij}\bm{B}_h^J,
	\end{align}
	where 
	\begin{equation}\label{eq:2DDivB_J1}
		{\rm div}_{ij}\bm{B}_h^J := \sum_{m=\pm1} \sum_{\mu=1}^{Q} \frac{\omega_{\mu}}{2} \left(\frac{(B_1)_{i+\halfone,j+\mfourone}^{J,\mu}-(B_1)_{i-\halfone,j+\mfourone}^{J,\mu}}{\dx} + \frac{(B_2)_{i+\mfourone,j+\halfone}^{J,\mu}-(B_2)_{i+\mfourone,j-\halfone}^{J,\mu}}{\dy}\right).
	\end{equation}
	Thanks to \Cref{Lemma:SourT}, we have 
	\begin{equation*}
		\begin{aligned}
			-&\jump{B_{1,h}^J(x_i,y_{j+\mfourone}^{\mu})} \bm{S}\left(\average{\bm{U}_h^J(x_i,y_{j+\mfourone}^{\mu})}  \right) \cdot \bm{n}^* \\ 
			&~~~~~\geq \jump{B_{1,h}^J(x_i,y_{j+\mfourone}^{\mu})} (\bm{v}^* \cdot \bm{B}^*) - \frac{\Big|\jump{B_{1,h}^J(x_i,y_{j+\mfourone}^{\mu})}\Big|}{\sqrt{\average{\rho h}(x_i,y_{j+\mfourone}^{\mu})}} \left(\average{\bm{U}_h^J(x_i,y_{j+\mfourone}^{\mu})} \cdot \bm{n}^* + p^*_m\right) \\
			&~~~~~\geq \jump{B_{1,h}^J(x_i,y_{j+\mfourone}^{\mu})} (\bm{v}^* \cdot \bm{B}^*) - 2\beta_1 \left(\average{\bm{U}_h^J(x_i,y_{j+\mfourone}^{\mu})} \cdot \bm{n}^* + p^*_m\right). \\
		\end{aligned}
	\end{equation*}
	Similarly, one can derive 
	\begin{align*}
		-\jump{B_{1,h}^J(x_{i+\mfourone}^{\mu},y_j)} \bm{S}\left(\average{\bm{U}_h^J(x_{i+\mfourone}^{\mu},y_j)}  \right) \cdot \bm{n}^* 
		\geq \jump{B_{2,h}^J(x_{i+\mfourone}^{\mu},y_j)} (\bm{v}^* \cdot \bm{B}^*) - 2\beta_2 \left(\average{\bm{U}_h^J(x_{i+\mfourone}^{\mu},y_j)} \cdot \bm{n}^* + p^*_m\right). 
	\end{align*}
	Combining these inequalities with the equation \eqref{CDG2D:SourI}, we get 
	\begin{align}\label{pf:2DSour}
		\bm{S}_{ij}^J \cdot \bm{n}^* 
		&\geq \frac{1}{\dx} \sum_{m=\pm1}\sum_{\mu=1}^{Q} \frac{\omega_{\mu}}{2} \left( \jump{B_{1,h}^J(x_i,y_{j+\mfourone}^{\mu})} (\bm{v}^* \cdot \bm{B}^*) - 2\beta_1 \left(\average{\bm{U}_h^J(x_i,y_{j+\mfourone}^{\mu})} \cdot \bm{n}^* + p^*_m\right) \right)  \nonumber\\
		& +\frac{1}{\dy} \sum_{m=\pm1}\sum_{\mu=1}^{Q} \frac{\omega_{\mu}}{2} \left( \jump{B_{2,h}^J(x_{i+\mfourone}^{\mu},y_j)} (\bm{v}^* \cdot \bm{B}^*) - 2\beta_2 \left(\average{\bm{U}_h^J(x_{i+\mfourone}^{\mu},y_j)} \cdot \bm{n}^* + p^*_m\right) \right)  \nonumber\\
		& \geq (\bm{v}^* \cdot \bm{B}^*)  {\rm div}_{ij} \jump{\bm{B}_h^J} \nonumber\\
		& -\frac{ 2 \alpha_1}{\dx} \sum_{m=\pm1}\sum_{\mu=1}^{Q} \frac{\omega_{\mu}}{2}   \left(\average{\bm{U}_h^J(x_i,y_{j+\mfourone}^{\mu})} \cdot \bm{n}^* + p^*_m\right)
		- \frac{ 2 \alpha_2}{\dy} \sum_{m=\pm1}\sum_{\mu=1}^{Q} \frac{\omega_{\mu}}{2}  \left(\average{\bm{U}_h^J(x_{i+\mfourone}^{\mu},y_j)} \cdot \bm{n}^* + p^*_m\right)  \nonumber\\
		& = (\bm{v}^* \cdot \bm{B}^*)  {\rm div}_{ij} \jump{\bm{B}_h^J}  - {\lambda} \left( {\bm \Pi}_{ij}^{J,2} \cdot \bm{n}^* + 2p^*_m\right), 
	\end{align} 
	where we have used the fact that 
	\begin{equation*}
		\begin{aligned}
			\frac{\lambda}{2} {\bm \Pi}_{ij}^{J,2} 
			& = \frac{\alpha_1}{\dx} \sum_{m=\pm1}\sum_{\mu=1}^{Q} \frac{\omega_{\mu}}{2} \average{\bm{U}_h^J(x_i,y_{j+\mfourone}^{\mu})}  
			+ \frac{\alpha_2}{\dy} \sum_{m=\pm1} \sum_{\mu=1}^{Q} \frac{\omega_{\mu}}{2} \average{\bm{U}_h^J(x_{i+\mfourone}^{\mu},y_j)} \\
			& \geq \frac{\beta_1}{\dx} \sum_{m=\pm1}\sum_{\mu=1}^{Q} \frac{\omega_{\mu}}{2} \average{\bm{U}_h^J(x_i,y_{j+\mfourone}^{\mu})}  
			+ \frac{\beta_2}{\dy} \sum_{m=\pm1} \sum_{\mu=1}^{Q} \frac{\omega_{\mu}}{2} \average{\bm{U}_h^J(x_{i+\mfourone}^{\mu},y_j)},
		\end{aligned}
	\end{equation*}
 and the notation 
	\begin{equation}\label{eq:2DDivB_J2}
		{\rm div}_{ij} \jump{\bm{B}_h^J} := \sum_{m=\pm1} \sum_{\mu=1}^{Q} \frac{\omega_{\mu}}{2} \left(\frac{\jump{B_{1,h}^J(x_i,y_{j+\mfourone}^{\mu})}}{\dx} + \frac{\jump{B_{2,h}^J(x_{i+\mfourone}^{\mu},y_j)}}{\dy}\right).
	\end{equation}
	Combining the inequalities \eqref{pf:2DPIFdotnstar} and \eqref{pf:2DSour} with \eqref{WKL33}, we obtain 
		\begin{align} \nonumber 
			\overline{\bm{U}}_{ij}^{I,\dt} \cdot \bm{n}^* + p^*_m 
			& >  \left(\frac{\theta \bar{\omega}}{2}- {\lambda}\Delta t\right) \Big(({\bm \Pi}_{ij}^{J,2}\cdot \bm{n}^* + 2p^*_m) + ({\bm \Pi}_{ij}^{J,3} \cdot \bm{n}^* + 2p^*_m) \Big) \\ \nonumber
			& ~~~~~ + \dt (\bm{v}^* \cdot \bm{B}^*) \left( {\rm div}_{ij} \jump{\bm{B}_h^J} - {\rm div}_{ij}\bm{B}_h^J \right) \\ \label{pf:Udotn*}
			& \ge \dt (\bm{v}^* \cdot \bm{B}^*) \left({\rm div}_{ij} \jump{\bm{B}_h^J} - {\rm div}_{ij}\bm{B}_h^J \right),  
		\end{align}
	where we have used \eqref{pf:CDG2Dcon1} and the CFL constraint \eqref{CDG2D:CFLCond}. 
	With \eqref{eq:2DDivB_J1} and \eqref{eq:2DDivB_J2}, we derive 
	\begin{align*}
		{\rm div}_{ij} \jump{\bm{B}_h^J} - {\rm div}_{ij}\bm{B}_h^J
		&=-\sum_{\mu=1}^{Q} \frac{\omega_{\mu}}{2} \left(\frac{(B_1)^{J,-,\mu}_{i,j-\fourone}-(B_1)_{i-\halfone,j-\fourone}^{J,\mu}}{\dx} +  \frac{(B_2)^{J,\mu,-}_{i-\fourone,j}-(B_2)_{i-\fourone,j-\halfone}^{J,\mu}}{\dy}\right) \\
		&~~~~~-\sum_{\mu=1}^{Q} \frac{\omega_{\mu}}{2} \left(\frac{(B_1)_{i+\halfone,j-\fourone}^{J,\mu}-(B_1)^{J,+,\mu}_{i,j-\fourone}}{\dx} +  \frac{(B_2)^{J,\mu,-}_{i+\fourone,j}-(B_2)_{i+\fourone,j-\halfone}^{J,\mu}}{\dy}\right) \\
		&~~~~~-\sum_{\mu=1}^{Q} \frac{\omega_{\mu}}{2} \left(\frac{(B_1)^{J,-,\mu}_{i,j+\fourone}-(B_1)_{i-\halfone,j+\fourone}^{J,\mu}}{\dx} +  \frac{(B_2)_{i-\fourone,j+\halfone}^{J,\mu}-(B_2)^{J,\mu,+}_{i-\fourone,j}}{\dy}\right) \\
		&~~~~~-\sum_{\mu=1}^{Q} \frac{\omega_{\mu}}{2} \left(\frac{(B_1)_{i+\halfone,j+\fourone}^{J,\mu}-(B_1)^{J,+,\mu}_{i,j+\fourone}}{\dx} +  \frac{(B_2)_{i+\fourone,j+\halfone}^{J,\mu}-(B_2)^{J,\mu,+}_{i+\fourone,j}}{\dy}\right) \\
		&=-\frac{1}{\dx \dy} \sum_{m=\pm1,\sigma=\pm1}\int_{\partial \Omega_{i+\mfourone,j+\sfourone} } \bm{B}_h^J \cdot \bm{n}_{\partial \Omega_{i+\mfourone,j+\sfourone} } \dd s \\
		&=-\frac{1}{\dx \dy} \sum_{m=\pm1,\sigma=\pm1}\iint_{ \Omega_{i+\mfourone,j+\sfourone} } \nabla \cdot \bm{B}_h^J \dd x\dd y,
	\end{align*}
	where the first step is obtained by reformulation, the second step follows from the exactness of $Q$-point Gauss quadrature rule for any bivariate polynomial of degree $\le k$, and the third step is derived by applying the divergence theorem within the four quarter cells  $\Omega_{i+\mfourone,j+\sfourone}:=[x_{i+\mfourone}-\frac{\dx}{4}, x_{i+\mfourone}+\frac{\dx}{4}] \times [y_{j+\sfourone}-\frac{\dy}{4}, y_{j+\sfourone}+\frac{\dy}{4}]$ for $m=\pm1$ and $\sigma=\pm1$.
	Since $\bm{U}_h^J \in \mathbb{V}_{h}^{J,k}$, we have $\nabla \cdot \bm{B}_h^J = 0 $ within each of these four quarter cells. This implies the identity ${\rm div}_{ij} \jump{\bm{B}_h^J} - {\rm div}_{ij}\bm{B}_h^J =0$. It follows from \eqref{pf:Udotn*} that 
	$$
	\overline{\bm{U}}_{ij}^{I,\dt} \cdot \bm{n}^* + p^*_m > 0, 
	$$
	which together with \eqref{pf:Udotn1} yields $\overline{\bm{U}}_{ij}^{I,\dt} \in \mathcal{G}_* = \mathcal{G}$, according to the GQL representation in the \Cref{Lemma:Gstar}. 
	By similar arguments, we can show $\overline{\bm{U}}_{i+\halfone,j+\halfone}^{J,\dt} \in \mathcal{G}$. The proof is completed. 
\end{proof}

\begin{remark}\label{rmk:BP2D}
	Our 2D CDG schemes effectively combine two important techniques to control divergence errors: the locally DF CDG finite elements and the suitable discretization of the symmetrization source terms. The former ensures that the numerical magnetic fields $\bm{B}_h^I$ and $\bm{B}_h^J$ maintain locally zero divergence within each cell. The latter helps reduce the divergence errors across cell interfaces. As demonstrated in the proof of \Cref{thm:main2D}, the BP property of the CDG schemes highly depends on these two techniques, which together effectively eliminate the influence of divergence errors on the BP property.
\end{remark}

We would like to mention that the GQL approach is essential for establishing and rigorously proving the BP property. Without it, the BP analysis would become exceedingly challenging, if not impossible. The challenges primarily arise from the high nonlinearity in the functions $p(\bm{U})$, $\bm{v}(\bm{U})$, $q(\bm{U})$, and $\Phi(\bm{U})$ associated with the physical constraints. Due to the locally DF constraint, the states at the Gauss quadrature nodes on the cell interfaces are strongly coupled. Consequently, some standard BP analysis techniques, which typically rely on decomposing high-order multidimensional schemes into a convex combination of formally 1D BP schemes, become invalid in the RMHD case. Therefore, the BP analysis is nontrivial, and the GQL approach is key to overcoming these challenges.

\subsection{2D BP Limiter} %\label{2DBPLmt}

	\Cref{thm:main2D} establishes a sufficient condition \eqref{CDG2D:QuaCond} for our 2D locally DF CDG schemes to preserve the BP property of the cell averages, specifically when using the forward Euler method for time discretization. Moreover, this BP property is also retained in the fully discrete 2D locally DF CDG schemes that incorporate high-order SSP time discretization, given that an SSP method can be regarded as a convex combination of the forward Euler method.

As in the 1D case, the BP condition \eqref{CDG2D:QuaCond} may not be inherently fulfilled by the 2D CDG solutions. To ensure compliance with this BP condition \eqref{CDG2D:QuaCond}, a 2D local scaling BP limiter is required. In addition, similar to the 1D discussions in \Cref{rem:1Dmorepoints}, we also need to enforce  $\bm{U}_h^{J}(x_{i\pm\fourone}^{\mu},y_{j\pm\fourone}^{\tilde{\mu}}) \in {\mathcal G}$ and $\bm{U}_h^{I}(x_{i+\halfone\pm\fourone}^\mu,y_{j+\halfone\pm\fourone}^{\tilde{\mu}}) \in {\mathcal G}$  for the robust recovery of primitive variables at these tensor-product Gauss points. This necessity arises because the flux must be evaluated at these points, as indicated in \eqref{CDG2D:DisRI}--\eqref{CDG2D:DisBJ}. In summary, the 2D BP limiter is employed to enforce the admissibility of $\bm{U}_h^{I}(x,y)$ and $\bm{U}_h^{J}(x,y)$ at the following points:  
	\begin{equation}\label{eq:Sij}
		\mathcal{S}_{ij} = \mathcal{Q}_{ij}  \cup (\mathcal{Q}_i^x \otimes \mathcal{Q}_j^y),
	\end{equation}
	where
	$$\mathcal{Q}_i^x=\{{x}_{i-\fourone}^\mu\}_{\mu=1}^Q \cup \{{x}_{i+\fourone}^\mu\}_{\mu=1}^Q, \quad \mathcal{Q}_j^y=\{{y}_{j-\fourone}^\mu\}_{\mu=1}^Q \cup \{{y}_{j+\fourone}^\mu\}_{\mu=1}^Q.$$
The implementation of the 2D BP limiter is very similar to that of the 1D case, only replacing the 1D point set $\mathcal{S}_j$ with the 2D point set $\mathcal{S}_{ij}$. Hence, the details can be omitted here.

\begin{remark}
	It is important to note that the 2D BP limiter maintains the locally DF property of the CDG solutions. After applying the BP limiter to the CDG solutions at each Runge--Kutta stage \eqref{RKTime}, we obtain the fully discrete CDG schemes that are high-order accurate, locally DF, and provably BP under the CFL condition \eqref{CDG2D:CFLCond}.
\end{remark}

\subsection{BP CFL Conditions with Different CADs}

Note that the BP condition \eqref{CDG2D:QuaCond} and the associated BP limiter depend on the nodes of the 2D CADs \eqref{eq:CADI} and \eqref{eq:CADJ} adopted in the BP analysis. Furthermore, the theoretically estimated BP CFL condition \eqref{CDG2D:CFLCond} also depends on the weight $\bar \omega$ in the 2D CADs. If one chooses the 2D Zhang--Shu CAD \cite{zhang2010} in \Cref{ex:zs}, then the following result can be obtained. 

\begin{corollary}\label{Coro:ZS}
	If the condition \eqref{CDG2D:QuaCond} is satisfied with ${\mathcal Q}_{ij}$ defined by the nodes of the 2D Zhang--Shu CAD, then the BP property in \Cref{thm:main2D} holds under the CFL condition 
		\begin{equation}\label{CDG2D:CFLCondzs}
		0<\dt\left(\frac{\alpha_1}{\dx} + \frac{\alpha_2}{\dy}\right) \le \frac{\theta \widehat{\omega}_1}{2} = \frac{\theta }{2 L (L-1)}, 
	\end{equation}
	where $\alpha_1 = \max\{1,\beta_1\}$, $\alpha_2 = \max\{1,\beta_2\}$, $\theta = {\dt}/{\tau_{\rm max}} \in (0,1]$, and $L=\lceil\frac{k+3}{2} \rceil$. Specifically, $\widehat{\omega}_1=\frac1{L (L-1)}=\frac16$ for $k=2$ and $3$; $\widehat{\omega}_1=\frac1{12}$ for $k=4$ and $5$; $\widehat{\omega}_1=\frac1{20}$ for $k=6$ and $7$. 
\end{corollary}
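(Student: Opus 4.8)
The plan is to deduce the claim directly from \Cref{thm:main2D}: it suffices to confirm that the 2D Zhang--Shu CAD of \Cref{ex:zs} is a legitimate instance of the general symmetric CAD \eqref{eq:2DCAD}, to identify the associated weight $\bar\omega$, and then to substitute this $\bar\omega$ into the abstract BP CFL bound \eqref{CDG2D:CFLCond}.

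First I would verify the quadrature identity \eqref{CAD:1b}. Since $L=\lceil\frac{k+3}{2}\rceil$, the $L$-point Gauss--Lobatto rule on $[-1,1]$ is exact for polynomials of degree at most $2L-3\ge k$, and since $Q=k+1$ the $Q$-point Gauss rule is exact for degree at most $2Q-1=2k+1\ge k$. Hence, for any $f\in\mathbb{P}^k(\Omega)$ on $\Omega=[-1,1]^2$, applying Gauss--Lobatto in $x$ together with Gauss in $y$ as a tensor-product rule, doing the same with the roles of $x$ and $y$ exchanged, and forming the convex combination with weights $\lambda_1/\lambda$ and $\lambda_2/\lambda$ gives
\[
\frac14\int_\Omega f\,\dd x\,\dd y
= \sum_{\nu=1}^{L}\sum_{\mu=1}^{Q}\widehat\omega_\nu\,\omega_\mu\left(\frac{\lambda_1}{\lambda}f(\widehat x^\nu,y^\mu)+\frac{\lambda_2}{\lambda}f(x^\mu,\widehat y^\nu)\right).
\]
Splitting off the boundary contributions $\nu\in\{1,L\}$, for which $\widehat x^1=\widehat y^1=-1$, $\widehat x^L=\widehat y^L=1$, and $\widehat\omega_1=\widehat\omega_L$, reproduces exactly the form \eqref{eq:2DCAD} with $\bar\omega=\widehat\omega_1$ and with the internal-node sum as in \eqref{CAD:1b}.

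Next I would check that the Zhang--Shu weights and nodes satisfy the positivity and normalization constraints built into \eqref{eq:2DCAD}. The Gauss and Gauss--Lobatto weights are positive, so $\bar\omega=\widehat\omega_1>0$ and all $\widetilde\omega_s\ge0$; moreover $\sum_{\nu=2}^{L-1}\widehat\omega_\nu=1-2\widehat\omega_1$, whence $2\bar\omega+\sum_s\widetilde\omega_s=1$; and each internal node $(\widehat x^\nu,y^\mu)$ or $(x^\mu,\widehat y^\nu)$ with $2\le\nu\le L-1$ lies in $(-1,1)^2\subset\Omega$. Consequently the extended CADs \eqref{eq:CADI}--\eqref{eq:CADJ} are genuine convex decompositions of $\overline{\bm{U}}^J_{ij}$ and $\overline{\bm{U}}^I_{i+\halfone,j+\halfone}$ into values of the CDG solution at the nodes of $\mathcal{Q}_{ij}$, so the hypothesis \eqref{CDG2D:QuaCond} (with $\mathcal{Q}_{ij}$ built from the Zhang--Shu nodes) yields $\overline{\bm{U}}^I_{ij}\in\mathcal{G}$, $\tfrac{1}{1-2\bar\omega}{\bm \Pi}^{J,1}_{ij}\in\mathcal{G}$, $\tfrac12{\bm \Pi}^{J,2}_{ij}\in\mathcal{G}$, and $\tfrac12{\bm \Pi}^{J,3}_{ij}\in\mathcal{G}$, exactly the admissibility facts used in the proof of \Cref{thm:main2D}.

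Finally, \Cref{thm:main2D} then applies verbatim with this CAD, so the updated cell averages stay in $\mathcal{G}$ under \eqref{CDG2D:CFLCond}, which here reads $0<\dt(\alpha_1/\dx+\alpha_2/\dy)\le\theta\bar\omega/2=\theta\widehat\omega_1/2$. Inserting the closed form $\bar\omega=\widehat\omega_1=\frac{1}{L(L-1)}$ from \Cref{ex:zs} gives \eqref{CDG2D:CFLCondzs}, and the tabulated values $\widehat\omega_1=\frac16,\frac1{12},\frac1{20}$ follow by evaluating $L=\lceil\frac{k+3}{2}\rceil$ at $k=2,3$ ($L=3$), $k=4,5$ ($L=4$), and $k=6,7$ ($L=5$). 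The argument is essentially bookkeeping; the only points that demand a little care are matching the exactness degrees ($2L-3\ge k$ for Gauss--Lobatto and $2Q-1\ge k$ for Gauss) and the normalization $2\bar\omega+\sum_s\widetilde\omega_s=1$, which is precisely what makes the decomposition convex and hence compatible with the convexity-based estimates underpinning \Cref{thm:main2D}.
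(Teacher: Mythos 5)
Your proposal is correct and follows the same route as the paper, which simply states that the corollary is a direct consequence of \Cref{thm:main2D}; you have filled in the bookkeeping (exactness degrees $2L-3\ge k$ and $2Q-1\ge k$, the normalization $2\bar\omega+\sum_s\tilde\omega_s=1$, and the identification $\bar\omega=\widehat\omega_1=\frac{1}{L(L-1)}$) that the paper leaves implicit. All of your checks, including the tabulated values of $\widehat\omega_1$ for $k=2,\dots,7$, are accurate.
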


\begin{proof}
This is a direct consequence of \Cref{thm:main2D}. 
\end{proof}

If we choose the 2D Cui--Ding--Wu CAD in \Cref{ex:cdw}, then the following result can be obtained.

\begin{corollary}\label{Coro:cdw}
	If the condition \eqref{CDG2D:QuaCond} is satisfied with ${\mathcal Q}_{ij}$ defined by the nodes of the 2D Cui--Ding--Wu CAD, then the BP property in \Cref{thm:main2D} holds under the CFL condition 
	\begin{equation}\label{CDG2D:CFLCondcdw}
		0<\dt\left(\frac{\alpha_1}{\dx} + \frac{\alpha_2}{\dy}\right) \le \frac{\theta \bar \omega_\star }{2}, 
	\end{equation}
	where $\theta = {\dt}/{\tau_{\rm max}} \in (0,1]$. The weight $\bar \omega_\star=\bar \omega_\star( \delta, \mathbb P^k )$ depends on $\delta := \frac{\lambda_1-\lambda_2}{\lambda} = \frac{\alpha_1 \dy - \alpha_2 \dx}{ \alpha_1 \dy + \alpha_2 \dx } $ and the space $\mathbb P^k$. 
	For $k\in \{2,\dots,7\}$, it is given by  
		\begin{equation*}
	\bar \omega_\star( \delta, \mathbb P^k )=
		\begin{cases} 
			\frac12, &~ k=1;\\
			\displaystyle
			\frac{1}{4+2|\delta|}, &~ k=2,3;\\ \displaystyle
			\left[\frac{14}{3}+\frac{2}{3}\sqrt{78\,\delta^2+46} \cos \left( \frac{1}{3} \arccos\frac{1476\,\delta^2-244}{(78\,\delta^2+46)^{\frac{3}{2}}} \right) \right]^{-1}, &~ k=4,5;\\ \displaystyle
			\left[\frac{20}{3}+2\,|\delta|+\frac{2}{3}\sqrt{126\,\delta^2+96|\delta|+94}\cos\left(\frac{1}{3}\arccos\frac{864\,|\delta|^3+2916\,\delta^2+288\,|\delta|-532}{(126\,\delta^2+96|\delta|+94)^{\frac{3}{2}}}\right)\right]^{-1}, &~ k=6, 7. 
		\end{cases}
	\end{equation*}
	Particularly, in the typical case $\alpha_1 \dy = \alpha_2 \dx$, we have $\delta=0$ and 
	\begin{equation*}
		\bar \omega_\star = \frac 14 ~\mbox{  for } k=2,3; \quad ~ \bar \omega_\star = 2-\frac{\sqrt{14}}{2} \approx 0.1292 ~\mbox{  for } k=4,5; \quad ~ \bar \omega_\star = 1-\frac{\sqrt{30}}{6} \approx 0.08713 ~\mbox{  for } k=6,7. 
	\end{equation*}
	The values of $\bar \omega_\star$ for higher degree $k\ge 8$ can be effectively computed using Algorithm 5.11 of \cite{CuiDingWu2022SINUM}. 
\end{corollary}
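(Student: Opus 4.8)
The plan is to obtain this corollary as a direct specialization of \Cref{thm:main2D}, in exactly the same spirit as \Cref{Coro:ZS}. The only places where the choice of CAD enters the hypotheses or the conclusion are the node set $\mathcal{Q}_{ij}$ in the BP condition \eqref{CDG2D:QuaCond} and the weight $\bar\omega$ appearing in the CFL bound \eqref{CDG2D:CFLCond}. Thus, once $\mathcal{Q}_{ij}$ is instantiated with the nodes of the extended 2D Cui--Ding--Wu CAD \eqref{eq:CADI}--\eqref{eq:CADJ}, it suffices to identify the value of $\bar\omega$ for that CAD, which I will write as $\bar\omega_\star(\delta,\mathbb P^k)$ with $\delta=\frac{\lambda_1-\lambda_2}{\lambda}=\frac{\alpha_1\dy-\alpha_2\dx}{\alpha_1\dy+\alpha_2\dx}$, recalling that in the proof of \Cref{thm:main2D} we chose $\lambda_1=\alpha_1/\dx$ and $\lambda_2=\alpha_2/\dy$.

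First I would recall that the extended CAD on overlapping meshes in \eqref{eq:CADI} was assembled by splitting each target cell into four quarter cells $\Omega_{i+\mfourone,j+\sfourone}$ with equal weights $\frac14$ and transporting the reference-cell decomposition \eqref{eq:2DCAD} to each quarter cell. Hence the weight attached to the boundary-face contributions in \eqref{eq:CADI} is exactly $\bar\omega/2$ (each interior face being shared by two quarter cells), and this is precisely the $\bar\omega$ that propagates into the CFL estimate \eqref{CDG2D:CFLCond} through the terms $\frac{\theta\bar\omega}{2}\bm\Pi_{ij}^{J,2}$ and $\frac{\theta\bar\omega}{2}\bm\Pi_{ij}^{J,3}$. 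So the task reduces to: for the Cui--Ding--Wu CAD on the reference cell $\Omega=[-1,1]^2$, determine the admissible value of $\bar\omega$ as a function of $\delta$ and the polynomial degree $k$.

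Second, I would read off the value degree by degree. For $k=1$ the CAD is simply the edge Gauss rule, giving $\bar\omega=\frac12$; for $k=2,3$ formula \eqref{CAD:2b} gives $\bar\omega=\frac{1}{4+2|\delta|}$ directly. For $k\ge4$, $\bar\omega_\star(\delta,\mathbb P^k)$ is the optimal value of the problem that maximizes $\bar\omega$ over all symmetric CADs of the form \eqref{eq:2DCAD} subject to (i) exactness on $\mathbb P^k(\Omega)$, (ii) nonnegativity of all weights with $2\bar\omega+\sum_s\tilde\omega_s=1$, and (iii) $(\tilde x^s,\tilde y^s)\in\Omega$; the closed-form trigonometric expressions for $k=4,5$ and $k=6,7$, as well as Algorithm 5.11 for $k\ge8$, are precisely the optimal solutions established in \cite{CuiDingWu2023JCP,CuiDingWu2022SINUM}, which I would invoke here. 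Finally I would verify the stated specialization at $\delta=0$ (i.e.\ $\alpha_1\dy=\alpha_2\dx$): the $\arccos$ arguments collapse, the cubics defining $\bar\omega_\star$ factor, and one gets $\bar\omega_\star=\frac14$ for $k=2,3$, $\bar\omega_\star=2-\frac{\sqrt{14}}{2}$ for $k=4,5$, and $\bar\omega_\star=1-\frac{\sqrt{30}}{6}$ for $k=6,7$. With $\bar\omega$ thus identified, \eqref{CDG2D:CFLCond} becomes \eqref{CDG2D:CFLCondcdw} and the conclusion follows verbatim from \Cref{thm:main2D}.

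The main obstacle is the derivation of the closed-form $\bar\omega_\star(\delta,\mathbb P^k)$ for $k\ge4$: this is a genuinely nontrivial constrained optimization (a moment/quadrature problem whose optimality requires showing that the maximal amount of weight can be pushed to the cell boundary only by a specific few-node interior configuration), and it is the hard technical core behind the Cui--Ding--Wu CAD. In the present paper, however, this can be cited rather than re-proved; granting that citation, \Cref{Coro:cdw} is an immediate consequence of \Cref{thm:main2D}, exactly as \Cref{Coro:ZS} is.
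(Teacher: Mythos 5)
Your proposal is correct and matches the paper's own proof, which simply deduces the corollary from \Cref{thm:main2D} together with the explicit optimal weights $\bar\omega_\star(\delta,\mathbb P^k)$ established in Theorems 5.4, 5.6, and 5.8 of \cite{CuiDingWu2022SINUM}. Your additional remarks on how $\bar\omega$ enters only through the node set $\mathcal{Q}_{ij}$ and the CFL bound, and on citing rather than re-deriving the closed forms for $k\ge 4$, are consistent with the paper's (one-line) argument.
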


\begin{proof}
	This corollary follows from \Cref{thm:main2D} and \cite[Theorems 5.4, 5.6, and 5.8]{CuiDingWu2022SINUM}. 
\end{proof}

One can observe that the 2D Cui--Ding--Wu CAD provides a milder BP CFL constraint and involves fewer internal nodes compared to the 2D Zhang--Shu CAD. Therefore, in our numerical experiments, we opt for the Cui--Ding--Wu CAD for our 2D BP CDG schemes.

\begin{remark}
	As shown in \Cref{Coro:cdw}, the weight $\bar \omega_\star$ has simple expressions when $\alpha_1 \dy = \alpha_2 \dx$, which is a typical case always encountered in our computations. Recall that
	$\alpha_\ell = \max\{1,\beta_\ell\}$, where $\{\beta_\ell\}_{\ell=1}^2$ are defined by \eqref{CDG2D:beta}
	and are proportional to the jump in the normal component of the magnetic field at the cell interface. For the exact solutions, this jump is zero. Consequently, $\beta_\ell$ is small and at the level of truncation error in smooth regions.
	For discontinuous problems, we numerically observe that $\beta_\ell$ is much smaller than $1$, which equals the speed of light. Thus, in most cases (including all the cases we tested), $\beta_\ell$ remains smaller than $1$, and therefore $\alpha_\ell = 1$.
	Hence, if we take $\dx = \dy$, then $\alpha_1 \dy = \alpha_2 \dx$, $\lambda_1=\lambda_2$, and $\delta = 0$.
\end{remark}

\subsection{Why We Need the Symmetrization Source Terms?}

If we remove the discretized part of the symmetrization source terms from our 2D CDG schemes, we then obtain the 2D CDG method for the conservative RMHD system \eqref{eq:RMHD} without the symmetrization source terms. 
The evolution equations \eqref{CDG2D:ave} for the cell averages in the 2D CDG schemes  without the discretized source terms are given by 
\begin{equation}\label{CDG2D:aveNSour}
	\frac{\dd \overline{\bm{U}}_{ij}^I}{\dd t} =\mathcal{H}_{ij}^I(\bm{U}_h^I, \bm{U}_h^J), 
	\qquad
	\frac{\dd \overline{\bm{U}}_{i+\halfone,j+\halfone}^J}{\dd t} =\mathcal{H}_{i+\halfone,j+\halfone}^J(\bm{U}_h^J,\bm{U}_h^I),
\end{equation}	
where $\mathcal{H}_{ij}^I(\bm{U}_h^I, \bm{U}_h^J)$ and $\mathcal{H}_{i+\halfone,j+\halfone}^J
(\bm{U}_h^J,\bm{U}_h^I)$ are defined in \eqref{CDG2D:aveHI} and \eqref{CDG2D:aveHJ}, respectively. 
%The semi-discrete scheme \eqref{CDG2D:aveNSour} describes the evolution of the cell averages for the conservative RMHD system \eqref{eq:RMHD}.
%We have the following theorem:
We have the following theorem on the BP property of the 2D CDG schemes without the discretized source terms.

\begin{theorem}\label{thm:IRMHD}
	Assume that $\overline{\bm{U}}_{ij}^I \in \mathcal{G}$ and $\overline{\bm{U}}_{i+\halfone,j+\halfone}^J \in \mathcal{G}$.
	If the numerical solutions $\bm{U}_h^I(x,y)$ and $\bm{U}_h^J(x,y)$ satisfy the condition \eqref{CDG2D:QuaCond}, 
	then under the CFL condition
	\begin{equation}\label{CDG2D:CFLCondNSour}
		0<\dt\left(\frac{1}{\dx} + \frac{1}{\dy}\right) \leq \frac{\theta \bar{\omega}}{2}, \quad \theta = \frac{\dt}{\tau_{\rm max}} \in (0,1], 
	\end{equation} 
	the updated cell averages
	\begin{equation*}\label{CDG2D:UpdateNSource}
		\overline{\bm{U}}_{ij}^{I,\mathcal{H},\dt} := 
		\overline{\bm{U}}_{ij}^I + \dt \mathcal{H}_{ij}^I(\bm{U}_h^I,\bm{U}_h^J) ,
		\quad
		\overline{\bm{U}}_{i+\halfone,j+\halfone}^{J,\mathcal{H},\dt} := 
		\overline{\bm{U}}_{i+\halfone,j+\halfone}^J + \dt \mathcal{H}_{i+\halfone,j+\halfone}^J(\bm{U}_h^J,\bm{U}_h^I)
	\end{equation*}
	satisfy for all $i$ and $j$ that	
	\begin{equation}\label{thm:H1}
		\overline{\bm{U}}_{ij}^{I,\mathcal{H},\dt} \cdot \bm{n}_1 >0, \quad \overline{\bm{U}}_{i+\halfone,j+\halfone}^{J,\mathcal{H},\dt} \cdot \bm{n}_1 >0, 
	\end{equation}
	\begin{equation}\label{thm:H2}
		\overline{\bm{U}}_{ij}^{I,\mathcal{H},\dt} \cdot \bm{n}^* + p^*_m > - \dt (\bm{v}^*\cdot \bm{B}^*) ({\rm div}_{ij}\bm{B}_h^{J}), 
	\end{equation}
	\begin{equation}\label{thm:H3}
		\overline{\bm{U}}_{i+\halfone,j+\halfone}^{J,\mathcal{H},\dt} \cdot \bm{n}^* + p^*_m > - \dt (\bm{v}^*\cdot \bm{B}^*) ({\rm div}_{i+\halfone,j+\halfone}\bm{B}_h^I) 
	\end{equation}
	for any free auxiliary variables $\bm{v}^*\in \mathbb{B}_1(0)$ and $\bm{B}^*\in \mathbb{R}^3$.
	
	Furthermore, 	the estimates \eqref{thm:H1}-\eqref{thm:H3} imply $\overline{\bm{U}}_{ij}^{I,\mathcal{H},\dt}\in \mathcal{G}$ and $ \overline{\bm{U}}_{i+\halfone,j+\halfone}^{J,\mathcal{H},\dt} \in \mathcal{G}$, 	
	if the numerical solutions $\bm{U}_h^{J}(x,y)$ and $\bm{U}_h^{I}(x,y)$ satisfy the following discrete DF condition
	\begin{equation}\label{DDF}
		{\rm div}_{ij}\bm{B}_h^{J} = 0, \quad {\rm div}_{i+\halfone,j+\halfone}\bm{B}_h^{I}=0,
	\end{equation}
	where ${\rm div}_{ij}\bm{B}_h^{J}$ is defined by \eqref{eq:2DDivB_J1} and ${\rm div}_{i+\halfone,j+\halfone}\bm{B}_h^{I}$ is given by
	\begin{equation}\label{eq:2DDivB_I1}
		{\rm div}_{i+\halfone,j+\halfone}\bm{B}_h^I := \sum_{m=\pm1} \sum_{\mu=1}^{Q} \frac{\omega_{\mu}}{2} \left(\frac{(B_1)_{i+1,j+\halfone+\mfourone}^{I,\mu}-(B_1)_{i,j+\halfone+\mfourone}^{I,\mu}}{\dx} + \frac{(B_2)_{i+\halfone+\mfourone,j+1}^{I,\mu}-(B_2)_{i+\halfone+\mfourone,j}^{I,\mu}}{\dy}\right).
	\end{equation}
\end{theorem}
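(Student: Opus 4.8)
The plan is to follow the argument of \Cref{thm:main2D} almost verbatim, with the single simplification that the discretized symmetrization source terms are absent; consequently the free parameters in the extended 2D CAD \eqref{eq:CADI} may be taken as $\lambda_1 = 1/\dx$ and $\lambda_2 = 1/\dy$, equivalently $\alpha_1 = \alpha_2 = 1$, since the quantities $\beta_\ell$ in \eqref{CDG2D:beta} originated purely from the source-term estimates. First I would rewrite, using \eqref{CDG2D:aveNSour}, \eqref{CDG2D:aveHI}, and this CAD,
\begin{equation*}
	\overline{\bm{U}}_{ij}^{I,\mathcal{H},\dt}
	= (1-\theta)\overline{\bm{U}}_{ij}^{I} + \theta{\bm \Pi}_{ij}^{J,1} + \frac{\theta \bar{\omega}}{2}\big({\bm \Pi}_{ij}^{J,2} + {\bm \Pi}_{ij}^{J,3}\big) + \dt\, {\bm \Pi}_{F}^{J},
\end{equation*}
with ${\bm \Pi}_{F}^{J}$ as in \eqref{pf:2DPIF}, and invoke the convexity relations \eqref{pf:CDG2Dcon1}, which hold under the hypothesis \eqref{CDG2D:QuaCond}. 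Set $\lambda := 1/\dx + 1/\dy$.

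For the first component \eqref{thm:H1}, I would apply \eqref{eq:GQLCor2_1} of \Cref{Coro:Flux2} to each flux difference in ${\bm \Pi}_{F}^{J}$; exactly as in \eqref{pf:2DPIFdotn1} this gives ${\bm \Pi}_{F}^{J}\cdot\bm{n}_1 \ge -\lambda\, {\bm \Pi}_{ij}^{J,3}\cdot\bm{n}_1$. Substituting into the decomposition above, the coefficient $\frac{\theta\bar\omega}{2} - \lambda\dt$ of ${\bm \Pi}_{ij}^{J,3}\cdot\bm{n}_1$ is nonnegative by \eqref{CDG2D:CFLCondNSour}, and every remaining term is a positive multiple of an admissible state paired with $\bm{n}_1$; hence $\overline{\bm{U}}_{ij}^{I,\mathcal{H},\dt}\cdot\bm{n}_1 > 0$. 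The dual-mesh estimate in \eqref{thm:H1} is identical.

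For the $\bm{n}^*$ estimates, I would apply \eqref{eq:GQLCor2_2} of \Cref{Coro:Flux2} to the flux differences, which, as in \eqref{pf:2DPIFdotnstar}, yields ${\bm \Pi}_{F}^{J}\cdot\bm{n}^* \ge -\lambda\big({\bm \Pi}_{ij}^{J,3}\cdot\bm{n}^* + 2p^*_m\big) - (\bm{v}^*\cdot\bm{B}^*)\,{\rm div}_{ij}\bm{B}_h^{J}$. Unlike in \Cref{thm:main2D}, there is now no source-term contribution $-\lambda({\bm \Pi}_{ij}^{J,2}\cdot\bm{n}^* + 2p^*_m)$ to cancel the magnetic-divergence remainder: the term $\frac{\theta\bar\omega}{2}({\bm \Pi}_{ij}^{J,2}\cdot\bm{n}^* + 2p^*_m)$ in the CAD is simply nonnegative and dropped, while $\frac{\theta\bar\omega}{2}({\bm \Pi}_{ij}^{J,3}\cdot\bm{n}^* + 2p^*_m)$ absorbs the flux remainder under \eqref{CDG2D:CFLCondNSour}. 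What survives is precisely $\overline{\bm{U}}_{ij}^{I,\mathcal{H},\dt}\cdot\bm{n}^* + p^*_m > -\dt(\bm{v}^*\cdot\bm{B}^*)\,{\rm div}_{ij}\bm{B}_h^{J}$, i.e.\ \eqref{thm:H2}; and \eqref{thm:H3} follows the same way with ${\rm div}_{i+\halfone,j+\halfone}\bm{B}_h^{I}$ from \eqref{eq:2DDivB_I1}.

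Finally, under the discrete DF condition \eqref{DDF} the right-hand sides of \eqref{thm:H2}--\eqref{thm:H3} vanish, so \eqref{thm:H1}--\eqref{thm:H3} reduce to $\overline{\bm{U}}_{ij}^{I,\mathcal{H},\dt}\cdot\bm{n}_1>0$ together with $\overline{\bm{U}}_{ij}^{I,\mathcal{H},\dt}\cdot\bm{n}^*+p^*_m>0$ for all $\bm{v}^*\in\mathbb{B}_1(\bm 0)$ and $\bm{B}^*\in\mathbb{R}^3$; by the GQL representation \Cref{Lemma:Gstar} this is exactly membership in $\mathcal{G}_*=\mathcal{G}$, and analogously for the dual-mesh average. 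I expect the crux to be conceptual rather than computational: in contrast with \Cref{thm:main2D}, the locally DF property of $\mathbb{V}_h^{J,k}$ only forces ${\rm div}_{ij}\jump{\bm{B}_h^J} - {\rm div}_{ij}\bm{B}_h^{J} = 0$, so ${\rm div}_{ij}\bm{B}_h^{J}$ — which also registers the jumps across the primal-mesh interfaces interior to $I_{ij}$ — need not vanish; hence without source terms one is driven to the globally coupled condition \eqref{DDF}, which the local scaling BP limiter does not preserve. This is precisely the obstruction that motivates the symmetrization source terms.
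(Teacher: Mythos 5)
Your proof is correct and follows the paper's own argument essentially verbatim: the paper likewise reruns the proof of \Cref{thm:main2D} with $\lambda_1=1/\dx$, $\lambda_2=1/\dy$ (so $\alpha_1=\alpha_2=1$, since the $\beta_\ell$ indeed arise only from the source-term estimate), drops that estimate, keeps the divergence remainder $-\dt(\bm{v}^*\cdot\bm{B}^*)\,{\rm div}_{ij}\bm{B}_h^J$ explicit, and closes with the GQL representation under \eqref{DDF}. One terminological quibble in your closing remark: the jumps recorded by ${\rm div}_{ij}\bm{B}_h^J={\rm div}_{ij}\jump{\bm{B}_h^J}$ sit on the dual-mesh interfaces $x=x_i$, $y=y_j$ lying in the interior of the primal cell $I_{ij}$, not on primal-mesh interfaces.
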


\begin{proof}
	Consider the CAD \eqref{eq:CADI} with $\lambda_1=\frac{1}{\dx}$, $\lambda_2 = \frac{1}{\dy}$, and $\lambda = \lambda_1 + \lambda_2 =  \frac{1}{\dx} + \frac{1}{\dy}$. 
	Similar to the equation \eqref{pf:CDG2DUbar} in the proof of \Cref{thm:main2D}, one can derive 
	\begin{equation}
		\overline{\bm{U}}_{ij}^{I,\mathcal{H},\dt} 
		=(1-\theta)\overline{\bm{U}}_{ij}^{I} + \theta{\bm \Pi}_{ij}^{J,1} + \frac{\theta \bar{\omega}}{2} ({\bm \Pi}_{ij}^{J,2} + {\bm \Pi}_{ij}^{J,3}) + \dt {\bm \Pi}_{F}^{J}, 
	\end{equation}
	where ${\bm \Pi}_{F}^{J}$ is defined in \eqref{pf:2DPIF}. 
	Following the derivations of the equations \eqref{pf:2DPIFdotn1} and \eqref{pf:2DPIFdotnstar} in the proof of \Cref{thm:main2D}, we obtain ${\bm \Pi}_{F}^{J} \cdot \bm{n}_1 \geq {\lambda}{\bm \Pi}_{ij}^{J,3}  \cdot \bm{n}_1$ and ${\bm \Pi}_{F}^{J} \cdot \bm{n}^* \geq - (\bm{v}^*\cdot \bm{B}^*) {\rm div}_{ij}\bm{B}_h^J - {\lambda} \left({\bm \Pi}_{ij}^{J,3} \cdot \bm{n}^* + 2p^*_m\right) $.	
	Note that \eqref{pf:CDG2Dcon1} remains valid due to the condition \eqref{CDG2D:QuaCond}. 
	Combining \eqref{pf:CDG2Dcon1} with the CFL constraint \eqref{CDG2D:CFLCondNSour}, we have   $\overline{\bm{U}}_{ij}^{I,\mathcal{H},\dt} \cdot \bm{n}_1 >0 $ similar to \eqref{pf:Udotn1}
	and  
	\begin{equation*}
		\begin{aligned}
			\overline{\bm{U}}_{ij}^{I,\mathcal{H},\dt} \cdot \bm{n}^* + p^*_m &\geq (1-\theta)(\overline{\bm{U}}_{ij}^{I} \cdot \bm{n}^* + p^*_m) + \theta ({\bm \Pi}_{ij}^{J,1} \cdot \bm{n}^* + (1-2\bar{\omega}) p^*_m) \\
			&~~~~~+ \frac{\theta \bar{\omega}}{2}({\bm \Pi}_{ij}^{J,2}\cdot \bm{n}^* + 2p^*_m) +  \left(\frac{\theta \bar{\omega}}{2}- {\lambda}\Delta t \right) \left( {\bm \Pi}_{ij}^{J,3} \cdot \bm{n}^* + 2p^*_m \right) \\
			& ~~~~~ - \dt (\bm{v}^* \cdot \bm{B}^*) {\rm div}_{ij}(\bm{B}_h^J)  \\
			&> -\dt (\bm{v}^* \cdot \bm{B}^*) {\rm div}_{ij}(\bm{B}_h^J)
		\end{aligned}
	\end{equation*}
	for any free auxiliary variables $\bm{v}^*\in \mathbb{B}_1(0)$ and $\bm{B}^*\in \mathbb{R}^3$.
	Hence, if $\bm{U}_h^J(x,y)$ satisfies the discrete DF condition ${\rm div}_{ij}\bm{B}_h^{J} = 0$, then 
	\begin{equation*}
		\overline{\bm{U}}_{ij}^{I,\mathcal{H},\dt} \cdot \bm{n}^* + p^*_m  >0  ~~~~ \forall \bm{v}^*\in \mathbb{B}_1(0), ~ \bm{B}^*\in \mathbb{R}^3,
	\end{equation*}
	which, together with $\overline{\bm{U}}_{ij}^{I,\mathcal{H},\dt} \cdot \bm{n}_1 >0$,  implies $\overline{\bm{U}}_{ij}^{I,\mathcal{H},\dt} \in \mathcal{G}_* = \mathcal{G}$, thanks to the GQL representation in \Cref{Lemma:Gstar}. 
	Similarly, we can deduce that  $\overline{\bm{U}}_{i+\halfone,j+\halfone}^{J,\mathcal{H},\dt} \cdot \bm{n}_1 >0$ and the estimate \eqref{thm:H3}, which give rise to $\overline{\bm{U}}_{i+\halfone,j+\halfone}^{J,\mathcal{H},\dt} \in \mathcal{G}_* = \mathcal{G}$ under the discrete DF condition ${\rm div}_{i+\halfone,j+\halfone}\bm{B}_h^{I}=0$. The proof is completed.
\end{proof}

As shown in the proof of \Cref{thm:main2D}, the locally DF property is crucial in achieving the BP property for the CDG schemes for the modified RMHD equations \eqref{eq:ModRMHD} with symmetrization source terms. As \Cref{thm:IRMHD} reveals, if we remove these source terms, the BP property of the resulting CDG schemes hinges on the discrete DF condition \eqref{DDF}. However, the locally DF property alone is insufficient to ensure this DF condition \eqref{DDF}; it is met if the magnetic fields $\bm{B}^I_h$ and $\bm{B}^J_h$ are globally DF, including being locally DF within each cell and maintaining the continuity of the normal magnetic component across cell interfaces. Unfortunately, the local scaling nature of the BP limiter renders it incompatible with the globally DF property. Therefore, simultaneously enforcing both conditions \eqref{DDF} and \eqref{eq:Sij} without compromising accuracy and conservation presents a significant challenge and remains unresolved. The inclusion of symmetrization source terms relaxes the (globally coupled) DF requirement \eqref{DDF} to a locally DF condition compatible with the BP limiter, thereby effectively avoiding this issue.

\section{Numerical Experiments}\label{Numericalexperiments}
In this section, we conduct several benchmark and challenging tests to validate the accuracy and robustness of the proposed BP DF CDG schemes.   Our test cases include two smooth problems to verify the accuracy of our CDG method. Moreover, we also investigate several non-smooth problems, including three 1D Riemann problems, a 2D Orszag–Tang problem, a 2D rotor problem, a 2D shock-cloud interaction problem, three 2D blast problems, and two 2D astrophysical jets. These problems are known to contain strong discontinuities and suitable for verifying the capability of our CDG schemes in accurately resolving discontinuous solutions and capturing complex flow structures. Additionally, to suppress potential numerical oscillations, we implement the (locally DF) WENO limiter \cite{ZhaoTang2017} within some adaptively detected troubled cells, right before the BP limiter, during the simulations of these non-smooth problems. 
We focus on the $\mathbb{P}^2$-based CDG method with the third-order SSP Runge–Kutta time discretization \eqref{RKTime}. In our 2D BP CDG method, we employ the Cui–Ding–Wu CAD as described in \Cref{ex:cdw}. 
This approach allows for a milder BP CFL constraint in theory and requires fewer internal nodes.  
%compared to the Zhang–Shu CAD. 
It is worth noting that the theoretically estimated BP CFL condition is sufficient but not always necessary. In practice, we find that our BP CDG schemes often work robustly with a larger CFL number. 
In all following tests, we take the time step-size as $\dt=C_{\rm cfl} \dx$ for the 1D problems, and $\dt=\frac{C_{\rm cfl}}{ \frac{1}{\Delta x} + \frac{1}{\Delta y}}$ for the 2D problems, with the CFL number $C_{\rm cfl}=0.25$.

%The CFL number $C_{\rm cfl}$ is set as $0.5$ and $\theta=\dt/\tau_{\rm max}=1$. Namely, in 1D tests, the time step size is set as $\dt=C_{\rm cfl} \dx$, and in 2D examples, it is $\dt=\frac{C_{\rm cfl}}{ \frac{1}{\Delta x} + \frac{1}{\Delta y}}$.
%The chosen CFL number leads to a larger time step size in comparison with the estimated BP CFL condition \eqref{CDG1D:CFLCond}. 
%The time step size we have chosen remains effective, even though it is larger than the estimated CFL condition \eqref{CDG1D:CFLCond}. This effectiveness is achieved through our efficient implementation. In this implementation, if negative density, negative pressure, or super-luminal velocity produces with this given time step size, we perform recomputation from the last time step using half of the previous time step size. The recomputation is ensured by \Cref{thm:main1D} and \Cref{thm:main2D} to occur a small fixed number of times at most; furthermore, it is never encountered in any of the subsequent 1D and 2D tests. 

To demonstrate the effectiveness and diverse applicability of our CDG method for RMHD problems with various EOSs, our numerical examples will cover four different EOSs:
\begin{equation}\label{EOS:IDEOS}
\mbox{Ideal EOS}:\qquad	h=1+\frac{\Gamma p}{(\Gamma-1) \rho},
\end{equation}
\begin{equation}\label{EOS:IPEOS}
\mbox{IP-EOS \cite{Sokolov2001}}:\qquad	h = \frac{2p}{\rho} + \sqrt{1+4 \left(\frac{p}{\rho}\right)^2},
\end{equation}
\begin{equation}\label{EOS:TMEOS}
\mbox{TM-EOS \cite{Mathews1971,Mignone2005}}:\qquad	h = \frac{5p}{2\rho} + \sqrt{1+\frac{9}{4} \left(\frac{p}{\rho}\right)^2},
\end{equation}
\begin{equation}\label{EOS:RCEOS}
\mbox{RC-EOS \cite{Ryu2006}}:\qquad	h = \frac{12p^2 + 8p \rho + 2 \rho^2}{3p \rho + 2\rho^2},
\end{equation}
where $\Gamma \in (1,2]$ in the ideal EOS \eqref{EOS:IDEOS} denotes the constant adiabatic index. All these four EOSs adhere to the conditions \eqref{EOS:heq1} and \eqref{EOS:heq2}, ensuring the validity of our theoretical analyses and the BP limiters for them. Although the ideal EOS \eqref{EOS:IDEOS} has been widely used in simulating many RMHD problems, it is borrowed from the non-relativistic case and provides a poor approximation for most relativistic flows, as it is inconsistent with relativistic kinetic theory \cite{Ryu2006}. In fact, the ideal EOS \eqref{EOS:IDEOS} is valid only for either strictly sub-relativistic or ultra-relativistic gases. Recognizing the pivotal role of the accurate EOS in relativistic hydrodynamics, several other EOSs, including \eqref{EOS:IPEOS}–\eqref{EOS:RCEOS}, have been proposed in the literature; see, e.g., \cite{Mathews1971, Mignone2005, Sokolov2001, Ryu2006}.

\begin{expl}[Smooth Problems]\label{Ex:Smooth} \rm 
	
	A 1D smooth problem and a 2D smooth problem are tested to verify the accuracy of our CDG schemes. Both problems describe the periodic propagation of Alfv{\'e}n waves with a large speed of $0.99c$ and low pressure, employing the TM-EOS \eqref{EOS:TMEOS}.
	
	For the 1D problem, we adopt the same setup as in \cite{WuTangM3AS}. The spatial domain is specified as $[0,1]$, and the exact solution is 
	\begin{align*}
		&\rho(x,t) = 1, \quad 
		p(x,t) = 0.01, \quad
		v_1(x,t) = 0, \quad 
		v_2(x,t)=0.99 \sin(2\pi (x+t/\kappa)), \\
		&v_3(x,t)=0.99 \cos(2\pi (x+t/\kappa)), \quad
		B_1(x,t) = 1, \quad
		B_2(x,t) = \kappa v_2(x,t), \quad
		B_3(x,t) = \kappa v_3(x,t), 
	\end{align*} 
	where $\kappa = \sqrt{1+\rho h W^2}$.
	
	For the 2D problem, the wave propagates periodically at an angle $\alpha = \pi /4$ with respect to the $x$-axis within the domain $[0,\sqrt{2}]^2$. This problem is analogous to the one tested in \cite{WuShu2020NumMath}, but with lower pressure. The exact solution is
	\begin{align*}
		&\rho(x,y,t) = 1, \quad 
		p(x,y,t) = 0.01, \quad
		v_1(x,y,t) = -0.99\sin(2\pi(\zeta+t/\kappa)) \sin\alpha, \\
		&v_2(x,y,t)=0.99\sin(2\pi(\zeta+t/\kappa)) \cos\alpha, \quad
		v_3(x,y,t)=0.99 \cos(2\pi (\zeta+t/\kappa)), \\
		&B_1(x,y,t) = \cos \alpha + \kappa v_1(x,y,t), \quad
		B_2(x,y,t) = \sin \alpha + \kappa v_2(x,y,t), \quad
		B_3(x,y,t) = \kappa v_3(x,y,t)
	\end{align*} 
	with $\zeta=x \cos \alpha + y \sin \alpha$.

	In our simulations, the computational domain is partitioned into uniform meshes with periodic boundary conditions. \Cref{tab:Ex-Smooth1D} lists the errors in the velocity component $v_2$ at $t=1$ for the 1D problem and the corresponding convergence orders obtained using the proposed $\mathbb P^2$-based BP CDG method at different grid resolutions. \Cref{tab:Ex-Smooth2D} presents the errors in the magnetic component $B_2$ at $t=1$ for the 2D problem and the corresponding convergence orders obtained using the proposed $\mathbb P^2$-based BP locally DF CDG method. It can be seen that the expected third-order convergence is achieved in both the 1D and 2D cases, indicating that the BP limiter and the discretized source terms do not compromise the desired accuracy of the CDG schemes.
	
	\begin{table}[!htb] 
		\centering
		\caption{Errors in $v_2$ at $t=1$ and the corresponding convergence orders for the 1D smooth problem in \Cref{Ex:Smooth} computed by the third-order BP CDG scheme with $ N$ uniform cells. 
		}
		\label{tab:Ex-Smooth1D}
		\setlength{\tabcolsep}{3mm}{
			\begin{tabular}{ccccccc}
				%{\hsize}{@{}@{\extracolsep{\fill}}cllclclclc@{}}
				\toprule[1.5pt]
				\multirow{2}{*}{$ N$} &
				\multicolumn{2}{c}{$ l^{1} $ norm} &
				\multicolumn{2}{c}{$ l^{2} $ norm} &
				\multicolumn{2}{c}{$ l^{\infty} $ norm} \\
				\cmidrule(r){2-3} \cmidrule(r){4-5} \cmidrule(l){6-7}
				& error & order &  error & order &  error & order \\
				
				\midrule[1.5pt]
				20  &6.53e{-}05&{-} &7.45e{-}05&{-} &1.41e{-}04&{-} \\
				40  &8.06e{-}06&3.02&9.21e{-}06&3.02&1.77e{-}05&3.00\\
				80  &1.00e{-}06&3.00&1.15e{-}06&3.00&2.20e{-}06&3.00\\
				160 &1.25e{-}07&3.00&1.43e{-}07&3.00&2.75e{-}07&3.00\\
				320 &1.57e{-}08&3.00&1.79e{-}08&3.00&3.44e{-}08&3.00\\
				640 &1.96e{-}09&3.00&2.24e{-}09&3.00&4.30e{-}09&3.00\\

				\bottomrule[1.5pt]
			\end{tabular}
		}
	\end{table}

	\begin{table}[!htb] 
		\centering
		\caption{Errors in $B_2$ at $t=1$ and the corresponding convergence orders for the 2D smooth problem in \Cref{Ex:Smooth} computed by the third-order BP locally DF CDG scheme with $ N\times N$ uniform cells. 
		}
		\label{tab:Ex-Smooth2D}
		\setlength{\tabcolsep}{3mm}{
			\begin{tabular}{ccccccc}
				%{\hsize}{@{}@{\extracolsep{\fill}}cllclclclc@{}}
				\toprule[1.5pt]
				\multirow{2}{*}{$ N\times N$} &
				\multicolumn{2}{c}{$ l^{1} $ norm} &
				\multicolumn{2}{c}{$ l^{2} $ norm} &
				\multicolumn{2}{c}{$ l^{\infty} $ norm} \\
				\cmidrule(r){2-3} \cmidrule(r){4-5} \cmidrule(l){6-7}
				& error & order &  error & order &  error & order \\
				
				\midrule[1.5pt]
				$20\times 20$  & 1.92e{-}03&{-} &2.36e{-}03&{-} &8.45e{-}03&{-}  \\
				$40\times 40$  & 1.26e{-}04&3.93&2.22e{-}04&3.41&1.07e{-}03&2.98 \\
				$80\times 80$  & 1.56e{-}05&3.01&2.78e{-}05&3.00&1.35e{-}04&2.99 \\
				$160\times 160$& 1.96e{-}06&3.00&3.48e{-}06&3.00&1.70e{-}05&2.99 \\
				$320\times 320$& 2.45e{-}07&3.00&4.35e{-}07&3.00&2.13e{-}06&2.99 \\
				$640\times 640$& 3.07e{-}08&3.00&5.44e{-}08&3.00&2.67e{-}07&3.00 \\
				
				\bottomrule[1.5pt]
			\end{tabular}
		}
	\end{table}
	
\end{expl}

\begin{expl}[1D Riemann Problem \Rmnum{1}]\label{Ex:1DRP1} \rm   %EOS1-ID
	The initial conditions are
	\begin{equation*}
		(\rho,v_1,v_2,v_3,B_1,B_2,B_3,p) = 
		\begin{cases}
			(1,0,0,0,5,26,26,30), \quad 0\leq x \leq 0.5, \\
			(1,0,0,0,5,0.7,0.7,1), \quad 0.5< x \leq 0.5, \\
		\end{cases}
	\end{equation*}
	with a strong magnetic field $|\bm{B}|\approx37.108$ in the left state. 
	The computational domain $[0,1]$ is divided into $1000$ uniform cells with outflow boundary conditions. As in \cite{WuTangM3AS}, the ideal EOS \eqref{EOS:IDEOS} with $\Gamma=5/3$ is used. \Cref{fig:Ex-1DRP1} illustrates the numerical solutions at $t=0.4$ obtained using the BP CDG method. Since it is difficult to derive the exact solution to the 1D Riemann problem of the RMHD equations \eqref{eq:RMHD}, we compute a reference solution using the first-order Lax--Friedrichs scheme on a very fine mesh of $400,000$ uniform cells. It is observed that the results demonstrate good agreement with the reference solution and exhibit high resolution. If the BP limiter is not employed to enforce the proposed BP condition \eqref{eq:1DBPcond}, nonphysical numerical solutions would be produced after a few time steps, leading to immediate simulation failure.
	
	\begin{figure}[!htbp]%htbp
		\centering
		
		\begin{subfigure}{0.48\textwidth}
			\includegraphics[width=\textwidth]{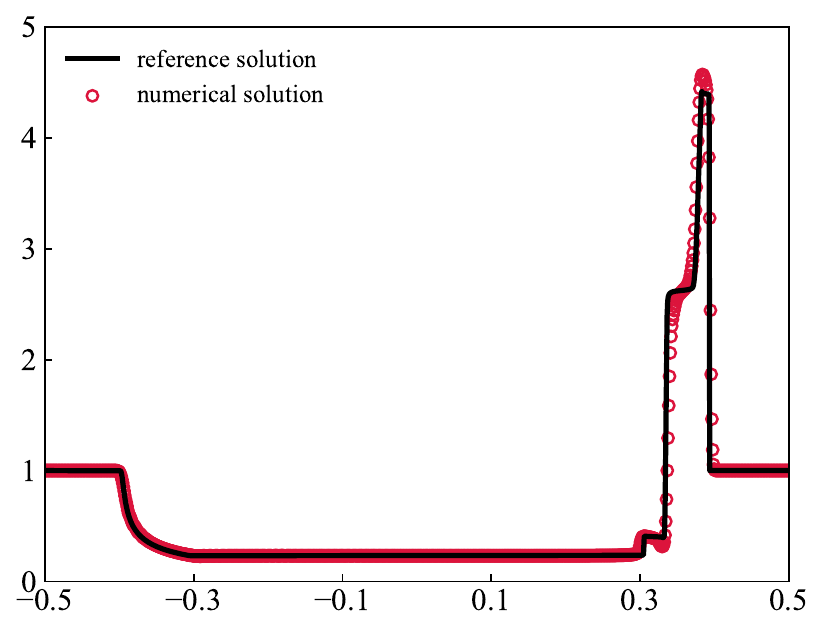}
						\caption{$\rho$}
		\end{subfigure}
		\hfill
		\begin{subfigure}{0.48\textwidth}
			\centering
			\includegraphics[width=\textwidth]{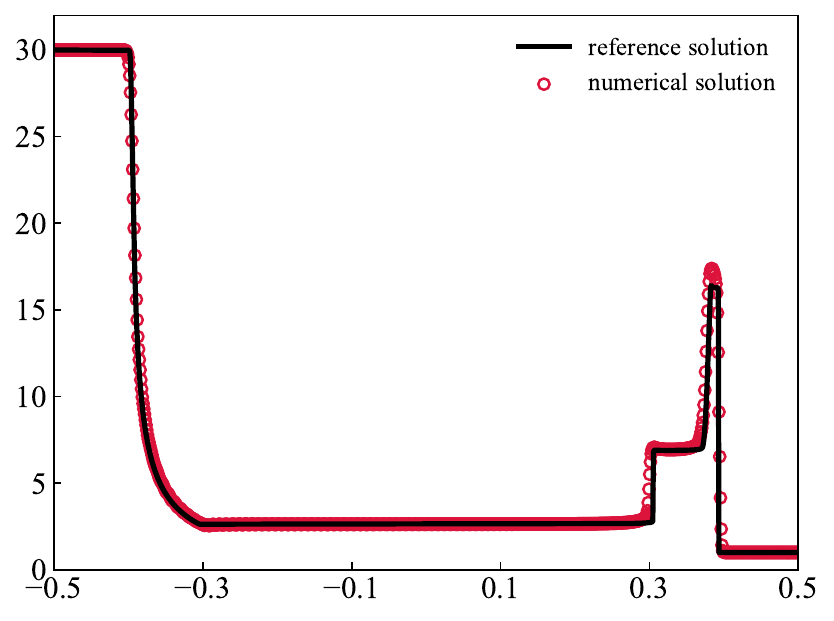}
						\caption{$p$}
		\end{subfigure}
		
		\begin{subfigure}{0.48\textwidth}
			\centering
			\includegraphics[width=\textwidth]{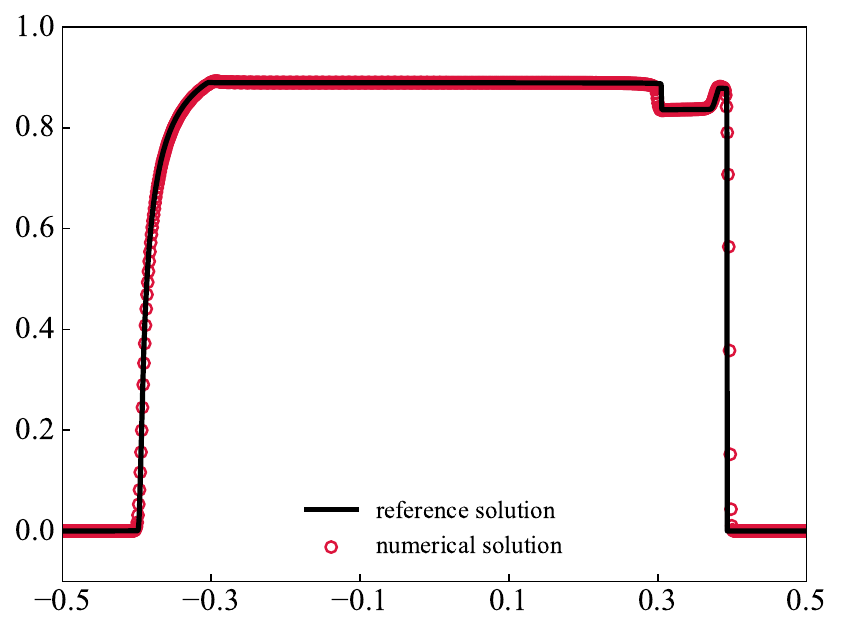}
						\caption{$v_1$}
		\end{subfigure}
		\hfill
		\begin{subfigure}{0.48\textwidth}
			\centering
			\includegraphics[width=\textwidth]{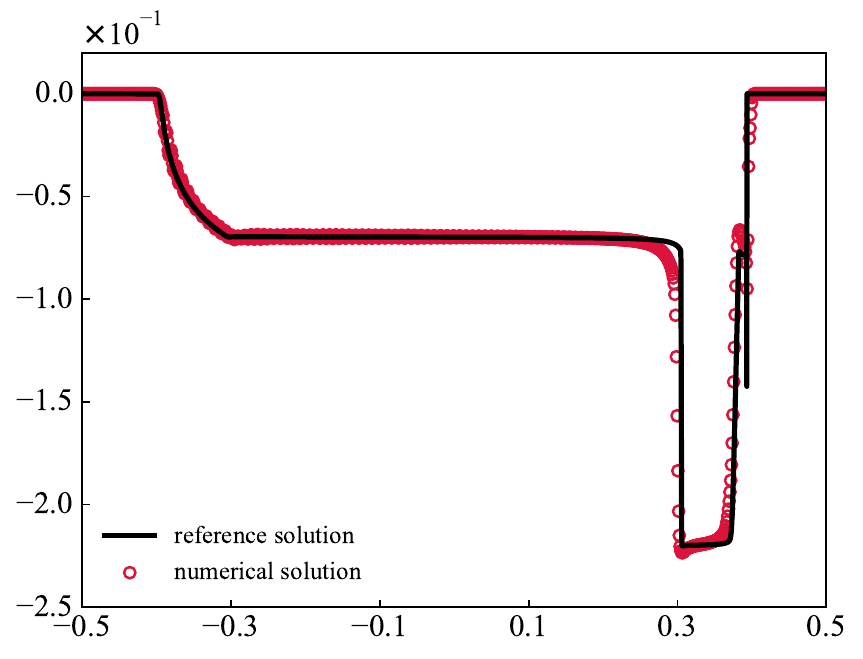}
						\caption{$v_2$}
		\end{subfigure}
		
		\begin{subfigure}{0.48\textwidth}
			\centering
			\includegraphics[width=\textwidth]{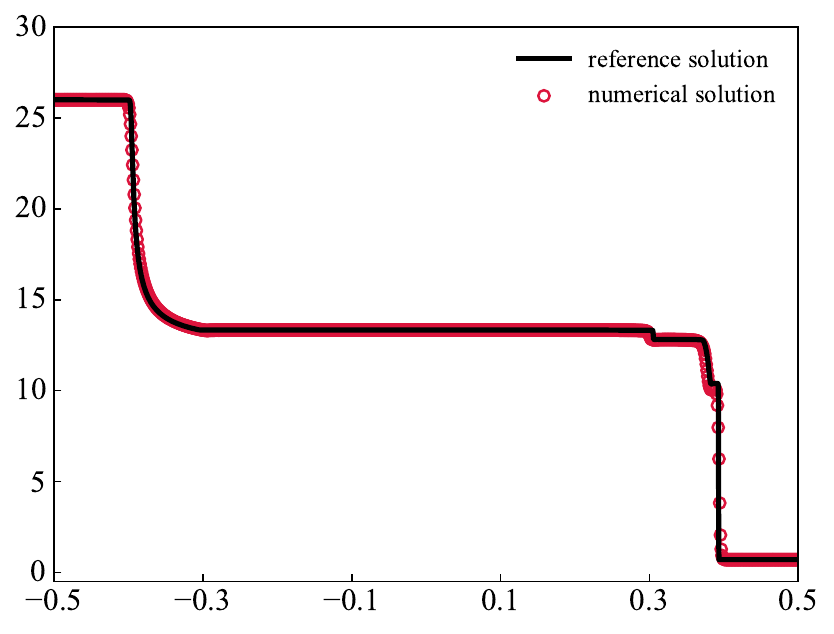}
						\caption{$B_2$}
		\end{subfigure}
		\hfill
		\begin{subfigure}{0.48\textwidth}
			\centering
			\includegraphics[width=\textwidth]{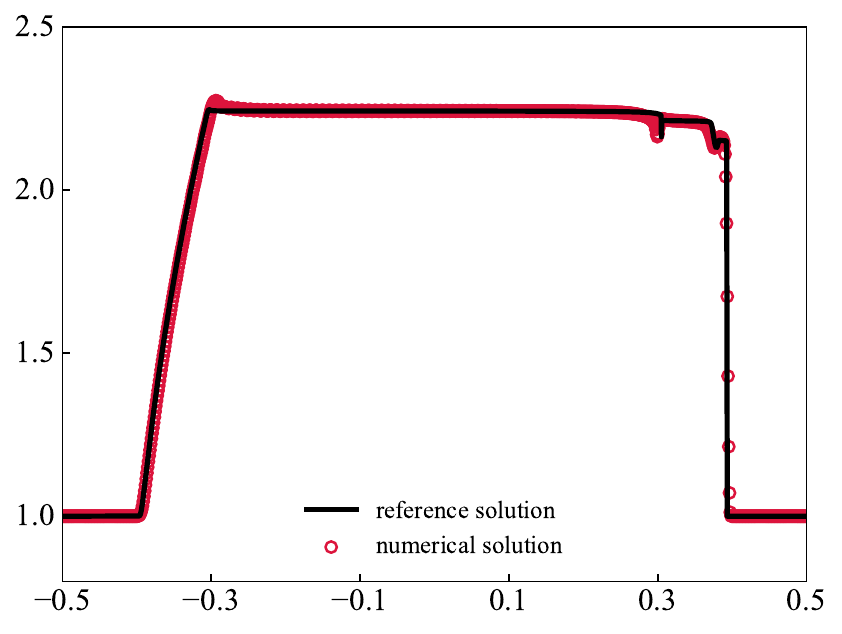}
						\caption{$W$}
		\end{subfigure}

		\caption{Numerical results obtained by the third-order BP CDG method for \Cref{Ex:1DRP1} with the ideal EOS \eqref{EOS:IDEOS}. %Top to bottom and left to right: $\rho$, $p$, $v_1$,  $v_2$, $B_2$, and $W$.
			%			Top left: $\rho$; top-right: $v_1$; bottom-left: $B_2$; bottom-right: $p$.
			%			Top-down and left-right represent $\rho$, $v_1$, $B_2$, and $p$..
		}
		\label{fig:Ex-1DRP1}
	\end{figure} 
	
\end{expl}

\begin{expl}[1D Riemann Problem \Rmnum{2}]\label{Ex:1DRP2} \rm %EOS2-RC
		The initial conditions are given by
	\begin{equation*}
		(\rho,v_1,v_2,v_3,B_1,B_2,B_3,p) = 
		\begin{cases}
			(1,0,0,0,10,7,7,10^4), &\quad 0\leq x \leq 0.5, \\
			(1,0,0,0,10,0.7,0.7,10^{-8}), &\quad 0.5< x \leq 0.5. \\
		\end{cases}
	\end{equation*} 
This example is similar to \Cref{Ex:1DRP1}; however, it features an extremely low thermal pressure with a huge relative jump of approximately $10^{12}$. The plasma-beta is very low, being approximately $1.98\times 10^{-10}$. We employ outflow boundary conditions and adopt the RC-EOS \eqref{EOS:RCEOS}, which differs from the one used in \cite{WuTangM3AS}. \Cref{fig:Ex-1DRP2} displays the results obtained using the BP CDG method with $1000$ uniform cells at $t=0.4$. The reference solution is computed using the first-order Lax--Friedrichs scheme with $400,000$ uniform cells. The numerical results compare well against the reference solution. Again, for this example, the CDG code would also break down without the BP limiter to enforce the proposed BP condition \eqref{eq:1DBPcond}. 
	
	\begin{figure}[!htbp]%htbp
		\centering
		\begin{subfigure}{0.45\textwidth}
			\includegraphics[width=\textwidth]{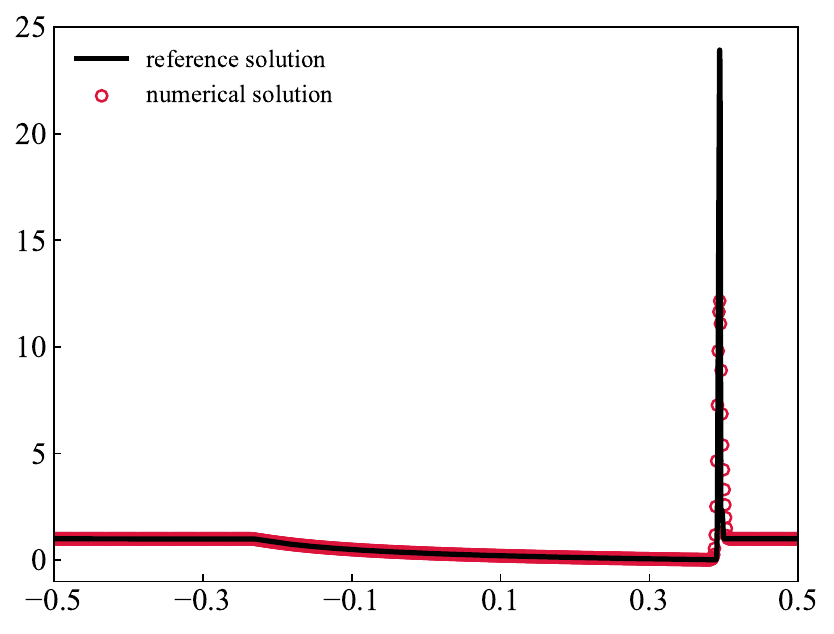}
						\caption{$\rho$}
		\end{subfigure}
		\hfill
		\begin{subfigure}{0.48\textwidth}
			\centering
			\includegraphics[width=\textwidth]{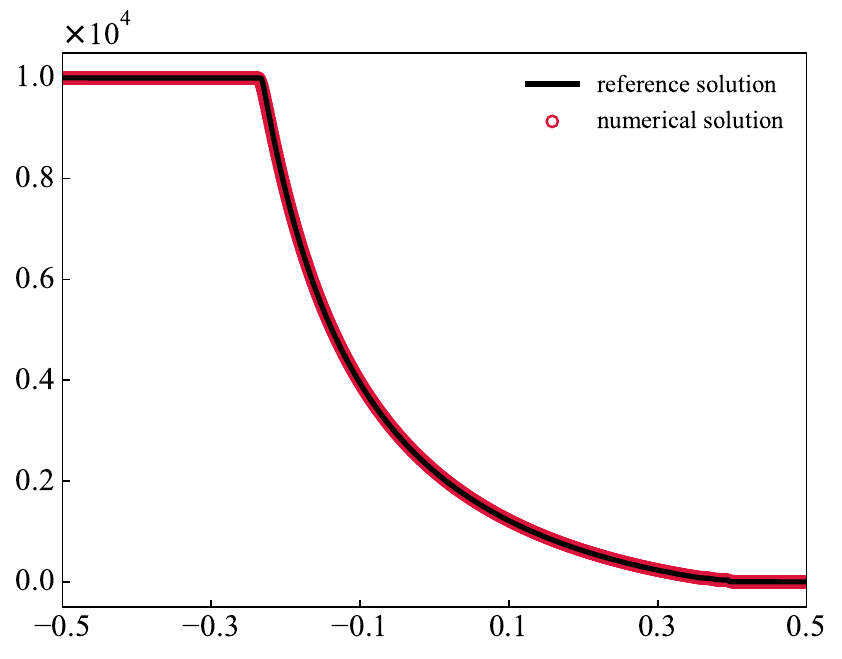}
						\caption{$p$}
		\end{subfigure}
		
		\begin{subfigure}{0.48\textwidth}
			\centering
			\includegraphics[width=\textwidth]{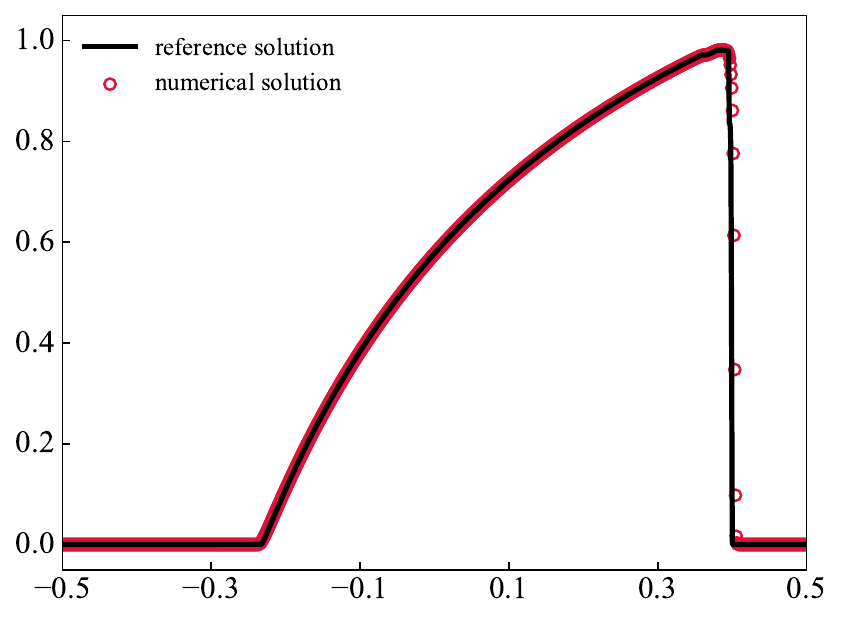}
						\caption{$v_1$}
		\end{subfigure}
		\hfill
		\begin{subfigure}{0.48\textwidth}
			\centering
			\includegraphics[width=\textwidth]{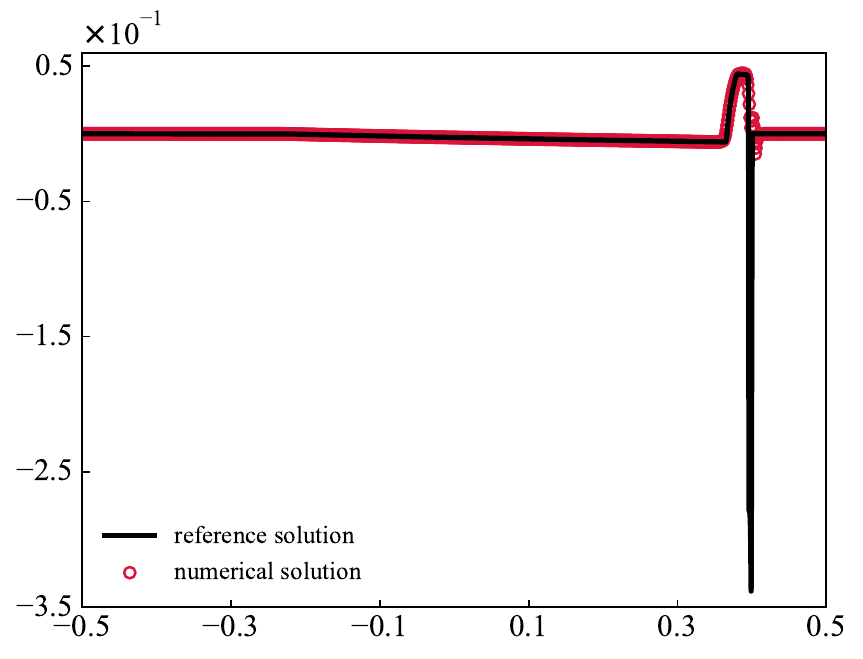}
						\caption{$v_2$}
		\end{subfigure}
		
		\begin{subfigure}{0.48\textwidth}
			\centering
			\includegraphics[width=\textwidth]{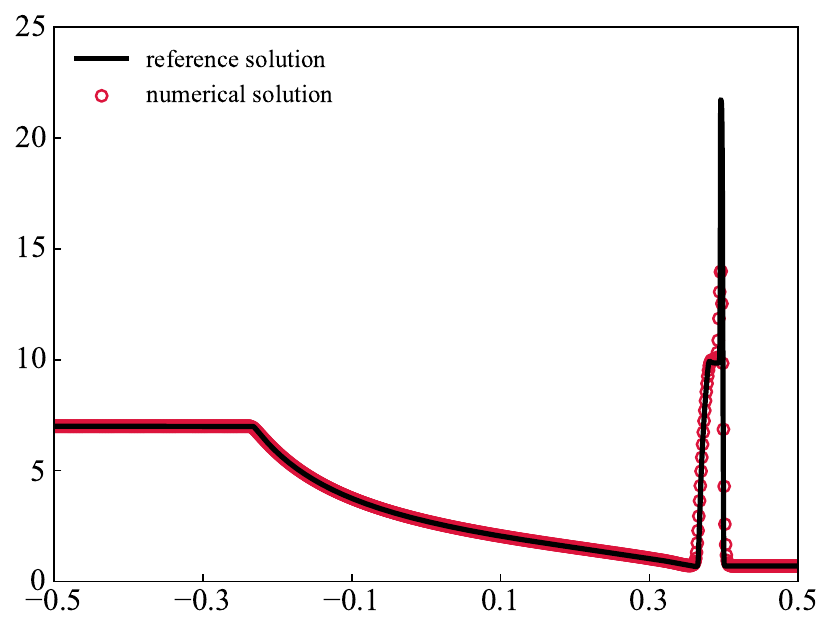}
						\caption{$B_2$}
		\end{subfigure}
		\hfill
		\begin{subfigure}{0.48\textwidth}
			\centering
			\includegraphics[width=\textwidth]{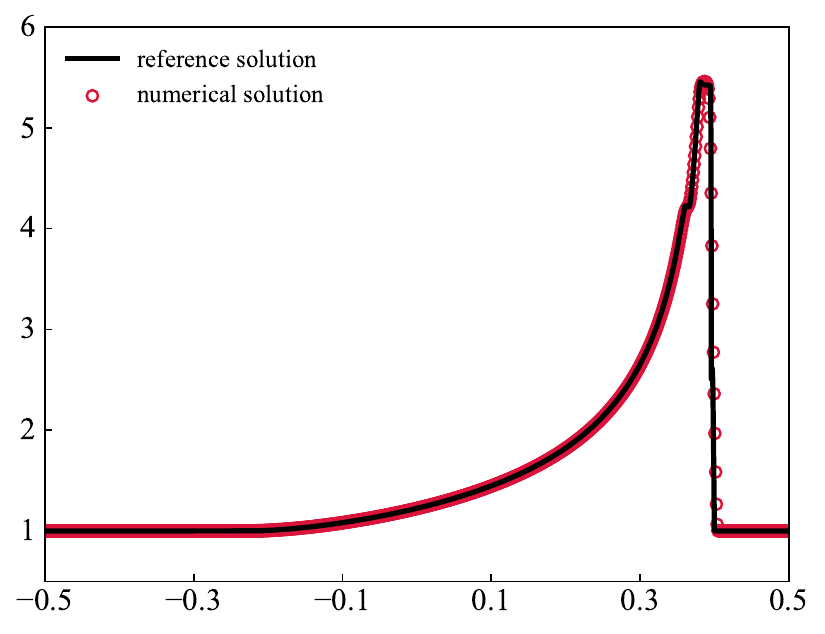}
						\caption{$W$}
		\end{subfigure}
		
		\caption{Numerical results obtained by the third-order BP CDG method for \Cref{Ex:1DRP2} with the RC-EOS \eqref{EOS:RCEOS}. 
			%			Top left: $\rho$; top-right: $v_1$; bottom-left: $B_2$; bottom-right: $p$.
		}
		\label{fig:Ex-1DRP2}
	\end{figure} 
	
\end{expl}

\begin{expl}[1D Riemann problem \Rmnum{3}]\label{Ex:1DRP3} \rm %EOS3-IP
This is an ultra RMHD test case that describes a strong collision between two high-speed flows with a large Lorentz factor of $223.61$. The BP limiting procedure is also necessary for the successful simulation of this demanding example using the CDG method. This problem is initialized with
	\begin{equation*}
		(\rho,v_1,v_2,v_3,B_1,B_2,B_3,p) = 
		\begin{cases}
			(1,0.99999,0,0,100,70,70,0.1), &\quad 0\leq x \leq 0.5, \\
			(1,-0.99999,0,0,100,-70,-70,0.1), &\quad 0.5< x \leq 0.5. \\
		\end{cases}
	\end{equation*}
Over time, the exact solution involves two fast and two slow reflected shock waves, and a high-pressure region between the two slow shock waves. This problem was initially proposed and analyzed in \cite{WuTangM3AS} with the ideal EOS \eqref{EOS:IDEOS}. \Cref{fig:Ex-1DRP3-EOS3} shows the results using the BP CDG methods with the IP-EOS \eqref{EOS:IPEOS} at $t=0.4$, where a mesh of $1000$ uniform cells and outflow boundary conditions are employed. The reference solution is obtained using the first-order Lax--Friedrichs scheme with $400,000$ uniform cells. One can observe that our BP CDG method is capable of accurately capturing the strong shock waves. The wall-heating-type phenomenon can be observed in the density profile around $x=0$, as commonly reported in the literature \cite{HeTang2012RMHD,WuTangM3AS}.

	\begin{figure}[!htbp]
		\centering
		\begin{subfigure}{0.48\textwidth}
			\includegraphics[width=\textwidth]{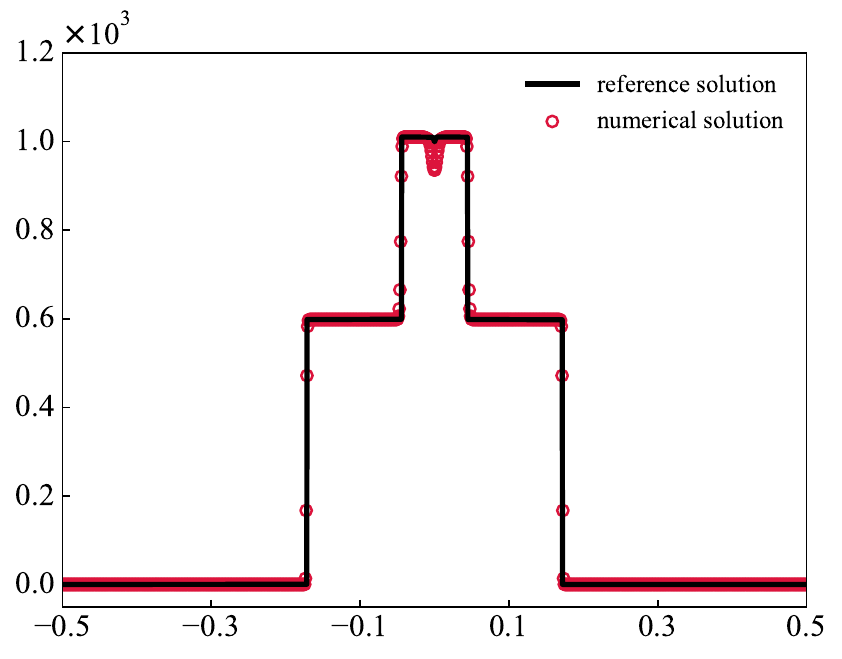}
						\caption{$\rho$}
		\end{subfigure}
		\hfill
		\begin{subfigure}{0.48\textwidth}
			\centering
			\includegraphics[width=\textwidth]{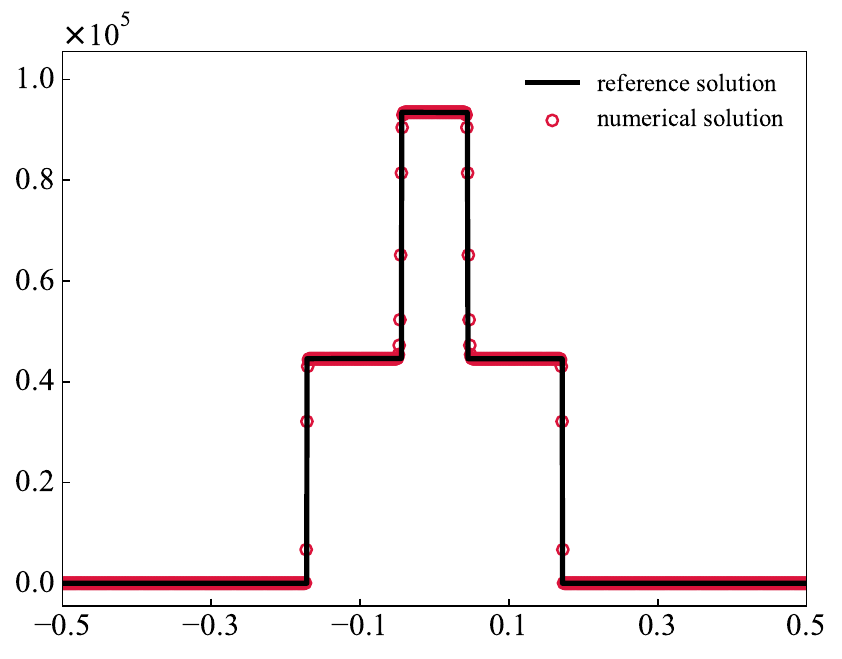}
						\caption{$p$}
		\end{subfigure}
		
		\begin{subfigure}{0.48\textwidth}
			\centering
			\includegraphics[width=\textwidth]{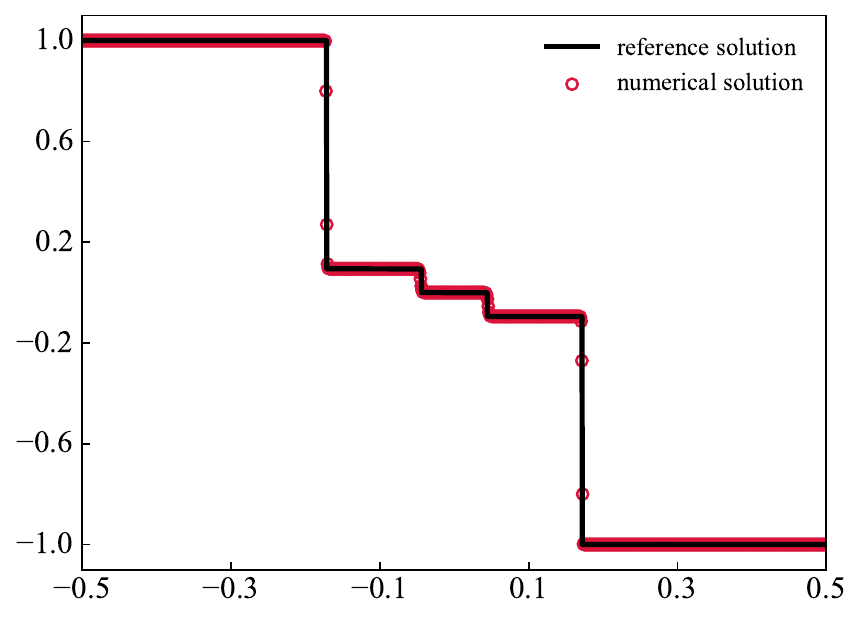}
						\caption{$v_1$}
		\end{subfigure}
		\hfill
		\begin{subfigure}{0.48\textwidth}
			\centering
			\includegraphics[width=\textwidth]{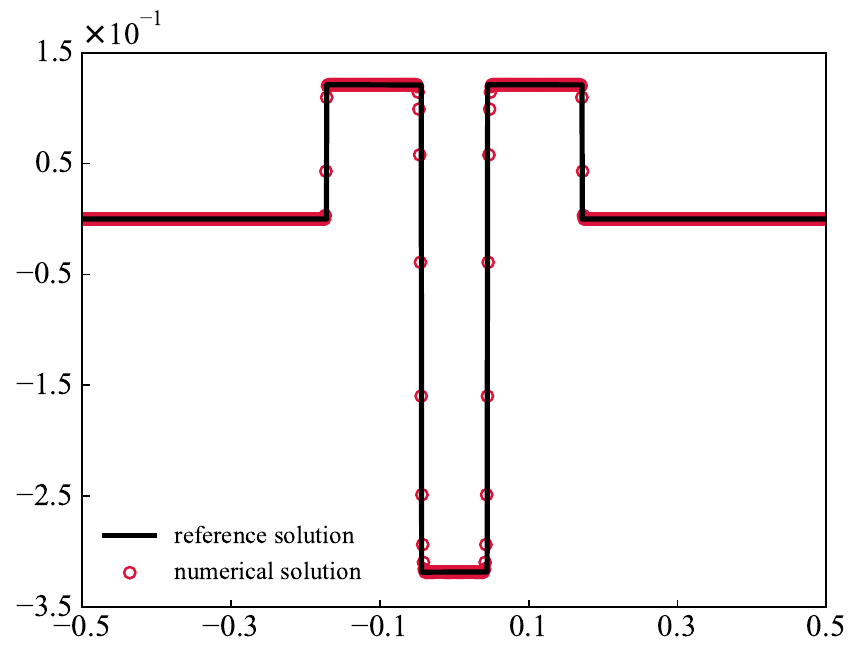}
						\caption{$v_2$}
		\end{subfigure}
		
		\begin{subfigure}{0.48\textwidth}
			\centering
			\includegraphics[width=\textwidth]{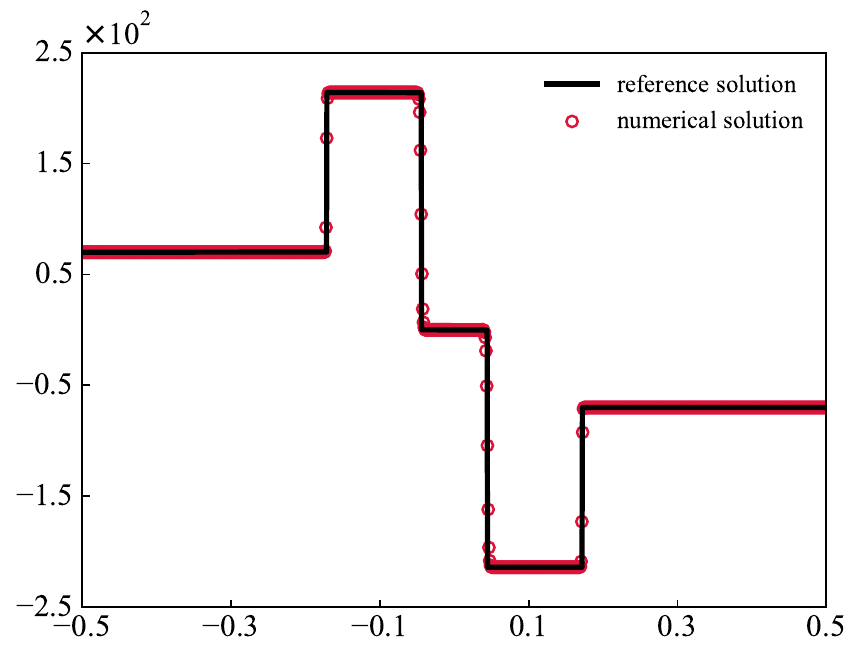}
						\caption{$B_2$}
		\end{subfigure}
		\hfill
		\begin{subfigure}{0.48\textwidth}
			\centering
			\includegraphics[width=\textwidth]{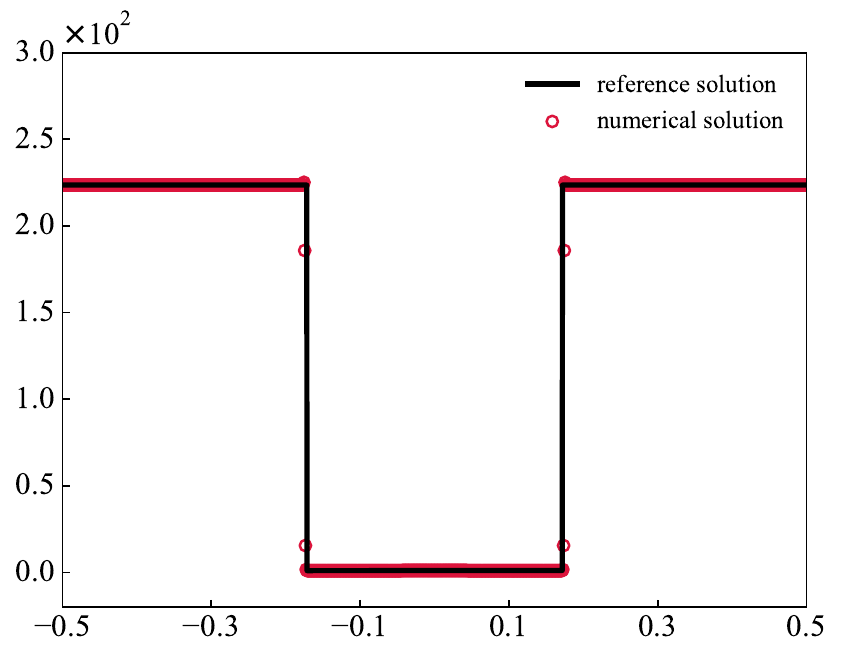}
						\caption{$W$}
		\end{subfigure}
		
		\caption{Numerical results obtained by the third-order BP CDG method for \Cref{Ex:1DRP3} with the IP-EOS \eqref{EOS:IPEOS}. %Top to bottom and left to right: $\rho$, $p$, $v_1$,  $v_2$, $B_2$, and $W$.
			%			Top left: $\rho$; top-right: $v_1$; bottom-left: $B_2$; bottom-right: $p$.
		}
		\label{fig:Ex-1DRP3-EOS3}
	\end{figure} 
\end{expl}

\begin{expl}[Orszag--Tang Problem]\label{Ex:OT} %EOS4-TM
We start the 2D simulations with an Orszag--Tang problem, following the setup in \cite{Host:2008,WuShu2020NumMath} but adopting the TM-EOS \eqref{EOS:TMEOS}. The smooth initial conditions are taken as
	\begin{equation*}
		(\rho, \bm{v}, \bm{B}, p) =  (1, -A\sin y, A\sin x, 0, -\sin y, \sin (2x), 0, 10),
	\end{equation*}
where the constant $A=0.99/\sqrt{2}$. In this problem, the initial maximum velocity reaches $0.99c$, corresponding to a Lorentz factor $W\approx7.09$. The computational domain $[0,2\pi]^2$ is discretized using $600\times 600$ uniform cells, and periodic boundary conditions are applied to all boundaries. \Cref{fig:Ex-OT} presents the numerical results obtained using our BP locally DF CDG method for the logarithm of the rest-mass density $\log{\rho}$ and the Lorentz factor $W$ at $t=2.82$ and $6.85$. As time progresses, complex wave structures are generated and correctly captured by our method. The results agree with those reported in \cite{Host:2008,WuShu2019SISC,WuShu2020NumMath}. By utilizing the BP limiter, the robustness of the CDG method is enhanced. However, without it, the CDG code would fail at $t\approx2.196$ due to nonphysical solutions. 
	
To further demonstrate the stability and robustness of the proposed BP locally DF CDG method, we quantitatively evaluate the evolution of the numerical divergence error over time, following the approach described in \cite{WuShu2020NumMath,WuJiangShu2022}. Define $\jump{\langle \bm{n}, \bm{B}_h^I \rangle}$ as the jump of the normal component of $\bm{B}_h$ on the edge ${\mathcal E}_h^I$ of the primal mesh $\mathcal{T}_h^I$. The global divergence error in $\bm{B}_h^I$ is evaluated as
	\begin{equation*}
		\Vert \nabla \cdot \bm{B}_h^I \Vert := \sum_{{\mathcal E}_h^I\in \mathcal{T}_h^I} \int_{{\mathcal E}_h^I}\Big|\jump{\langle \bm{n}, \bm{B}_h^I \rangle}\Big| \dd s + \sum_{i,j}\int_{I_{ij}} \big|\nabla \cdot \bm{B}_h^I \big| \dd x \dd y.
	\end{equation*}
	on the primal mesh $\mathcal{T}_h^I$. 
	Then, the global relative divergence error is defined as 
	\begin{equation*}
		\varepsilon_{\rm div} := \frac{\Vert \nabla \cdot \bm{B}_h^I \Vert}{\Vert \bm{B}_h^I \Vert },
	\end{equation*}
	where 
	\begin{equation*}
		\Vert \bm{B}_h^I \Vert := \sum_{{\mathcal E}_h^I\in \mathcal{T}_h^I} \int_{{\mathcal E}_h^I} \average{\big|\bm{B}_h^I \big|} \dd s + \sum_{i,j}\int_{I_{ij}} \big| \bm{B}_h^I \big| \dd x \dd y.
	\end{equation*}
	\Cref{fig:Ex-OT-gdivB} displays the time evolution of the divergence errors $\varepsilon_{\rm div}$, which remain small and of the order $\mathcal{O}(10^{-4})$.
	
	\begin{figure}[!htb]
		\centering
		\begin{subfigure}{0.48\textwidth}
			\includegraphics[width=\textwidth]{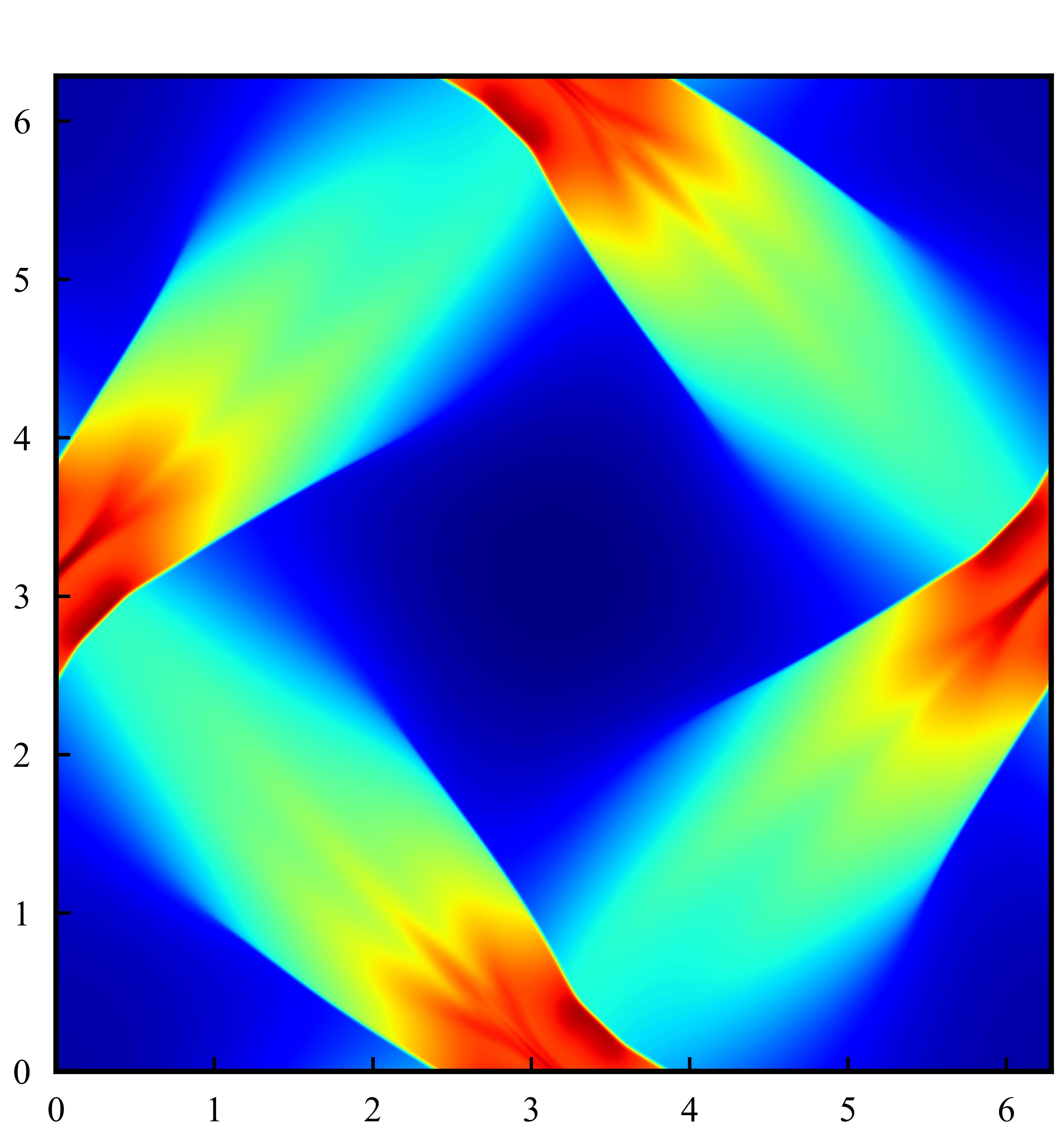}
			%			\caption{$\log(\rho)$ at $t=2.82$}
		\end{subfigure}
		\hfill	
		\begin{subfigure}{0.48\textwidth}
			\centering
			\includegraphics[width=\textwidth]{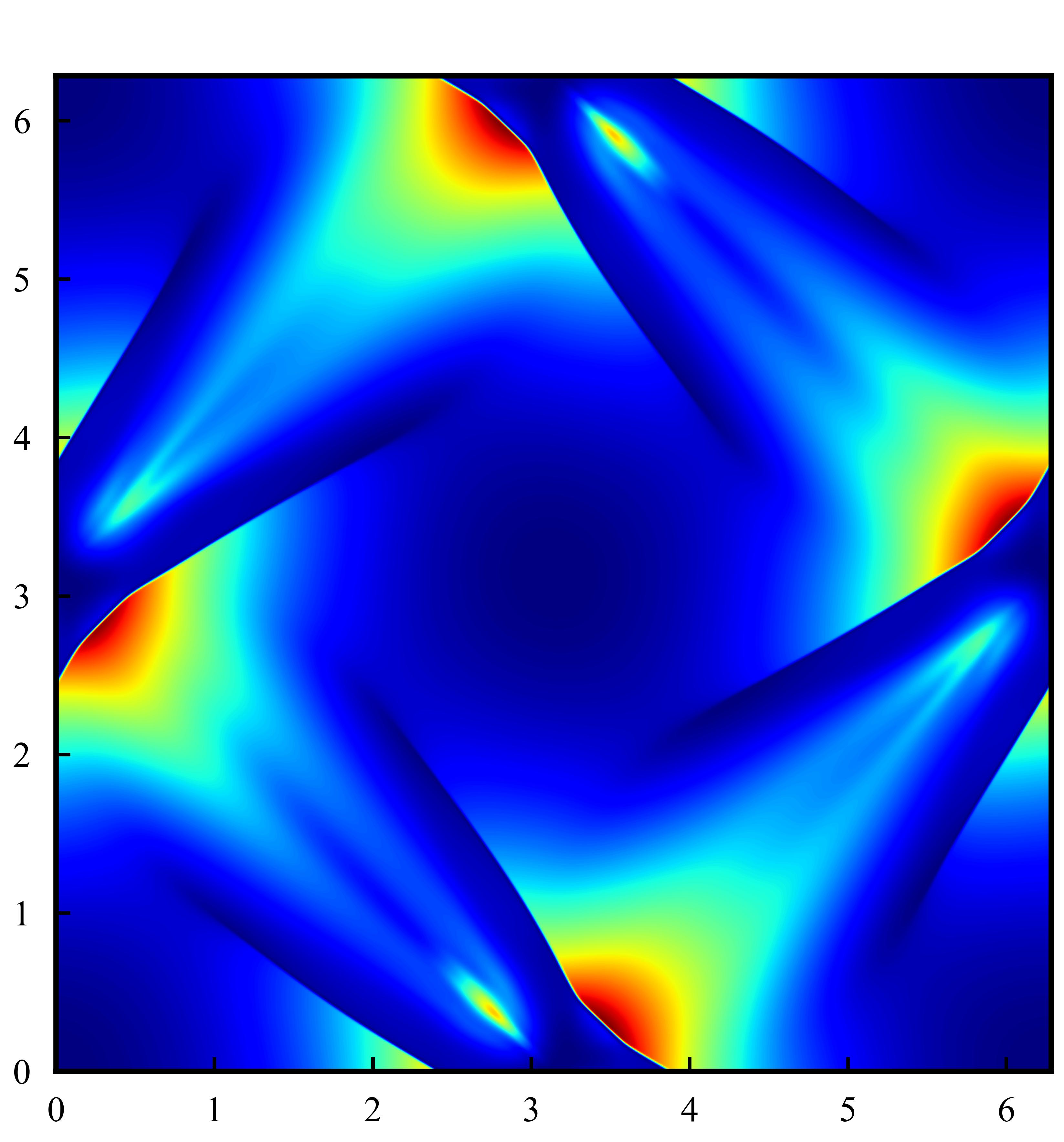}
			%			\caption{$\log(\rho)$  at $t=6.85$}
		\end{subfigure}
		
		\begin{subfigure}{0.48\textwidth}
			\includegraphics[width=\textwidth]{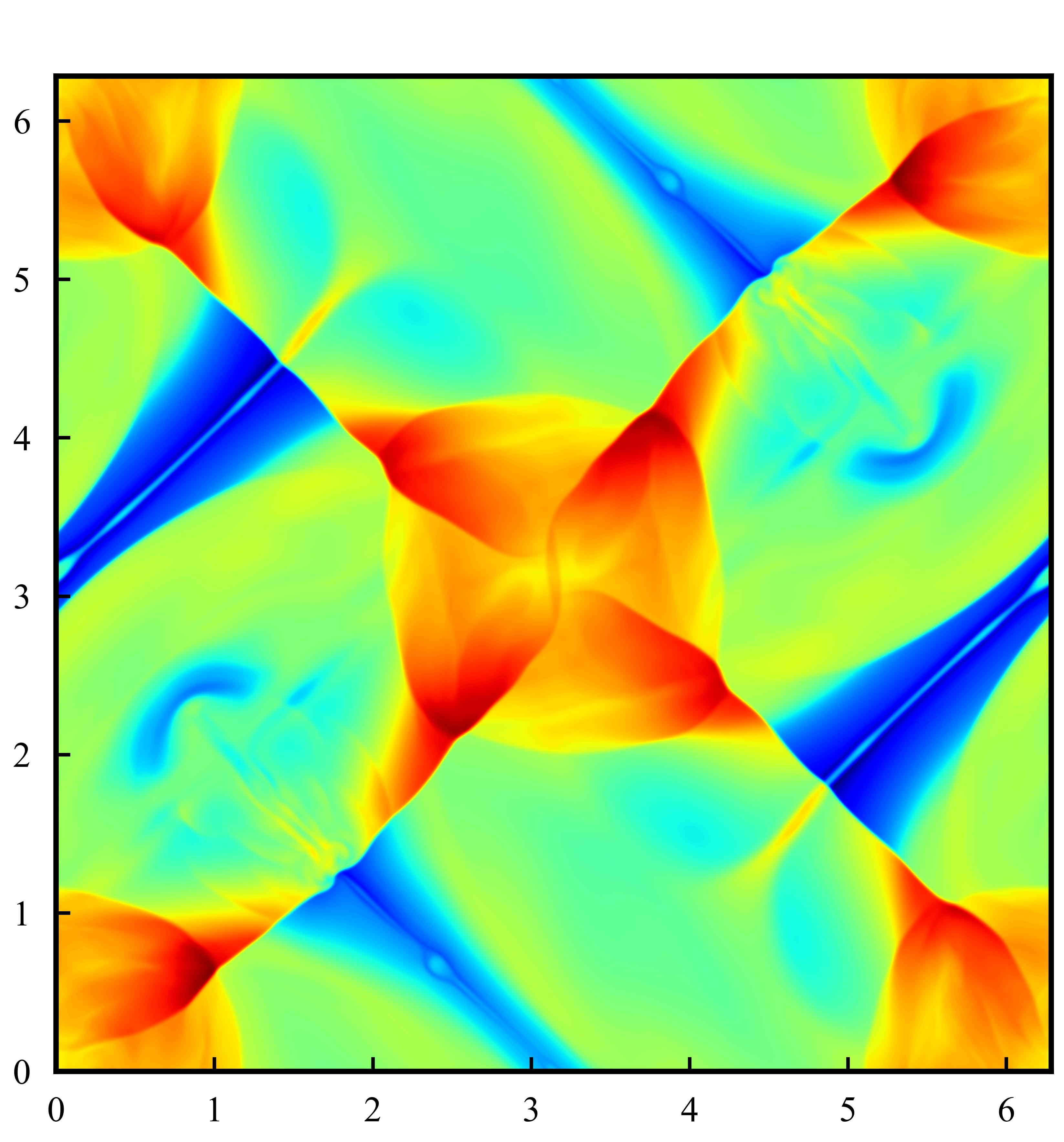}
			%			\caption{$\log(\rho)$ at $t=2.82$}
		\end{subfigure}
		\hfill	
		\begin{subfigure}{0.48\textwidth}
			\centering
			\includegraphics[width=\textwidth]{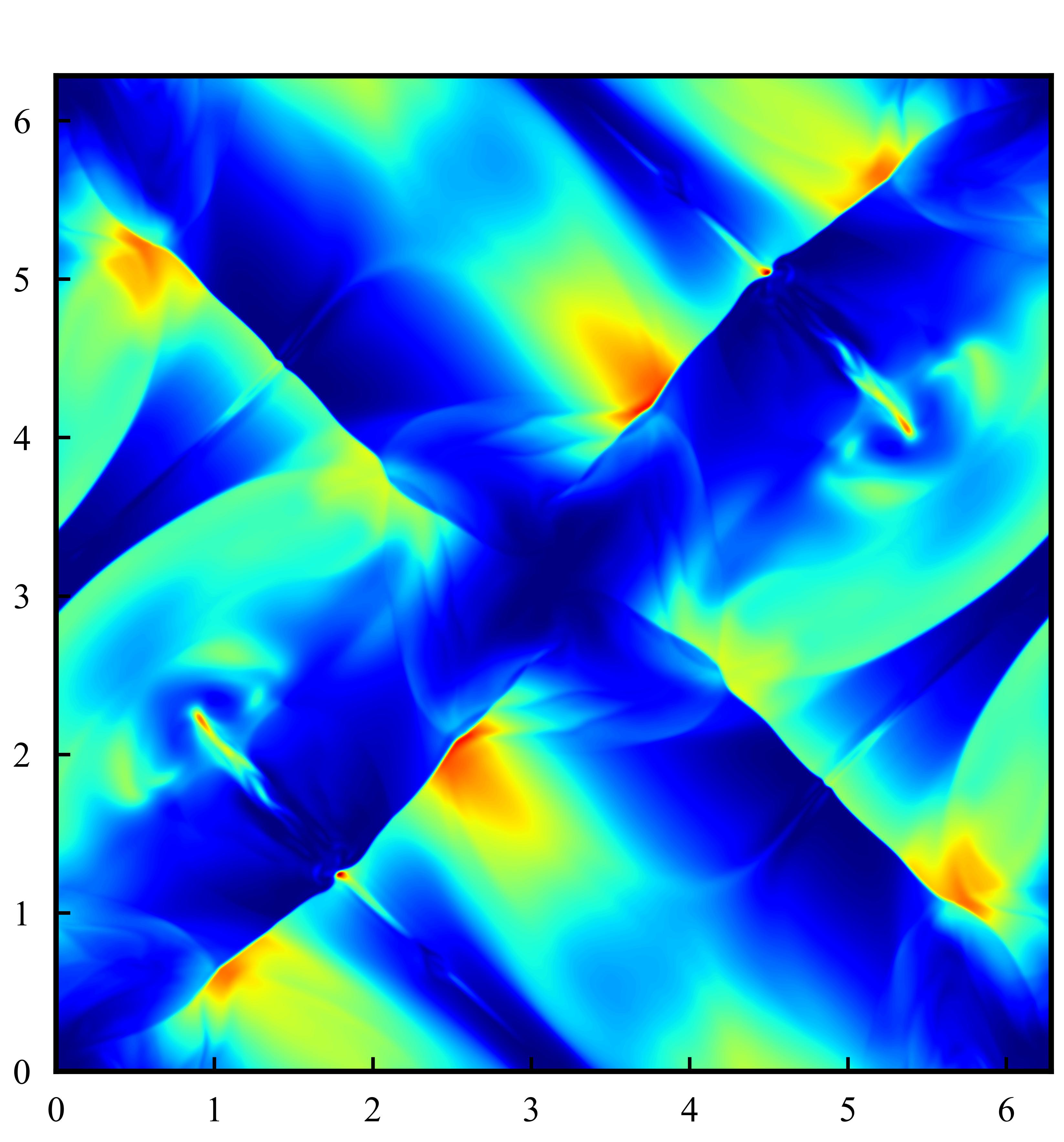}
			%			\caption{$\log(\rho)$  at $t=6.85$}
		\end{subfigure}
		
		\caption{Density logarithm $\log{\rho}$ (left) and Lorentz factor $W$ (right) at $t=2.82$ (top) and $6.85$ (bottom) for \Cref{Ex:OT} with the TM-EOS \eqref{EOS:TMEOS}. 
		}
		\label{fig:Ex-OT}
	\end{figure} 
\end{expl}

\begin{figure}[!htb]
	\centering
	\begin{subfigure}{0.32\textwidth}
		\includegraphics[width=\textwidth]{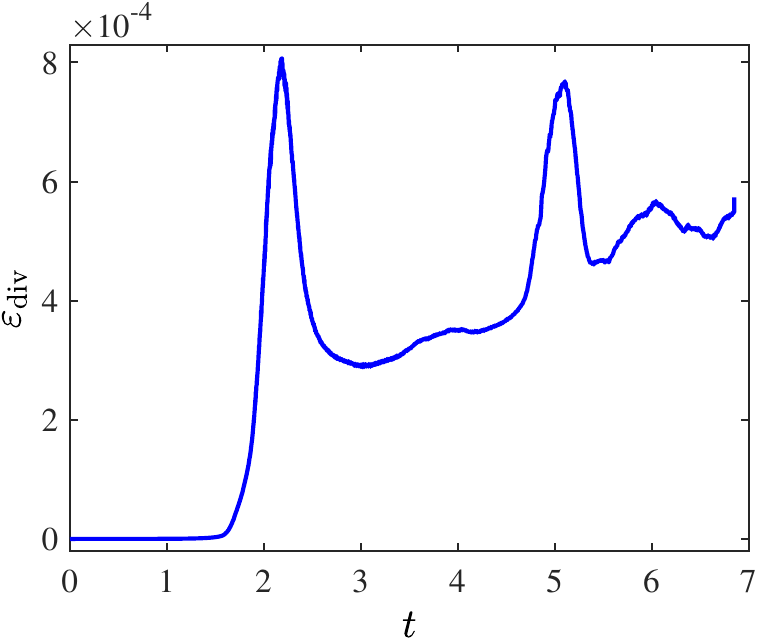}
		\caption{ \Cref{Ex:OT}}
		\label{fig:Ex-OT-gdivB}
	\end{subfigure}
	\hfill
	\begin{subfigure}{0.32\textwidth}
		\includegraphics[width=\textwidth]{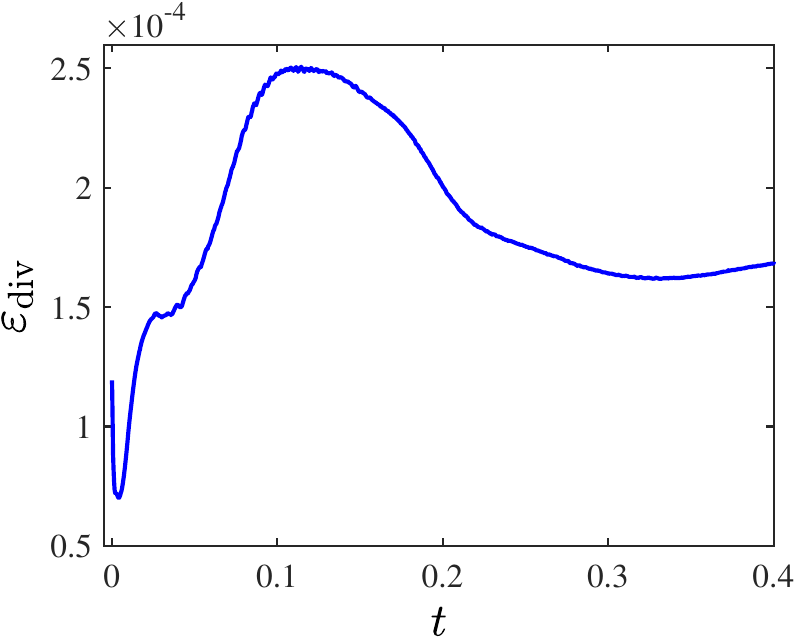}
		\caption{ \Cref{Ex:Rotor}}
		\label{fig:Ex-Rotor-gdivB}
	\end{subfigure}
	\hfill
	\begin{subfigure}{0.32\textwidth}
		\includegraphics[width=\textwidth]{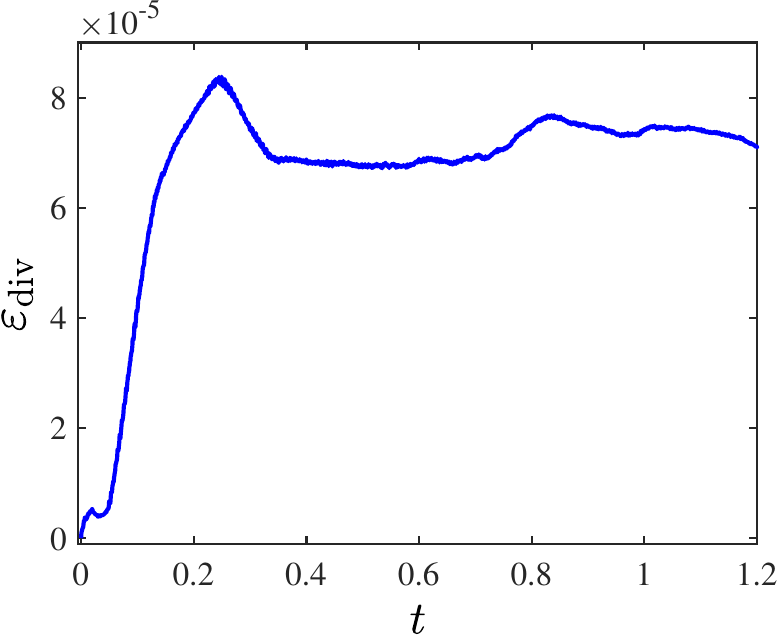}
		\caption{ \Cref{Ex:ShockCloud}}
		\label{fig:Ex-SC-gdivB}
	\end{subfigure}
	
	\caption{Time evolution of the global relative divergence error $\varepsilon_{\rm div}$.
	}
	\label{fig:Ex-OT-Rotor-SC}
\end{figure}

\begin{expl}[Rotor Problem]\label{Ex:Rotor}%EOS2-RC
	 This is a relativistic extension of the non-relativistic MHD rotor
	problem \cite{BalsaraSpicer1999,BALSARA2014172}. 
	The initial conditions are given by  
	{\begin{equation*}
			(\rho, \bm{v}, \bm{B}, p) = 
			\begin{cases}
				(10, -\alpha y, \alpha x, 0, 1, 0, 0, 1), &  r \leq r_1, \\
				(1+9 \delta, -\alpha y \delta r_1/r, \alpha x \delta r_1/r, 0, 1, 0, 0, 1), &  r_1< r \leq r_2, \\
				(1,0,0,0, 1, 0, 0, 1 ), &  r_2<r, \\
			\end{cases}
	\end{equation*}}
	where $\alpha=9.95$, $\delta=(r_2-r)/(r_2-r_1)$, $r=\sqrt{x^2+y^2}$, $r_1=0.1$, and $r_2=0.115$.
	This problem describes a dense fluid disk rotating in an ambient fluid with the RC-EOS \eqref{EOS:RCEOS}. The computational domain is taken as $[-0.5,0.5]^2$, partitioned into $400\times 400$ uniform cells, and outflow boundary conditions are used. \Cref{fig:Ex-Rotor} displays the numerical results at $t=0.4$ obtained by the proposed BP locally DF CDG scheme. We observe that an oblong-shaped shell forms at the center. The computed results agree quite well with those simulated in \cite{HeTang2012RMHD,ZhaoTang2017} with the ideal EOS, but exhibit different patterns compared to the non-relativistic MHD rotor
	\cite{BalsaraSpicer1999,BALSARA2014172}. In addition, \Cref{fig:Ex-Rotor-gdivB} presents the time evolution of the global relative divergence error $\varepsilon_{\rm div}$, which is maintained at the order of ${\mathcal O}(10^{-4})$.
	%	which is enforced at the level of round-off error.
	
	\begin{figure}[!htb]
		\centering		
		\begin{subfigure}{0.48\textwidth}
			\includegraphics[width=\textwidth]{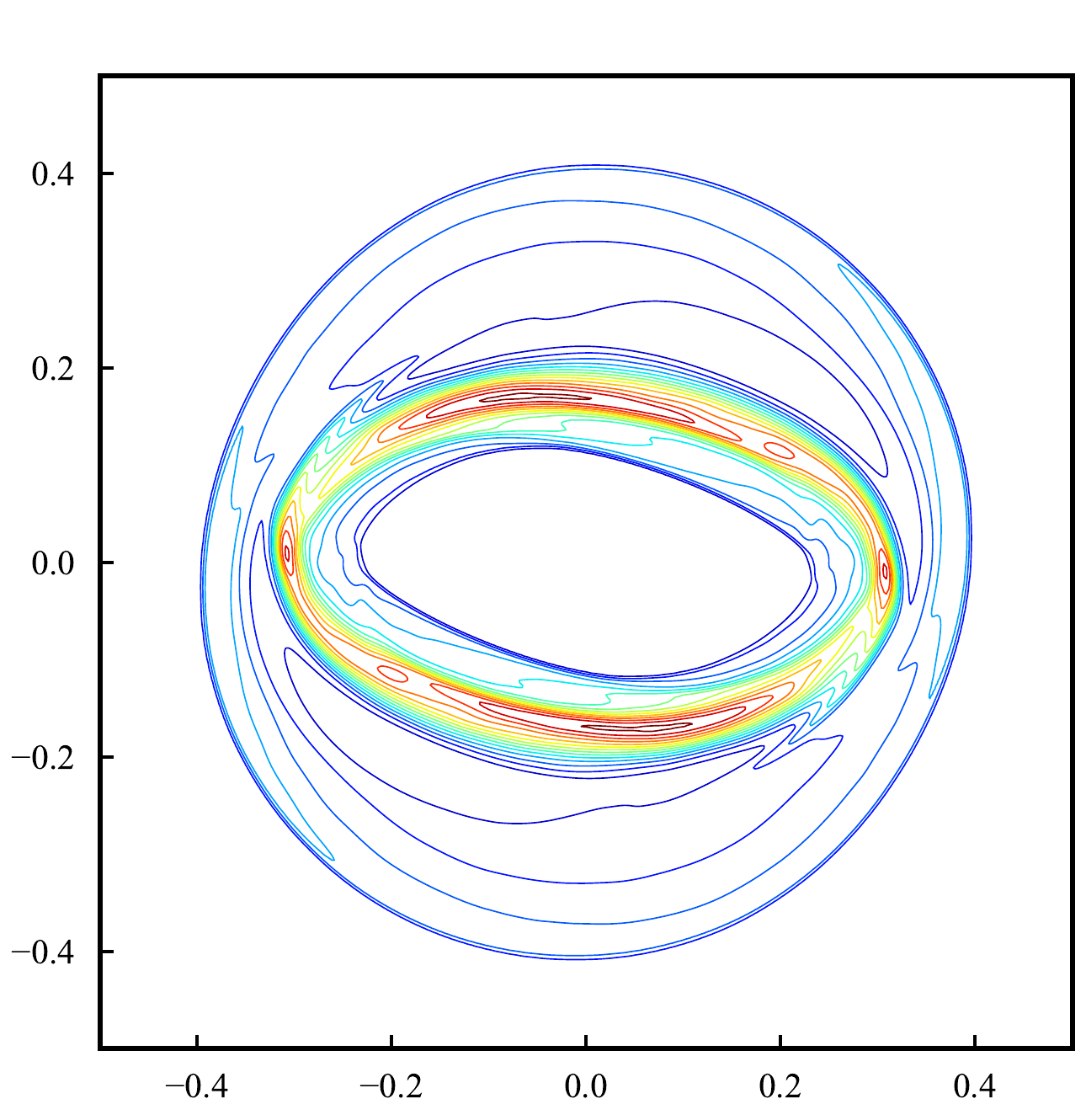}
		\end{subfigure}
		\hfill
		\begin{subfigure}{0.48\textwidth}
			\includegraphics[width=\textwidth]{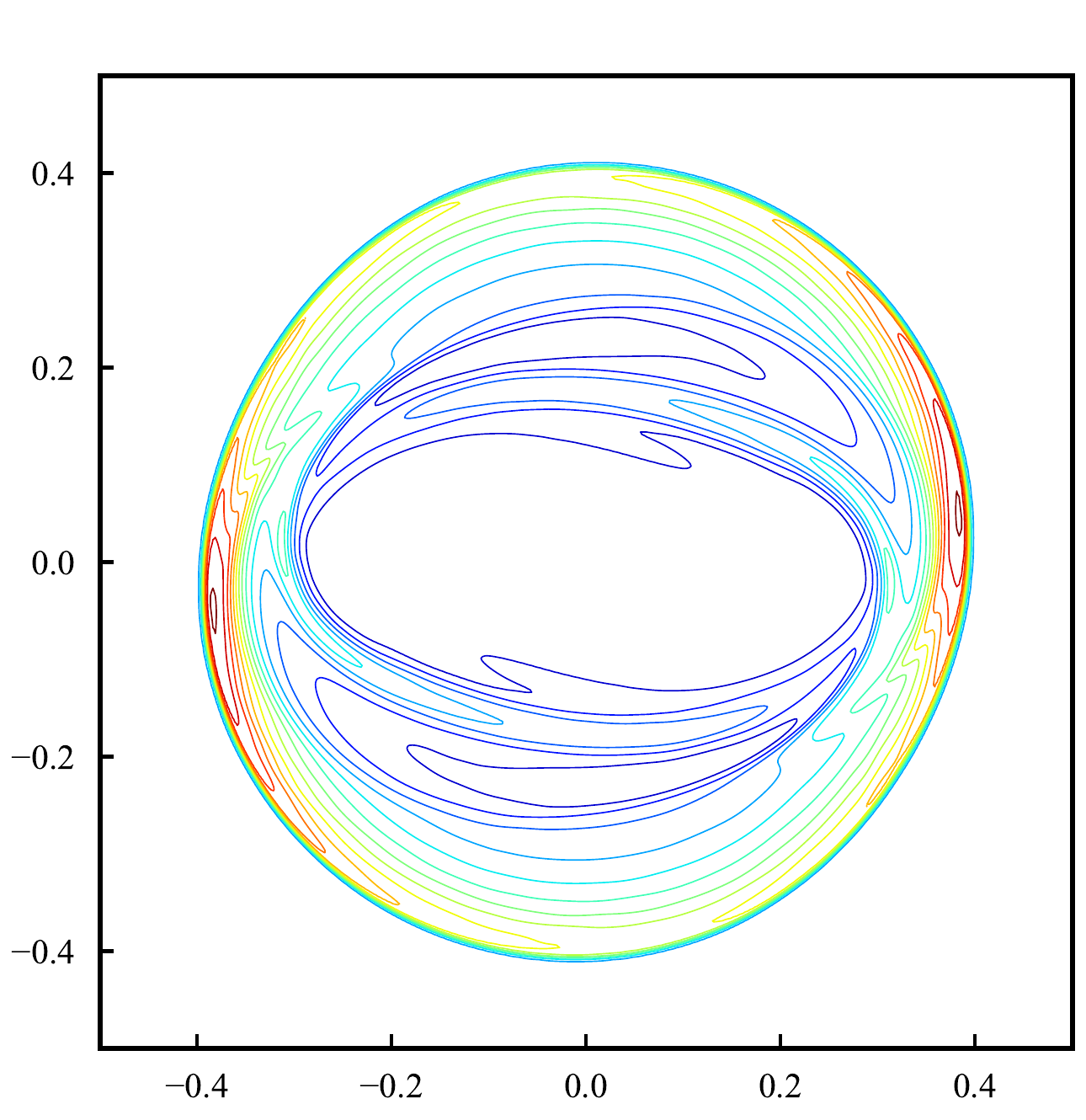}
		\end{subfigure}
		
		\begin{subfigure}{0.48\textwidth}
			\includegraphics[width=\textwidth]{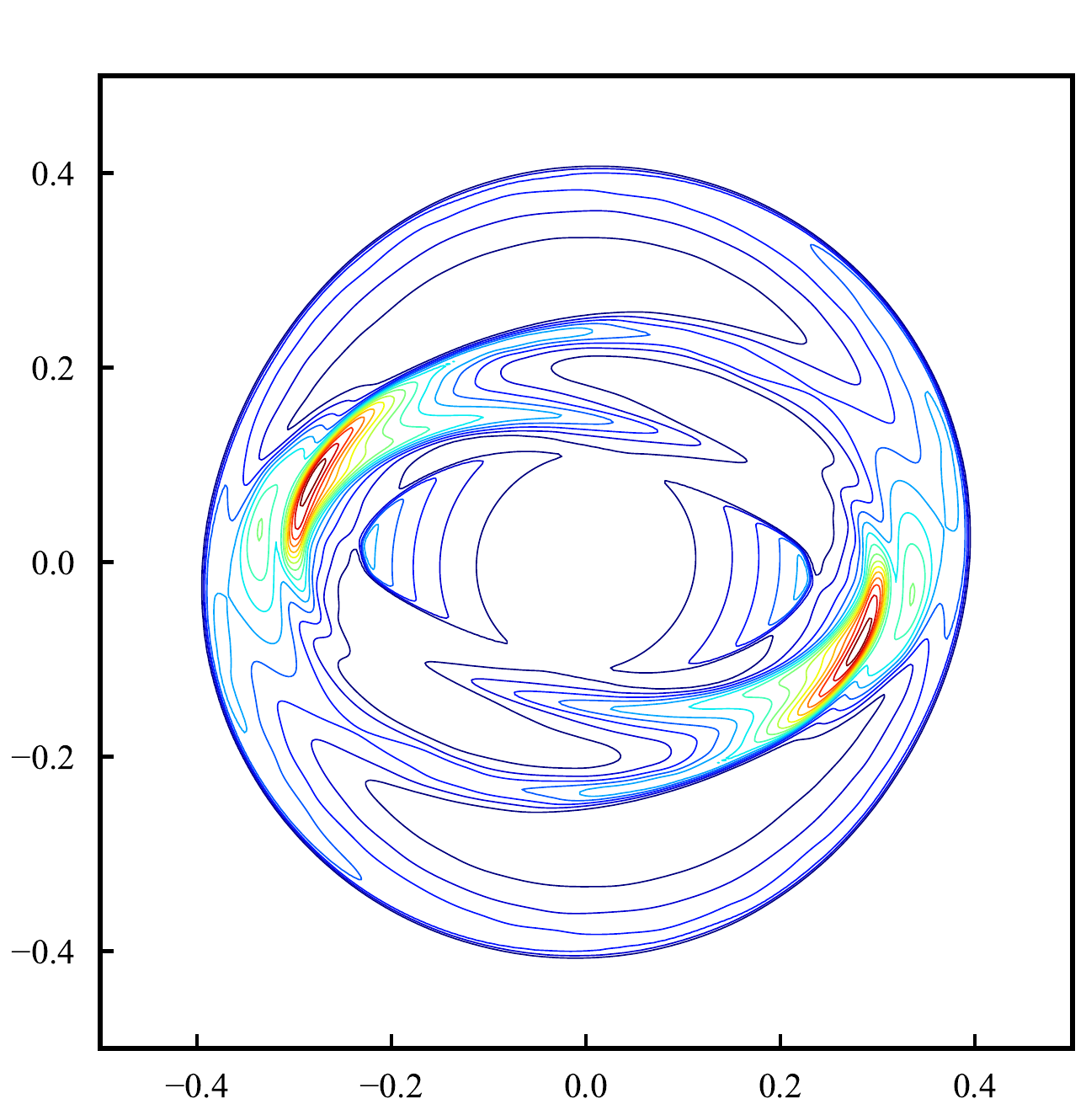}
		\end{subfigure}
		\hfill
		\begin{subfigure}{0.48\textwidth}
			\includegraphics[width=\textwidth]{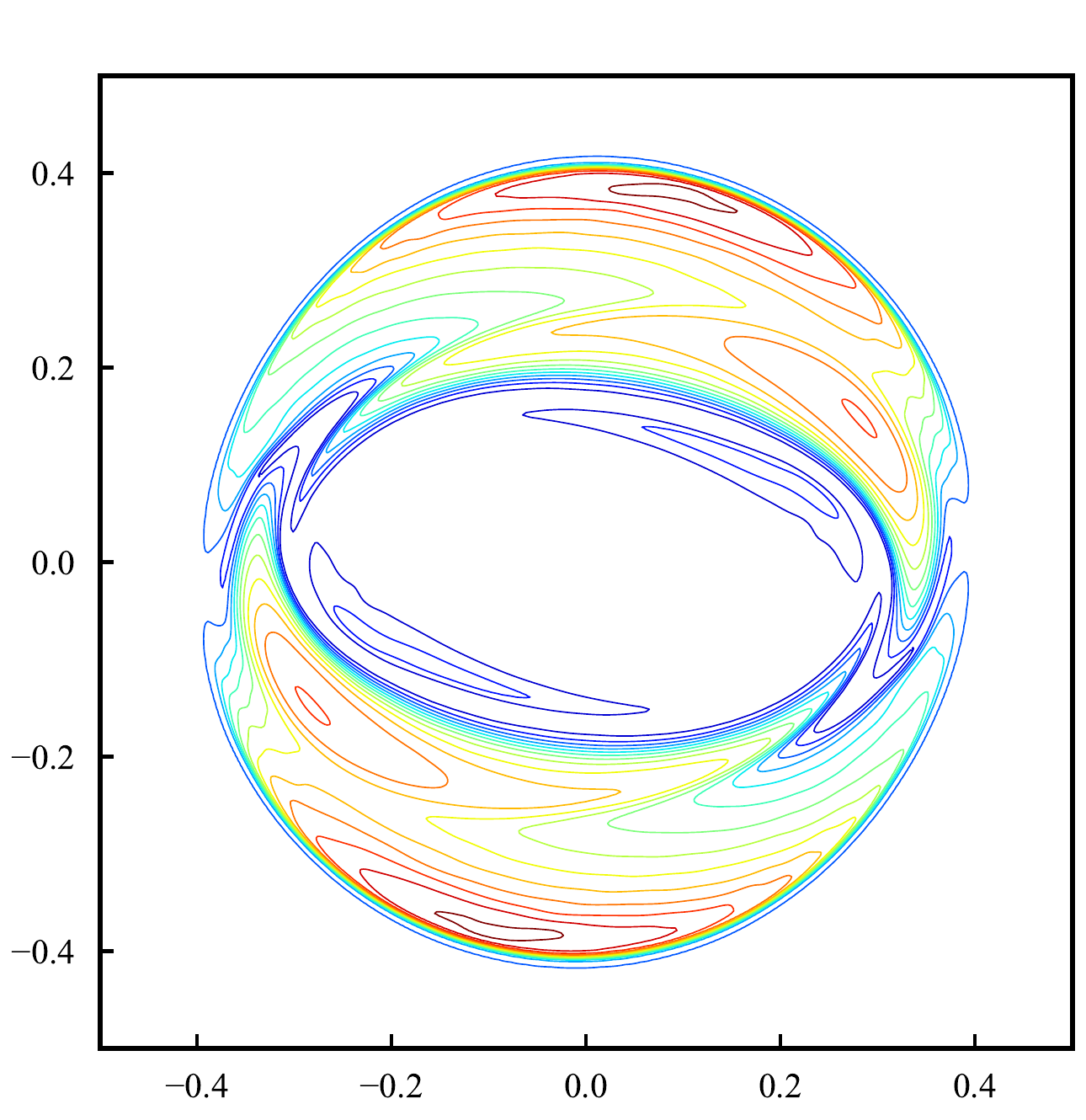}
		\end{subfigure}
		
		\caption{Rest-mass density (top-left), thermal pressure (top-right), Lorentz factor (bottom-left), and magnetic pressure (bottom-right) for \Cref{Ex:Rotor} with the RC-EOS \eqref{EOS:RCEOS}. 
		}
		\label{fig:Ex-Rotor}
	\end{figure} 
\end{expl}

\begin{expl}[Shock-Cloud Interaction Problem]\label{Ex:ShockCloud}%EOS1-ID
This benchmark problem simulates the interaction of a strong shock wave with a high-density cloud. The setup is the same as that in \cite{HeTang2012RMHD,WuTangM3AS}. The computational domain is $[-0.2,1.2]\times[0,1]$ with a resolution of $560\times 400$ uniform cells. Initially, a shock parallel to the $y$-axis is located at $x=0.05$ and moves in the right direction. The initial post-shock state is specified as
	\begin{equation*}
		(\rho, \bm{v}, \bm{B}, p) = (3.86859, 0.68,0,0, 0, 0.8498108108786, -0.8498108108786, 1.251148954517),
	\end{equation*}	
while the pre-shock condition is given by
	\begin{equation*}
		(\bm{v}, \bm{B}, p) = (0,0,0, 0, 0.1610642582333, 0.1610642582333, 0.05),  \quad
		\rho=
		\begin{cases}
			30,  &\sqrt{(x-0.25)^2+(y-0.5)^2} \leq 0.15, \\
			1,  &\text{otherwise},
		\end{cases}
	\end{equation*}
which includes a circular cloud with high density. Outflow conditions are applied to all boundaries except for the left one, where an inflow condition is imposed. The ideal EOS \eqref{EOS:IDEOS} with $\Gamma=5/3$ is used for this simulation. \Cref{fig:Ex-ShockCloud} shows the schlieren images of the rest-mass density logarithm $\log{\rho}$, thermal pressure logarithm $\log{p}$, Lorentz factor $W$, and the magnitude of the magnetic field $|\bm{B}|$ at $t=1.2$ computed by our BP locally DF CDG method. The complex wave structures are clearly captured by our scheme with high resolution, and the results are consistent with those reported in \cite{HeTang2012RMHD,WuTangM3AS}. The BP limiter is crucial for this problem, as not using it would cause the CDG code to fail at the first time step. As shown in \Cref{fig:Ex-SC-gdivB}, the global relative divergence error $\varepsilon_{\rm div}$ is maintained at about $10^{-5}$, further demonstrating
the good stability of our CDG method.
	
	\begin{figure}[!htb]
		\centering
		\begin{subfigure}{0.48\textwidth}
			\includegraphics[width=\textwidth]{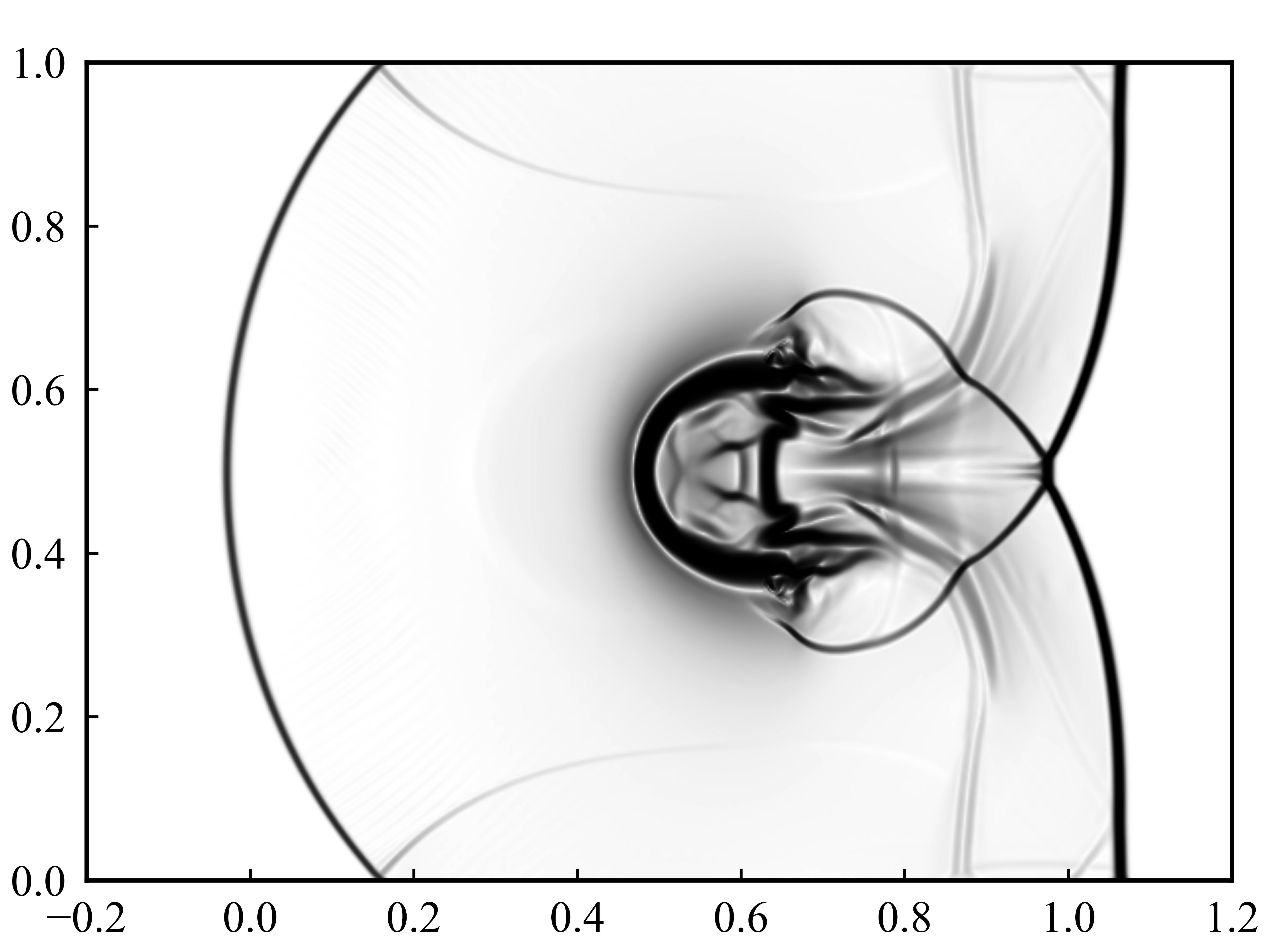}
		\end{subfigure}
		\hfill
		\begin{subfigure}{0.48\textwidth}
			\includegraphics[width=\textwidth]{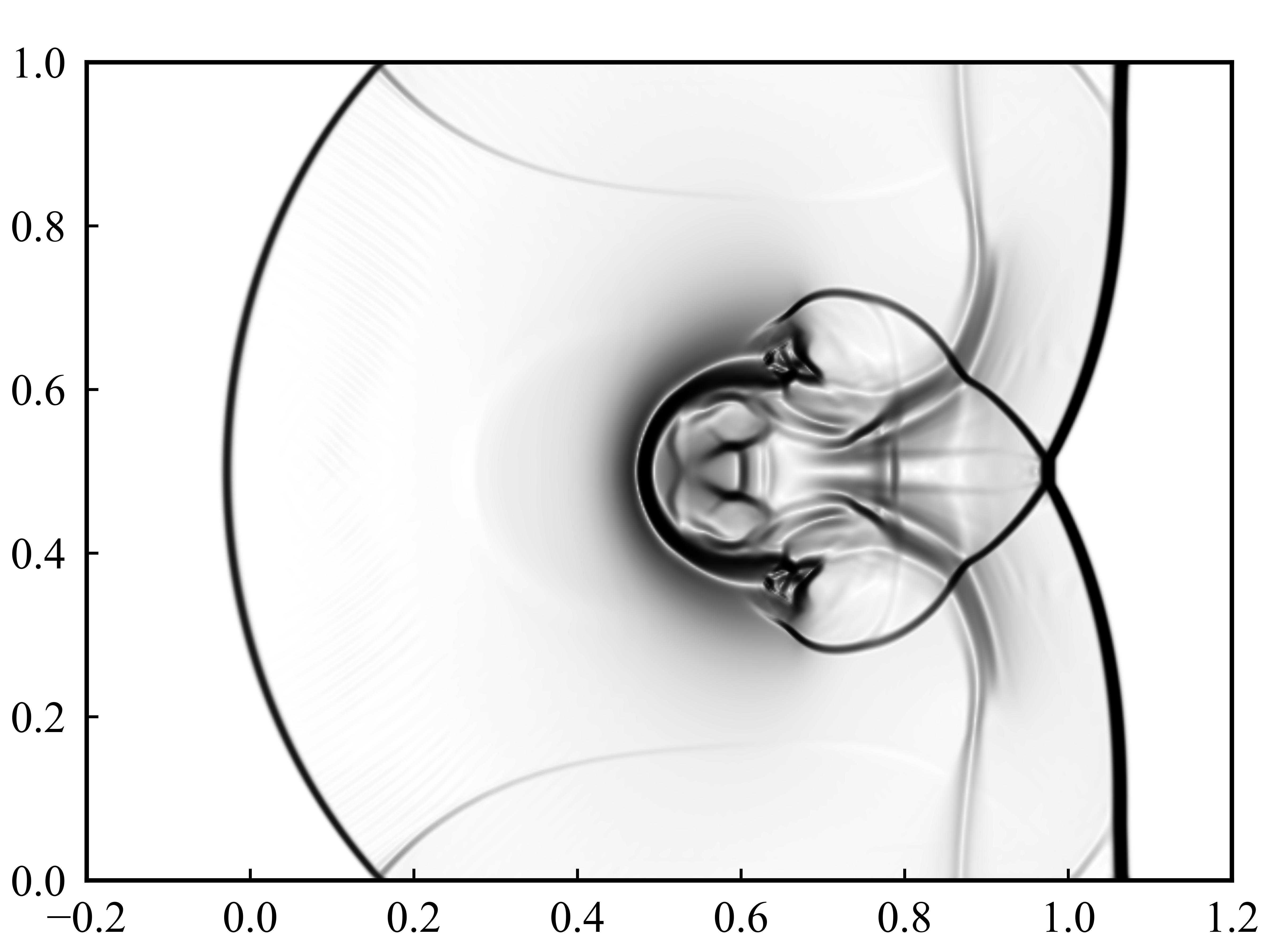}
		\end{subfigure}
		
		\begin{subfigure}{0.48\textwidth}
			\includegraphics[width=\textwidth]{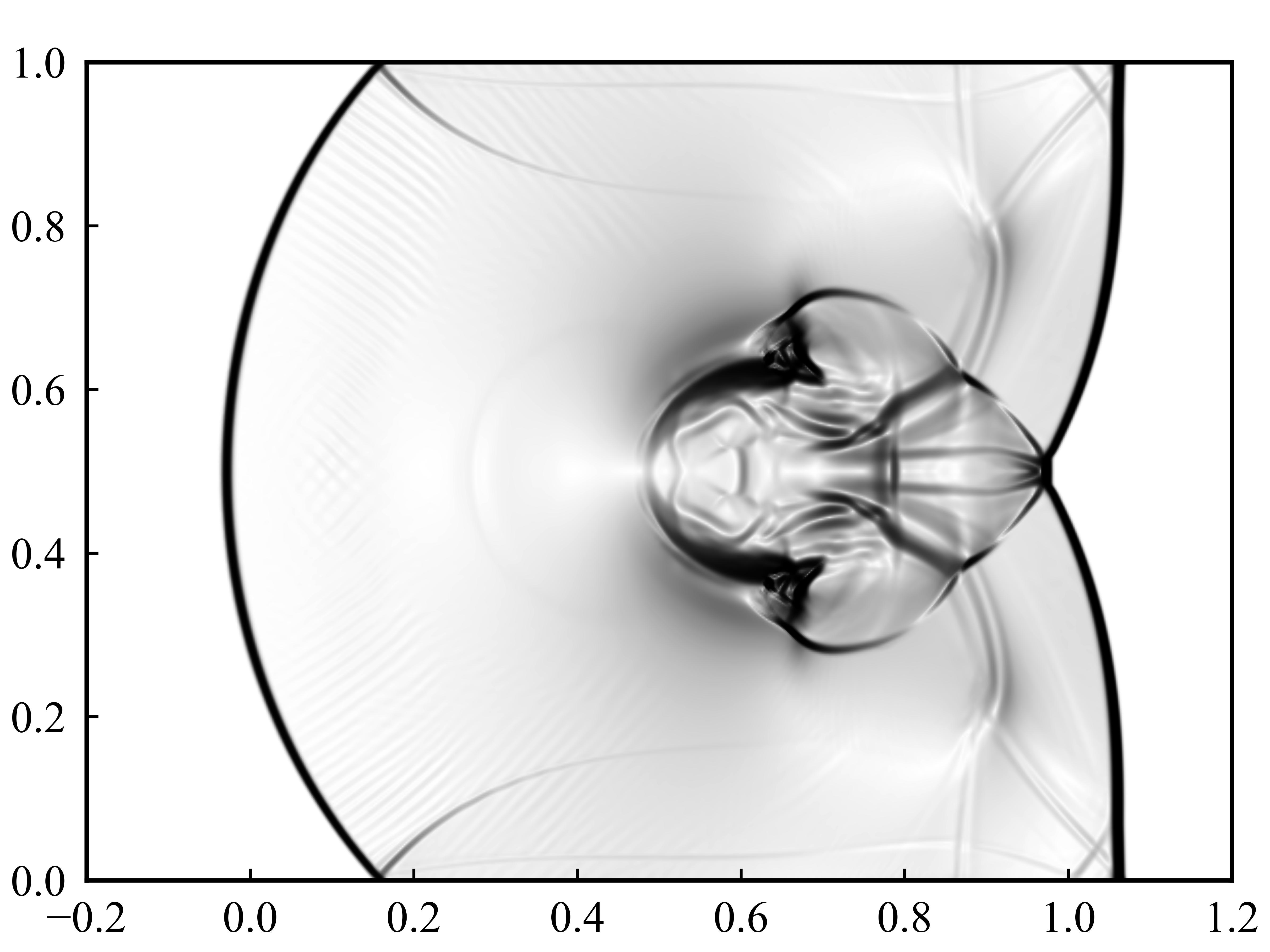}
		\end{subfigure}
		\hfill
		\begin{subfigure}{0.48\textwidth}
			\includegraphics[width=\textwidth]{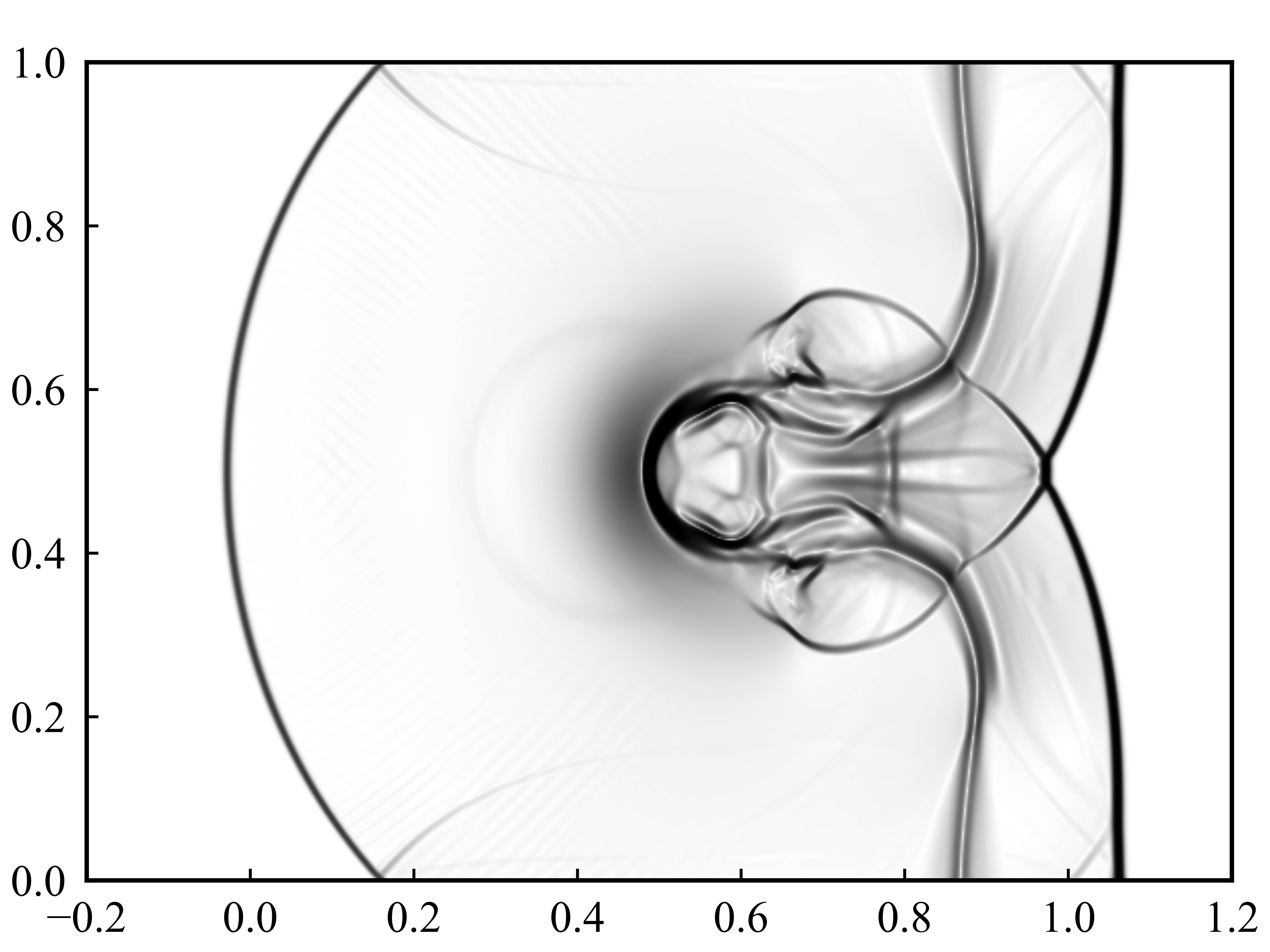}
		\end{subfigure}

		\caption{Schlieren images of the rest-mass density logarithm (top-left), thermal pressure logarithm (top-right), Lorentz factor (bottom-left), and magnitude of magnetic field (bottom-right) for \Cref{Ex:ShockCloud}.
		}
		\label{fig:Ex-ShockCloud}
	\end{figure} 
\end{expl}

\begin{expl}[Blast Problems]\label{Ex:Blast}%EOS3-IP
RMHD blast wave problems are widely used to test the robustness of numerical schemes, as nonphysical solutions can easily be produced in numerical simulations. Our setup is similar to those described in \cite{WuTangM3AS,WuShu2020NumMath}, except for that the IP-EOS \eqref{EOS:IPEOS} is used. The computational domain is $\Omega=[-6,6]^2$. Initially, $\Omega$ is filled with stationary fluid and divided into three regions: the explosion region ($r < 0.8$), the low-density and low-pressure region ($r > 1$), and the middle region ($0.8 \leq r \leq 1$), where $r=\sqrt{x^2+y^2}$ represents the distance from the center of the domain. The initial conditions are specified as follows:
	\begin{equation*}
		 \bm{B} = \left(B_a, 0, 0\right), \qquad \bm{v}={\bf 0},\qquad 
		(\rho,p) = 
		\begin{cases}
			(10^{-2}, 1), & r < 0.8, \\
			(\rho_a, p_a), & 0.8 \leq r \leq 1.0, \\
			(10^{-4}, 5\times10^{-4}), & \text{otherwise}.
		\end{cases}
	\end{equation*}
	where $B_a$ is constant, and $\rho_a$ and $p_a$ are obtained by linear interpolation.  Specifically, $\rho_a = 10^{-4} + \rho_b \frac{1-r}{0.2} $ and $p_a = 5\times10^{-4} + p_b \frac{1-r}{0.2}$ with  $\rho_b=10^{-2}-10^{-4}$ and $p_b = 1-5\times10^{-4}$.
 In our simulations, we consider three different values of $B_a$, namely, $0.1$, $0.5$, and $2000$. These correspond to a moderate magnetic field, a relatively strong magnetic field, and an extremely strong magnetic field, respectively. The domain $\Omega$ is partitioned into $400 \times 400$ uniform cells, and outflow boundary conditions are applied to all boundaries. \Cref{fig:Ex-Blast} displays the rest-mass density logarithm, thermal pressure, and magnitude of the magnetic field at $t=4$ obtained by the proposed BP locally DF CDG method. The complicated wave patterns are well captured, and our results agree with those reported in \cite{WuTangM3AS,WuShu2020NumMath} by the non-central DG schemes. \Cref{fig:BL_Error} illustrates the temporal evolution of the global relative divergence error $\varepsilon_{\rm div}$ for three different values of $B_a$. Notably, for all three cases, our method keeps the global relative divergence errors at quite small levels. Again, it is observed that the BP limiter is essential for ensuring the BP condition \eqref{CDG2D:QuaCond}. Without it, the CDG code would produce nonphysical numerical solutions, causing the simulation to fail.

	\begin{figure}[!htb]
		\centering
		\begin{subfigure}{0.32\textwidth}
			\includegraphics[width=\textwidth]{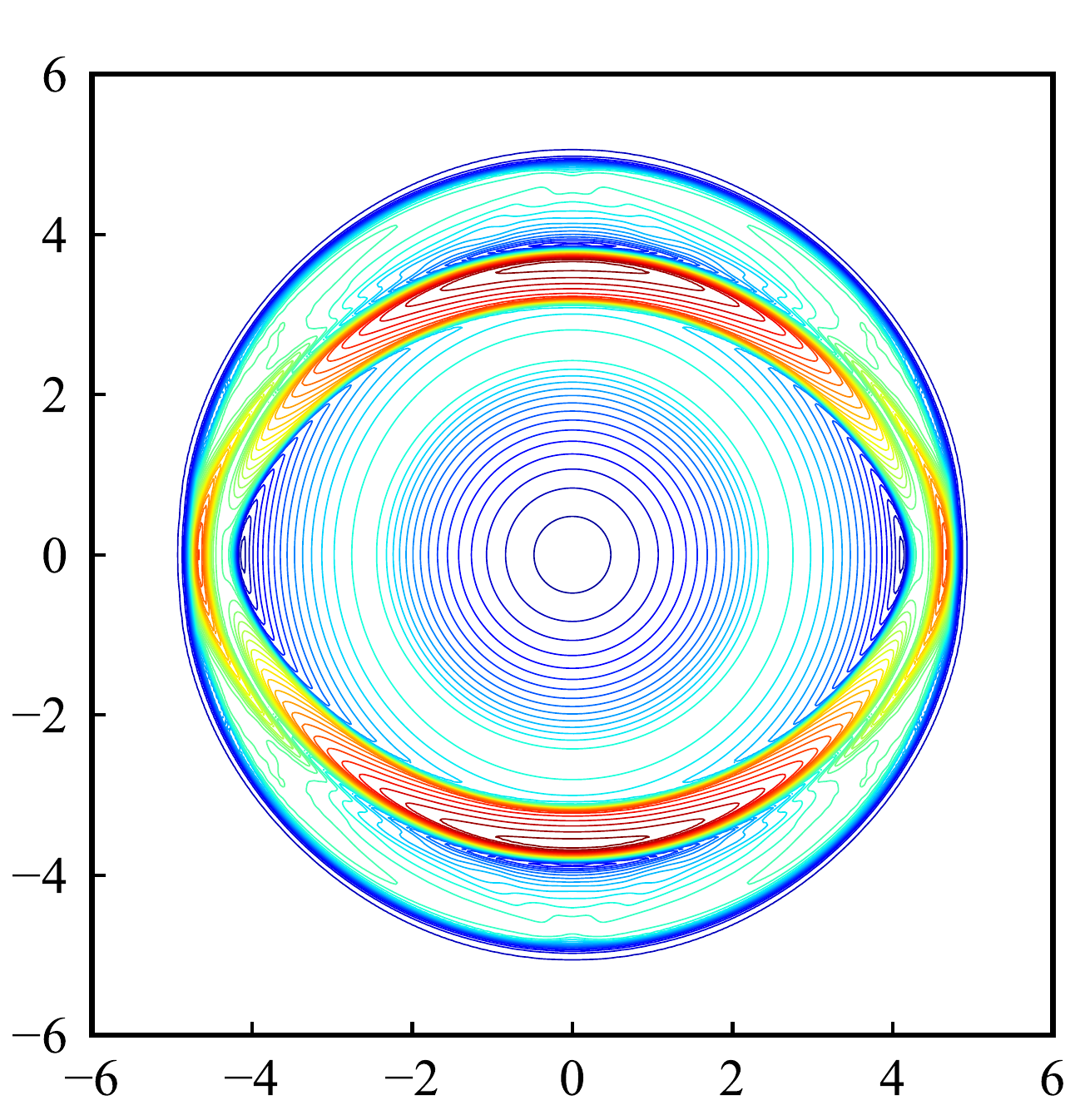}
		\end{subfigure}
		\hfill
		\begin{subfigure}{0.32\textwidth}
			\includegraphics[width=\textwidth]{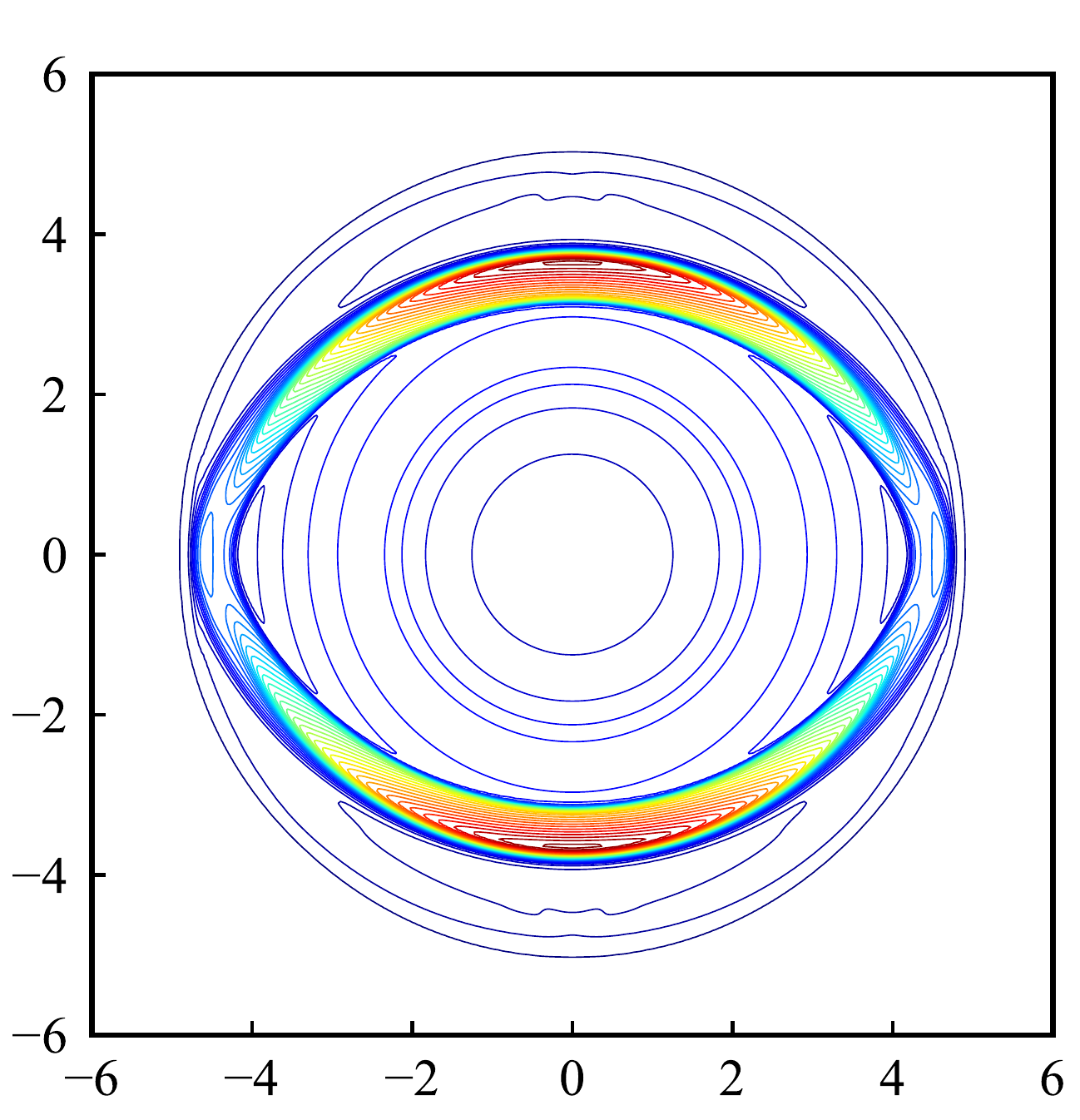}
		\end{subfigure}
		\hfill
		\begin{subfigure}{0.32\textwidth}
			\includegraphics[width=\textwidth]{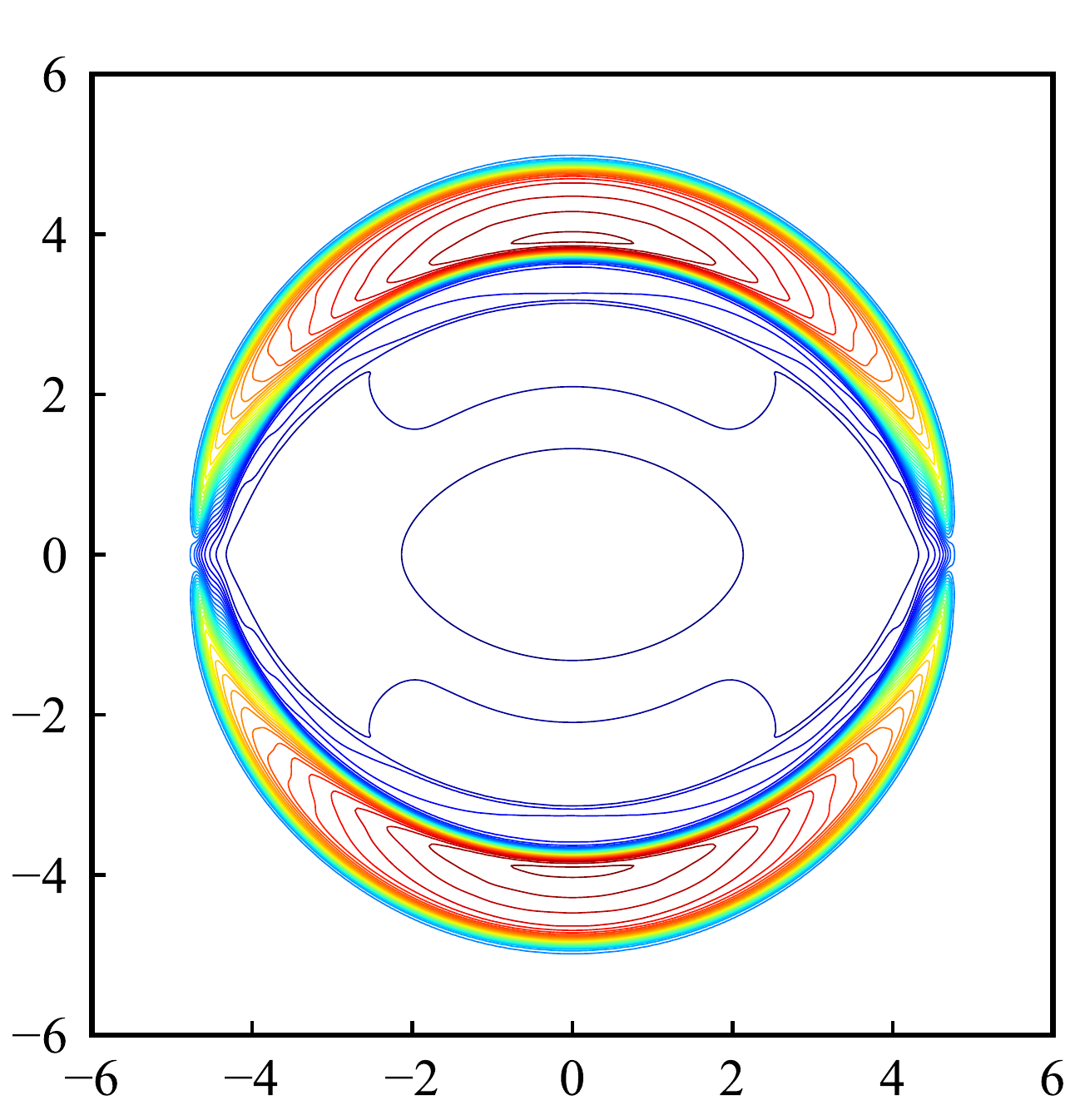}
		\end{subfigure}
		
		\begin{subfigure}{0.32\textwidth}
			\includegraphics[width=\textwidth]{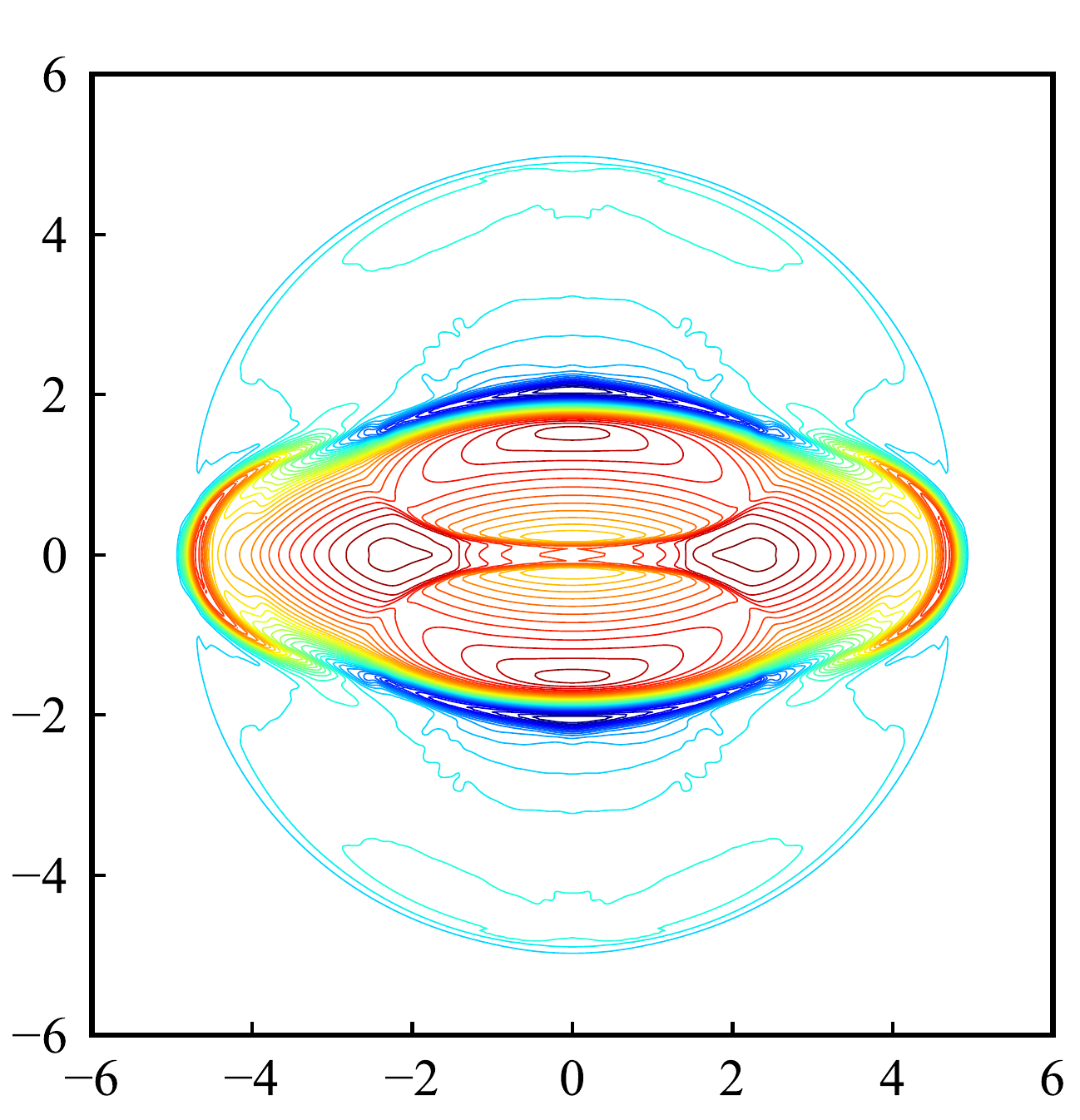}
		\end{subfigure}
		\hfill
		\begin{subfigure}{0.32\textwidth}
			\includegraphics[width=\textwidth]{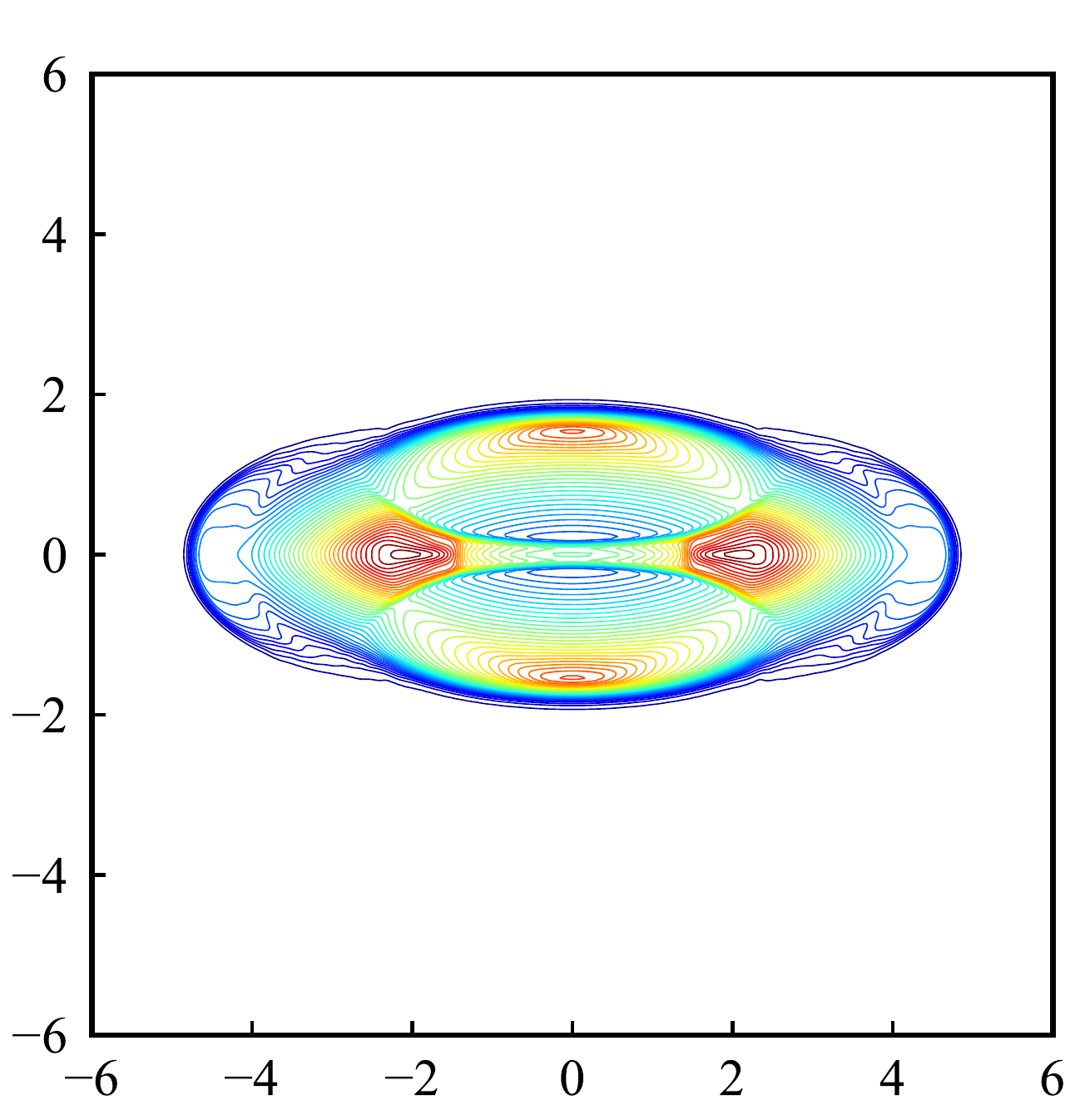}
		\end{subfigure}
		\hfill
		\begin{subfigure}{0.32\textwidth}
			\includegraphics[width=\textwidth]{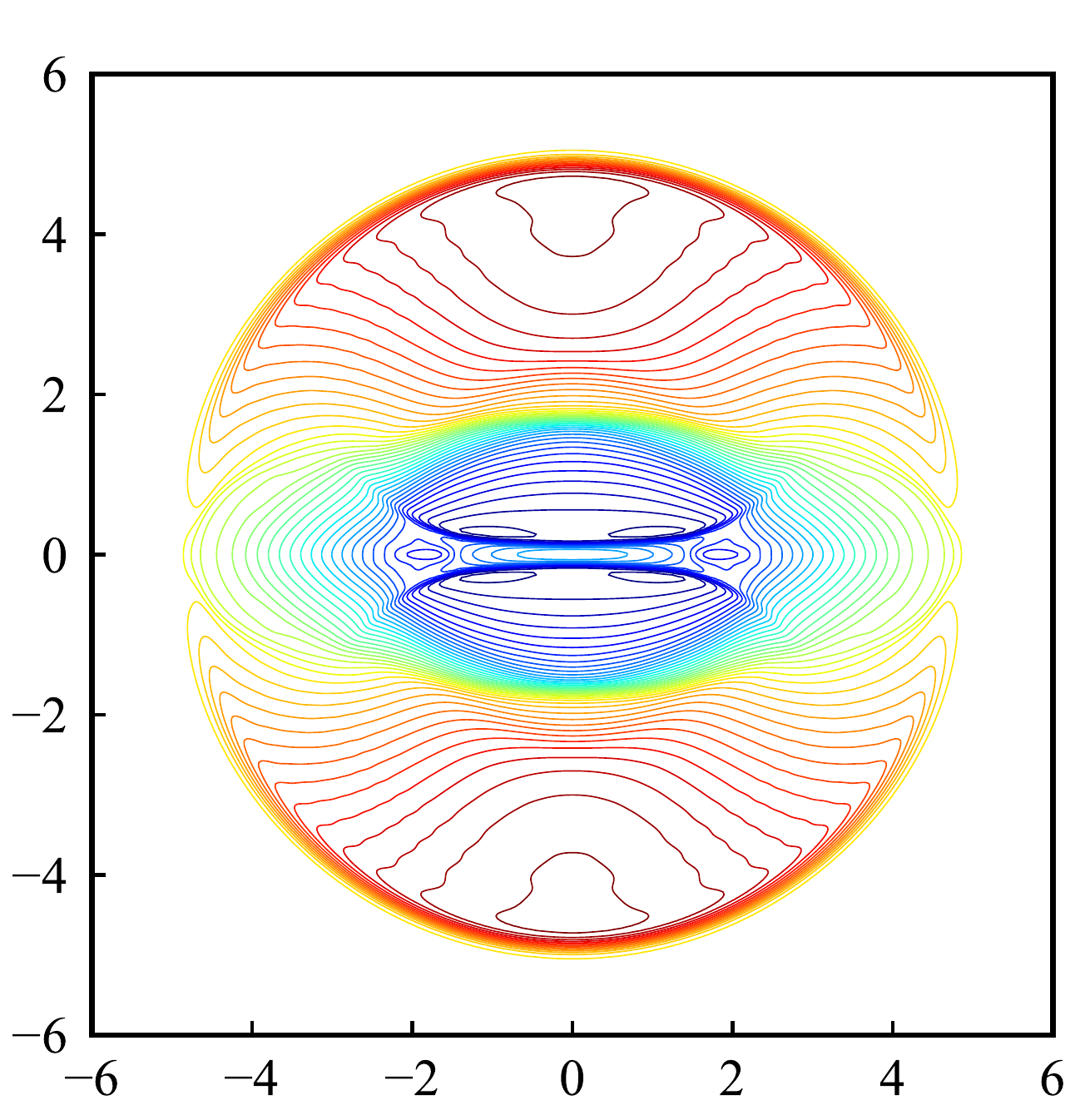}
		\end{subfigure}
		
		\begin{subfigure}{0.32\textwidth}
			\includegraphics[width=\textwidth]{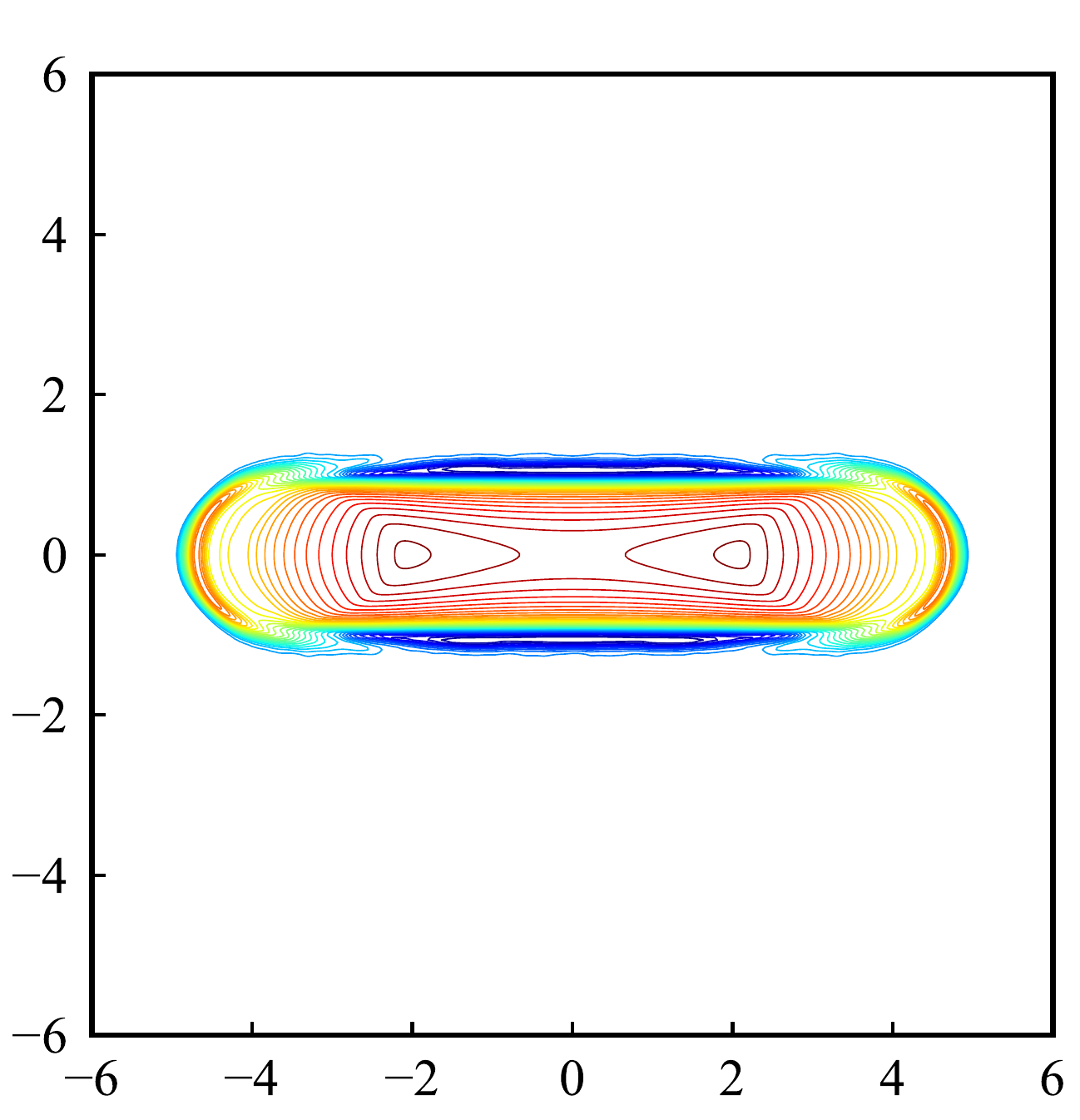}
		\end{subfigure}
		\hfill
		\begin{subfigure}{0.32\textwidth}
			\includegraphics[width=\textwidth]{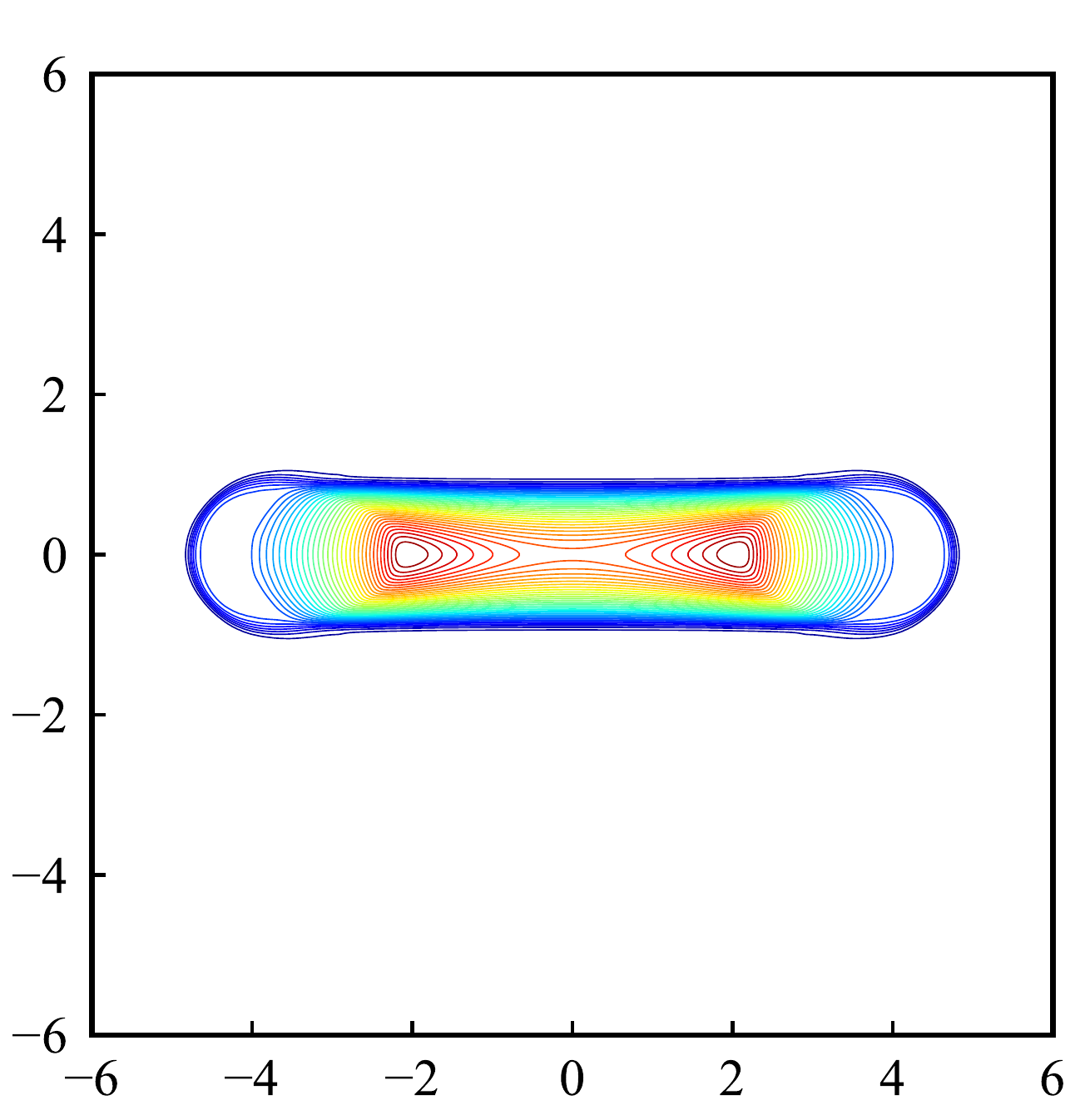}
		\end{subfigure}
		\hfill
		\begin{subfigure}{0.32\textwidth}
			\includegraphics[width=\textwidth]{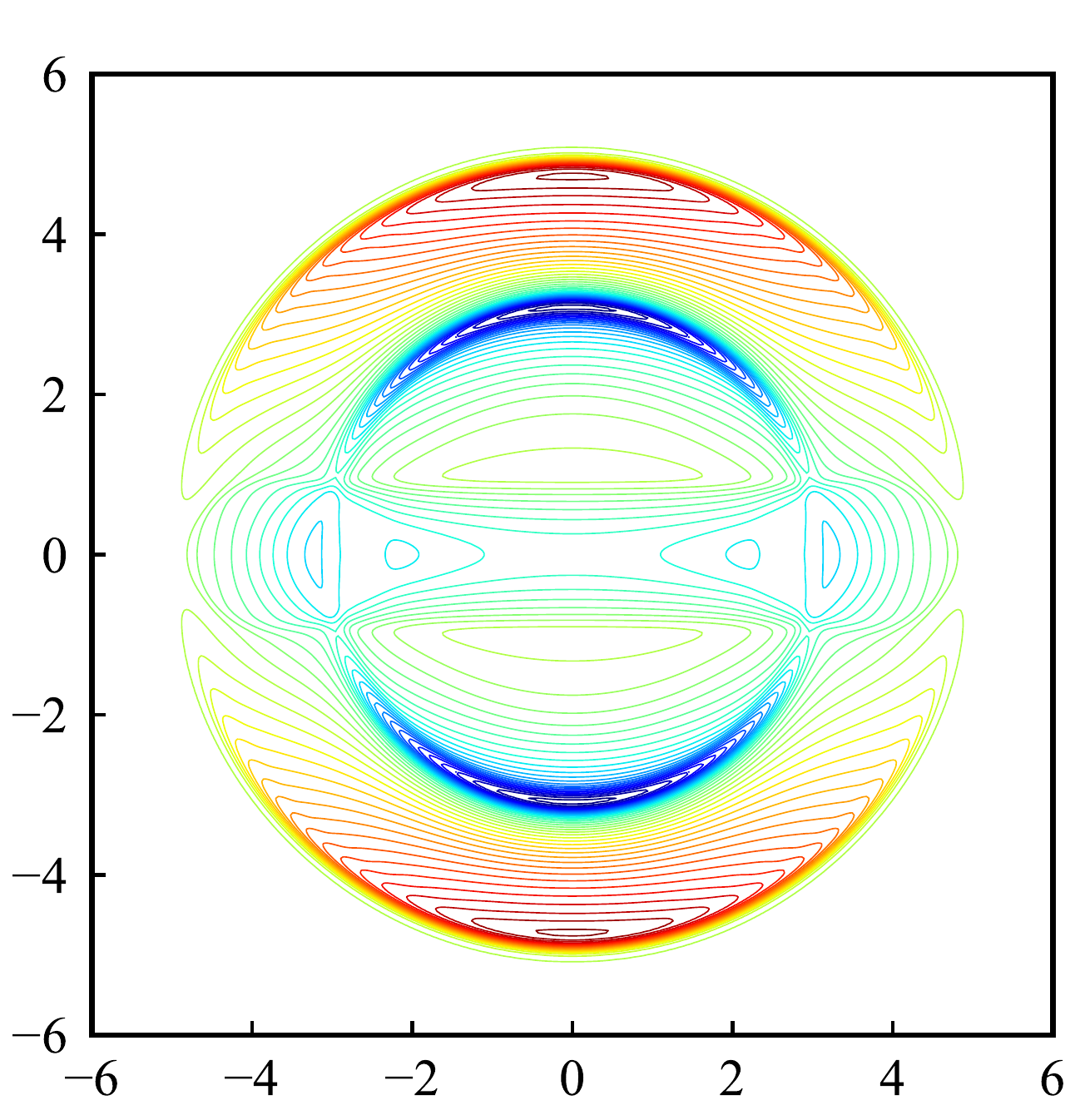}
		\end{subfigure}
		
		\caption{Contour plots of the rest-mass density logarithm (left), thermal pressure (middle), and magnitude of magnetic field (right) for \Cref{Ex:Blast} with the IP-EOS \eqref{EOS:IPEOS}. 
			Top: $B_a = 0.1$; 
			middle: $B_a = 0.5$; 
			bottom: $B_a=2000$.
			%			{\em}
		}
		\label{fig:Ex-Blast}
	\end{figure}

\begin{figure}[!htb]
		\centering
		\begin{subfigure}{0.32\textwidth}
			\includegraphics[width=\textwidth]{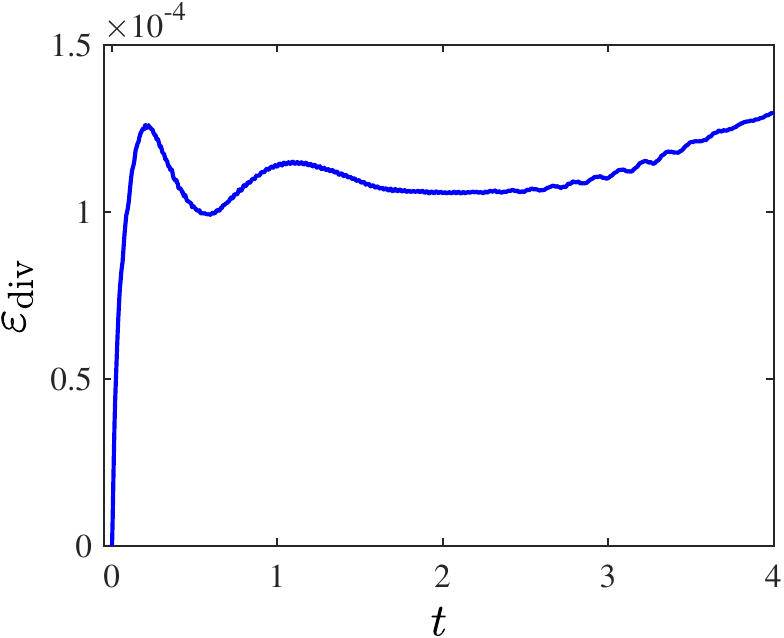}
			%			\caption{ $B_a=0.1$}
		\end{subfigure}
		\hfill
		\begin{subfigure}{0.32\textwidth}
			\includegraphics[width=\textwidth]{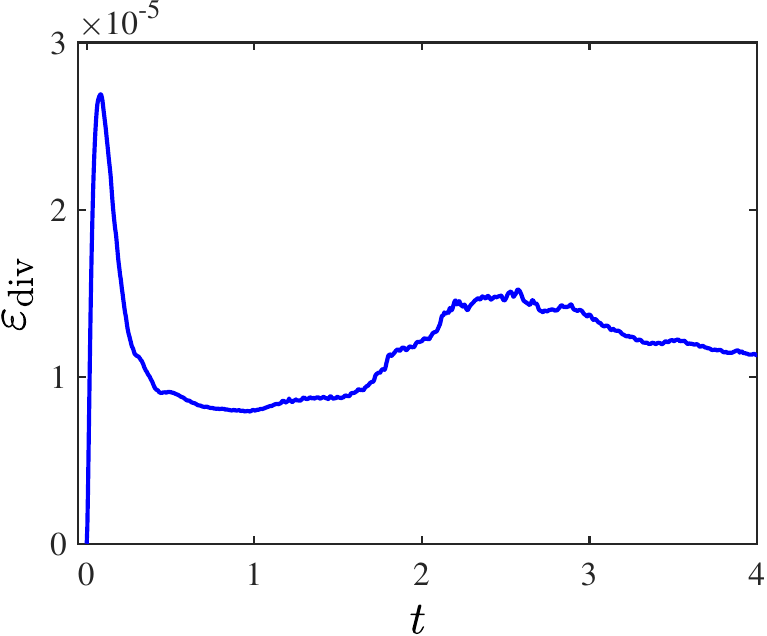}
			%			\caption{ $B_a=0.5$}
		\end{subfigure}
		\hfill
		\begin{subfigure}{0.32\textwidth}
			\includegraphics[width=\textwidth]{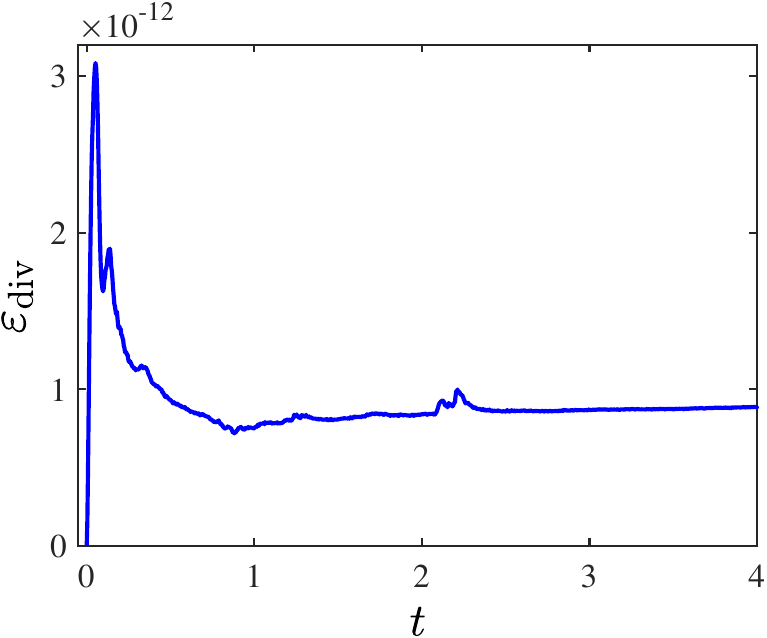}
			%			\caption{ $B_a=2000$}
		\end{subfigure}
		
		\caption{Time evolution of the global relative divergence error $\varepsilon_{\rm div}$ for \Cref{Ex:Blast} with $B_a=0.1$ (left), $B_a=0.5$ (middle), and $B_a=2000$ (right).
		}
		\label{fig:BL_Error}
	\end{figure} 
\end{expl}

\begin{expl}[Astrophysical Jets]\label{Ex:Jet}%EOS1-ID,EOS2-RC

As our final test problem, we consider two high-speed RMHD jet flows. This problem was originally proposed and investigated in \cite{WuShu2020NumMath} with the ideal EOS. The domain $\Omega=[-12,12]\times [0,25]$ is initially filled with stationary ambient plasma having 
$(\rho, \bm{B}, p) = (1, 0, \sqrt{2000 P_a}, 0, P_a)$, where $P_a= 2.35362407217\times 10^{-5}$. A jet with a Mach number of $M_b=50$ is injected into the domain $\Omega$ from the bottom boundary in the $y$-direction, initially located at $x\in [-0.5,0.5]$ and $y=0$. Due to symmetry, we simulate only half of the domain $\Omega$, specifically $[0,12]\times [0,25]$, which is divided into $240\times 500$ uniform cells. The reflective boundary condition is applied to $x=0$, while outflow conditions are imposed on all other boundaries, except for the region $x\in [0,0.5]$ on the bottom boundary where the inflow conditions of
$(\rho, \bm{v}, \bm{B}, p) = (0.1, 0, 0.99, 0, 0, \sqrt{2000 P_a}, 0, P_a)$
are used. This problem is highly challenging due to the large Lorentz factor $W\approx 7.09$, the high relativistic Mach number $M_r \approx 354.37$, and the low plasma-beta of $0.001$. The CDG code would quickly break down due to nonphysical numerical solutions if we turn off the BP limiter, remove the discretized part of the symmetrization source terms from our schemes, or use the conventional (non-DF) CDG discretization. This confirms the importance of these three ingredients in our schemes for ensuring the BP property.

%	\begin{figure}[!htb]
%		\centering	
%		\begin{subfigure}{0.32\textwidth}
%			\includegraphics[width=\textwidth]{}
%		\end{subfigure}
%		\hfill
%		\begin{subfigure}{0.32\textwidth}
%			\includegraphics[width=\textwidth]{}
%		\end{subfigure}
%		\hfill
%		\begin{subfigure}{0.32\textwidth}
%			\includegraphics[width=\textwidth]{}
%		\end{subfigure}
%		
%		\begin{subfigure}{0.32\textwidth}
%			\includegraphics[width=\textwidth]{}
%		\end{subfigure}
%		\hfill
%		\begin{subfigure}{0.32\textwidth}
%			\includegraphics[width=\textwidth]{}
%		\end{subfigure}
%		\hfill
%		\begin{subfigure}{0.32\textwidth}
%			\includegraphics[width=\textwidth]{}
%		\end{subfigure}
%		
%		\caption{ \Cref{Ex:Jet}: Contours of the rest-mass density logarithm at $t=10, 20$, and $30$ (from left to right). Top: ideal EOS \eqref{EOS:IDEOS}; Bottom: RC-EOS \eqref{EOS:RCEOS}.
%		}
%		\label{fig:Ex-Jet-1}
%	\end{figure} 
	
	\begin{figure}[!htb]
		\centering	
		\begin{subfigure}{0.32\textwidth}
			\includegraphics[width=\textwidth]{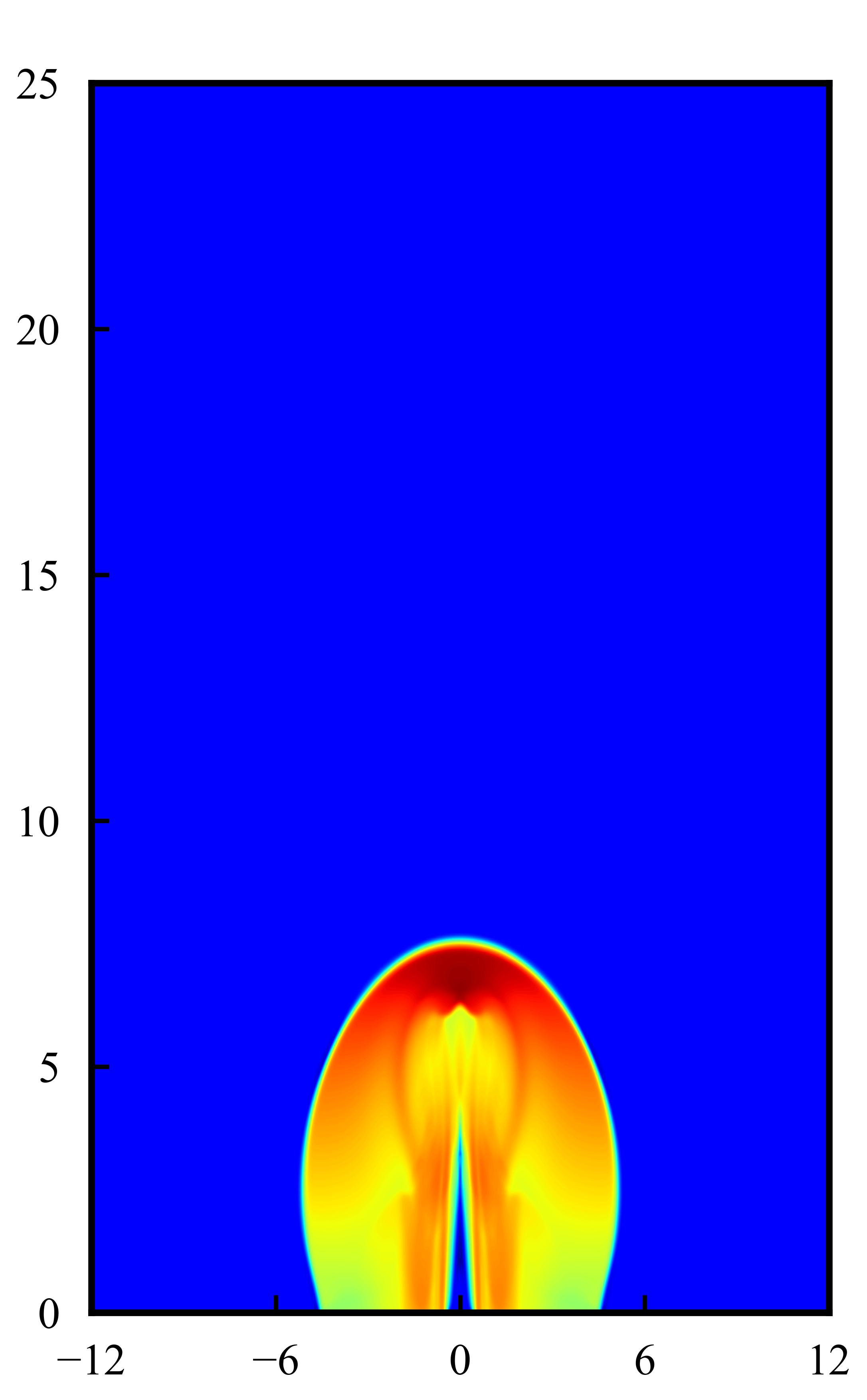}
		\end{subfigure}
		\hfill
		\begin{subfigure}{0.32\textwidth}
			\includegraphics[width=\textwidth]{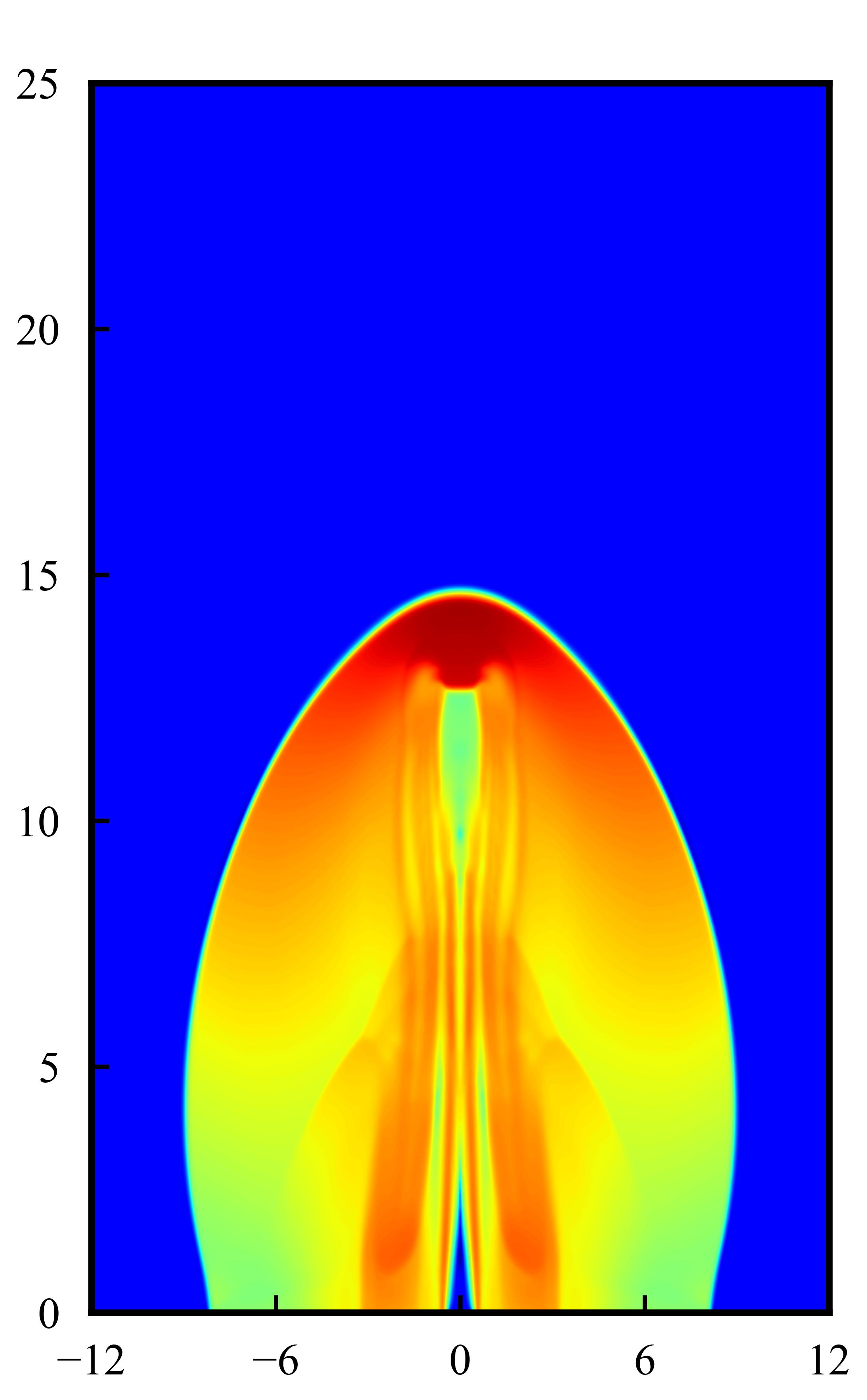}
		\end{subfigure}
		\hfill
		\begin{subfigure}{0.32\textwidth}
			\includegraphics[width=\textwidth]{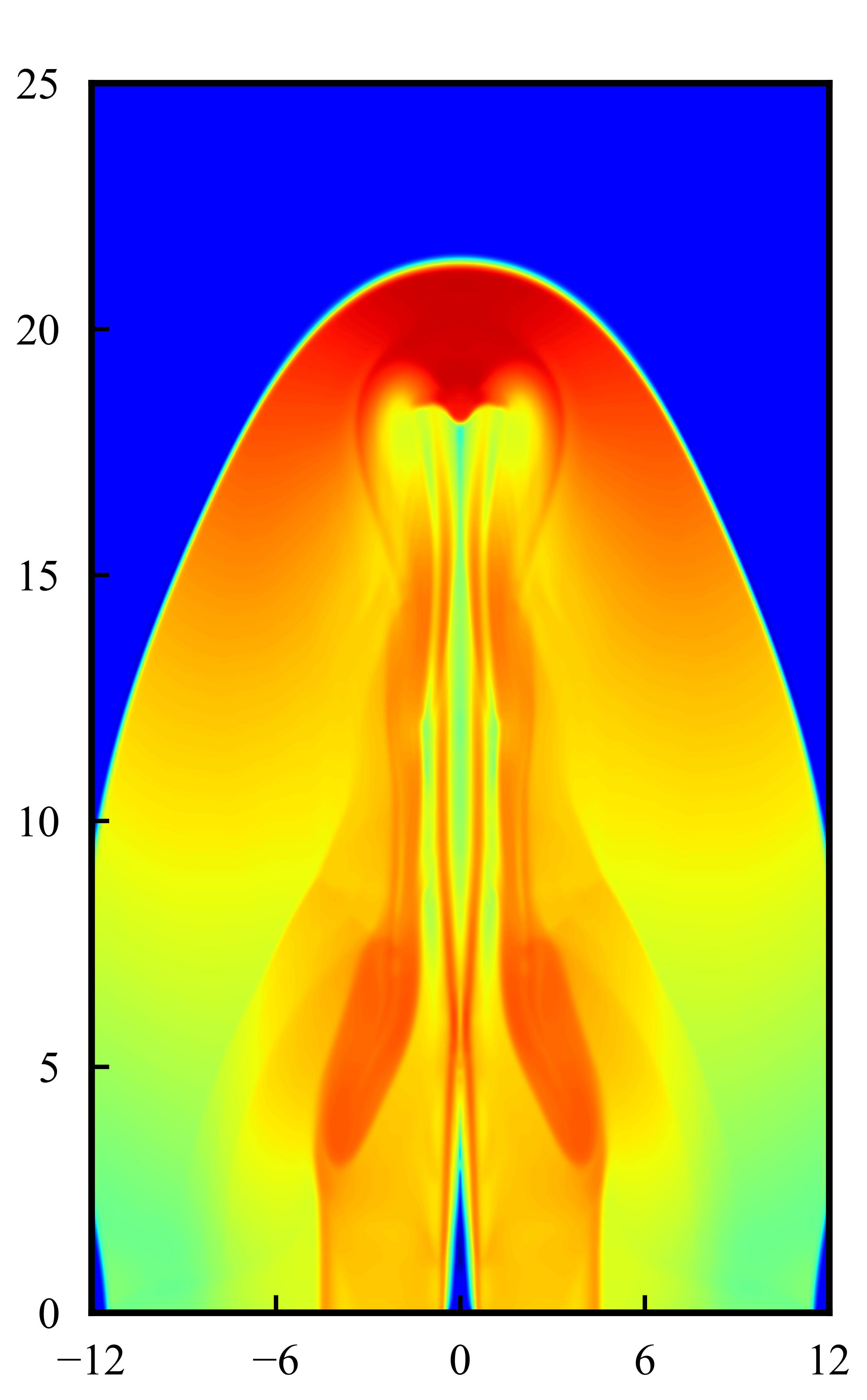}
		\end{subfigure}
		
		\begin{subfigure}{0.32\textwidth}
			\includegraphics[width=\textwidth]{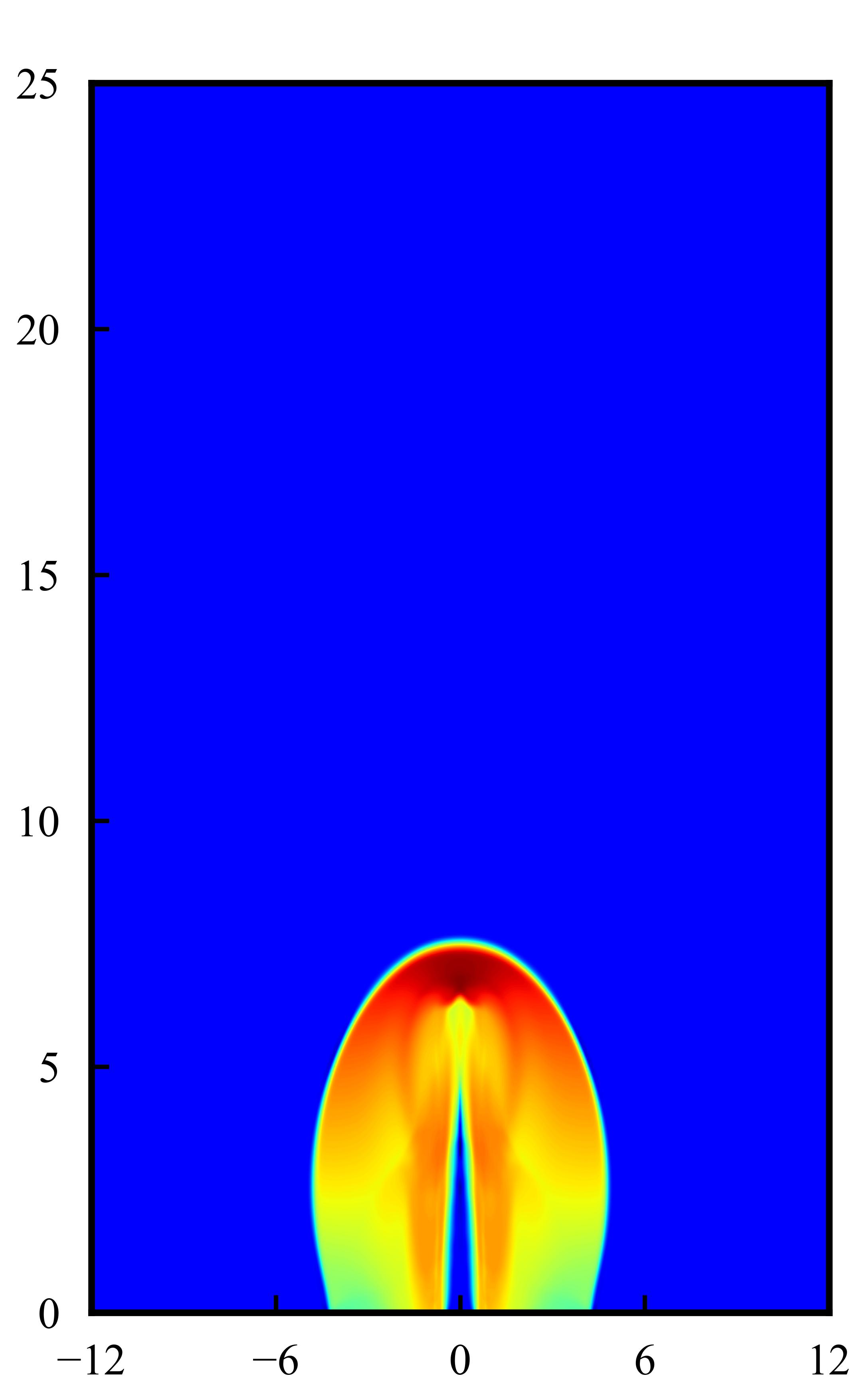}
		\end{subfigure}
		\hfill
		\begin{subfigure}{0.32\textwidth}
			\includegraphics[width=\textwidth]{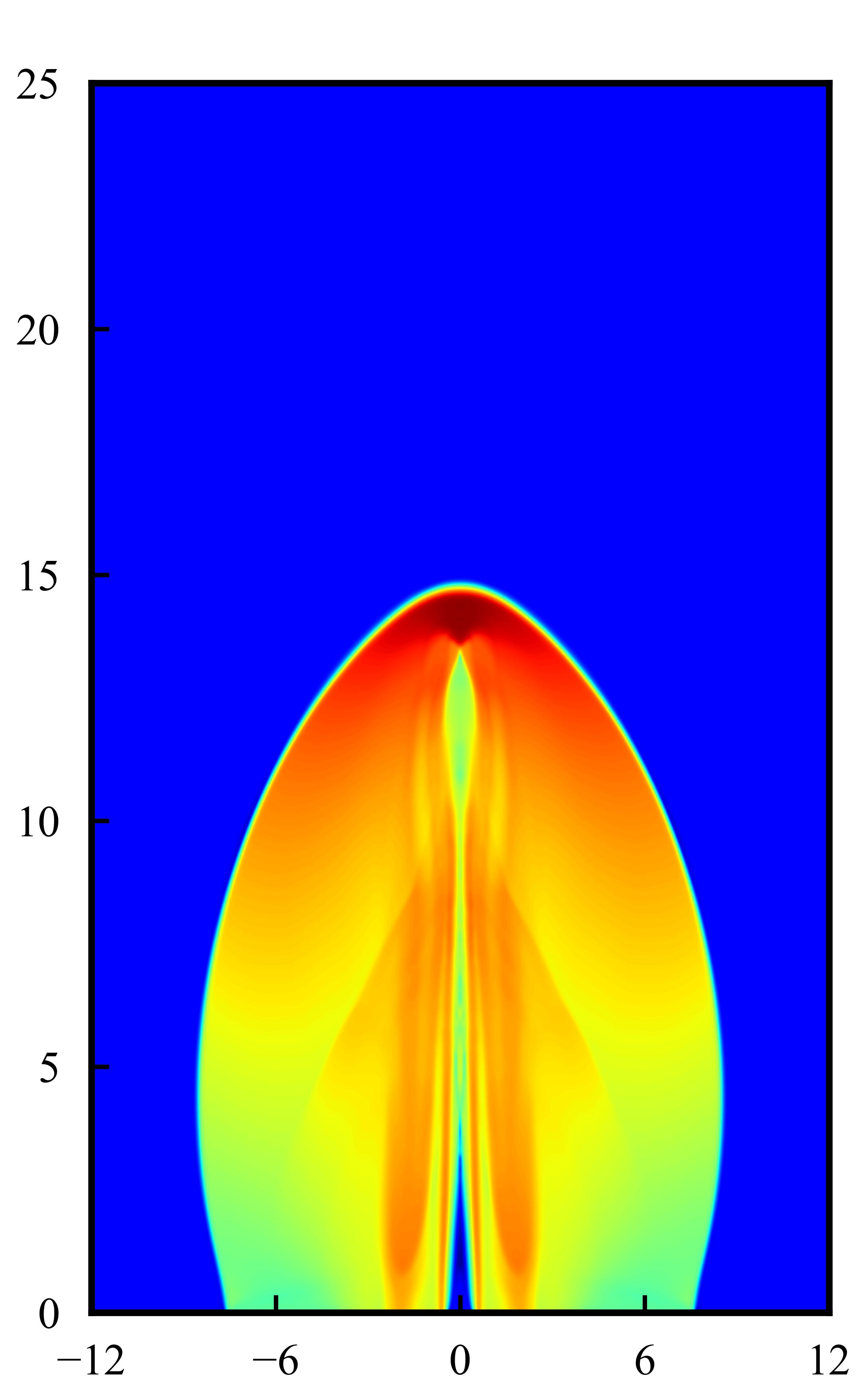}
		\end{subfigure}
		\hfill
		\begin{subfigure}{0.32\textwidth}
			\includegraphics[width=\textwidth]{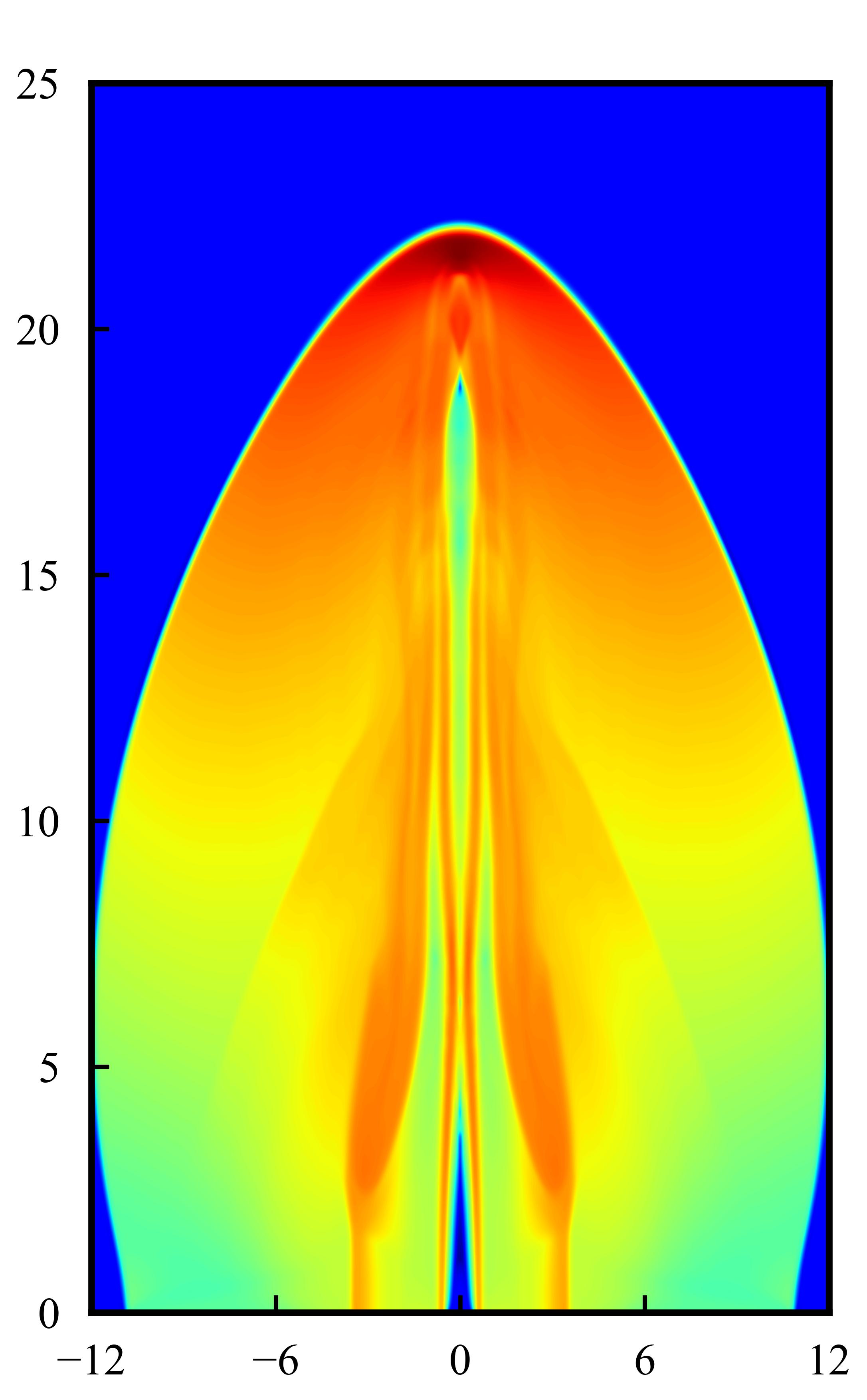}
		\end{subfigure}
		
		\caption{Pressure logarithm at $t=10, 20$, and $30$ (from left to right) for \Cref{Ex:Jet}. Top:  ideal EOS \eqref{EOS:IDEOS}; bottom: RC-EOS \eqref{EOS:RCEOS}.
		}
		\label{fig:Ex-Jet-2}
	\end{figure} 
	
	\begin{figure}[!htb]
		\centering			
		\begin{subfigure}{0.32\textwidth}
			\includegraphics[width=\textwidth]{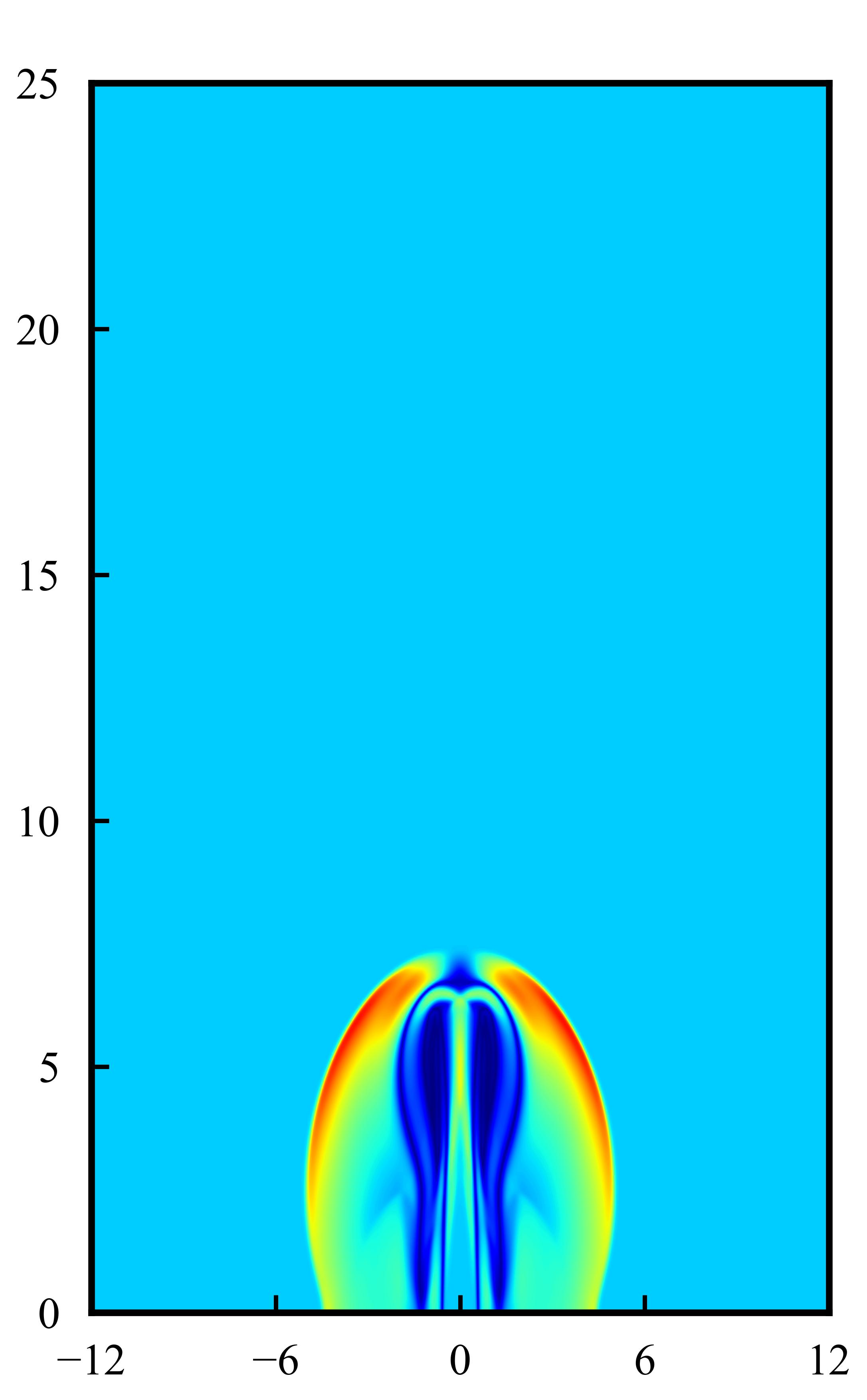}
		\end{subfigure}
		\hfill
		\begin{subfigure}{0.32\textwidth}
			\includegraphics[width=\textwidth]{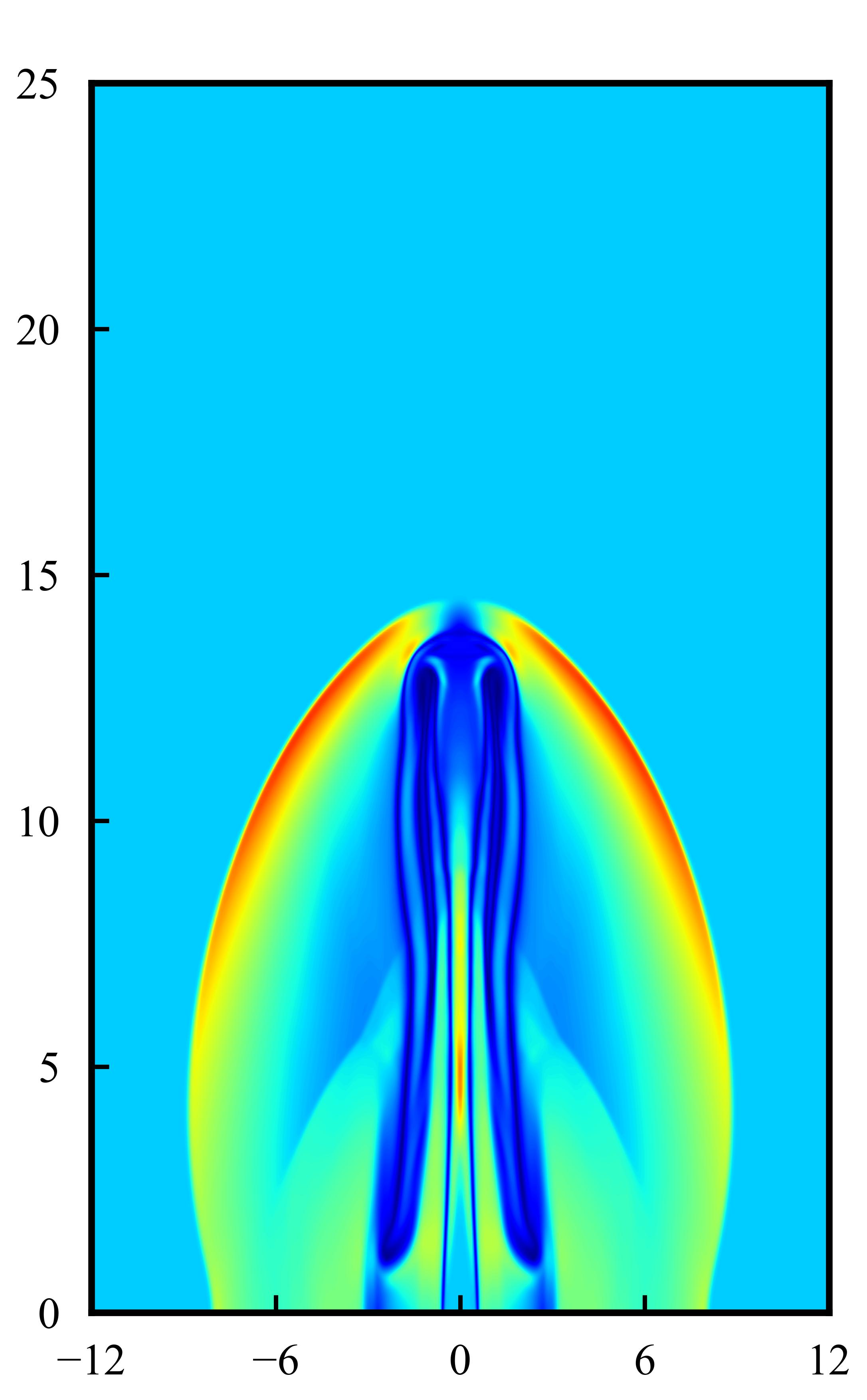}
		\end{subfigure}
		\hfill
		\begin{subfigure}{0.32\textwidth}
			\includegraphics[width=\textwidth]{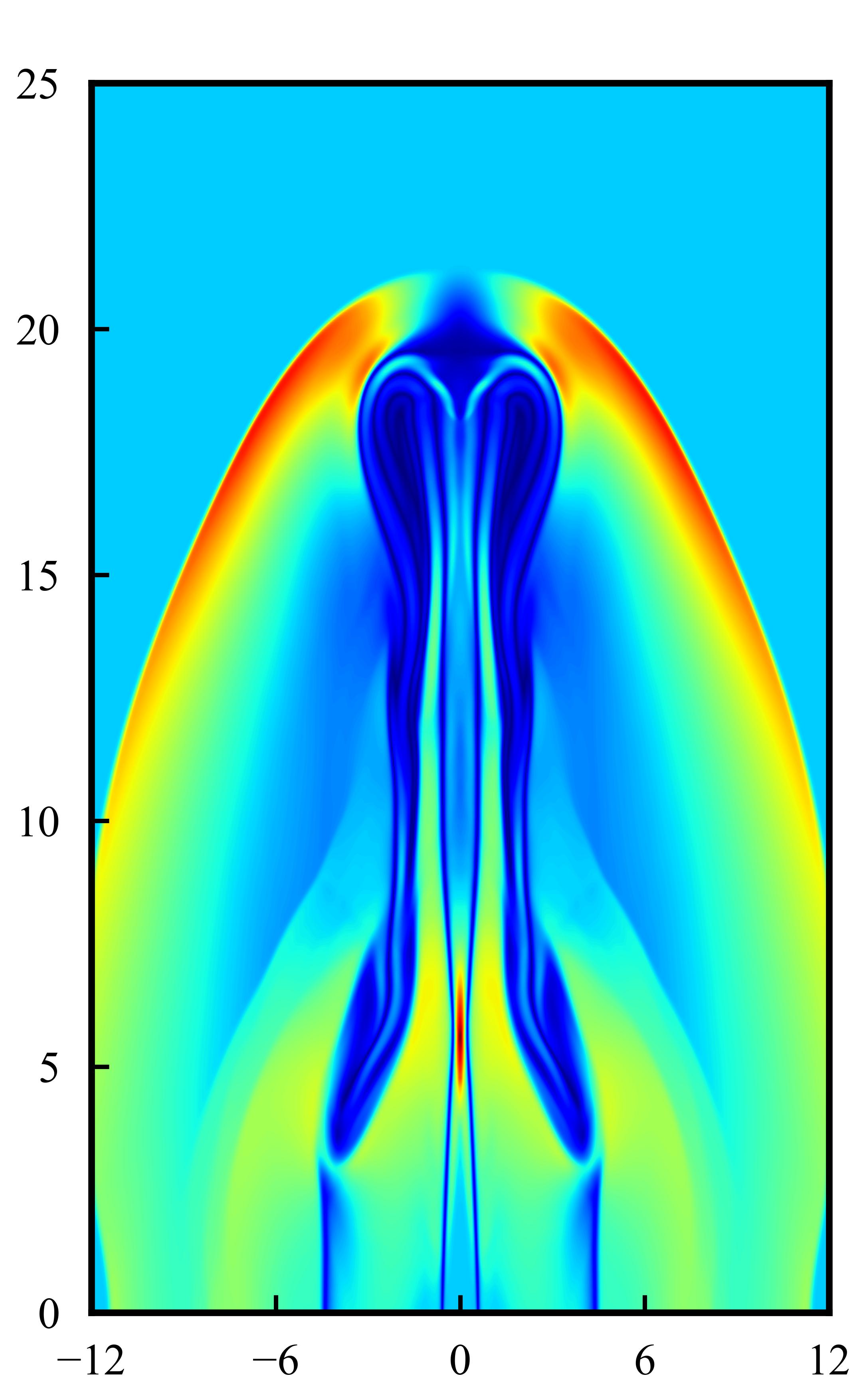}
		\end{subfigure}
		
		\begin{subfigure}{0.32\textwidth}
			\includegraphics[width=\textwidth]{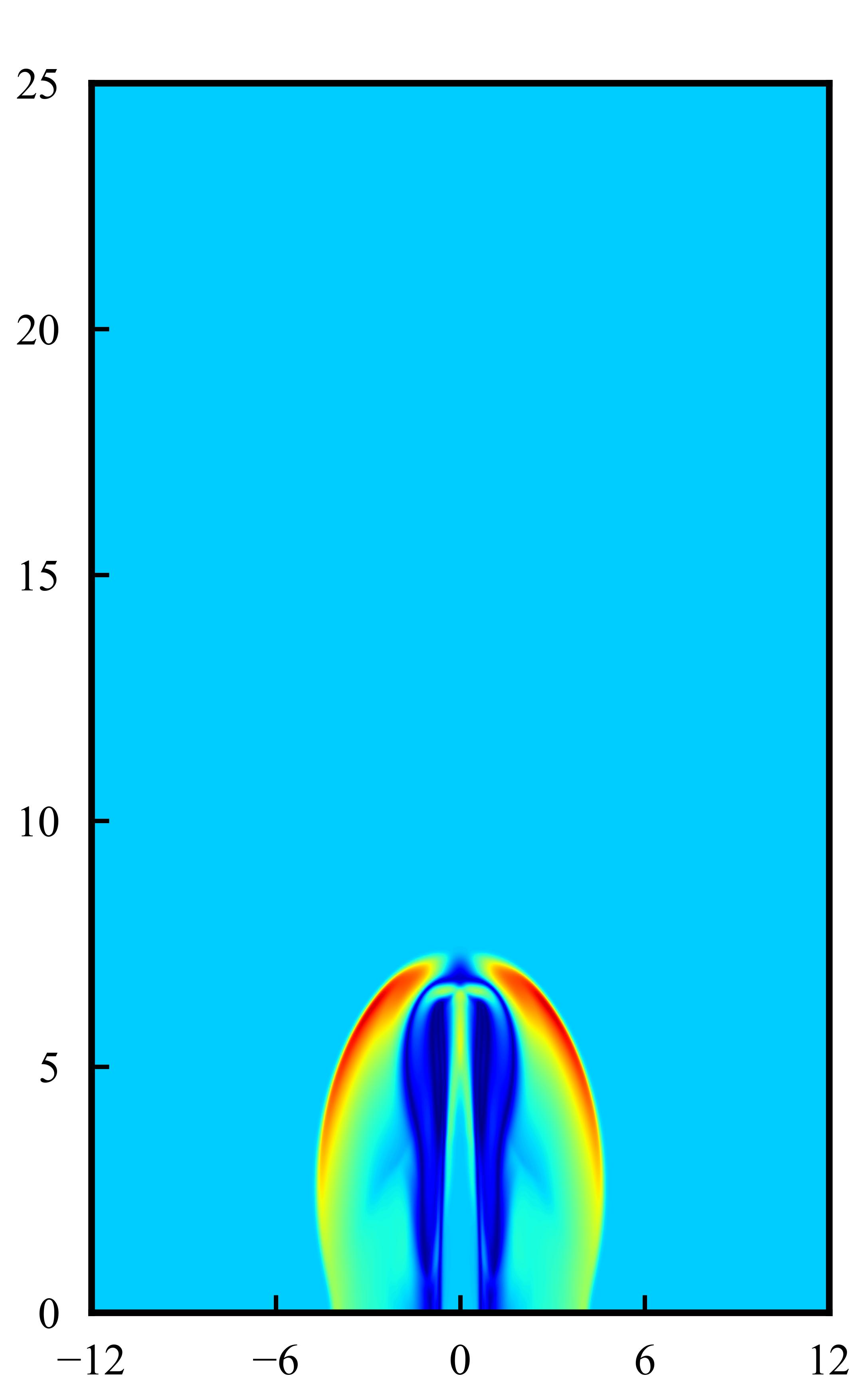}
		\end{subfigure}
		\hfill
		\begin{subfigure}{0.32\textwidth}
			\includegraphics[width=\textwidth]{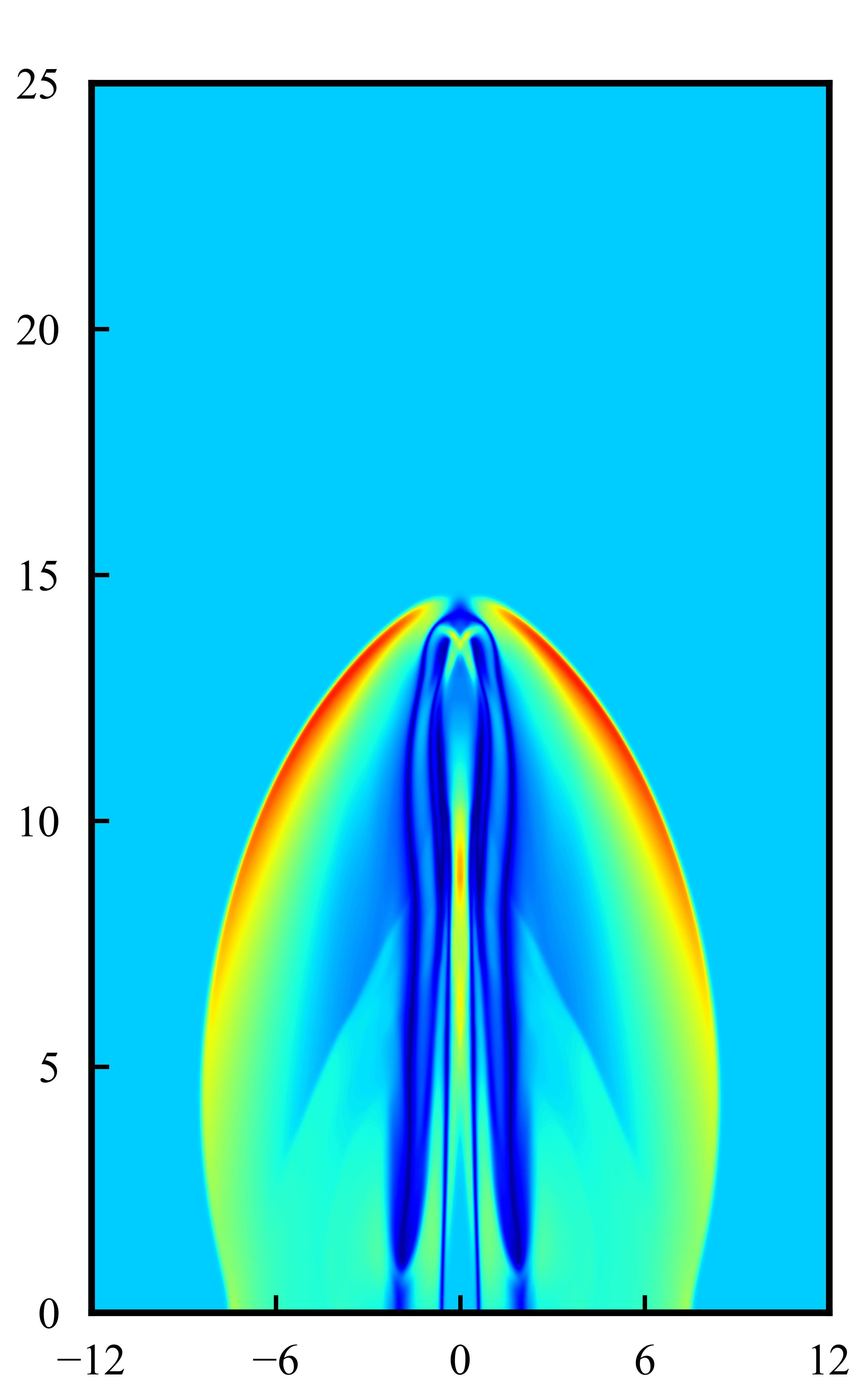}
		\end{subfigure}
		\hfill
		\begin{subfigure}{0.32\textwidth}
			\includegraphics[width=\textwidth]{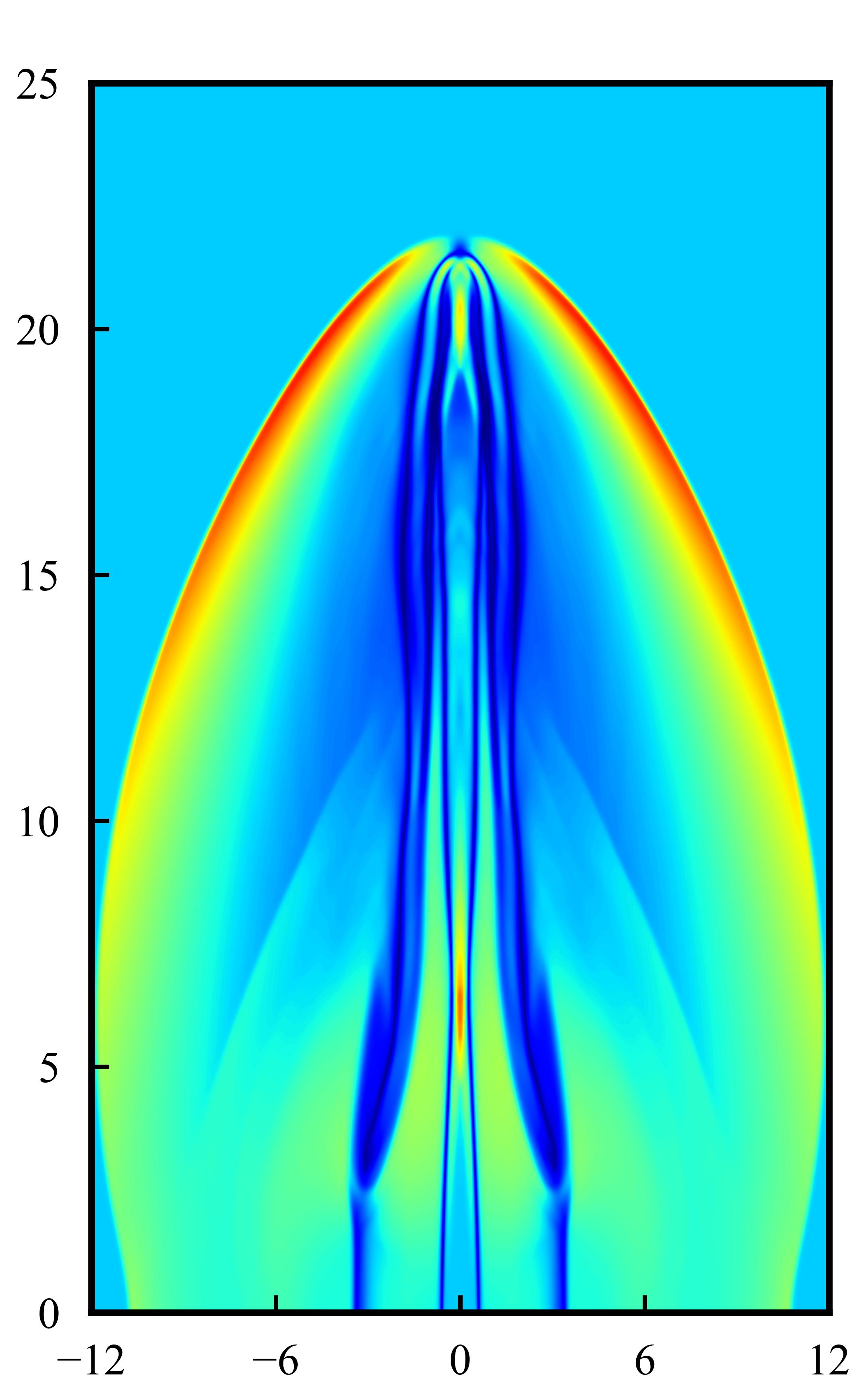}
		\end{subfigure}
		
		\caption{Magnitude of magnetic field at $t=10, 20$, and $30$ (from left to right) for \Cref{Ex:Jet}. Top: ideal EOS \eqref{EOS:IDEOS}; bottom: RC-EOS \eqref{EOS:RCEOS}.
		}
		\label{fig:Ex-Jet-3}
	\end{figure} 
	
	\begin{figure}[!htb]
		\centering
		\begin{subfigure}{0.45\textwidth}
			\includegraphics[width=\textwidth]{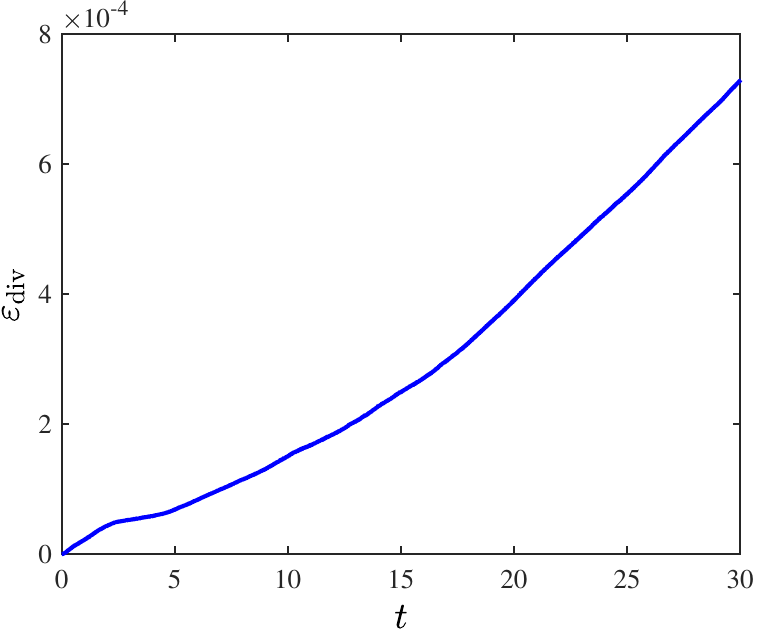}
			%			\caption{ EOS1 \ref{EOS1}}
		\end{subfigure}
		\qquad
		\begin{subfigure}{0.45\textwidth}
			\includegraphics[width=\textwidth]{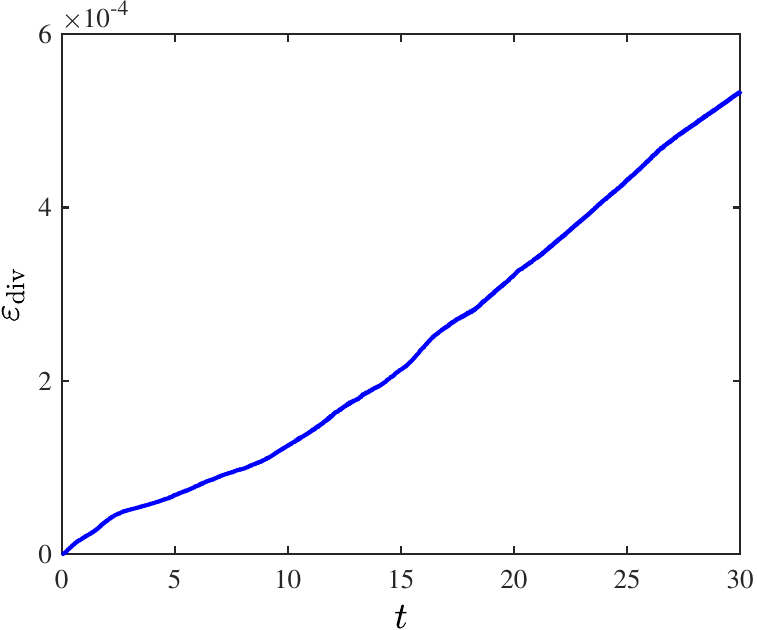}
			%			\caption{ EOS2 \ref{EOS2}}
		\end{subfigure}
		
		\caption{Time evolution of the divergence error $\varepsilon_{\rm div}$ for \Cref{Ex:Jet}. Left: ideal EOS \eqref{EOS:IDEOS}; right: RC-EOS \eqref{EOS:RCEOS}.
		}
		\label{fig:Jet_Error}
	\end{figure} 
\end{expl}

We consider two different EOSs, namely, the ideal EOS \eqref{EOS:IDEOS} and the RC-EOS \eqref{EOS:RCEOS}. \Cref{fig:Ex-Jet-2,fig:Ex-Jet-3} display the pressure logarithm $p$ and the magnitude of the magnetic field $|\bm{B}|$, at $t=10$, 20, and $30$, obtained using our BP locally DF CDG method. Our numerical results for the ideal EOS \eqref{EOS:IDEOS} are comparable to those computed in \cite{WuShu2020NumMath}. Furthermore, the results for the RC-EOS \eqref{EOS:RCEOS} also show high resolution for the flow structures. In \Cref{fig:Jet_Error}, we also present the time evolution of the global relative divergence errors for the ideal EOS \eqref{EOS:IDEOS} and the RC-EOS \eqref{EOS:RCEOS}, respectively. In both cases, the divergence errors remain at a low level of $\mathcal{O}(10^{-4})$. The proposed BP locally DF CDG method demonstrates high robustness and stability for this challenging problem.

\section{Conclusions}\label{Conclusions}

In numerical simulations of relativistic magnetohydrodynamics (RMHD), 
it is crucial yet challenging to preserve the physical bounds of fluid velocity, density, and pressure, while also maintaining the divergence-free (DF) constraint of the magnetic field. The difficulties are primarily due to the strong nonlinearity of the RMHD system and the intricate influence of divergence errors on the bound-preserving (BP) property. 
In this paper, we have designed robust, uniformly high-order, central discontinuous Galerkin (CDG) schemes that are provably BP, locally DF, and applicable to a general equation of state (EOS) for RMHD. For 1D RMHD, the standard CDG method is exactly DF, and we have proven its BP property under a condition achievable through the BP limiter. 
For multidimensional RMHD, we have shown that the BP property of the standard CDG method is closely tied to a discrete DF condition on overlapping meshes. This presents a significant challenge, as this condition is globally coupled across all cells, making it incompatible with the standard local scaling BP limiter.
 To address this issue, we have devised novel CDG schemes based on suitable locally DF discretization of a modified RMHD system---the relativistic counterpart to Godunov's symmetrizable system for non-relativistic MHD---with the inclusion of additional source terms. Our careful discretization of these source terms precisely eliminates the impact of divergence errors at cell interfaces on the BP property. Consequently, the BP property of our new CDG schemes is influenced solely by a local discrete DF condition, ensured by our locally DF CDG discretization.
 We have conducted comprehensive and rigorous analyses of the BP property for our CDG schemes, based on technical estimates within the geometric quasilinearization (GQL) framework. Our analyses have led to the establishment of a theoretical link between BP and discrete DF properties on overlapping meshes.
Additionally, we have introduced a new 2D cell average decomposition on overlapping meshes, which requires fewer internal nodes and results in a milder theoretical BP CFL condition, thereby enhancing the efficiency of the 2D BP CDG method. 
The remarkable robustness and effectiveness of the proposed schemes have been demonstrated through various benchmark RMHD tests with different EOSs, including challenging ultra-relativistic blasts and jets in strongly magnetized scenarios.

%\renewcommand\baselinestretch{0.85}

%\bibliographystyle{abbrv}
%\bibliography{references_article,references_supp2}

%%Vancouver style references.
\bibliographystyle{model1-num-names}%
\bibliography{references_article,references_supp2}

\end{document}